\pgfplotsset{compat=1.15}
\definecolor{gen0}{rgb}{1,0,0}
\definecolor{gen1}{rgb}{0,0,1}
\definecolor{gen2}{rgb}{0,0.7,0}
\newtheorem{thm}{Theorem}[section]
\newtheorem{pro}[thm]{Proposition}%[section]
\newtheorem{lem}[thm]{Lemma}%[section] 
\newtheorem{cor}[thm]{Corollary}%[section]
\theoremstyle{definition}
\newtheorem{cons}[thm]{Construction}%[section]
\newtheorem{rmk}[thm]{Remark}%[section]
\newcommand{\abs}[1]{\left\lvert#1\right\rvert}
\newcommand{\norm}[1]{\left\lVert#1\right\rVert}
\newcommand{\overbar}[1]{\mkern 2mu\overline{\mkern-2mu#1\mkern-2mu}\mkern 2mu}
\newcommand\Ccal{\mathcal{C}}
\newcommand\Ical{\mathcal{I}}
\newcommand\Ucal{\mathcal{U}}
\newcommand\Scal{\mathcal{S}}
\newcommand\Tcal{\mathcal{T}}
\newcommand\Gcal{\mathcal{G}}
\newcommand\Vcal{\mathcal{V}}
\newcommand\Ecal{\mathcal{E}}
\newcommand\Kcal{\mathcal{K}}
\newcommand\Acal{\mathcal{A}}
\newcommand\Wcal{\mathcal{W}}
\newcommand\Bcal{\mathcal{B}}
\newcommand\Fbf{\mathbf{F}}
\newcommand\ybf{\mathbf{y}}
\newcommand\N{\mathbb{N}}
\newcommand\AUT{\mathrm{AUT}}
\newcommand\SAW{\mathrm{SAW}}
\newcommand\SAR{\mathrm{SAR}}
\newcommand\Jac{\mathfrak{J}}
\newcommand\pair{\mathrm{pair}}
\newcommand\disj{\mathrm{disj}}
\newcommand\sta{\mathrm{star}}
\newcommand\nb{\mathrm{nb}}
\newcommand\refl{\mathrm{Refl}}
\newcommand\init{\mathrm{-}}
\newcommand\term{\mathrm{+}}
\newcommand\Eout{E}
\renewcommand\deg{\mathrm{deg}}
\newcommand{\lidi}{distance}
\title{Self-avoiding walk is ballistic on graphs with more than one end}
\author{Florian~Lehner\thanks{F.\ Lehner was supported by FWF (Austrian Science Fund) project P31889-N35.}}
\affil{%Department of Mathematics, 
University of Auckland, New Zealand}
\author{Christian~Lindorfer\thanks{C.\ Lindorfer was partially supported by FWF (Austrian Science Fund) projects P31889-N35 and DK~W1230.}}
\affil{%Institut f\"ur Diskrete Mathematik, 
Technische Universit\"at Graz,
%Steyrergasse 30, A-8010 Graz, 
Austria}
\author{Christoforos~Panagiotis\thanks{C.\ Panagiotis was supported by the Swiss National Science Foundation and the NCCR SwissMAP.}}
\affil{University of Bath, %Department of Mathematical Sciences, Bath BA2 7AY, 
UK}
\date{\today} 
\begin{document}
\maketitle

\begin{abstract}
We prove that on any transitive graph $G$ with infinitely many ends, a self-avoiding walk of length $n$ is ballistic with extremely high probability, in the sense that there exist constants $c,t>0$ such that $\mathbb{P}_n(d_G(w_0,w_n)\geq cn)\geq 1-e^{-tn}$ for every $n\geq 1$. Furthermore, we show that the number of self-avoiding walks of length $n$ grows asymptotically like $\mu_w^n$, in the sense that there exists $C>0$ such that $\mu_w^n\leq c_n\leq C\mu_w^n$ for every $n\geq 1$. Our results extend more generally to quasi-transitive graphs with infinitely many ends, satisfying the additional technical property that there is a quasi-transitive group of automorphisms of $G$ which does not fix an end of $G$.
\end{abstract}

\section{Introduction}

A \textit{self-avoiding walk} on a graph $G$ is a walk that visits each vertex at most once. This notion was originally introduced in the work of the chemist Flory \cite{Flory53} to model long polymer chains, and it soon attracted the interest of the mathematical community. The primary focus has been on studying the asymptotic behaviour of a self-avoiding of a given length sampled uniformly at random, giving rise to questions that, while simple to pose, frequently prove challenging to resolve.

A significant amount of research on self-avoiding walks has concentrated on answering these questions in the case of lattices in $\mathbb{R}^d$, where the model is now well-understood in dimensions $d\geq 5$ by the seminal work of Hara and Slade \cite{HaSla1992,HaSlaCri}. The low-dimensional cases continue to present serious challenges. See \cite{HammersleyWelsh,KestenI,KestenII,DuHaSub,SAWEnds,PolyIns,SAWHex,KP23} and the references therein for some of the most important results. For a comprehensive introduction to the model in this context interested readers can refer to \cite{MR3025395,MR2986656}.

Over the years, the study of self-avoiding walk beyond lattices has received increasing attention. The systematic study of self-avoiding walk on general transitive graphs was initiated in a series of papers by Grimmett and Li \cite{GriLi2013,GriLi2014b,GriLi2014,GriLi2015,GriLi2016,GriLi2017,GriLi2017b,zbMATH07217533}, whose work is primarily concerned with properties of the connective constant.
Other works on self-avoiding walk in this context include \cite{MaWuSAW, NaPe,Extendable,MR3584819,Ben16,Li20,P19,BenPa,GeoWed}.
See \cite{zbMATH07217533} for a survey of these results.

In this paper, we consider the self-avoiding walk on quasi-transitive graphs with more than one end. Such graphs admit a group invariant \emph{tree decomposition} $\Tcal = (T,\Vcal)$; we refer the reader to Section~\ref{sec: tree dec} for the precise definition, but note that any such tree decomposition gives rise a quasi-transitive action of $\AUT(G)$ on $T$. The concept of tree decomposition was initially introduced by Halin in 1976 \cite{MR444522} and later rediscovered by Robertson and Seymour \cite{MR742386}, playing a pivotal role in proving the Graph Minor Theorem. Tree decompositions that are invariant under some group action were perhaps first studied by Dunwoody and Kr{\"o}n \cite{MR3385727}, drawing inspiration from a method involving edge cuts introduced by Dunwoody in \cite{MR671142}. In our context, it is crucial that these tree decompositions are not only invariant under a quasi-transitive group of automorphisms, but also satisfy some additional properties -- see Corollary~\ref{cor:reducedtd} for the precise statement.
%\christoforos{Similar to what is written in your paper. We might need to change it a bit} 

One of the first instances in the study of self-avoiding walks on graphs with more than one end can be found in the work of Alm and Janson \cite{AlJa90}, where they specifically study the case of $2$-ended graphs. This particular case turns out to be more tractable due to the fact that the large-scale structure of the graph is similar to that of a line, that is, the tree $T$ of the tree decomposition is isomorphic to $\mathbb Z$. The remaining case of graphs with infinitely many ends proves to be more challenging, with the currently known results limited to either the properties of the SAW-generating functions away from its critical point \cite{saw-mcfl} or to graphs that satisfy additional geometric assumptions \cite{MR3584819,Li20}. Our main results answer two fundamental questions in the study of self-avoiding walks in the case of transitive graphs with infinitely many ends, and more generally, quasi-transitive graphs $G$ that satisfy the additional technical property that a quasi-transitive group of automorphisms $\Gamma$ of $G$ does not fix an end of the tree $T$; this is in particular true if $\AUT(G)$ does not fix an end of $G$.

Denote by $c_n$ the number of self-avoiding walks of length $n$ on $G$
started from some fixed vertex $o$. A fundamental quantity in the study of self-avoiding walk is the \textit{connective constant} $$\mu_w=\lim_{n\to \infty} c_n^{1/n},$$
where the fact that the limit exists and does not depend on the starting point follows from a standard subadditivity argument \cite{MR0091568}. The connective constant is not typically known or expected to take an interesting value, with a notable exception being the hexagonal lattice, where Duminil-Copin and Smirnov proved in a celebrated paper \cite{SAWHex} that $\mu_w=\sqrt{2+\sqrt{2}}$. For this reason, it is often more interesting to estimate the subexponential correction to $\mu_w^n$.

Our first result states that $c_n$ grows asymptotically like $\mu_w^n$. 
\begin{thm}\label{thm:c_n asym}
If a quasi transitive graph $G$ has infinitely many ends, and $\AUT(G)$ does not fix an end of $G$, then there exist $k\geq 1$, $a_1,a_2,\ldots,a_k> 0$ and $c>0$ such that for every $q\geq 1$ and $r=0,1,\ldots,k-1$ we have 
\begin{equation}\label{eq:c_n asym}
c_{qk+r}=a_r \mu_w^{qk+r} (1+O(e^{-c q})).    
\end{equation}
%where $n=qk+r$.
In particular, there exist $C_1,C_2>0$ such that 
\begin{equation}\label{eq: nu 1}
C_1\mu_w^n\leq c_n\leq C_2\mu_w^n
\end{equation}
for every $n\geq 1$, and we can choose $C_1=1$ if $G$ is a transitive graph.    
\end{thm}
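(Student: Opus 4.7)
My plan is to reduce Theorem~\ref{thm:c_n asym} to a renewal argument along a $\Z$-indexed family of separators coming from the $\Gamma$-invariant tree decomposition $\Tcal=(T,\Vcal)$ of Corollary~\ref{cor:reducedtd}. Since the quasi-transitive group $\Gamma\leq\AUT(G)$ does not fix an end of $T$, I would first produce a hyperbolic $\tau\in\Gamma$ acting as translation on a bi-infinite axis in $T$, fix a separator $S$ on this axis, and work with the family of disjoint finite separators $(\tau^j S)_{j\in\Z}$. These cut $G$ into a chain of slabs. Call a SAW a \emph{bridge of span $m$} if its first vertex lies in $S$, its last in $\tau^m S$, and it neither returns to $S$ nor prematurely hits $\tau^m S$; let $b_n$ be the total number of bridges of length $n$.

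The first step is the equivalence $c_n\asymp b_n\asymp\mu_w^n$ that yields the bounds~\eqref{eq: nu 1}. The ballisticity statement from the abstract implies that, except on an event of probability $e^{-\Omega(n)}$, a uniform SAW of length $n$ crosses $\Omega(n)$ of the separators $\tau^j S$; trimming the segments before the first and after the last crossing converts almost every such SAW into a bridge, yielding $c_n\leq C\, b_n(1+o(1))$. For the other direction, two bridges can be concatenated at a common translate of $S$ using a uniformly bounded ``matching patch'' chosen $\Gamma$-equivariantly to reconcile the finitely many orbits of separator vertices, giving the supermultiplicativity $b_m b_{m'}\leq K\, b_{m+m'+O(1)}$ and hence $b_n\geq c\,\mu_w^n$. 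For transitive $G$ the patch can be taken empty, recovering $C_1=1$.

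The sharp asymptotic~\eqref{eq:c_n asym} then follows from a renewal/transfer-matrix analysis. Every bridge decomposes uniquely into \emph{irreducible} bridges of span $1$, giving the generating-function identity $B(z)=I(z)/(1-I(z))$ with $B(z)=\sum b_n z^n$ and $I(z)=\sum i_n z^n$. The essential analytic input is that $I$ is analytic in a disk of radius strictly larger than $1/\mu_w$, which turns $1/\mu_w$ into a simple pole of $B$ and pushes the next singularity to distance at least $e^c/\mu_w$; standard meromorphic asymptotics then yield the error $O(e^{-cq})$. The period $k$ appears as $\gcd\{n:i_n>0\}$, and transferring the asymptotic from bridges to all SAWs by attaching short initial and terminal walks produces the residue-dependent constants $a_r$.

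I expect the technical heart to be the exponential gap in the radius of convergence of $I(z)$: ballisticity gives that typical SAWs have many renewals, but one additionally needs that failing to renew for $n$ steps is penalised by a factor $e^{-cn}$ relative to $\mu_w^n$, which typically requires feeding a pattern-insertion argument back into the ballisticity bound. A secondary issue is that in the quasi-transitive case the identity $B=I/(1-I)$ becomes a matrix identity indexed by $\Gamma$-orbits of separator vertices, so one must invoke Perron--Frobenius to identify $\mu_w$ as the dominant pole and to verify strict positivity of all $a_r$.
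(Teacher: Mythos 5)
The central gap is in the claim that ballisticity forces a typical SAW to cross $\Omega(n)$ of the separators $\tau^j S$ on a \emph{fixed} bi-infinite axis of $T$. When $T$ has infinitely many ends, this fails: each slab between consecutive $\tau^j S$ is itself an infinite quasi-transitive graph with infinitely many ends, so a walk can achieve displacement $\Theta(n)$ in $G$ while staying entirely within one slab, or by branching off into a completely different direction of $T$. There is no reason for a SAW to follow a pre-chosen axis, and indeed the fraction of length-$n$ SAWs that are bridges along a fixed axis should decay exponentially. Consequently the step $c_n\leq C\,b_n(1+o(1))$ is simply false, and the scalar renewal identity $B(z)=I(z)/(1-I(z))$ does not govern $c_n$. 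This is exactly the obstruction the paper flags when contrasting with Alm--Janson's two-ended case, where $T$ really is a line: with infinitely many ends, renewals must be allowed at \emph{arbitrary} arcs of $T$, turning the renewal process into a branching one. That is what the machinery of arrangements (Section~\ref{sec: conf arrang}) and the Jacobian matrix $\Jac(z)$ indexed by $\Gamma$-equivalence classes of configurations (Section~\ref{sec: system}) is for.

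Your instinct about the analytic endgame --- analyticity of the irreducible-block generating function in a disk of radius strictly larger than $1/\mu_w$, Perron--Frobenius for a simple dominant eigenvalue, simple poles on $|z|=R$, period $k$ from a gcd condition --- is on the right track and matches the structure of the paper's proof. But the correct objects are $c$-completions and the irreducible component $\Ical_p$ of persistent I-configurations, not bridges along a single axis. The paper shows that $\Ical_p$ is the unique strong component of the dependency digraph with $\lambda_\Kcal(R)=1$, by bounding the spectral radii of U-components and transient components via disjoint pair configurations (Lemmas~\ref{lem:pair-p}, \ref{lem:U-pair}, \ref{lem:Itrans-pair}), and then applies the pole analysis to $(I-\Jac_{\Ical}(z))^{-1}$. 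Also note that the paper does not derive Theorem~\ref{thm:c_n asym} from Theorem~\ref{thm:ballistic}: both follow independently from the same spectral input $\mu_p<\mu_w$ and $\lambda_{\Ical_p}(R)=1$, so treating ballisticity as a black box, besides not sufficing, is also not how the two results are related.
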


Our next theorem concerns the displacement of a typical self-avoiding walk of length $n$. We define $\mathbb{P}_n$ to be
the uniform measure on self-avoiding walks of length $n$ in $G$ starting from $o$, and we write $w=(w_0,w_1,\ldots,w_n)$ for a random
self-avoiding walk sampled from $\mathbb{P}_n$. We write $d_G(\cdot,\cdot)$ for the graph distance in $G$.

\begin{thm}\label{thm:ballistic}
If $G$ has infinitely many ends, and $\AUT(G)$ does not fix an end of $G$, then there exist constants $c,t>0$ such that $\mathbb{P}_n(d_G(w_0,w_n)\geq cn)\geq 1-e^{-tn}$ for every $n\geq 1$. 
\end{thm}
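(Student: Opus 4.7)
The plan is to reduce ballisticity in $G$ to a ballisticity statement about the projection of the self-avoiding walk onto the tree $T$ of the tree decomposition $\Tcal=(T,\Vcal)$ from Corollary~\ref{cor:reducedtd}, and then to establish this projected statement by a combinatorial counting argument that uses Theorem~\ref{thm:c_n asym} as the crucial input.

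First I would define a projection $\pi\colon V(G)\to V(T)$ by assigning each vertex $v\in V(G)$ to some node $t\in V(T)$ with $v\in V_t$. Because the separators $V_s\cap V_t$ between adjacent nodes of $T$ are finite and of uniformly bounded diameter in the reduced tree decomposition, any $G$-path from $u$ to $v$ must cross each separator lying on the $T$-path from $\pi(u)$ to $\pi(v)$. This yields a lower bound of the form $d_G(u,v)\geq \lambda\,d_T(\pi(u),\pi(v))-C$ for constants $\lambda,C>0$, so it suffices to prove
\[
\mathbb{P}_n\bigl(d_T(\pi(w_0),\pi(w_n))\geq c'n\bigr)\geq 1-e^{-tn}
\]
for some constants $c',t>0$.

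Next I would introduce \emph{tree-bridges}: SAWs whose tree-projection is a geodesic in $T$. Let $b_n$ denote the number of length-$n$ tree-bridges from $o$. Two tree-bridges emanating into disjoint subtrees of $T$ can be concatenated without creating self-intersections in $G$, since they are supported on disjoint parts of $\Vcal$ meeting only at a finite separator; this gives a super-multiplicative inequality $b_{n+m}\geq K^{-1}b_n b_m$, so $b_n^{1/n}$ converges to some $\mu_b\leq\mu_w$. The crux of the argument is to show $\mu_b=\mu_w$. Here the no-fixed-end hypothesis on $\AUT(G)$ enters: it guarantees elements of $\Gamma$ acting hyperbolically on $T$, and such elements can be used to "straighten" an arbitrary SAW into a tree-bridge at bounded multiplicative cost, giving $b_n\geq c\mu_w^n$ and hence $b_n\asymp \mu_w^n$ in view of the upper bound $b_n\leq c_n\leq C_2\mu_w^n$ from Theorem~\ref{thm:c_n asym}.

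Finally, to upgrade this to exponential concentration, I would canonically decompose each SAW into maximal tree-bridge segments separated by tree-backtrack segments. A renewal-style argument, combined with a Kesten-type pattern theorem made quantitative via $c_n\leq C_2\mu_w^n$, then yields that the total forward tree-displacement is at least $c'n$ except on an event of $\mathbb{P}_n$-measure at most $e^{-tn}$. The main obstacle throughout is ensuring that the various surgeries — concatenation of tree-bridges, straightening of SAWs, and replacement of backtracks — respect self-avoidance in $G$. The tree structure of the decomposition affords the necessary disjoint branches for these surgeries, and the no-fixed-end assumption on $\AUT(G)$ is precisely what provides enough automorphisms of $T$ to redirect branches freely at each scale, preventing any end from acting as a "sink" that would obstruct the straightening step and force $\mu_b<\mu_w$.
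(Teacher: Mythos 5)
Your high-level plan is reasonable and, in outline, is the sort of bridge decomposition that is classical in SAW theory; it differs from the paper's route, which shows $\mu_p<\mu_w$ (via analyticity of $F_{\SAR}(z)$ at the critical point $z=R=1/\mu_w$, Propositions~\ref{pro:Fz-analytic} and \ref{pro:Ip R}) and then invokes \cite[Theorem~4]{P19} to obtain ballisticity as a black box. Your first reduction --- from $G$-displacement to tree-displacement via the separators on the $T$-geodesic --- is sound (this is essentially Corollary~\ref{cor:finitesubtree}, which guarantees the separators along a long $T$-path are eventually disjoint, so crossing them costs length).

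The gap is in the middle step, and it is not cosmetic. You assert that hyperbolic automorphisms can ``straighten an arbitrary SAW into a tree-bridge at bounded multiplicative cost,'' concluding $b_n\geq c\mu_w^n$. But a single surgery does not straighten an arbitrary SAW: a walk whose tree-projection backtracks $\varepsilon n$ times requires $\Theta(n)$ redirections, and each redirection costs a bounded-size choice, so the ``bounded multiplicative cost'' becomes $e^{\Theta(n)}$ and the inequality you want does not follow. What the argument actually requires is not a cheap map from SAWs to bridges, but the \emph{inverse} estimate: that SAWs which backtrack a positive fraction of the time are exponentially rare. That is precisely the content the paper extracts, via the reflection-extension surgery of Construction~\ref{cons:refl-ext}, Lemma~\ref{lem:onepart}, and the spectral-radius bound $\lambda_\Ucal(R)<1$ (Lemmas~\ref{lem:pair-p}, \ref{lem:U-pair}), all of which are invoked in Propositions~\ref{pro:P-analytic} and \ref{pro:Fz-analytic}. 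Nothing in your proposal substitutes for this estimate. Moreover, even granting $b_n\asymp\mu_w^n$, your final step --- ``a renewal-style argument combined with a Kesten-type pattern theorem'' --- is where the exponential concentration would actually come from, and you give no construction. The paper delegates exactly this step to \cite{P19}, which proves that $\mu_p<\mu_w$ implies the exponential tail; reproving it here would need to be made concrete, and without the exponential-rarity-of-backtracks input, $b_n\asymp\mu_w^n$ alone only gives a \emph{positive} (not $1-e^{-tn}$) probability that a uniform length-$n$ SAW is a tree-bridge. In short: the reduction to the tree is fine, the bridge framework is a legitimate alternative route, but the theorem's real content is the exponential decay of backtracking, and that is asserted rather than proved.
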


It turns out that if $\AUT(G)$ fixes an end of $G$ then it is non-unimodular. To see this, let $x$ and $y$ be two vertices where $y$ lies in some small separator separating $x$ from the fixed end $\omega$, but far away from $x$. Then the size of the orbit of $y$ under $\AUT(G)_x$ is bounded by some absolute constant, but the size of the orbit of $x$ under $\AUT(G)_y$ grows as we increase the distance between $x$ and $y$. %\christoforos{We need to either prove it or refer to something}. 
Transitive graphs whose automorphism group admits a non-unimodular transitive subgroup were treated by Hutchcroft \cite{Hutch2019}, who proved that \eqref{eq: nu 1} and the conclusion of Theorem~\ref{thm:ballistic} hold in this context. Thus we obtain the following result.

\begin{thm}\label{thm:transitive}
Let $G$ be a transitive graph with infinitely many ends. Then there exists constant $C>0$ such that  
\[
\mu_w^n \leq c_n \leq C\mu_w^n.
\] 
Moreover, there exist constants $c,t>0$ such that $\mathbb{P}_n(d_G(w_0,w_n)\geq cn)\geq 1-e^{-tn}$ for every $n\geq 1$.
\end{thm}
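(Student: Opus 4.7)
My plan is to split into two cases according to whether $\AUT(G)$ fixes an end of $G$. The case where no end is fixed reduces directly to Theorems~\ref{thm:c_n asym} and \ref{thm:ballistic}, while the remaining case is handled by first verifying non-unimodularity of $\AUT(G)$ and then appealing to Hutchcroft's theorem from \cite{Hutch2019}.

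In the first case, since $G$ is transitive and $\AUT(G)$ does not fix an end, Theorems~\ref{thm:c_n asym} and \ref{thm:ballistic} apply directly with $\Gamma=\AUT(G)$, giving both the upper bound $c_n\leq C_2\mu_w^n$ and the ballisticity estimate. The matching lower bound $c_n\geq \mu_w^n$ in the transitive setting is standard: every self-avoiding walk of length $m+n$ starting at $o$ decomposes into a self-avoiding walk of length $m$ from $o$ followed by one of length $n$ from its endpoint, and transitivity implies that the number of self-avoiding walks of length $n$ from any vertex equals $c_n$, giving $c_{m+n}\leq c_m c_n$; Fekete's lemma then yields $c_n\geq \mu_w^n$.

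In the second case, where $\AUT(G)$ fixes an end $\omega$, I would verify the claim sketched in the remark preceding the theorem, namely that $\AUT(G)$ is non-unimodular. Using the invariant tree decomposition $\Tcal=(T,\Vcal)$ from Section~\ref{sec: tree dec}, I would fix a vertex $x$ and, for each large $n$, select a vertex $y_n$ at graph distance about $n$ from $x$ lying in a separator $S_n$ of uniformly bounded size (bounded adhesion, Corollary~\ref{cor:reducedtd}) that separates $x$ from $\omega$. Since $\AUT(G)_x$ setwise stabilises $S_n$, the orbit $\AUT(G)_x\cdot y_n$ has size bounded by $\sup_n|S_n|$. On the other hand, a loxodromic automorphism translating along the ray in $T$ toward $\omega$ produces arbitrarily many translates of $x$ on the component of $G\setminus S_n$ opposite to $\omega$; combined with the setwise stabiliser of $S_n$, these witness that $|\AUT(G)_{y_n}\cdot x|$ is unbounded in $n$. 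This violates the identity $|\AUT(G)_x\cdot y|=|\AUT(G)_y\cdot x|$ characterising unimodularity of transitive actions, so $\AUT(G)$ is non-unimodular, and Hutchcroft's theorem delivers both conclusions.

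The main obstacle is the structural input to the non-unimodularity argument: producing the separators $S_n$ of uniformly bounded size at arbitrarily large distance from $x$ along a ray toward $\omega$, and exhibiting a loxodromic automorphism translating toward $\omega$. Both should follow from the invariant tree decomposition, where fixing the end $\omega$ of $G$ translates into fixing an end of $T$; the details should be extracted from the structure theory in Section~\ref{sec: tree dec}. Once this is in place, the unimodularity criterion is purely formal, and the rest of the theorem follows by citation.
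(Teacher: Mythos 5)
Your proposal takes the same route as the paper: split on whether $\AUT(G)$ fixes an end, invoke Theorems~\ref{thm:c_n asym} and~\ref{thm:ballistic} when it does not (with the lower bound $c_n\geq\mu_w^n$ coming from submultiplicativity in the transitive case, exactly as in the statement of Theorem~\ref{thm:c_n asym} with $C_1=1$), and when it does, deduce non-unimodularity from the tree decomposition and cite Hutchcroft. The paper presents the non-unimodularity step only as a two-sentence sketch in the paragraph preceding the theorem; your attempt to fill in the details via setwise stabilisation of the adhesion sets along the ray toward $\omega$ and a loxodromic translation is consistent with that sketch, and you correctly flag that the orbit-growth half requires the structure theory from Section~\ref{sec: tree dec} to be made rigorous.
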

\subsection{Tools and proof methods}

The proofs of Theorems~\ref{thm:c_n asym} and \ref{thm:ballistic} follow an approach similar to that of Alm and Janson in \cite{AlJa90}. Leveraging the tree decomposition, we decompose the graph into finite or infinite
parts, and analyse the restrictions of self-avoiding walks to these parts. These restrictions give rise to the notions of \textit{configurations}, \textit{shapes}, and \textit{arrangements}. Similar notions were also introduced in \cite{saw-mcfl}, but as we are working in a greater generality (in particular, parts can be infinite in our setting), some modifications in the definitions are necessary. Intuitively, each shape describes the restriction of a self-avoiding walk to a part of the tree decomposition, and each configuration describes how a self-avoiding walk crosses between two neighbouring parts. Arrangements are collections of compatible shapes and configurations, that is shapes and configurations whose partial descriptions of a self-avoiding walk fit together; see Figure \ref{fig:arrangement-sketch} for an example. 

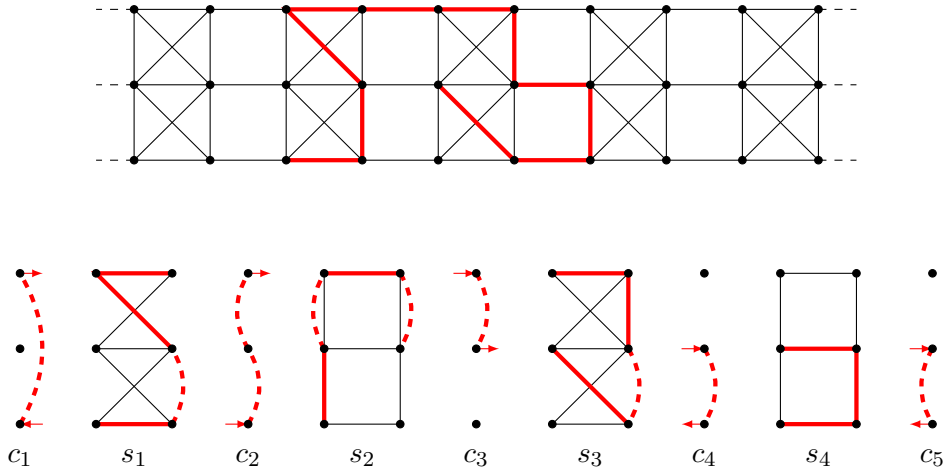
\begin{figure}
    \centering
    \colorlet{walkcol}{red} %change to black if you want walks to be black
    \begin{tikzpicture}[vertex/.style={inner sep=1pt,circle,draw,fill}]
        
        \foreach \i in {0,...,9}
            \draw (\i,0) -- (\i,2);
        \foreach \i in {0,2,...,8}
        {
            \foreach \j in {0,1}
            {
                \draw (\i,\j)--(\i+1,\j+1);
                \draw (\i+1,\j)--(\i,\j+1);
            }
        }
        \foreach \j in {0,...,2}
        {
            \draw[dashed] (0,\j)--(-.5,\j);
            \draw[dashed] (9,\j)--(9.5,\j);
            \draw (0,\j)--(9,\j);
        }

        \draw[ultra thick,walkcol] (2,0)--(3,0)--(3,1)--(2,2)--(5,2)--(5,1)--(6,1)--(6,0)--(5,0)--(4,1);

        \foreach \j in {0,...,2}
            \foreach \i in {0,...,9}
                \node[vertex] at (\i,\j) {};

    \begin{scope}[yshift=-3.5cm,xshift=-1.5cm]
        \foreach \j in {0,...,2}
        {
            \foreach \i in {1,4,7,10}
                \draw (\i,\j)--(\i+1,\j);
        }
        \foreach \j in {0,1}
        {
            \foreach \i in {1,7}
            {
                \draw (\i,\j)--(\i+1,\j+1);
                \draw (\i+1,\j)--(\i,\j+1);
            }
            \foreach \i in {4,5,10,11}
                \draw (\i,\j)--(\i,\j+1);
        }

        \draw[-latex,walkcol] (0.3,0)--(0,0);
        \draw[-latex,walkcol] (0,2)--(0.3,2);
        \draw[ultra thick,walkcol,dashed] (0,0) to[bend right] (0,2);
        
        \draw[ultra thick,walkcol] (1,0)--(2,0);
        \draw[ultra thick,walkcol,dashed] (2,0) to[ bend right] (2,1);
        \draw[ultra thick,walkcol] (2,1)--(1,2)--(2,2);

        \draw[-latex,walkcol] (2.7,0)--(3,0);
        \draw[-latex,walkcol] (3,2)--(3.3,2);
        \draw[ultra thick,dashed,walkcol] (3,0) to[bend right] (3,1);
        \draw[ultra thick,dashed,walkcol] (3,1) to[bend left] (3,2);

        \draw[ultra thick,walkcol] (4,0)--(4,1);
        \draw[ultra thick,dashed,walkcol] (4,1) to[bend left] (4,2);
        \draw[ultra thick,walkcol] (4,2)--(5,2);
        \draw[ultra thick,dashed,walkcol] (5,2) to[bend left] (5,1);

        \draw[-latex,walkcol] (5.7,2)--(6,2);
        \draw[-latex,walkcol] (6,1)--(6.3,1);
        \draw[ultra thick,dashed,walkcol] (6,2) to[bend left] (6,1);

        \draw[ultra thick,walkcol] (7,2)--(8,2)--(8,1);
        \draw[ultra thick,dashed,walkcol] (8,1) to[bend left] (8,0);
        \draw[ultra thick,walkcol] (8,0)--(7,1);

        \draw[-latex,walkcol] (8.7,1)--(9,1);
        \draw[-latex,walkcol] (9,0)--(8.7,0);
        \draw[ultra thick,dashed,walkcol] (9,1) to[bend left] (9,0);

        \draw[ultra thick,walkcol] (10,1)--(11,1)--(11,0)--(10,0);

        \draw[-latex,walkcol] (11.7,1)--(12,1);
        \draw[-latex,walkcol] (12,0)--(11.7,0);
        \draw[ultra thick,walkcol,dashed] (12,1) to[bend right] (12,0);

        \node[anchor=north] at (0,-.2) {$c_1$};
        \node[anchor=north] at (3,-.2) {$c_2$};
        \node[anchor=north] at (6,-.2) {$c_3$};
        \node[anchor=north] at (9,-.2) {$c_4$};
        \node[anchor=north] at (12,-.2) {$c_5$};

        \node[anchor=north] at (1.5,-.2) {$s_1$};
        \node[anchor=north] at (4.5,-.2) {$s_2$};
        \node[anchor=north] at (7.5,-.2) {$s_3$};
        \node[anchor=north] at (10.5,-.2) {$s_4$};
         \foreach \j in {0,...,2}
            \foreach \i in {0,...,12}
                \node[vertex] at (\i,\j) {};
    \end{scope}
    \end{tikzpicture}
    \caption{A self avoiding walk and its decomposition into configurations and shapes. Note that every edge of the graph appears in exactly one part. Dashed edges correspond to detours whose edges do not lie in this part; configurations also keep track which side the detour lies on. The little arrows pointing in and out of every configuration indicate the side on which the first and last edge of the self-avoiding walk lie, respectively.}
    \label{fig:arrangement-sketch}
\end{figure}

Precise definitions are introduced in Section~\ref{sec: conf arrang}; for now we point out that there is a natural bijection between arrangements and self-avoiding walks and hence we may study arrangements in order to understand self-avoiding walks. We note that even though self-avoiding walks are highly non-Markovian, arrangements satisfy a spatial Markov property in the sense that compatibility of shapes and configurations is a local property; this means that in order to study arrangements, we do not have to consider their global structures, but may focus on the individual configurations and shapes. This observation is only useful for self-avoiding walks which intersect more than one part, as otherwise the self-avoiding walk is the same as its restriction to a part and we have not gained any insight. However, it turns out that the probability that a self-avoiding walk stays within one part is exponentially small -- see Lemma~\ref{lem:onepart}. 

The spatial Markov property of configurations gives rise to a natural recursive structure which enables us to obtain a system of equations linking the partition functions of arrangements `rooted at' given configurations.  This system can be encoded using a Jacobian matrix $\Jac(z)$, thereby reducing the problem of determining the asymptotic behaviour of self-avoiding walk to identifying which irreducible components of $\Jac(z)$ have spectral radius equal to $1$ at the critical point $z=1/\mu_w$. 

We will distinguish between irreducible components rooted at \emph{I-configurations} and \emph{U-configurations}. Precise definitions will be given later, but intuitively, an I-configuration means that the corresponding self-avoiding walk starts and ends on different sides, and a U-configuration between two parts means that the corresponding self-avoiding walk starts and ends on the same side. For instance, configurations $c_2$ and $c_3$ in the example in Figure \ref{fig:arrangement-sketch} are I-configurations, and configurations $c_1$, $c_4$, and $c_5$ are U-configurations.

As a key step in our approach, we establish that the spectral radius of each component of U-configurations is strictly smaller than $1$ at the critical point. After developing a geometric construction that allows us to transform walks, we employ an argument in the spirit of Kesten's pattern lemma \cite{KestenI} to show that self-avoiding walks consisting only of U-configurations are exponentially rare. In particular, this holds for \textit{self-avoiding polygons}, walks that start and end at the same vertex and are otherwise self-avoiding. The latter is known to imply that self-avoiding walk is ballistic \cite{P19}, and Theorem~\ref{thm:ballistic} follows.

To deduce Theorem~\ref{thm:c_n asym}, we establish that only one irreducible component possesses a spectral radius equal to $1$ at $z=1/\mu_w$, namely the \textit{persistent} I-configurations. Leveraging the Perron-Frobenius theorem, we conclude that the spectral radius is a simple eigenvalue, allowing us to deduce that the self-avoiding walk-generating function has only simple poles. This, in turn, implies that $c_n$ grows asymptotically like $\mu_w^n$.

\subsection{Paper organisation}

In Section \ref{sec: defs} we define the model and gather some relevant definitions. In Sections \ref{sec: tree dec} and \ref{sec: conf arrang} we introduce the notions of tree decompositions, configurations, and arrangements and prove some basic facts about them. In particular, we give a bijection between self-avoiding walks and certain configurations which will allow us to translate between the two objects. Next, in Section \ref{sec: system} we introduce the Jacobian matrix $\Jac(z)$ and we establish the connection with the spectral radius of its irreducible components. Subsequently, in Section~\ref{sec: analytic} we show the analyticity of $\Jac(z)$ and the partition functions of U-configurations at the critical point. In Section~\ref{sec: spectral} we prove that the spectral radius of several irreducible components is strictly smaller than $1$ at the critical point. Finally, in Section~\ref{sec: proofs} we finish the proof of Theorems~\ref{thm:c_n asym} and \ref{thm:ballistic}.

\section{Definitions and basic background}\label{sec: defs}

In this section, we gather some definitions that will be used throughout the paper. Most of our notation is standard, but there is some new notation as well; we hence encourage even readers familiar with graph theory to skim this section.

\subsection{Graph-theoretic definitions}
A \emph{digraph} $G$ consists of a set of \emph{vertices} $V(G)$ and a set of \emph{arcs} $E(G)$. Arcs are considered to be \emph{oriented}, so every arc $e$ is assigned an \emph{initial vertex} $e^- \in V(G)$ and a \emph{terminal vertex} $e^+ \in V(G)$, which are different vertices of $G$; note that we allow different edges to have the same initial and terminal vertex. A \emph{graph} is a digraph $G$ together with a bijection $\bar{} \; \colon E(G) \to E(G)$ such that $\bar{e}^+=e^-$, $\bar{e}^-=e^+$
and $\bar{\bar{e}}=e$. This means that arcs appear in pairs $e, \bar{e}$ having the same endpoints but different direction. We call such an unordered pair $\{e,\bar e\}$ an \emph{edge}. 
%The set of edges of the graph $G$ is denoted by $E(G)$. 
%FL: I think we only use E(G) for arcs
A digraph is called \emph{simple} if it contains no arcs which are different but have the same initial and terminal vertex. In this case we sometimes abuse notation and write $e=e^-e^+$. For a vertex $v$ of $G$, we denote by $\Eout(v)$ the set of all arcs with initial vertex $v$ and by $\deg(v)=\abs{\Eout(v)}$ the \emph{degree} of $v$, that is, $\deg(v)$ is the number of outgoing arcs of $v$. A digraph is called \emph{locally finite}, if all vertices have finite degree. 

\subsection{Definition of the model}

A \emph{walk} in a digraph is an alternating sequence $p=(v_0,e_1,v_1, \dots, e_n,v_n)$ of vertices $v_i \in V(G)$ and arcs $e_i \in E(G)$ such that $e_{i}^-=v_{i-1}$ and $e_i^+=v_i$ for every $i \in [n]$. We point out that we cannot define a walk purely as a sequence of vertices because there may be more than one edge connecting the same pair of vertices. The \emph{length} of the walk $p=(v_0,e_1,v_1, \dots, e_n,v_n)$, denoted by $\abs{p}$, is the number $n$ of arcs in $p$. Its \emph{initial vertex} is $p^-=v_0$ and its \emph{terminal vertex} is $p^+=v_n$. For convenience, we include in our definition the \emph{trivial walk} $(v)$ of length $0$, starting (and ending) at a vertex $v$ and also the \emph{empty walk} $\emptyset$ consisting of no vertices and no arcs.

A \emph{self avoiding walk} (or \emph{SAW}) is a walk $p$ whose vertices are pairwise different. We write $c_n(x)$ for the number of self-avoiding walks of length $n$ with initial vertex $x$, and $c_n$ for $c_n(o)$, where $o$ is some fixed root vertex in $G$. We define \[\mu_w(x)=\limsup_{n\to \infty}c_n(x)^{1/n}.\] We recall that on a quasi-transitive graph, the limit exists and does not depend on the base point \cite{MR0091568}. In this case, we will simply write $\mu_w$. 

A \emph{self-avoiding polygon} is a walk $p=(v_0,e_1,v_1, \dots, e_n,v_n)$ with $v_0=v_n$ and with $v_i\neq v_j$ for distinct pairs $i, j$ other than
the pair $0,n$. As in \cite{P19}, we identify two self-avoiding polygons which share the same set of edges. We write $p_n(x)$ for the number of self-avoiding polygons of length $n$ with initial vertex $x$. We also write $p_n$ for the number of self-avoiding polygons of length $n$ starting at the fixed root vertex $o$. We define
\[\mu_p(x)=\limsup_{n\to \infty}p_n(x)^{1/n},\]
and
\[\mu_p=\limsup_{n\to \infty}\left(\sup_{x\in V}p_n(x)\right)^{1/n}.\]

In certain cases, it is more convenient to work with self-avoiding walks that start at a vertex $x$ and end at a neighbour vertex of $x$. We call these walks \emph{self-avoiding returns}.

\subsection{Graph distance and connectedness}

Let $G$ be a digraph and let $u$ and $v$ be vertices of $G$. The distance $d_G(u,v)$ from $u$ to $v$ is the length of the shortest walk with initial vertex $u$ and terminal vertex $v$. If no such walk exists, $d_G(u,v)$ is infinite. We extend this notation to subsets of $V(G)$ in the obvious way: the distance of two sets is the minimal distance between elements of these sets. We also extend this definition to arcs of $G$. For technical reasons we prefer a pair of arcs to have distance 0 if and only if the two arcs have the same set of endpoints. Thus for two arcs $e,f \in E(G)$ we define 
\[
d_G(e,f) = 
\begin{cases}
0 \quad &\text{if } \{e^-,e^+\} = \{f^-, f^+\} \text{ (as sets),}\\
d_G(\{e^-,e^+\},\{f^-, f^+\})+1 \quad &\text{otherwise}.
\end{cases}
\]
Observe that if $G$ is a graph, the distance is symmetric on both $V(G)$ and $E(G)$, however it is only a metric on $V(G)$ because an arc and its inverse have distance $0$. 
%\christoforos{Without the $+1$ it is not a metric on $E(G)$ but with the $+1$ I think it is}

A digraph $G$ is \emph{strongly connected} if for any two vertices $u,v$ there is a walk $p$ in $G$ starting at $u$ and ending at $v$. Note that this implies that any two vertices in the digraph are connected by walks in both directions.
A \emph{strong component} of $G$ is a maximal strongly connected subgraph. If $G$ is a graph, we usually omit the word ``strong'' in both notations. 

For $K \subseteq V(G)$ we denote by $G-K$ the subgraph obtained from $G$ by removing $K$ and all arcs incident to $K$. If removing $K$ disconnects $G$, then $K$ is called a \emph{separating set}. Furthermore, we denote by $G[K]$ the subgraph of $G$ \emph{induced} by $K$, that is the graph $G-(V(G)\setminus K)$.

A walk is \emph{closed} if its initial and terminal vertex coincide. A closed walk $p$ of length at least $3$ is called a \emph{cycle} if all vertices except the initial and terminal vertex are pairwise different; note that closed walks of length $2$ are not considered cycles in this paper even though they use different arcs for both directions. A \emph{tree} is a connected and cycle free graph which does not contain any cycle. A tree consisting only of vertices of degree at most 2 is called a \emph{path}. We point out that unlike walks, we consider paths to be graphs and therefore a path has no `direction'; a finite path can be seen as the support of a SAW. Given two disjoint subsets $A$ and $B$ of vertices of a graph $G$, an \emph{$A$--$B$-path} on $G$ is a finite path intersecting $A$ and $B$ only in its two endpoints.

\subsection{Surgery on walks and multi-walks}

Let $p$ be a walk. For two vertices $u$ and $v$ of $p$ we write $u p v$ for the maximal sub-walk of $p$ starting at $u$ and ending at $v$. If $u=v_0$ or $v=v_n$ we omit the corresponding vertex and denote the sub-walk by $p v$ or $u p$, respectively. We extend this notation even further. For walks $p_1, \dots, p_n$ and vertices $v_0, \dots, v_n$ in the respective walks, we denote the concatenation $(v_0 p_1 v_1)(v_1 p_2 v_2) \dots (v_{n-1} p_n v_n)$ of the sub-walks $v_{i-1} p_i v_i$ by $v_0 p_1 v_1 p_2 \dots p_n v_n$. If the terminal vertex $v$ of $p_1$ coincides with the initial vertex of $p_2$, we write $p_1p_2$ instead of $p_1vp_2$, and similarly for concatenations of multiple walks. If $e$ is an arc connecting the terminal vertex $v_1$ of $p_1$ to the initial vertex $v_2$ of $p_2$, then we write $p_1 e p_2$ instead of $p_1 v_1 (v_1,e,v_2) v_2 p_2$, and similarly for concatenations with more than two parts.

A \emph{multi-walk} $p$ is a sequence of vertices and arcs obtained by stringing together the sequences of vertices and arcs corresponding to walks $p_1, \dots, p_k$; the $p_i$ are called the \emph{walk components} of $p$. In other words, a multi-walk is a sequence of vertices and arcs, such that every arc in the sequence is preceded by its initial vertex and succeeded by its terminal vertex. Note that each of the walks $p_i$ is a sequence starting and ending with a vertex, so that the final vertex of $p_i$ and the initial vertex of $p_{i+1}$ will appear next to each other in the sequence $p$. In fact every appearance of two consecutive vertices in a multi-walk always marks the start of a new walk component.

\subsection{Rays and ends}
A \emph{ray} is a one-way infinite path and a \emph{double ray} is a two-way infinite path. 

Two rays in a graph $G$ are called \emph{equivalent}, if for every finite set $K \subseteq V(G)$ they end up in the same component of $G-K$, that is, all but finitely many of their vertices are contained in that component. An \emph{end} of $G$ is an equivalence class of rays with respect to this equivalence relation. Note that for every finite set $K \subseteq V(G)$ and every end $\omega$, there is a component $H$ of $G-K$ such that all but finitely many vertices of each ray in $\omega$ are contained in $H$; in such a case we say that $\omega$ \emph{lies in} $H$. Two ends $\omega_1$ and $\omega_2$ of a graph $G$ are \emph{separated} by $K$ if they lie in different components of $G-K$. Halin~\cite{MR190031} showed that an end containing arbitrarily many disjoint rays must contain an infinite family of disjoint rays, hence the maximum number of disjoint rays contained in an end $\omega$ is well defined and lies in $\N \cup \{\infty\}$. This number is called the \emph{size} of the end $\omega$. An end of finite size is called \emph{thin}, and an end of infinite size is called \emph{thick}.

\subsection{Graph automorphisms}

An \emph{automorphism} $\gamma$ of a graph $G$ is a permutation of $V(G)$ and $E(G)$ preserving the neighbourhood relation in $G$. More precisely, for every arc $e \in E(G)$ it satisfies $(\gamma e)^- = \gamma e^-$ and $\gamma \bar e = \overbar{\gamma e}$. The set of all automorphisms of $G$ forms a group which is called the \emph{automorphism group} of $G$ and denoted by $\AUT(G)$. For a subgroup $\Gamma \subseteq \AUT(G)$ we can define an equivalence relation on $V(G)$ by $u \sim v \iff \exists \;\gamma \in \Gamma\colon u = \gamma v$. The equivalence classes with respect to this relation are called \emph{orbits}, the orbit containing a vertex $v$ is denoted by $\Gamma v$. We say that $\Gamma$ acts \emph{(vertex transitively}, or simply \emph{transitively}, if there is exactly one orbit, and that it acts \emph{quasi-transitively}, if there are only finitely many orbits. In this case the graph $G$ is also called \emph{(quasi-)transitive}. Similarly, we say that $\Gamma$ acts \emph{arc transitively}, if the action of $\Gamma$ on $E(G)$ has a single orbit and \emph{edge transitively}, if it admits a single orbit on the set of edges. A subgraph $H$ of $G$ is called $\gamma$-invariant for $\gamma \in \AUT(G)$ if $\gamma(H)=H$.

It is well known, that if an infinite, locally finite, connected graph is quasi-transitive, then it has either one, two, or infinitely many ends. If it has one end, this end is thick. If it has two ends, both are thin and must have the same size. Finally, if it has infinitely many ends, then it must have thin ends. These and many more results were given by Halin in~\cite{MR335368}.

\subsection{Open subgraphs} 
It will be convenient for us to consider subgraphs which are allowed to contain edges of which they do not contain both endpoints.

An \emph{open subgraph} $H$ of $G$ consists of a vertex set $V(H) \subseteq V(G)$ and the set $E(H)=\{e \in E(G) \mid e^- \in V(H)\}$ of all outgoing arcs of $V(H)$ in $G$. Note that $H$ is uniquely determined by its vertex set $V(H)$; we say that $H$ is the open subgraph \emph{induced} by $V(H)$. We call $H$ finite, if $V(H)$ is finite. Arcs $e \in E(H)$ such that $e^+ \in V(G) \setminus V(H)$ are called \emph{boundary arcs} of $H$, and the set of all boundary arcs is denoted $\partial E(H)$. Observe that an open subgraph is a graph if and only if the set of boundary arcs is empty. The main idea behind the definition of open subgraphs is the following: for every partition of the vertex set $V(G)$, the graph $G$ is the disjoint union of the open subgraphs of $G$ induced by the sets in the partition. 

An open subgraph $H$ of $G$ is called \emph{open subtree} if $G[V(H)]$ is a tree and it is called \emph{open path} if $G[V(H)]$ is a path. The \emph{leaves} of an open subtree are the leaves of the underlying tree; in other words, when talking about the leaves of an open subtree, we ignore all boundary edges. For convenience, we let the length of an open path be the number of vertices it contains. Note that this is not the length of the underlying path---intuitively we may imagine that an open path contains an additional `half edge' in the for of an outgoing arc at each of its endpoints. An \emph{open star} $S$ in $G$ is an open path of length $1$; we point out that the underlying tree of an open star is not a star, but a single vertex. It is induced by a single vertex $s$ of $G$, called the \emph{center} of $S$ and we denote it by $\sta_G(s)$ or $\sta(s)$ if the graph is clear. Additionally, for an arc $e$ we write $\sta(e)$ for the open subtree with vertex set $\{e^-,e^+\}$. If $T$ is a tree, the \emph{(open) cone} $K_e$ in $T$ rooted at the arc $e$ consists of $e$ and the component of $T-\{e,\bar e\}$ containing the initial vertex $e^-$ of $e$.

Let $T$ be a tree and $e,f$ be two different arcs of $T$. We call $e$ and $f$ \emph{linkable} if they point away from each other, that is, they are not contained in the unique path in $T$ connecting $e^-$ and $f^-$. If $e$ and $f$ are linkable, the (unique) \emph{link} $W$ connecting $e$ and $f$ is the open path induced by this path: it connects $e^-$ and $f^-$, and $e$ and $f$ are boundary arcs at the two endpoints of $W$. Observe that whenever $e$ and $f$ are linkable, the length of their link $W$ is $d_T(e,f) = d_T(e^-,f^-)+1$.

\section{Tree decompositions}\label{sec: tree dec}

Recall that our general proof strategy is to decompose SAWs into configurations and shapes. In order to be able to do this, we first need to decompose the graph appropriately. To this end, the goal of the current section is to show that every quasi-transitive graph with more than one end admits a tree decomposition with certain `nice properties'. We start by giving some definitions. 

A \emph{tree decomposition} of a graph $G$ is a pair $\Tcal=(T,\Vcal)$, consisting of a tree $T$ and a function $\Vcal: V(T) \rightarrow 2^{V(G)}$ assigning a subset of $V(G)$ to every vertex of $T$, such that the following three conditions are satisfied:
\begin{enumerate}[label=(T\arabic*)]
\item \label{itm:td-coververtices}
$V(G)= \bigcup_{t \in V(T)} \Vcal(t)$.
\item \label{itm:td-coveredges}
For every arc $e \in E(G)$ there is a vertex $t \in V(T)$ such that $\Vcal(t)$ contains both endpoints of $e$.
\item \label{itm:td-nocrossedge}
$\Vcal(s) \cap \Vcal(t) \subseteq \Vcal(r)$ for every vertex $r$ on the unique $s$--$t$-path in $T$.
\end{enumerate}
For every $t \in V(t)$, the set $\Vcal(t)$ is called the \emph{part} of $t$. For an arc $e=st$ of $T$, the intersection $\Vcal(e)=\Vcal(s,t):= \Vcal(s) \cap \Vcal(t)$ is called the \emph{adhesion set} of $e$. Note that by definition $\Vcal(e) = \Vcal(\bar e)$.%, thus we define the adhesion set of the edge $f = \{e,\bar e\}$ by $\Vcal(f) = \Vcal(e)$. %CL: I don't think we need this.

A \emph{separation} of a graph $G$ is a pair $(A,B)$ of vertex sets such that $G[A] \cup G[B]=G$, in other words $A \cup B = V(G)$, and there are no edges between $A \setminus B$ and $B \setminus A$. The intersection $A \cap B$ is called the \emph{separator}, and $|A \cap B|$ is called the \emph{order} of $(A,B)$. A separation $(A,B)$ of finite order is said to \emph{separate} two ends $\omega_1$ and $\omega_2$ if one of them lies in a component of $B \setminus A$, and the other one lies in a component of $A \setminus B$. We say that $(A,B)$ \emph{minimally separates} $\omega_1$ and $\omega_2$ if $(A,B)$ has the minimal order among all separations separating $\omega_1$ and $\omega_2$. Note that every arc $a$ of the decomposition tree $T$ corresponds to a separation of $G$ with separator $\Vcal(a)$: we call 

\[\left(\bigcup_{t \in V(K_a)} \Vcal(t),\bigcup_{t \in V(K_{\bar{a}})} \Vcal(t)\right)\] 
the separation of $G$ \emph{induced by} $a$. Clearly, if the separation induced by $a$ is $(A,B)$, then the separation induced by $\bar a$ is $(B,A)$.

A tree decomposition $(T,\Vcal)$ of $G$ is called \emph{$\Gamma$-invariant} for a group $\Gamma \leq \AUT(G)$, if every $\gamma \in \Gamma$ maps parts onto parts and thereby induces an automorphism of $T$ (which we denote by $\gamma$ as well); this clearly induces an action of $\Gamma$ on $T$. 

The following result which guarantees the existence of a $\Gamma$-invariant tree decomposition is at the heart of our construction. It can be seen as a special case of \cite[Theorem 8.1]{MR3385727}; simply note that the separations $(A,B)$ with $|A\cap B| = k$ and infinite sides $A$ and $B$ correspond to a cut system in the sense of \cite{MR3385727}, and that the $G$-tree $T(\Ccal)$ can be interpreted as a tree decomposition whose induced separations are all contained in this cut system.

\begin{thm}
\label{thm:dunwoodykroen}
Let $\Gamma$ be a group acting quasi-transitively on a locally finite graph $G$. Assume that $G$ has more than one end, and let $k$ be the minimal integer such that there are two ends which can be separated by removing $k$ vertices. Then there is a $\Gamma$-invariant tree decomposition of $G$ in which every induced separation has order $k$ and separates two ends.
\end{thm}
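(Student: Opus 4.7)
The plan is to derive Theorem~\ref{thm:dunwoodykroen} from Theorem~8.1 of \cite{MR3385727} by identifying the correct cut system. First, I would define $\Ccal$ to consist of all separations $(A,B)$ of $G$ with $|A\cap B| = k$ such that both $A\setminus B$ and $B\setminus A$ are infinite. By Halin's correspondence between infinite components and ends, having both sides infinite is equivalent (up to a finite correction) to both sides containing ends, and by the minimality of $k$ in the hypothesis such separations exist. Moreover, any separation in $\Ccal$ separates at least two ends of $G$, since an ``end-free'' side would have to be finite.

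Next, I would verify that $\Ccal$ satisfies the axioms of a cut system in the sense of \cite{MR3385727}. The $\Gamma$-invariance is immediate, since automorphisms preserve the order of a separation and the infiniteness of its sides, and $\Ccal$ is closed under reversal by definition. The core technical point is the nestedness argument: whenever two separations $(A,B),(C,D) \in \Ccal$ cross, one forms the four ``corners'' $A\cap C$, $A\cap D$, $B\cap C$, $B\cap D$, and submodularity of the separator function plus the minimality of $k$ forces one of the corner separations to again have order $k$ and infinite sides. The standard Dunwoody--Kr\"on procedure then yields a $\Gamma$-invariant nested sub-family of $\Ccal$ that still captures the essential splitting structure.

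Once a nested $\Gamma$-invariant sub-family is in hand, Theorem~8.1 of \cite{MR3385727} produces a $\Gamma$-tree $T$ together with a $\Gamma$-equivariant assignment of parts $\Vcal: V(T) \to 2^{V(G)}$ such that the induced separations of $(T,\Vcal)$ are precisely the elements of that sub-family. Translating between the $G$-tree formalism and the tree decomposition formalism used here is routine, and verifies axioms \ref{itm:td-coververtices}--\ref{itm:td-nocrossedge}. By construction, every induced separation lies in $\Ccal$, so has order exactly $k$ and separates two ends, yielding both conclusions of the theorem. Quasi-transitivity of $\Gamma$ on $T$ follows from the quasi-transitivity on $G$ together with local finiteness, as each $\Gamma$-orbit of separators reduces to an orbit of vertices under bounded distance.

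The main obstacle is the nestedness step: for vertex cuts the submodularity inequality is not as tight as in the edge-cut case, so one must work carefully to show that crossings can indeed be resolved within $\Ccal$ without dropping below order $k$ or losing the end-separation property. This delicate combinatorial analysis is precisely what \cite{MR3385727} carries out in establishing their structure theorem, so in practice the bulk of the work is packaged into that reference; the contribution of the present proof is to identify the correct cut system and verify its hypotheses in the quasi-transitive, multi-ended setting.
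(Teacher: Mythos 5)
Your proposal matches the paper's approach exactly: both derive the theorem as a special case of Theorem~8.1 of Dunwoody and Kr\"on~\cite{MR3385727}, taking the cut system to be the family of separations of order $k$ with both sides infinite, and observing that the resulting $G$-tree yields a tree decomposition whose induced separations lie in this cut system. The paper dispenses with the details in two sentences, while you elaborate on the intermediate verification steps (cut-system axioms, nestedness via corner separations, translation between $G$-trees and tree decompositions), correctly attributing the heavy lifting to the cited reference.
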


Corollary \ref{cor:reducedtd} below describes the tree decompositions we will be working with for the rest of this paper. It is similar to \cite[Corollary 4.3]{hamann2018stallings} and \cite[Corollary 3.2]{saw-mcfl}. We still provide a proof for the convenience of the reader since these two results are slightly less general than ours. 
%CP: Check reference

We call a tree decomposition \emph{reduced} if every induced separation minimally separates some pair of ends of $G$, and no two parts corresponding to adjacent vertices in $T$ coincide. We call it \emph{strongly reduced}, if in addition every induced separation has order $k$, where $k$ is the minimal integer such that there is a pair of ends which can be separated by removing $k$ vertices.

In order to extract a tree decomposition with the desired properties from the above theorem, we will need the following construction. Let $\Tcal = (T,\Vcal)$ be a tree decomposition of $G$, and let $F \subseteq E(T)$ be a subset of arcs such that $\bar e \in F$ whenever $e \in F$. We define the \emph{contraction} $\Tcal/F = (T/F,\Vcal/F)$ of the tree decomposition $\Tcal$ as follows.
The vertices of $T/F$ are components of $T-(E(T)\setminus F)$ with an arc between two of them if there is an arc in $T$ (which is necessarily an element of $E(T)\setminus F$) connecting the corresponding components. For a component $S\in V(T/F)$ we set $\Vcal(S) = \bigcup_{s \in V(S)} \Vcal(s)$. It is not hard to see that this defines a tree decomposition, and that the arcs of the decomposition tree are precisely the arcs in $E(T)\setminus F$. Moreover, the separations induced by an arc $e \in  E(T)\setminus F$ and its counterpart in $E(T/F)$ coincide.

\begin{cor}
\label{cor:reducedtd}
Let $\Gamma$ be a group acting quasi-transitively on a locally finite graph $G$. Then there is a strongly reduced, $\Gamma$-invariant tree decomposition $(T,\Vcal)$ such that the action of $\Gamma$ on $T$ is edge transitive, but not arc transitive.
\end{cor}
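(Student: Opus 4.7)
The plan is to build $(T,\Vcal)$ from the decomposition supplied by Theorem~\ref{thm:dunwoodykroen} through three successive modifications, each enforcing one of the required properties. First I apply Theorem~\ref{thm:dunwoodykroen} to obtain a $\Gamma$-invariant tree decomposition $\Tcal_0=(T_0,\Vcal_0)$ in which every induced separation has order $k$ and separates two ends. Because $k$ is by definition the minimum order of any separation of $G$ separating a pair of ends, each such induced separation is automatically of minimum order among separations separating the pair of ends it separates, and hence minimally separates that pair. To enforce strict-subset adhesion sets (and in particular distinct adjacent parts), I form the contraction $\Tcal_1=(T_1,\Vcal_1):=\Tcal_0/F$ along the $\Gamma$-invariant, swap-closed set $F=\{e\in E(T_0):\Vcal_0(e^-)\subseteq\Vcal_0(e^+)\text{ or }\Vcal_0(e^+)\subseteq\Vcal_0(e^-)\}$. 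A short verification using axiom~\ref{itm:td-nocrossedge} shows that in $\Tcal_1$ every induced separation still has order $k$, still minimally separates a pair of ends, and now has its adhesion set as a strict subset of both incident parts; thus $\Tcal_1$ is a strongly reduced $\Gamma$-invariant tree decomposition.

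Next I address edge transitivity. Quasi-transitivity of $\Gamma$ on $G$ implies that $\Gamma$ has only finitely many orbits on the induced separations of $\Tcal_1$. Following the structure-theoretic template underlying Theorem~\ref{thm:dunwoodykroen} (compare \cite[Corollary 4.3]{hamann2018stallings} and \cite[Corollary 3.2]{saw-mcfl}), I restrict to a single symmetric $\Gamma$-orbit of separations of $\Tcal_1$. Such an orbit remains nested inside the nested family of $\Tcal_1$, and hence defines a $\Gamma$-invariant tree decomposition $\Tcal_2=(T_2,\Vcal_2)$ of $G$. The inherited properties (order $k$, minimally separating a pair of ends, strict-subset adhesion sets, adjacent parts distinct) persist, and $\Gamma$ now acts transitively on the unordered edges of $T_2$. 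Finally, if this action has no inversions, then $\Tcal_2$ is already edge transitive but not arc transitive and I set $(T,\Vcal):=\Tcal_2$. Otherwise I perform a barycentric subdivision: for each unordered edge $\{e,\bar e\}$ of $T_2$ I insert a new vertex $v_e$ adjacent to both endpoints with $\Vcal(v_e):=\Vcal_2(e)$. Axioms \ref{itm:td-coververtices}--\ref{itm:td-nocrossedge} for $(T,\Vcal)$ follow from those of $\Tcal_2$; strong reducedness is preserved because the adhesion sets of $\Tcal_2$ are strict subsets of both incident parts; the $\Gamma$-invariant bipartition of $V(T)$ into original and subdivision vertices forbids any element of $\Gamma$ from inverting an arc of $T$, so the action is not arc transitive; and since each edge of $T$ corresponds bijectively to an arc of $T_2$, arc transitivity on $T_2$ transfers to edge transitivity on $T$.

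The main obstacle is the middle step: isolating a single symmetric $\Gamma$-orbit of separations that still yields a valid tree decomposition of $G$, and in particular continues to cover every edge of $G$ (axiom~\ref{itm:td-coveredges}). This requires careful use of the structure theory of nested families of separations, as in the cited analogues, since the coarser family has fewer parts and potentially larger parts. Once this technical step is in hand, the remaining verifications---that each modification preserves the strongly reduced $\Gamma$-invariant property, and that the bipartite structure obtained after subdivision exactly realises edge transitivity without arc transitivity---are straightforward.
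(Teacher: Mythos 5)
Your overall strategy coincides with the paper's: start from Theorem~\ref{thm:dunwoodykroen}, collapse down to a single $\Gamma$-orbit of edges, and then, if the action inverts an edge, perform a barycentric subdivision. The subdivision step and the observation that cardinality of parts versus adhesion sets rules out arc-transitivity match the paper's argument. However, there is a genuine gap in your middle step, which you yourself flag as ``the main obstacle.''

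You treat ``restricting to a single symmetric $\Gamma$-orbit of separations'' as a delicate structure-theoretic construction requiring the theory of nested families of separations. But the paper sidesteps this entirely: it observes (in the discussion of contraction immediately preceding the corollary) that for \emph{any} symmetric, $\Gamma$-invariant $F \subseteq E(T)$, the contraction $\Tcal/F$ is automatically a tree decomposition whose arcs are exactly $E(T)\setminus F$ and whose induced separations are unchanged. Taking $F = E(T)\setminus(\Gamma e \cup \Gamma\bar e)$ for any fixed arc $e$ then yields a $\Gamma$-invariant tree decomposition on which the action is edge-transitive, with no need to re-derive the tree from a nested family and in particular no need to re-verify axiom~\ref{itm:td-coveredges}. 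Your plan does not identify this and hence leaves the key step unproved.

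There is a second, related issue in the ordering of your steps. You contract in step 2 to force adhesion sets to be proper subsets of both incident parts, and then in step 3 assert that this property ``persists'' after passing to a single orbit. That persistence is not obvious: the parts of $\Tcal_2$ are unions of parts of $\Tcal_1$ over possibly infinite subtrees, and a larger part could in principle again coincide with a neighbouring part. The paper avoids this by enforcing edge transitivity \emph{first}; then reducedness of $\Tcal/F$ is immediate, because if two adjacent parts coincided, edge transitivity would force all parts to coincide, hence to equal their (finite) adhesion sets, contradicting the infiniteness of $G$. Once you adopt the contraction viewpoint, your step 2 becomes superfluous and the persistence claim no longer needs to be made.

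A minor point in the subdivision step: you assert that strong reducedness of the subdivided decomposition follows because adhesion sets of $\Tcal_2$ are proper subsets of both incident parts. This is true, but in the paper this properness is itself deduced from arc transitivity of $\Tcal_2$ (since $|\Vcal(s)|=|\Vcal(t)|$, equality $|\Vcal(s)|=|\Vcal(st)|$ would force $\Vcal(s)=\Vcal(t)$). You should supply that justification rather than inherit it from step 2, since, as noted above, the step-2 property may not survive your step 3.
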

\begin{proof}
We start with the tree decomposition $\Tcal = (T, \Vcal)$ provided by Theorem \ref{thm:dunwoodykroen} and successively change it to satisfy the additional conditions.

First, let $e \in E(T)$ and let $F = E(T) \setminus (\Gamma e \cup \Gamma \bar e)$. Then $\Tcal/F$ is easily seen to be $\Gamma$-invariant, and the action of $\Gamma$ on $T/F$ is transitive on $E(T/F)$. 

We claim that $\Tcal/F$ is reduced. Assume that it is not. Then there are two parts corresponding to adjacent vertices of $T/F$ which coincide. Edge transitivity implies that all parts of $\Tcal/F$ coincide, and thus all adhesion sets coincide with the parts as well. This cannot be the case since $G$ is infinite, but the adhesion sets of $\Tcal/F$ are precisely the adhesion sets of $\Tcal$ corresponding to arcs in $\Gamma e$ and thus they are finite. Finally, $\Tcal/F$ is strongly reduced due to the size of the adhesion sets in the tree decomposition $\Tcal$ provided by Theorem \ref{thm:dunwoodykroen}. 

Note that if the action on $T/F$ in the above construction is not arc transitive, then we are done. Hence we may assume that we have a strongly reduced tree decomposition $\Tcal = (T, \Vcal)$ such that the action of $\Gamma$ on $T$ is arc transitive. Let $T'$ be the tree obtained by subdividing each edge in $T$ precisely once; more formally, $V(T') = V(T) \cup \{\{e,\bar e\} \mid e \in E(T)\}$, and $t \in V(T)$ and $\{e,\bar e\}$ are adjacent in $T'$ if and only if $t$ and $e$ are incident in $T$. Then it is straightforward to check that $\Tcal' = (T',\Vcal)$ defines a tree decomposition; recall that $\Vcal(st)$ is defined as $\Vcal(s) \cap \Vcal(t)$. This tree decomposition is still $\Gamma$-invariant, since the action on the tree $T$ induces an action on the tree $T'$ which takes parts to parts and adhesion sets to adhesion sets.

Next we show that $\Tcal'$ is strongly reduced. Let $st \in E(T)$. Then $|\Vcal(s)| = |\Vcal(t)|$ by arc transitivity, and consequently $|\Vcal(s)| > |\Vcal(st)|$ because otherwise the parts corresponding to $s$ and $t$ would coincide. Moreover, note that every adhesion set of $\Tcal'$ is also an adhesion set of $\Tcal$ since $\Vcal(s) \cap \Vcal(st) = \Vcal(s) \cap \Vcal(t)$. This shows that $\Tcal'$ is strongly reduced.

The action of $\Gamma$ is not transitive on the arcs of $T'$, since it is impossible to map the set $\Vcal(s)$ to the set $\Vcal(st)$ because the sets have different cardinalities. To see that the action is transitive on edges, note that by arc transitivity of the action on $T$, for any two arcs $e_1=s_1 t_1$ and $e_1=s_1 t_1$ there is $\gamma \in \Gamma$ such that $\gamma e_1 = e_2$ and thus in particular $\gamma s_1 = s_2$. We conclude that $\gamma$ maps the edge connecting $s_1$ and $\{e_1,\bar e_1\}$ to the edge connecting $s_2$ and $\{e_2,\bar e_2\}$ in $T'$.
\end{proof}
%\christian{Let Florian check this proof again after my changes...}

We now collect some useful properties of the tree decompositions provided by Corollary~\ref{cor:reducedtd}. For the remainder of this section let $\Gamma$ be a group acting quasi-transitively on a locally finite graph $G$ and  let $\Tcal = (T,\Vcal)$ be a tree decomposition as provided by Corollary \ref{cor:reducedtd}.

\begin{pro}
\label{pro:finitesubtree}
For every vertex $v \in V(G)$ there are only finitely many $s \in V(T)$ such that $v \in \Vcal(s)$.
\end{pro}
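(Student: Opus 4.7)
The strategy is proof by contradiction: suppose that $T_v := \{s \in V(T) : v \in \Vcal(s)\}$ is infinite. By property~\ref{itm:td-nocrossedge}, $T_v$ is a (connected) subtree of $T$ and therefore has infinitely many edges, so $v$ lies in the adhesion set $\Vcal(e)$ for infinitely many pairwise distinct edges of $T$.

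I would then exploit the edge-transitivity of the $\Gamma$-action on $T$ given by Corollary~\ref{cor:reducedtd}. Fix a reference edge $e_0 \in E(T)$ with $S_0 := \Vcal(e_0)$, so $|S_0|=k$. Every adhesion set has the form $\gamma S_0$ for some $\gamma \in \Gamma$, and so the infinitely many edges above can be written as $\gamma_i\{e_0,\bar{e_0}\}$ with $\gamma_i^{-1}v \in S_0$. Applying the pigeonhole principle to the $k$-element set $S_0$ yields a fixed $u_0 \in S_0$ with $\gamma_i u_0 = v$ for infinitely many indices $i$. Fixing any $\alpha \in \Gamma$ with $\alpha u_0 = v$ and writing $\gamma_i = \alpha\beta_i$ with $\beta_i \in \Gamma_{u_0}$, the pairwise distinctness of the $\gamma_i\{e_0,\bar{e_0}\}$ translates to pairwise distinctness of the $\beta_i\{e_0,\bar{e_0}\}$, and so the $\Gamma_{u_0}$-orbit of $\{e_0,\bar{e_0}\}$ in $E(T)$ is infinite.

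To obtain a contradiction I would invoke a standard compactness argument. Since $G$ is locally finite, the closure of $\Gamma_{u_0}$ in $\AUT(G)$ with respect to the topology of pointwise convergence on $V(G)$ is compact, and therefore $\Gamma_{u_0}$ has finite orbits on $V(G)$. Consequently the adhesion sets $\beta_i S_0$, as subsets of $V(G)$, take only finitely many values, since they lie in the finite set $\prod_{w \in S_0}\Gamma_{u_0}w$ of $k$-element subsets. By pigeonhole, infinitely many of the edges $\beta_i\{e_0,\bar{e_0}\}$ then share a common adhesion set $S^*$. The contradiction is reached by showing that only finitely many edges of $T$ can have adhesion set equal to $S^*$: each such edge induces a separation of $G$ with separator $S^*$, and since $G$ is locally finite and connected, $G - S^*$ has only finitely many components (each incident to at least one of the finitely many edges leaving the finite set $S^*$), so there are only finitely many such separations; moreover, thanks to the reduced property and the subdivision structure of Corollary~\ref{cor:reducedtd}, only boundedly many edges of $T$ can correspond to a single separation. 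The main technical obstacle is this last count—specifically, verifying that distinct edges of the pre-subdivision tree from Corollary~\ref{cor:reducedtd} give rise to distinct separations, so that edges of $T$ sharing a common separation are bounded in number (by at most two, coming from the two halves of a single pre-subdivision edge).
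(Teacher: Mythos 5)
Your argument is structurally quite different from the paper's, which never invokes edge-transitivity or compactness of stabilizers: the paper takes a maximal set $U\ni v$ contained in infinitely many parts, uses the reduced property to show that the separation induced by each arc of the resulting infinite subtree $T'$ separates two ends (so $v$ has neighbours on both sides), and then pigeonholes on the finitely many neighbours of $v$ to violate maximality of $U$. Your route via edge-transitivity, the stabilizer $\Gamma_{u_0}$, and compactness of vertex-stabilizers of a locally finite connected graph is a valid alternative strategy, and the steps up to ``infinitely many edges of $T$ share a common adhesion set $S^*$'' are all correct.

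The gap you flag yourself is the genuine one, and the route you propose to close it is not quite right. The property ``distinct edges of the pre-subdivision tree give distinct separations'' is not asserted anywhere in the paper: Theorem~\ref{thm:dunwoodykroen} makes no such claim, and the contracted tree $\Tcal/F$ may in principle also contain degree-two vertices whose part equals an adhesion set, which is exactly the configuration (not only the post-subdivision middle vertices) under which two edges induce the same separation. A self-contained fix is to argue directly that two distinct edges of $T$ inducing the same unordered separation with separator $S^*$ must share a degree-two endpoint $s$ with $\Vcal(s)=S^*$: when the two arcs $e,f$ (or suitable reorientations) are linkable, the inclusions between the cones $K_e,K_f$ force one side of the separation to have no vertices outside $S^*$, contradicting that it separates two ends; when the cones nest, \ref{itm:td-nocrossedge} together with reducedness collapses the path between $e$ and $f$ to a single shared vertex with part $S^*$. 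With that in hand, each separation is induced by at most two edges, there are finitely many candidate separations, and your contradiction goes through. As written, though, that step is the missing ingredient.
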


\begin{proof}
Assume for a contradiction that $v$ is contained in infinitely many parts. Let $U \subseteq V(G)$ be a maximal subset such that $v \in U$ and there are infinitely many parts containing all of $U$. Note that such a maximal subset exists because if a set of vertices is contained in two parts then by \ref{itm:td-nocrossedge} it is contained in two neighbouring parts. Therefore any set of vertices which is contained in infinitely many parts must be contained in an adhesion set, and thus there is an upper bound on the size of such a set.

By \ref{itm:td-nocrossedge}, the set $\{s \in V(T)\colon U \subseteq \Vcal(s)\}$ induces a subtree $T'$ of $T$. This subtree is infinite because there are infinitely many parts containing $U$. Define $F = E(T')$ and note that $U \subseteq \Vcal(e)$ for every $e \in F$. The separation $(A_e,B_e)$ induced by $e \in F$ separates two ends, hence $v$ must have neighbours $a_e \in A_e \setminus B_e$ and $ b_e \in B_e \setminus A_e$, otherwise it would be possible to separate the ends by fewer vertices.

By \ref{itm:td-coveredges}, both endpoints of the arc $v a_e \in E(G)$ must be contained in some part of the tree decomposition, and since $a_e \notin B_e$ this part is $\Vcal(s)$ for some $s \in K_e$. Similarly, both endpoints of the arc $vb_e \in E(G)$ are contained in a part $\Vcal(s)$ for some $s \in K_{\bar e}$.

Since $F$ contains infinitely many arcs, there is an infinite subset $S \subseteq V(T)$ such that $\Vcal(s)$ contains a neighbour of $v$ for every $s \in S$. But $v$ only has finitely many neighbours, so infinitely many parts contain the same neighbour $u$ of $v$.

Taking an appropriate infinite subset of $F$ (either the arcs of some infinite star, or arcs on a ray which are sufficiently far apart) we can make sure that for any two vertices in $S$, the path connecting them contains at least one vertex of $T'$. By \ref{itm:td-nocrossedge} this implies that $u$ is contained in $\Vcal(s)$ for infinitely many $s \in T'$; this contradicts the maximality of $U$.
\end{proof}

\begin{cor}\label{cor:finitesubtree}
    There is a constant $N$ such that $\Vcal(e) \cap \Vcal(f) = \emptyset$ whenever $e,f \in E(T)$ such that $d_T(e,f) \geq N$.
\end{cor}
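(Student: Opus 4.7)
My plan is to derive the corollary as a direct consequence of Proposition~\ref{pro:finitesubtree} together with the quasi-transitivity of the action of $\Gamma$ on $V(G)$. For each $v \in V(G)$, set $T_v := \{s \in V(T) : v \in \Vcal(s)\}$. By Proposition~\ref{pro:finitesubtree}, $T_v$ is finite, so the quantity $D_v := \max\{d_T(s,t) : s,t \in T_v\}$ is a well-defined non-negative integer. Note that if a vertex $v$ lies in $\Vcal(e) \cap \Vcal(f)$, then by definition of the adhesion sets all four endpoints $e^-, e^+, f^-, f^+$ belong to $T_v$, and hence $d_T(e^-, f^-) \leq D_v$.

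Next I would observe that $D_v$ is invariant under the group action: for any $\gamma \in \Gamma$, the fact that $\gamma$ maps parts to parts yields $T_{\gamma v} = \gamma T_v$, and since $\gamma$ induces an isometry of $T$ we obtain $D_{\gamma v} = D_v$. Because $\Gamma$ acts quasi-transitively on $V(G)$, there are only finitely many orbits, so we can set
\[
D := \max_{v \in R} D_v,
\]
where $R \subseteq V(G)$ is a finite set of orbit representatives. This maximum is finite.

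Finally, I would set $N := D + 2$ and verify the statement: suppose $e, f \in E(T)$ satisfy $\Vcal(e) \cap \Vcal(f) \neq \emptyset$, and pick any $v$ in this intersection. Then $e^-$ and $f^-$ both lie in $T_v$, giving $d_T(e^-, f^-) \leq D_v \leq D$. By the definition of distance between arcs, we have $d_T(e,f) \leq d_T(e^-, f^-) + 1 \leq D + 1 < N$, so $d_T(e,f) \geq N$ is impossible.

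There is no real obstacle here: the only point that needs a moment's care is remembering that $d_T(e,f)$ is defined using the initial vertices of the arcs plus one (or is zero when the endpoint sets coincide), but in either case it is bounded by $D+1$ whenever the adhesion sets meet. The crucial input is Proposition~\ref{pro:finitesubtree} combined with quasi-transitivity, which together upgrade a pointwise finiteness statement to a uniform bound.
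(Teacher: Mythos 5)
Your proof is correct and follows the same strategy as the paper's: combine Proposition~\ref{pro:finitesubtree} with quasi-transitivity to upgrade the pointwise finiteness into a uniform bound. The only cosmetic difference is that the paper bounds the number of parts containing a given vertex $v$ (and then invokes \ref{itm:td-nocrossedge} to show that $v$ would lie in every part along the path from $e^-$ to $f^-$), whereas you bound the diameter of the set $T_v$ directly; the two bounds are interchangeable here.
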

\begin{proof}
    By Proposition~\ref{pro:finitesubtree} every vertex $v \in V(G)$ is contained in only finitely many parts of the tree decomposition. Moreover $G$ is quasi-transitive, so there is a constant $N$ such that every $v \in V(G)$ is contained in at most $N$ parts. Assume that $e$ and $f$ are arcs of $T$ such that $\Vcal(e) \cap \Vcal(f)$ contains a vertex $v$ and $d_T(e,f) \geq N$. Then the unique path in $T$ connecting $e^-$ and $f^-$ has at least $N+1$ vertices. It follows from \ref{itm:td-nocrossedge} that $v \in\Vcal(t)$ for each vertex $t$ of this path, contradicting the fact that every vertex is contained in at most $N$ parts.
\end{proof}

\begin{pro}
\label{pro:edgeplacement}
There is a map $\theta\colon E(G) \to V(T)$ such that for every arc $e \in E(G)$ 
\begin{enumerate}[label=(\roman*)]
    \item both endpoints of $e$ are contained in $\Vcal(\theta(e))$,
    \item $\theta(e) = \theta(\bar e)$, and
    \item $\gamma \theta(e) = \theta(\gamma e)$ for every $\gamma \in \Gamma$.
\end{enumerate}
\end{pro}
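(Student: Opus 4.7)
My plan is to construct $\theta$ by equivariantly selecting, for each unordered edge $\{e, \bar e\}$ of $G$, a canonical vertex of a certain finite subtree of $T$. Concretely, I set
\[
T_e := \{t \in V(T) : e^-, e^+ \in \Vcal(t)\}.
\]
By \ref{itm:td-coveredges} this set is non-empty; by \ref{itm:td-nocrossedge} it induces a connected subtree of $T$; and by Proposition~\ref{pro:finitesubtree} this subtree is finite. Since $e$ and $\bar e$ share the same endpoint set, $T_e = T_{\bar e}$, and since $\Gamma$ acts by tree-decomposition automorphisms, $\gamma T_e = T_{\gamma e}$ for every $\gamma \in \Gamma$. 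Thus $T_e$ is naturally a $\Gamma$-equivariant assignment from edges of $G$ to finite subtrees of $T$, and it remains to choose a vertex of $T_e$ in a $\Gamma$-equivariant way.

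Because $\Gamma$ acts quasi-transitively on $G$, the collection of unordered edges $\{\{e, \bar e\} : e \in E(G)\}$ splits into finitely many $\Gamma$-orbits. For each orbit I pick a representative $\{e_0, \bar e_0\}$ with setwise stabiliser $H := \{\gamma \in \Gamma : \gamma\{e_0, \bar e_0\} = \{e_0, \bar e_0\}\}$. Assuming I can find a vertex $\theta_0 \in T_{e_0}$ fixed pointwise by $H$, I define $\theta(e) := \gamma \theta_0$ for any $\gamma \in \Gamma$ mapping $\{e_0, \bar e_0\}$ to $\{e, \bar e\}$. Well-definedness reduces precisely to $H$-invariance of $\theta_0$, and conditions (ii) and (iii) then hold by construction; condition (i) follows from $\theta(e) \in \gamma T_{e_0} = T_e$.

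The only substantive step, and the one I expect to be the main obstacle, is producing the $H$-fixed vertex inside $T_{e_0}$. Here $H$ acts on the finite subtree $T_{e_0}$ by tree automorphisms, so the classical fact that every automorphism group of a finite tree stabilises its center (either a vertex or a pair of adjacent vertices) applies. If the center is a vertex, it is pointwise fixed by $H$ and we are done. If it is an edge $\{a, b\}$ of $T$, then $H$ stabilises $\{a, b\}$ setwise, and I must rule out the possibility that some $\gamma \in H$ swaps $a$ and $b$. This is where Corollary~\ref{cor:reducedtd} does the decisive work: the action of $\Gamma$ on $T$ is edge-transitive but not arc-transitive. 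A short conjugation argument shows that if some single $\gamma \in \Gamma$ swapped the endpoints of one edge of $T$, then, conjugating $\gamma$ by an element of $\Gamma$ sending $\{a, b\}$ onto any other edge (which exists by edge-transitivity), we would obtain an element swapping the endpoints of that other edge, and hence the action on $T$ would be arc-transitive, contradicting Corollary~\ref{cor:reducedtd}. Consequently no $\gamma \in H$ inverts $\{a, b\}$, both $a$ and $b$ are pointwise $H$-fixed, and we may take $\theta_0 := a$.
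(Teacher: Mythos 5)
Your proof is correct and follows essentially the same route as the paper: you form the finite subtree $T_e$ of parts containing both endpoints of $e$, take its center, and invoke edge- but not arc-transitivity of the $\Gamma$-action on $T$ to canonically choose a vertex when the center is an edge. The paper packages the last step by fixing a $\Gamma$-invariant orientation of $E(T)$ (equivalent to your observation that no element of $\Gamma$ can reverse an edge of $T$) and defining $\theta$ globally rather than by transporting from orbit representatives, but the underlying argument is the same.
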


\begin{proof}
For every arc $e \in E(G)$, let $S(e) =\{s \in V(T)\colon e^-,e^+ \in \Vcal(s)\}$. The set $S(e)$ is non-empty by \ref{itm:td-coveredges}, the induced subgraph $T[S(e)]$ is a tree by \ref{itm:td-nocrossedge}, and this tree is finite by Proposition \ref{pro:finitesubtree}. 
%CP: What does central mean here? 
%CL: The center consists of all vertices v minimizing the max distance from v to any other vertex in the tree. Not sure right now whether we should define this here.
Hence it either has a central vertex, or a central edge. In the former case, we let $\theta(e)$ be the central vertex of $T[S(e)]$. In the latter case, we note that, because $\Gamma$ acts edge transitively 
but not arc transitively, there is an orientation of the edges such that $\Gamma$ preserves the chosen orientations. Let $\theta(e)$ be the initial vertex of the central edge of $T[S(e)]$ with respect to this orientation.

Clearly, every $\gamma \in \Gamma$ which maps $e$ to $f$ also maps $S(e)$ to $S(f)$, and thus maps the central vertex or edge of $T[S(e)]$ to the central vertex or edge of $T[S(f)]$. Since $\gamma$ also preserves the orientation we picked above, we conclude that $\gamma \theta(e) = \theta(\gamma e)$ as desired.
\end{proof}

Now let $\Ecal(s)=\theta^{-1}(s) \subseteq E(G)$, where $\theta$ is the function given by Proposition \ref{pro:edgeplacement}. Then $\Ecal(s)$ is a subset of the edge set of $G[\Vcal(s)]$. 

Additionally we introduce for every arc $e=st$ of $T$ a new set of \emph{virtual arcs} $\Ecal(e)=\Ecal(st)$, such that every pair of vertices of $\Vcal(e)$ is connected by an arc in $\Ecal(e)$. By definition these arcs come in pairs connecting the same vertices but having different direction, so that $\Vcal(e)$ and $\Ecal(e)$ form a complete graph. Note that the sets $\Ecal(e)$ and $\Ecal(\bar{e})$ are disjoint by definition.

We define the \emph{adhesion graph} $\Gcal(e)=(\Vcal(e), \Ecal(e) \cup \Ecal(\bar{e})) (= \Gcal(\bar e)) $ and note that every pair of vertices is connected by precisely two arcs, and thus $\Gcal(e)$ is not simple unless it only consists of a single vertex.
In order to enhance readability, we usually write $\Ecal(s,t)$ instead of $\Ecal(st)$ and $\Gcal(s,t)$ instead of $\Gcal(st)$.

Finally, we assign to every vertex $s$ of $T$ the \emph{part graph} 
\[
\Gcal(s)=\left(\Vcal(s)\;,\; \Ecal(s) \uplus \biguplus_{e \colon e^-=s} \Ecal(e)\right).
\]
Again, $\Gcal(s)$ generally is not a simple graph since $\Ecal(s)$ and the various sets $\Ecal(s,t)$ potentially contain arcs with the same endpoints.

Recall that an open subtree $S$ of $T$ consists of a vertex set $V(S)$ and all arcs of $T$ starting at those vertices such that $T[V(S)]$ is a tree. The \emph{part graph induced by $S$} is 
\[
\Gcal(S)=\left(\bigcup_{t \in V(S)} \Vcal(t)\;, \; \bigcup_{t \in V(S)} \Ecal(t) \uplus \biguplus_{e \colon \partial E(S)} \Ecal({e})\right).
\]
This is a natural extension of part graphs; it is easy to see that $\Gcal(\sta(s))=\Gcal(s)$.

\begin{pro}
\label{pro:partsqtrans}
For every $s \in V(T)$, the setwise stabiliser $\Gamma_{\Vcal(s)}$ acts quasi-transitively on $\Gcal(s)$.
\end{pro}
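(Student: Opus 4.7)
The plan is to first reduce from the setwise stabiliser $\Gamma_{\Vcal(s)}$ to the pointwise tree-stabiliser $\Gamma_s := \{\gamma \in \Gamma : \gamma s = s\}$: since $\Tcal$ is $\Gamma$-invariant we have $\gamma \Vcal(s) = \Vcal(\gamma s)$, so $\Gamma_s \subseteq \Gamma_{\Vcal(s)}$. It therefore suffices to show that $\Gamma_s$ has finitely many orbits on the vertex set $\Vcal(s)$ and on the arc set $\Ecal(s) \uplus \biguplus_{e \colon e^- = s} \Ecal(e)$ of $\Gcal(s)$.

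For the vertex set, fix a finite set $\Ucal$ of representatives for the $\Gamma$-orbits on $V(G)$. Given $v \in \Vcal(s)$, write $v = \gamma u$ with $u \in \Ucal$ uniquely determined and $\gamma$ determined up to right multiplication by $\Gamma_u$; then $\gamma^{-1} s$ lies in $T_u := \{t \in V(T) : u \in \Vcal(t)\}$, which is finite by Proposition~\ref{pro:finitesubtree} and preserved by $\Gamma_u$. Associate to $v$ the pair $(u, \Gamma_u \cdot \gamma^{-1} s)$; this is well-defined, since replacing $\gamma$ by $\gamma \rho$ with $\rho \in \Gamma_u$ replaces $\gamma^{-1} s$ by $\rho^{-1} \gamma^{-1} s$, and it takes values in a finite set. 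If two vertices $v = \gamma u$ and $v' = \gamma' u$ yield the same pair, then $\gamma^{-1} s = \eta (\gamma')^{-1} s$ for some $\eta \in \Gamma_u$, whence $\sigma := \gamma' \eta^{-1} \gamma^{-1}$ fixes $s$ and sends $v$ to $v'$. Hence $\Gamma_s$ has finitely many orbits on $\Vcal(s)$.

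For real arcs in $\Ecal(s) = \theta^{-1}(s)$, I exploit the $\Gamma$-equivariance of $\theta$ from Proposition~\ref{pro:edgeplacement}: if $\eta e_1 = e_2$ with $e_1, e_2 \in \Ecal(s)$, then $\eta s = \eta \theta(e_1) = \theta(e_2) = s$, so $\eta \in \Gamma_s$. Combined with the quasi-transitivity of $\Gamma$ on $E(G)$ (which follows from vertex quasi-transitivity and local finiteness), this gives finitely many $\Gamma_s$-orbits on $\Ecal(s)$. For virtual arcs, the same argument applied to the action on $T$ works: by Corollary~\ref{cor:reducedtd} there are at most two $\Gamma$-orbits on $E(T)$, and any element of $\Gamma$ sending one arc at $s$ to another arc at $s$ in the same orbit must fix $s$, giving at most two $\Gamma_s$-orbits on arcs of $T$ leaving $s$. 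Since each $\Ecal(e)$ has size at most $k(k-1)$, where $k$ is the uniform order of the adhesion sets, this bounds the number of $\Gamma_s$-orbits on virtual arcs and completes the proof.

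The main subtlety is the vertex case: unlike for arcs, there is no equivariant map $V(G) \to V(T)$ to play the role of $\theta$, so the invariant must be constructed by hand, quotienting by $\Gamma_u$ to handle the ambiguity in the choice of a group element carrying the representative $u$ to $v$. Once this invariant is in place, everything else reduces to routine bookkeeping using the equivariance of $\theta$ and the edge-transitive action of $\Gamma$ on $T$ furnished by Corollary~\ref{cor:reducedtd}.
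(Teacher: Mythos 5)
Your proof is correct, but it takes a genuinely different route from the paper's. The paper splits the vertices of $\Vcal(s)$ into two classes: those lying in some adhesion set, and those lying in none. For the latter, a vertex is contained in a unique part, so any $\gamma\in\Gamma$ sending it to another such vertex of $\Vcal(s)$ automatically lies in $\Gamma_{\Vcal(s)}$; hence the $\Gamma$-orbits restricted to such vertices are already $\Gamma_{\Vcal(s)}$-orbits, and there are finitely many since $G$ is quasi-transitive. For vertices in adhesion sets, the paper observes that by \ref{itm:td-nocrossedge} they lie in an adhesion set incident to $s$, and then invokes the edge-transitivity of $\Gamma$ on $T$ from Corollary~\ref{cor:reducedtd} to conclude that the stabiliser of $s$ has at most two orbits on such adhesion sets, each of which is finite. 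You instead build a single finite invariant $v\mapsto(u,\Gamma_u\cdot\gamma^{-1}s)$ that classifies the $\Gamma_s$-orbits directly, avoiding the case split entirely and, for the vertex count, not using edge-transitivity of the action on $T$ at all — only Proposition~\ref{pro:finitesubtree} and quasi-transitivity of $\Gamma$ on $V(G)$. This makes the vertex argument marginally more robust (it would work for any $\Gamma$-invariant reduced tree decomposition, not just the edge-transitive one), at the cost of a slightly more abstract bookkeeping argument. Your additional discussion of arc orbits is harmless but unnecessary: in this paper quasi-transitivity is defined as finitely many \emph{vertex} orbits, so only the vertex count is needed. Also worth noting: since the tree decomposition is reduced, $\Vcal$ is injective on $V(T)$, so $\Gamma_{\Vcal(s)}$ actually coincides with $\Gamma_s$, which makes your reduction step automatic.
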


\begin{proof}If $u \in \Vcal(t)$ does not lie in any adhesion set, then neither does any image of $u$ under a graph automorphism. In particular, any $\gamma \in \Gamma$ mapping $u$ to some vertex $v \in \Vcal(t)$ fixes $\Vcal(t)$ and thus, under the action of the stabiliser of $\Vcal(t)$ there are only finitely many orbits of vertices in $\Vcal(t)$ not contained in any adhesion set. 

Whenever $\gamma \in \Gamma$ fixes $s$ and maps a neighbour $t$ of $s$ onto some other neighbour $t'$, $\gamma$ lies in  $\Gamma_{\Vcal(t)}$ and maps the adhesion set $\Vcal(s,t)$ onto $\Vcal(s,t')$. By edge transitivity, there are at most two orbits of adhesion sets contained in $\Vcal(s)$ under the action of $\Gamma_{\Vcal(t)}$. As every adhesion set contains the same finite number of elements, $\Gamma_{\Vcal(t)}$ acts with finitely many orbits on the vertices which lie in adhesion sets of the tree decomposition.
\end{proof}

\begin{pro}\label{pro:distance-to-adhesion}
There is $M \in \mathbb N$ such that for every $s \in V(T)$ and every $v \in \Vcal(s)$ there is a neighbour $t$ of $s$ such that $d_G(v,\Vcal(s,t)) \leq M$. 
\end{pro}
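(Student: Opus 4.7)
The plan is to combine the quasi-transitivity statement of Proposition~\ref{pro:partsqtrans} with the fact that $\Gamma$ has only finitely many orbits on $V(T)$, reducing the problem to a finite check. For a fixed $s \in V(T)$, I will define the function
\[
f_s \colon \Vcal(s) \to \N, \qquad f_s(v) := \min_{t\, \sim_T\, s} d_G(v, \Vcal(s,t)),
\]
where the minimum runs over the neighbours of $s$ in $T$. The goal is to show that $f_s$ is bounded by some constant $M_s$, and that $M_s$ depends only on the $\Gamma$-orbit of $s$ in $V(T)$.

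The first key step is to show that $f_s$ is invariant under the setwise stabiliser $\Gamma_{\Vcal(s)}$. Since the tree decomposition is reduced (Corollary~\ref{cor:reducedtd}), the parts $\Vcal(t)$ are pairwise distinct as $t$ ranges over $V(T)$; therefore any $\gamma \in \Gamma_{\Vcal(s)}$, viewed as an automorphism of $T$, must fix the vertex $s$. Consequently $\gamma$ permutes the neighbours of $s$ in $T$, and hence permutes the family of adhesion sets $\{\Vcal(s,t) : t \sim_T s\}$. Since $\gamma$ is also a graph automorphism of $G$, I get $f_s(\gamma v) = f_s(v)$. Next, $f_s(v)$ is finite for every $v \in \Vcal(s)$: because $G$ is connected and $s$ has at least one neighbour in $T$ (here I use that $G$ has more than one end, so $T$ is nontrivial), the distance from $v$ to any adhesion set $\Vcal(s,t)$ is finite.

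By Proposition~\ref{pro:partsqtrans} the action of $\Gamma_{\Vcal(s)}$ on $\Vcal(s)$ has only finitely many orbits, and $f_s$ is constant on each orbit, so $M_s := \sup_{v \in \Vcal(s)} f_s(v) < \infty$. To obtain the uniform bound $M$, I invoke the fact that $\Gamma$ acts edge-transitively (but not arc-transitively) on $T$, so $V(T)$ splits into (at most) two $\Gamma$-orbits with representatives $s_1, s_2$. For any $s = \gamma s_i$ with $\gamma \in \Gamma$, the automorphism $\gamma$ bijects $\Vcal(s_i)$ with $\Vcal(s)$ and adhesion sets at $s_i$ with adhesion sets at $s$, which gives $M_s = M_{s_i}$. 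Taking $M := \max(M_{s_1}, M_{s_2})$ finishes the argument.

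The only slightly subtle step is the $\Gamma_{\Vcal(s)}$-invariance of $f_s$, which genuinely relies on the reduced property from Corollary~\ref{cor:reducedtd}: if two distinct vertices of $T$ could share a part, then $\gamma \in \Gamma_{\Vcal(s)}$ might permute several vertices of $T$ carrying the same part, and the collection of adhesion sets attached at $s$ would no longer be preserved. Everything else is a straightforward orbit-counting argument based on Proposition~\ref{pro:partsqtrans} and the edge-transitivity of the $\Gamma$-action on $T$.
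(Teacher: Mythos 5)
Your proposal follows essentially the same strategy as the paper: consider the distance from $v \in \Vcal(s)$ to the nearest adhesion set, observe it is constant on $\Gamma_{\Vcal(s)}$-orbits, and invoke Proposition~\ref{pro:partsqtrans} to get finitely many orbits and hence a bound. Your treatment of uniformity over $s$ (via the finitely many $\Gamma$-orbits on $V(T)$) is correct and is left implicit in the paper. However, the justification you give for the $\Gamma_{\Vcal(s)}$-invariance of $f_s$ contains a gap: you assert that in a reduced tree decomposition the parts $\Vcal(t)$ are \emph{pairwise} distinct over all $t \in V(T)$, so that $\gamma \in \Gamma_{\Vcal(s)}$ must fix $s$ as an automorphism of $T$. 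But the definition of ``reduced'' in the paper only requires that parts of \emph{adjacent} vertices do not coincide; global distinctness of parts is a stronger statement that you would need to prove separately, and it is not obvious. The cleaner argument, and the one the paper uses, sidesteps this entirely: any $\gamma \in \Gamma_{\Vcal(s)}$ maps adhesion sets to adhesion sets and fixes $\Vcal(s)$ setwise, hence it permutes the collection of adhesion sets \emph{contained in} $\Vcal(s)$; and by \ref{itm:td-nocrossedge}, this collection is exactly $\{\Vcal(s,t) : t \sim_T s\}$ (any adhesion set $\Vcal(e) \subseteq \Vcal(s)$ is contained in $\Vcal(s,t)$ for the neighbour $t$ of $s$ towards $e$). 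This yields $\Gamma_{\Vcal(s)}$-invariance of $f_s$ without needing $\gamma$ to fix $s$ in $T$. Incidentally, your argument can also be patched even if parts fail to be distinct: if $\Vcal(s) = \Vcal(s')$ for some $s' \neq s$, then \ref{itm:td-nocrossedge} forces $\Vcal(s) = \Vcal(s,r)$ for the neighbour $r$ of $s$ towards $s'$, so $f_s \equiv 0$ trivially.
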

\begin{proof}
If $u$ and $v$ are vertices whose distance to the nearest adhesion set differs, then they lie in different orbits with respect to the action of $\Gamma_{\Vcal(s)}$ because $\Gamma_{\Vcal(s)}$ maps adhesion sets to adhesion sets. Hence, if the distance from a vertex to the nearest adhesion set was unbounded, then the action of $\Gamma_{\Vcal(s)}$ would have infinitely many orbits on $\Vcal(s)$, thus contradicting Proposition \ref{pro:partsqtrans}.
\end{proof}

\begin{pro}
\label{pro:edges-dense}
Let $e$ and $f$ be arcs of $T$. If $\Gamma$ does not fix an end of $T$, then there is an automorphism $\gamma \in \Gamma$ such that $e$ and $\gamma(f)$ are linkable.

If $e$ and $f$ lie in the same orbit, then for every odd $k$ we can choose $\gamma$ such that $d_T(e, \gamma(f)) = k$.
\end{pro}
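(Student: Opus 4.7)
The plan is to exhibit a hyperbolic element $\gamma_0 \in \Gamma$ whose axis passes through $e$, and then read off suitable images of $f$ from the arcs on that axis. By Corollary \ref{cor:reducedtd}, $\Gamma$ acts edge-transitively on the infinite tree $T$, so it does not fix any vertex or edge; combined with the hypothesis that $\Gamma$ fixes no end of $T$, the classical theory of group actions on trees furnishes such a $\gamma_0$ translating along an axis $A_0$. Because the action on $T$ is edge- but not arc-transitive, $\Gamma$ preserves the bipartition of $T$, and this bipartite structure plays a central role below. Using edge-transitivity to map some edge of $A_0$ onto $\{e, \bar e\}$ and conjugating $\gamma_0$ by the resulting automorphism, I may further assume that $e$ itself is an arc of $A_0$.

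Next I parametrise $A_0$ as a bi-infinite path with vertices $(v_i)_{i \in \Z}$ such that $e$ goes from $v_0$ to $v_1$. The bipartition of $T$ restricts to the parity partition of the $v_i$, and consequently the two arc orbits of $\Gamma$ split the arcs of $A_0$ by the colour of their initial vertex: by edge-transitivity, exactly one arc per edge of $A_0$ lies in $\Gamma \cdot e$, namely the one whose initial vertex has the same colour as $e^- = v_0$. In particular, for every $n \leq 0$ the outward-pointing arc on $A_0$ with initial vertex $v_n$ (that is, the arc from $v_n$ to $v_{n-1}$) lies in $\Gamma \cdot e$ precisely when $n$ is even, and in $\Gamma \cdot \bar e$ precisely when $n$ is odd. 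Each such outward-pointing arc is linkable with $e$, with link length $-n+1$.

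With this orbit description in hand, both claims of the proposition follow quickly. If $e$ and $f$ lie in the same orbit, then for any odd $k \geq 1$ I set $n = -(k-1)$, which is even and nonpositive, and pick the corresponding outward-pointing arc $f' \in \Gamma \cdot e$ with initial vertex $v_n$. Any $\gamma \in \Gamma$ satisfying $\gamma(f) = f'$ then fulfils $d_T(e, \gamma(f)) = d_T(v_0, v_n) + 1 = k$ and $e, \gamma(f)$ are linkable by construction. If $e$ and $f$ lie in different orbits (so $f \in \Gamma \cdot \bar e$, since there are exactly two arc orbits), the same argument with $\Gamma \cdot \bar e$ in place of $\Gamma \cdot e$ provides a $\gamma \in \Gamma$ such that $\gamma(f)$ is linkable with $e$ (at an even link length), which yields the first part of the proposition.

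The main obstacle is the orbit bookkeeping on $A_0$: identifying exactly which arcs of $A_0$ lie in $\Gamma \cdot e$ versus $\Gamma \cdot \bar e$, and confirming that outward-pointing representatives of the required orbit exist at every required position on the $K_e$-side. This rests on the bipartition-preservation from Corollary \ref{cor:reducedtd}, which in turn comes from the action being edge- but not arc-transitive. The existence of the hyperbolic element is a standard input from group actions on trees, and the remaining distance and linkability verifications are routine.
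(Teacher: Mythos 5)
Your proof rests on the claim that, because $\Gamma$ acts edge- but not arc-transitively on $T$, the two arc orbits are determined by the colour (bipartition class) of the initial vertex. This is not always true, and the failure mode is exactly the first of the two cases that the paper treats separately.

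Edge- but not arc-transitivity gives you a $\Gamma$-invariant \emph{orientation} of $E(T)$; it does \emph{not} give you that $\Gamma$ preserves the bipartition colouring. Indeed, if $\Gamma$ happens to act \emph{vertex-transitively} on $T$ (which is not ruled out — the construction in Corollary~\ref{cor:reducedtd} only subdivides when the action on the contracted tree turns out to be arc-transitive, so a vertex-transitive, edge- but not arc-transitive action is a genuine possibility), then by vertex-transitivity some $\gamma\in\Gamma$ sends a vertex to an adjacent vertex, hence \emph{swaps} the two parts of the bipartition. In that situation arcs with initial vertices of different colours can lie in the \emph{same} $\Gamma$-orbit, so your rule ``$f'\in\Gamma\cdot e$ iff $f'^-$ has the colour of $e^-$'' is simply false. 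Concretely, along the axis $A_0$ the arc orbits are governed by the invariant orientation, whose pattern is only periodic with the translation length of $\gamma_0$ and need not alternate; there is no guarantee that the outward-pointing arc at $v_n$ for even $n\leq 0$ lies in $\Gamma\cdot e$. Thus both the linkability conclusion and the ``every odd $k$'' refinement break down in the vertex-transitive case.

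The paper avoids the axis altogether. It fixes the $\Gamma$-invariant orientation and splits into two cases: when every vertex has both incoming and outgoing arcs (the vertex-transitive case, where bipartition colouring is useless), it uses the key observation that every vertex must in fact have \emph{at least two} incoming and two outgoing arcs (else $\Gamma$ would fix the end obtained by iterating the unique out- or in-arc), which directly gives an arc $\gamma(f)$ with $\gamma(f)^-=e^-$, $\gamma(f)\neq e$, linked by a path of length $1$; when all arcs go from one bipartite class to the other, a short path of length $1$ or $2$ does the job. To repair your argument you would need a separate treatment of the vertex-transitive case, and at that point you essentially reproduce the paper's case analysis.
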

\begin{proof}
Since the action is transitive on edges but not on arcs, we can choose an orientation of the edges of $E$ which is preserved under the action of $\Gamma$. There are two possibilities: either every vertex has both incoming and outgoing arcs with respect to this orientation, or all arcs are oriented from one bipartite part to the other.

In the first case note that every vertex has at least two incoming and outgoing arcs (otherwise $\Gamma$ would fix an end), and hence there is $\gamma \in \Gamma$ such that $\gamma(f)^-=e^-$ and the two arcs are linked by the open path of length 1 containing only the vertex $e^-$. In the second case we can find an open path containing one or two vertices, depending on the orientations of the arcs $e$ and~$f$.

If $e$ and $f$ lie in the same orbit, then we can choose $\gamma$ such that $d_T(e, \gamma(f)) = 1$. Concatenation of these open paths shows that we can also choose $\gamma$ such that $d_T(e,\gamma(f))=k$ for any odd $k$.
\end{proof}

\begin{pro}\label{pro:adhesion-paths}
Let $e,f \in E(T)$. There is a family of $|\Vcal(e)|$ disjoint paths connecting $\Vcal(e)$ to $\Vcal(f)$ in $G$. Moreover, there is $N \in \mathbb N$ such that if $d_T(e,f) \geq N$, then for any pair of vertices $u \in \Vcal(e)$ and $v\in \Vcal(f)$ there is a $u$--$v$-path in $G$ which meets $\Vcal(e) \cup \Vcal(f)$ only in $u$ and $v$.
\end{pro}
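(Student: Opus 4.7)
The first assertion is Menger's theorem in disguise. Writing $k:=|\Vcal(e)|=|\Vcal(f)|$ (equal by strong reducedness of $\Tcal$), it suffices to show that no vertex set $S\subseteq V(G)$ with $|S|<k$ separates $\Vcal(e)$ from $\Vcal(f)$. After possibly reversing $e$ or $f$ (which leaves their adhesion sets unchanged), we may assume that $e$ points towards $f$ in $T$. Writing $(A,B)$ and $(A',B')$ for the separations of $G$ induced by $e$ and by $f$, with $A$ on the $e^-$-side and $A'$ on the $f^-$-side, this orientation gives $B'\subseteq B$, hence $B'\setminus\Vcal(f)\subseteq B\setminus\Vcal(e)$. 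Since each induced separation minimally separates a pair of ends, we may pick ends $\omega^-$ in $A\setminus\Vcal(e)$ and $\omega^+$ in $B'\setminus\Vcal(f)$; the above inclusion puts $\omega^\pm$ into different components of $G\setminus\Vcal(e)$, so they are distinct ends and $\Vcal(e)$ is a finite separator between them. By the very definition of $k$, any finite separator between these two ends has size at least $k$. Now if $|S|<k$, then $\omega^-$ and $\omega^+$ lie in the same component of $G\setminus S$; joining tails of representative rays by a finite path in $G\setminus S$ yields a walk $W\subseteq G\setminus S$ that must cross both $\Vcal(e)$ and $\Vcal(f)$, and a sub-walk of $W$ from $\Vcal(e)\setminus S$ to $\Vcal(f)\setminus S$ contradicts the choice of $S$. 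Menger's theorem now yields the $k$ disjoint paths.

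For the second assertion, the key local observation is that every $u\in\Vcal(e)$ has a neighbour in $B\setminus\Vcal(e)$: otherwise every path from $u$ into $B\setminus\Vcal(e)$ would exit through $\Vcal(e)\setminus\{u\}$, making $\Vcal(e)\setminus\{u\}$ a separator of size $k-1$ between the two ends originally separated by $\Vcal(e)$, contradicting the minimality of $k$. Symmetrically, every $v\in\Vcal(f)$ has a neighbour in $A'\setminus\Vcal(f)$. Proposition~\ref{pro:finitesubtree} together with quasi-transitivity provides a uniform constant $D$ bounding the $T$-diameter of the finite subtree of parts containing any fixed vertex; choosing $N>2D$ then forces both neighbours to lie in the interior region $H:=(B\cap A')\setminus(\Vcal(e)\cup\Vcal(f))$. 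The sought $u$--$v$-path will then be obtained by prepending $u$ and appending $v$ to any path in $H$ joining those two neighbours.

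It remains to show that, for $d_T(e,f)\geq N$ sufficiently large, $H$ is connected (or at least that the two chosen neighbours can always be routed within the same component of $H$), and this is the main obstacle. The plan is to traverse the arc sequence $e=e_0,e_1,\dots,e_m=f$ along the $T$-path from $e$ to $f$: within each intermediate part $\Vcal(s_i)$ (with $s_i=e_i^+=e_{i+1}^-$ well separated from both endpoints) the first assertion supplies $k$ disjoint paths between the adjacent adhesion sets $\Vcal(e_i)$ and $\Vcal(e_{i+1})$ in $G$, while Proposition~\ref{pro:distance-to-adhesion} together with the quasi-transitive action of $\Gamma_{\Vcal(s_i)}$ on $\Gcal(s_i)$ (Proposition~\ref{pro:partsqtrans}) controls these paths uniformly across $i$. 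Corollary~\ref{cor:finitesubtree} guarantees that all intermediate adhesion sets are disjoint from $\Vcal(e)\cup\Vcal(f)$ once $N$ is large, so local routings in consecutive parts can in principle be concatenated into a path inside $H$; the delicate point is matching the local routings at the shared adhesion sets $\Vcal(e_i)$, where the full force of the uniform control supplied by quasi-transitivity is needed.
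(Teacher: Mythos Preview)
Your argument for the first assertion is correct and essentially matches the paper's: both invoke Menger's theorem after observing that any separator of $\Vcal(e)$ from $\Vcal(f)$ of size less than $k$ would separate two ends too cheaply, contradicting strong reducedness.

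For the second assertion, however, your proposal is incomplete. You explicitly flag the ``main obstacle''---the connectivity of the interior region $H=(B\cap A')\setminus(\Vcal(e)\cup\Vcal(f))$, or at least the ability to route between your two chosen neighbours inside it---and then outline a plan (local routings in consecutive parts, glued at intermediate adhesion sets) without carrying it out. The matching problem you identify at the shared adhesion sets $\Vcal(e_i)$ is genuine: the $k$ disjoint paths between $\Vcal(e_i)$ and $\Vcal(e_{i+1})$ need not line up with those between $\Vcal(e_{i+1})$ and $\Vcal(e_{i+2})$, and nothing you have invoked forces them to. So as written this is a proof sketch, not a proof.

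The paper sidesteps this difficulty entirely with a much simpler device. Rather than proving the interior region is connected, it picks an arc $e'$ in the middle of the $e$--$f$ path in $T$ and a fixed finite connected subgraph $H'$ of $G$ containing $\Vcal(e')$ (transportable by edge-transitivity, so the relevant constants are uniform). If $N$ is large enough, $H'$ is disjoint from $\Vcal(e)\cup\Vcal(f)$. Now take the union of $H'$ with the $k$ disjoint $\Vcal(e)$--$\Vcal(f)$-paths from the first part. Each such path must cross $\Vcal(e')\subseteq V(H')$, so the union is connected; and since the disjoint paths meet $\Vcal(e)\cup\Vcal(f)$ only at their endpoints and $H'$ avoids $\Vcal(e)\cup\Vcal(f)$ altogether, every vertex of $\Vcal(e)\cup\Vcal(f)$ has degree~$1$ in the union. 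Hence no such vertex can be internal on any path in this subgraph, and the required $u$--$v$-path exists. The point is that the disjoint paths from part one already do the long-range routing; the central patch $H'$ only needs to connect them locally, which is automatic.
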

\begin{proof}
Let $(A,B)$ and $(C,D)$ be the separations induced by $e$ and $f$, respectively. It is clear from the definition of induced separations that we may without loss of generality assume that $A \cap D \subseteq \Vcal(e) \cap \Vcal(f)$. There are ends $\omega_A$ and $\omega_D$ which lie in components of $A \setminus B$ and $D \setminus C$, respectively. If there were fewer than $|\Vcal(e)|$ disjoint paths connecting $\Vcal(e)$ to $\Vcal(f)$ in $G$, then by Menger's theorem it would be possible to separate these two ends by removing fewer than $|\Vcal(e)|$ vertices; this contradicts the fact that the tree decomposition is strongly reduced.

For the `moreover' part, let $e' \in E(T)$, and let $H$ be a finite connected subgraph of $G$ containing all vertices in $\Vcal(e')$. By Corollary~\ref{cor:finitesubtree} there is some $N' \in \mathbb N$ such that $\Vcal(f') \cap V(H) = \emptyset$ whenever the distance between $e'$ and $f'$ in $T$ is at least $N'$.

Now assume that the distance between $e$ and $f$ is at least $N=2N'$, and let $e'$ be an arc lying in the center of the path connecting $e$ and $f$ in $T$. The union of $H$ and the collection of disjoint paths from the first part is a connected subgraph of $G$ because each of the paths must pass through a vertex in $\Vcal(e')$. Vertices in $\Vcal(e) \cup \Vcal(f)$ have degree $1$ in this graph because of the choice of $N$. Hence they cannot appear as internal vertices of any path, and thus any path in this subgraph meets $\Vcal(e) \cup \Vcal(f)$ at most in its endpoints. Since the graph is connected, it contains the desired paths.
\end{proof}

\section{Configurations and arrangements}\label{sec: conf arrang}

The goal of this section is to utilize tree decompositions to build SAWs from so-called shapes living on the graphs $\Gcal (t)$ for $t \in V(T)$ and configurations living on  $\Gcal (e)$ for $e \in E(T)$ where $\Tcal=(T,\Vcal)$ is a tree decomposition of a quasi-transitive graph $G$; usually the one provided by Corollary \ref{cor:reducedtd}. In the case where $G$ is a one-dimensional lattice, this was already done by Alm and Janson \cite{AlJa90}. Here we use a more general approach similar to the one introduced in \cite{saw-mcfl} for tree decompositions whose parts are finite. As we also want to treat infinite parts, our definitions and notation are slightly different from the ones used there.

Let $G$ be connected, locally finite, simple and quasi-transitive, and let $\Tcal=(T,\Vcal)$ be a tree decomposition of $G$. Assume that we have a map $\theta$ mapping each edge to a part containing its endpoints as in Proposition \ref{pro:edgeplacement}. Define adhesion graphs and part graphs for this tree decomposition in the same way as we did after  Proposition \ref{pro:edgeplacement} for the tree decomposition provided by Corollary \ref{cor:reducedtd}.  A \emph{configuration} on an arc $e$ of $T$ with respect to $\Tcal$ is a triple $c=(q,x,y)$, where $x,y \in \{e,\bar{e}\}$ are not necessarily different orientations of $e$, and $q$ is either a SAW on the graph $\Gcal(e)$ or equal to the empty set $\emptyset$. In the latter case $c$ is called the \emph{empty configuration}.

We call $x$ the \emph{entry direction} and $y$ the \emph{exit direction} of $c$. The inverse of the configuration $c=(q,x,y)$ is the configuration $\bar c=(\bar q ,y,x)$ consisting of the reverse walk $\bar q$ of $q$, and which has entry direction $y$ and exit direction $x$.

A \emph{shape} on a vertex $s$ of $T$ with respect to $\Tcal$ is a SAW $p$ on the part graph $\Gcal(s)$. We say that a shape $p$ on $s$ and a configuration $(q,x,y)$ on an arc $e \in \Eout(s)$ are compatible if the following three conditions hold; the intersection of a walk $p$ with a subgraph is defined as the subsequence of $p$ consisting of the vertices and edges contained in that subgraph and therefore this intersection is in general not a walk but a multi-walk.
\begin{enumerate}[label=(C\arabic*)]
    \item \label{itm:compatible-intersection}
    $p \cap \Gcal(e) = q \cap \Gcal(s)$.
    \item \label{itm:compatible-in}
    If $x=e$, then $p$ starts in $\Vcal(e)$. 
    \item \label{itm:compatible-out}
    If $y=e$, then $p$ ends in $\Vcal(e)$. 
\end{enumerate}

An \emph{arrangement} on a finite open subtree $S$ of $T$ with respect to $\Tcal$ is a pair $A=(P,C)$ of maps assigning to every vertex $s \in V(S)$ a shape $P(s)$ and to every arc $e$ in $E(S)$ a configuration $C(e)=(Q(e),X(e),Y(e))=C(\bar e)$, where $X(e), Y(e)$ may be arcs in $E(S)$ or their inverses, such that for $s \in V(S)$ the following conditions hold:

\begin{enumerate}[label=(D\arabic*)]
    \item \label{itm:config-compatible}
    $P(s)$ and $C(e)$ are compatible for every $e \in \Eout(s)$.
    \item \label{itm:config-entry}
    There is at most one arc $e\in \Eout(s)$ such that $X(e) = e$. If there is no such arc, then $P(s)$ starts with a non-virtual arc.
    \item \label{itm:config-exit}
    There is at most one arc $e\in \Eout(s)$ such that $Y(e) = e$. If there is no such arc, then $P(s)$ ends with a non-virtual arc.
\end{enumerate}

The \emph{weight} $\norm{A}$ of the arrangement $A=(P,C)$ on the open subtree $S$ is the total number of non-virtual arcs contained in all the walks $P(s)$ for $s \in V(S)$, so $\norm{A}=\sum_{s \in V(S)} \norm{P(s)}$, where $\norm{P(s)}$ denotes the number of non-virtual arcs in $P(s)$.

The arrangement $A$ is called \emph{boring} on an arc $e \in E(S)$ if $X(e)=Y(e)$ and all arcs of $Q(e)$ are contained in $\Ecal(X(e))$; in this case we also say that the configuration $C(e)=(Q(e),X(e),Y(e))$ is \emph{boring}. Intuitively, if an arrangement is boring on an arc $e$, then this means that all non-trivial shapes lie on one side of this arc; in other words we won't lose any information by removing everything that lies on the other side of this arc from the open subtree. We can thus reduce an arrangement by iteratively pruning subtrees attached to edges on which the arrangement is boring until all such subtrees are trivial. We call an arrangement $A$ \emph{reduced} if it is non-boring on all arcs in $E(S)\setminus \partial E(S)$. Call a reduced arrangement \emph{complete} if the configuration $C(e)$ is boring and $X(e)= \bar e$ for every $e \in  \partial E(S)$.

Our goal is to establish a relation between (complete) arrangements on finite open subtrees $S$ of $T$ and self-avoiding walks of length at least 1 on $\Gcal(S)$. While our definition of arrangements differs slightly from the definitions of configurations in \cite{saw-mcfl}, we will follow the same strategy, thus our proofs are quite similar.

The main result of this section is Theorem \ref{thm:saw-to-arrangement} which intuitively states that every self avoiding walk has an arrangement associated to it and (subject to some technical conditions) this is a bijection. The technical details are quite messy, hence we start by providing a very rough sketch of the main ideas for the convenience of the reader. Tree decompositions can be made coarser by contracting edges, and finer by decontracting these edges again, and we can define corresponding operations on arrangements; see Figure \ref{fig:contraction} for a sketch and Constructions \ref{cons:contraction} and \ref{cons:projection} for details. Every self-avoiding walk $p$ consists of finitely many edges. Thus after some finite number of contractions in the tree decomposition, all edges of $p$ are contained in one part. Hence $p$ is a shape on this part, and this shape can be extended to a configuration on the open star. Decontracting this arrangement gives the arrangement on the original tree decomposition corresponding to $p$. We may also perform these steps in reverse order to translate an arrangement into a self-avoiding walk.

In the remainder of this section we provide the details to the above proof sketch. As a first step, we show that a self avoiding walk whose edges belong to a single part can indeed be transformed into an arrangement on the open star.

\begin{lem}\label{lem:arrangement-from-shape}
Let $S=\sta(s)$ be an open star in $T$ and $p$ be a shape on $s$. 
\begin{enumerate}[label=(\roman*)]
\item \label{itm:arrangement-existence}
There is an arrangement $A=(P,C)$ on $S$ such that $P(s)=p$.
\item \label{itm:config-from-shape}
For $e \in E(S)$ the walk $Q(e)$ of the configuration $C(e)$ is uniquely defined by $p$.
\item \label{itm:specific-entry-exit}
If $p$ starts in $\Vcal(f)$ for some $f \in E(S)$ and/or ends in $\Vcal(f')$ for some $f' \in E(S)$, we may choose $A$ such that $X(f)=f$ and/or $Y(f')=f'$ holds, respectively.
\end{enumerate}
\end{lem}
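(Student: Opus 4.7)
My plan is to construct the arrangement $A=(P,C)$ with $P(s)=p$ explicitly and then verify the axioms. The walk $Q(e)$ of each configuration $C(e)$ will be essentially forced by \ref{itm:compatible-intersection}, while the entry direction $X(e)$ and exit direction $Y(e)$ retain the small amount of flexibility required to handle \ref{itm:specific-entry-exit}.

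To define $Q(e)$, I observe that $\Gcal(s)\cap\Gcal(e)=(\Vcal(e),\Ecal(e))$ since the arcs in $\Ecal(\bar e)$ are absent from $\Gcal(s)$. Hence $p\cap\Gcal(e)$ decomposes as a sequence of walk components $q_1,\ldots,q_k$ on $\Gcal(e)$, each with vertices in $\Vcal(e)$ and arcs in $\Ecal(e)$. Because $\Ecal(\bar e)$ realises every ordered pair of distinct vertices of $\Vcal(e)$ by a unique arc, for each $i<k$ there is a unique arc $a_i\in\Ecal(\bar e)$ from the terminal vertex of $q_i$ to the initial vertex of $q_{i+1}$; these endpoints differ since $p$ is self-avoiding. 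I then set
\[
Q(e):=q_1\,a_1\,q_2\,a_2\cdots a_{k-1}\,q_k,
\]
with the convention $Q(e):=\emptyset$ when $p$ avoids $\Vcal(e)$. Self-avoidance of $p$ implies the $q_i$ visit pairwise disjoint sets of vertices, so $Q(e)$ is a SAW on $\Gcal(e)$; deleting its virtual arcs $a_i$ recovers $p\cap\Gcal(e)$, which gives \ref{itm:compatible-intersection}. Since both the $q_i$ and the connecting arcs $a_i$ are uniquely determined by $p$, this proves \ref{itm:config-from-shape}.

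For \ref{itm:arrangement-existence} I prescribe the entry directions as follows: if the first arc of $p$ is virtual and lies in some $\Ecal(f)$, which is then unique because the sets $\Ecal(f)$ for $f\in E(S)$ are pairwise disjoint, I set $X(f):=f$ and $X(e):=\bar e$ for every other $e$; otherwise I set $X(e):=\bar e$ for all $e\in E(S)$. I define $Y$ analogously using the last arc of $p$. Axioms \ref{itm:config-entry} and \ref{itm:config-exit} are then immediate, and \ref{itm:compatible-in} and \ref{itm:compatible-out} hold because $X(e)=e$ (resp.\ $Y(e)=e$) is declared only when the first (resp.\ last) arc of $p$ lies in $\Ecal(e)$, which forces $p$ to start (resp.\ end) in $\Vcal(e)$.

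Finally, for \ref{itm:specific-entry-exit} I override the default prescription: whenever $p^-\in\Vcal(f)$, I set $X(f):=f$ and $X(e):=\bar e$ for all $e\neq f$, and analogously for $Y(f')$ when $p^+\in\Vcal(f')$. The conditions \ref{itm:compatible-in}, \ref{itm:compatible-out}, \ref{itm:config-entry} and \ref{itm:config-exit} are then immediate from the hypothesis. The step needing care is the situation where $p$ happens to begin with a virtual arc in some $\Ecal(g)$ with $g\neq f$: the default would assign $X(g)=g$, but the override sets $X(g)=\bar g$. This remains consistent because \ref{itm:compatible-in} is one-sided, restricting only the case $X(e)=e$, and because the already-constructed $Q(g)$ is independent of $X(g)$, so \ref{itm:compatible-intersection} is unaffected. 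I expect this interplay between the default entry assignment and the override to be the only subtle point in the argument.
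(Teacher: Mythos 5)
Your proof is correct and follows essentially the same approach as the paper: you construct the walks $Q(e)$ by reconnecting the walk components of $p \cap \Gcal(e)$ with the unique virtual arcs in $\Ecal(\bar e)$, observe that this determines $Q(e)$ uniquely, and then assign entry/exit directions $X$ and $Y$ subject to the constraints of \ref{itm:config-entry}, \ref{itm:config-exit}, \ref{itm:compatible-in} and \ref{itm:compatible-out}. Your explicit treatment of the case where an override of $X(f)$ conflicts with a virtual first arc in $\Ecal(g)$, $g \neq f$, is a nice detail the paper leaves implicit, and your observation that $Q(g)$ does not depend on $X(g)$ is exactly what makes the override harmless.
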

\begin{proof}
Let $e \in E(S)$. Our first goal is to construct a walk $q$ on $\Gcal(e)$ such that \ref{itm:compatible-intersection} is satisfied and show that this walk is unique. If $p \cap \Gcal(e) =\emptyset$, we set $q=\emptyset$ the empty walk. Otherwise  $p \cap \Gcal(e)$ is a multi-walk, let $p_1, \dots, p_n$ be its walk-components. Set $q=p_1 e_1 p_2 \dots e_{n-1} p_n$, where $e_i \in \Ecal(\bar e)$ connects $p_i^+$ and $p_{i+1}^-$. It is not hard to check that \ref{itm:compatible-intersection} is satisfied. Furthermore, by definition $\Vcal(e) \subseteq V(\Gcal(s))$ and $\Ecal(e) \subseteq E(\Gcal(s))$ holds, thus there is no other way to complete $q$.

Let now $P(s)=p$ and for $e \in E(S)$ write $Q(e)$ for the walk $q$ on $G(e)$ constructed above. If $P(s)$ starts in some adhesion set $\Vcal(f)$, we may choose $X(f)=f$ and $X(e)=\bar e$ for $e \in E(S)\setminus\{f\}$. Otherwise $P(s)$ starts with a non-virtual arc and we have to choose $X(e)=\bar e$ for every $e \in E(S)$. Similarly, if $P(s)$ ends in some adhesion set $\Vcal(f)$ we may choose $Y(f)=f$ and $X(e)=\bar e$ for $e \in E(S)\setminus\{f\}$, otherwise we have to choose $X(e)=\bar e$ for every $e \in E(S)$. In any case this construction satisfies \ref{itm:config-compatible} -- \ref{itm:config-exit}. 
\end{proof}

The following Lemma formalises the intuition behind the notion of boring configurations. In particular, it shows that it is usually sufficient to work with reduced arrangements, as any non-reduced arrangement can be restricted to a smaller open subtree without losing any information. 

\begin{lem} \label{lem:boringextension}
    Let $S=\sta(s)$ be an open star in $T$, let $e \in E(S)$ be one of its arcs and let $c=(q,x,y)$ be a non-empty boring configuration on $e$ such that $x=y=e$. Then there is a unique arrangement $A=(P,C)$ on $S$ such that $C(e)=c$. Additionally, $P(s)$ contains no non-virtual arcs and $C(f)$ is boring for every $f \in E(S)\setminus \{e\}$.
\end{lem}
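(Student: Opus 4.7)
My plan is to prove existence by an explicit construction using Lemma~\ref{lem:arrangement-from-shape} with the shape $p := q$, and to prove uniqueness by exploiting the fact that $q$ is a single walk, not a multi-walk with multiple components.

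For existence, I would invoke Lemma~\ref{lem:arrangement-from-shape} with $p := q$. This is legal because $q$ is a SAW on $\Gcal(e)$ all of whose arcs lie in $\Ecal(e) \subseteq E(\Gcal(s))$, hence it is also a SAW on $\Gcal(s)$ starting and ending in $\Vcal(e)$. Part (iii) of that lemma furnishes an arrangement $A = (P,C)$ with $P(s) = q$ and $X(e) = Y(e) = e$; the remaining entry/exit directions are forced by (D2)/(D3) to be $X(f) = Y(f) = \bar f$ for every $f \in E(S) \setminus \{e\}$, since $e$ already witnesses the unique arc whose entry (resp.\ exit) direction points away from $s$. Three verifications remain. (i) $Q(e) = q$, so $C(e) = c$: the construction of $Q(e)$ in Lemma~\ref{lem:arrangement-from-shape} concatenates the walk components of $p \cap \Gcal(e)$, and since $p = q$ lives entirely in $\Gcal(e)$, there is only one component. (ii) For $f \neq e$ the configuration $C(f)$ is boring: the virtual arc sets $\Ecal(e)$ and $\Ecal(f) \cup \Ecal(\bar f)$ are pairwise disjoint by construction, so $q \cap \Gcal(f)$ contains no edges, and hence $Q(f)$ is built entirely from connecting arcs in $\Ecal(\bar f) = \Ecal(X(f))$, which is precisely the boring condition. (iii) $P(s) = q$ contains no non-virtual arc, because all its arcs belong to the virtual set $\Ecal(e)$.

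For uniqueness, let $A' = (P',C')$ be any arrangement on $S$ with $C'(e) = c$. The compatibility conditions (C1)--(C3) force $P'(s)$ to start and end in $\Vcal(e)$ and to satisfy $P'(s) \cap \Gcal(e) = q \cap \Gcal(s) = q$. The key observation is that $q$ is a single walk, so the multi-walk $P'(s) \cap \Gcal(e)$ must also have exactly one walk component. Since $\Ecal(\bar e) \cap E(\Gcal(s)) = \emptyset$, the edges of $P'(s)$ that contribute to this intersection are exactly those lying in $\Ecal(e)$. If $P'(s)$ ever left $\Vcal(e)$ or used a non-virtual arc between two vertices of $\Vcal(e)$, the intersection would split into at least two walk components: the SAW condition rules out $P'(s)$ returning to its starting vertex, so the fact that $P'(s)$ must end in $\Vcal(e)$ guarantees a genuine second component. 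This contradicts $q$ being a single walk, so $P'(s)$ stays inside $\Vcal(e)$ and uses only arcs of $\Ecal(e)$, giving $P'(s) = P'(s) \cap \Gcal(e) = q$. The remaining walks $Q'(f)$ are then determined by $P'(s) = q$ via Lemma~\ref{lem:arrangement-from-shape}(ii), while the entry/exit directions are forced by (D2)/(D3) together with $X'(e) = Y'(e) = e$, yielding $A' = A$.

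The only mildly delicate point will be the degenerate case where $q$ is a trivial walk at a single vertex $v \in \Vcal(e)$. In that situation the above ``splitting into two components'' argument must still be run, but it amounts to the observation that a SAW $P'(s)$ starting at $v$ cannot return to $v$, so if $P'(s)$ had positive length its endpoint $v' \in \Vcal(e)$ would differ from $v$ and the intersection $P'(s) \cap \Gcal(e)$ would consist of at least the two trivial components $(v)$ and $(v')$, contradicting $q = (v)$. Hence $P'(s) = (v) = q$ and the uniqueness argument carries through uniformly.
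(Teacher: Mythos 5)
Your existence argument follows the paper's proof exactly: view $q$ as a shape on $s$ (legal since all its arcs lie in $\Ecal(e) \subseteq E(\Gcal(s))$), invoke Lemma~\ref{lem:arrangement-from-shape} with the direction choices $X(e)=Y(e)=e$, and then read off from \ref{itm:config-entry}/\ref{itm:config-exit} and \ref{itm:compatible-intersection} that the remaining directions and the boringness of the $C(f)$ are forced. The three verifications you list are precisely the ones the paper carries out.

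On uniqueness your write-up is actually more explicit than the paper's (which only remarks that the entry/exit directions are forced and leaves the rest implicit), and the key idea — that $P'(s)\cap\Gcal(e)=q$ being a single walk with both endpoints of $P'(s)$ in $\Vcal(e)$ forces $P'(s)=q$ — is the right one. However, the dichotomy you use to trigger the component-splitting argument (``$P'(s)$ ever left $\Vcal(e)$ \emph{or} used a non-virtual arc between two vertices of $\Vcal(e)$'') misses a third possibility: $P'(s)$ could traverse a \emph{virtual} arc $e'\in\Ecal(f)$ for some $f\in E(S)\setminus\{e\}$ whose two endpoints both lie in $\Vcal(e)\cap\Vcal(f)$. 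Such an arc is neither non-virtual nor does it take $P'(s)$ outside $\Vcal(e)$, yet it is not in $E(\Gcal(e))$ and therefore still produces a break in $P'(s)\cap\Gcal(e)$. Since your conclusion is phrased as ``$P'(s)$ \dots uses only arcs of $\Ecal(e)$'', the logical step from the dichotomy to the conclusion does not quite close. The repair is immediate: state the dichotomy as ``$P'(s)$ uses some arc not in $\Ecal(e)$'' — the same splitting argument applies verbatim, because any arc outside $\Ecal(e)$ is outside $E(\Gcal(s))\cap E(\Gcal(e))$ and hence absent from the intersection. With that wording fixed, the proof is complete and the degenerate trivial-walk case is handled cleanly as you describe.
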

\begin{proof}
By definition of boring configurations the walk $q$ contains only arcs in $\Ecal(e)$ and no arcs in $\Ecal(\bar e)$. Thus $q$ is a shape on $s$ and Lemma~\ref{lem:arrangement-from-shape} provides the existence of an arrangement $A=(P,C)$ on $S$ such that $P(s)=q$. Additionally $q$ starts and ends in $\Vcal(e)$, so we may choose $A$ such that $X(e)=Y(e)=e$. Note that this is the only way of choosing entry and exit directions which is consistent with the given boring configuration $c$, and it is easy to check that indeed $C(e)=c$.  

Let $f \neq e$ be an arc of $S$. Then properties \ref{itm:config-entry} and \ref{itm:config-exit} imply $X(f)=Y(f)=\bar f$ because $X(e)=Y(e) = e$. Also $P(s) \cap \Gcal(f)$ consists only of vertices, so \ref{itm:compatible-intersection} implies that all arcs of $Q(f)$ are contained in $\Ecal(\bar f)$, so that $C(f)$ is boring.
\end{proof}

\begin{rmk}\label{rmk:int-edges-non-empty}
Observe that for any arrangement $A=(P,C)$ on a finite open subtree $S$ of $T$ empty configurations can only occur on boundary arcs of $S$. This follows from the fact that if $X(e) = e$ for some interior arc $e$ of $S$, then the walk $P(e^-)$ has to start in $\Vcal(e)$ by \ref{itm:compatible-in} and thus $Q(e)$ is not empty by \ref{itm:compatible-intersection}.
\end{rmk}

In the next step we will construct coarser tree decompositions from a given tree decomposition $\Tcal=(T,\Vcal)$ of $G$ by contracting arcs of $T$ (similar to the construction used in the proof of Corollary \ref{cor:reducedtd}). 

We first recall the definition of contraction in the special case where only a single edge is contracted, and introduce some additional notation for this special case. Let $f$ be an arc of $T$. The tree decomposition $\Tcal/f=(T/f,\Vcal/f)$ of $G$ obtained from $\Tcal$ by contracting $f$ is given as follows. The vertex set $V(T/f)$ of the decomposition tree $T/f$ is obtained from $V(T)$ by replacing $f^-$ and $f^+$ by a single new vertex $s_f$ and its edge set $E(T/f)$ is obtained from $E(T)$ by deleting $f$ and $\bar{f}$ and for the remaining edges changing all endpoints in $\{f^-, f^+\}$ to the new vertex $s_f$. In other words, we contract $f$ and leave the names of vertices and edges unchanged wherever possible. Furthermore, the part corresponding to $s_f$ is $\Vcal/f(s_f)=\Vcal(f^-) \cup \Vcal(f^+)$, for all other vertices $t \in V(T/f)$ we define $\Vcal/f(t) = \Vcal(t)$. It is not hard to check that the result satisfies properties \ref{itm:td-coververtices} -- \ref{itm:td-nocrossedge}, thus being a tree decomposition. The part graph of the new vertex is $\Gcal(s_f)=\Gcal(\sta(f))$, all other part graphs and adhesion graphs are inherited from $\Tcal$ and stay the same.

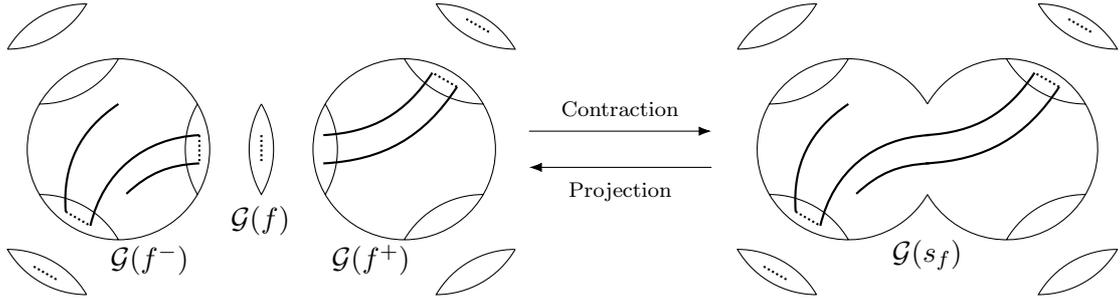
\begin{figure}
    \centering
    \begin{tikzpicture}[scale=1.2,vertex/.style={inner sep=1pt,circle,draw,fill},use Hobby shortcut]

        \draw 
        (30:1) arc(30:330:1cm)
        (90:1) arc(-30:-90:1cm)
        (-90:1) arc(30:90:1cm);

        \draw [shift = {(120:.7)}, rotate = 120]
        (30:1) arc(30:-30:1cm) arc(210:150:1cm);
        
        \draw [shift = {(-120:.7)}, rotate = -120]
        (30:1) arc(30:-30:1cm) arc(210:150:1cm);

        \begin{scope}[shift={(-30:1)},shift={(30:1)},rotate=180]
            \draw 
            (30:1) arc(30:330:1cm)
            (90:1) arc(-30:-90:1cm)
            (-90:1) arc(30:90:1cm);

            \draw [shift = {(120:.7)}, rotate = 120]
            (30:1) arc(30:-30:1cm) arc(210:150:1cm);
        
            \draw [shift = {(-120:.7)}, rotate = -120]
            (30:1) arc(30:-30:1cm) arc(210:150:1cm);
        \end{scope}

            \draw[thick] (90:.5) to [bend right] (-130:.9);
            \draw[thick,densely dotted] (-130:.9)--(-110:.9);
            \draw [shift = {(-120:.7)},thick,densely dotted] (-130:.9)--(-110:.9);
            \draw[thick] (-110:.9) to [bend left=37] (10:.9);
        \begin{scope}[shift={(-30:1)},shift={(30:1)},rotate=180]
            \draw[thick] (-10:.9) to[bend right=27] (-110:.9);
            \draw[thick,densely dotted] (-130:.9)--(-110:.9);
            \draw [shift = {(-120:.7)},thick,densely dotted] (-130:.9)--(-110:.9);
            \draw[thick] (-130:.9) to [bend left=27] (10:.9);
        \end{scope}
        \draw[thick] (-10:.9) to [bend right=20] (-80:.5);

        \node[yshift=-.8cm] at (-30:1) {$\Gcal(s_f)$};

        \begin{scope}[xshift=-8cm]
            \draw 
            (0:1) arc(0:360:1cm)
            (90:1) arc(-30:-90:1cm)
            (-90:1) arc(30:90:1cm)
            (30:1) arc(150:210:1cm);

            \draw [shift = {(0:.7)}]
            (30:1) arc(30:-30:1cm) arc(210:150:1cm);
            
            \draw [shift = {(120:.7)}, rotate = 120]
            (30:1) arc(30:-30:1cm) arc(210:150:1cm);
        
            \draw [shift = {(-120:.7)}, rotate = -120]
            (30:1) arc(30:-30:1cm) arc(210:150:1cm);

            \draw[thick] (90:.5) to [bend right] (-130:.9);
            \draw[thick,densely dotted] (-130:.9)--(-110:.9);
            \draw [shift = {(-120:.7)},thick,densely dotted] (-130:.9)--(-110:.9);
            \draw[thick] (-110:.9) to [bend left=37] (10:.9);
            \draw[thick,densely dotted] (10:.9)--(-10:.9);
            \draw [shift = {(0:.68)},thick,densely dotted] (10:.9)--(-10:.9);
            \draw[thick] (-10:.9) to [bend right=20] (-80:.5);

            \begin{scope}[xshift=1.4cm, shift = {(30:1)}, shift = {(-30:1)},rotate=180]
            \draw
            (0:1) arc(0:360:1cm)
            (90:1) arc(-30:-90:1cm)
            (-90:1) arc(30:90:1cm)
            (30:1) arc(150:210:1cm);
            
            \draw [shift = {(120:.7)}, rotate = 120]
            (30:1) arc(30:-30:1cm) arc(210:150:1cm);
        
            \draw [shift = {(-120:.7)}, rotate = -120]
            (30:1) arc(30:-30:1cm) arc(210:150:1cm);

            \draw[thick] (-10:.9) to[bend right=27] (-110:.9);
            \draw[thick,densely dotted] (-130:.9)--(-110:.9);
            \draw [shift = {(-120:.7)},thick,densely dotted] (-130:.9)--(-110:.9);
            \draw[thick] (-130:.9) to [bend left=27] (10:.9);
                
            \end{scope}

            \node[anchor=north] at (-70:1) {$\Gcal(f^-)$};
            \node[anchor=north,xshift=2cm,shift={(30:1)},shift={(-30:1)}] at (-110:1) {$\Gcal(f^+)$};
            \node[anchor=north,xshift=1cm,shift={(-30:1)}] at (0,-.1) {$\Gcal(f)$};

        \end{scope}

            \draw[-Latex] (-3.5,.2)--(-1.5,.2);
            \draw[Latex-] (-3.5,-.2)--(-1.5,-.2);
            \node[anchor=south] at (-2.5,.25) {\scriptsize Contraction};
            \node[anchor=north] at (-2.5,-.25) {\scriptsize Projection};

    \end{tikzpicture}
    \caption{Sketch of contraction (see Construction \ref{cons:contraction}) and projection (see Construction~\ref{cons:projection}) of arrangements with respect to the arc $f$.}
    \label{fig:contraction}
\end{figure}

Given an arrangement with respect to the tree decomposition $\Tcal$, we can define the contraction of the arrangement as described below. The construction and its inverse (Construction \ref{cons:projection}) are sketched in Figure \ref{fig:contraction}.

\begin{cons}\label{cons:contraction}
Let $A=(P,C)$ be an arrangement on the open subtree $\sta(f)$ of $T$ and let $\sta(s_f)$ be the open star in $T/f$ centered in the new vertex $s_f$ introduced by contracting $f$ in $T$. We construct a pair $(P/f,C/f)$ and show that it is an arrangement on $\sta(s_f)$. Configurations stay the same; we set $C/f(e)=C(e)$ for every $e \in \Eout(s_f)$. By Remark \ref{rmk:int-edges-non-empty} the configuration $C(f)$ is non-empty, let $Q(f)=(v_1, e_1, \dots, e_{k-1}, v_k)$. By \ref{itm:compatible-intersection} we have that $P(f^-)$ and $P(f^+)$ both visit $v_1, \dots, v_k$ in this order and no other vertices of $\Vcal(f)$. Let $a_0=X(f)$, let $a_k=Y(f)$ and for $j \in [k-1]$ let $a_j \in \{f,\bar{f}\}$ such that $e_j \in \Ecal(a_j)$.  
We define the walk $P/f(s_f)$ as the concatenation
\[
    P(a_0^+)v_1P(a_1^+)v_2\dots v_kP(a_k^+).
\]

In other words, $P/f(s_f)$ is obtained from $P(f^-)$ and $P(f^+)$ by deleting all arcs in $\Ecal(f)$ and then piecing the walk components of the resulting multi-walks together in a consistent manner.
\end{cons}

Intuitively it should be clear that applying Construction \ref{cons:contraction} to an arrangement yields an arrangement on the contracted tree decomposition. We will now prove this formally.

\begin{lem}
\label{lem:contraction-walk}
The walk $P/f(s_f)$ is a self-avoiding walk on $\Gcal(s_f)$ satisfying $ P/f(s_f) \cap \Gcal(f^-) = P(f^-)  - \Ecal(f)$ and $P/f(s_f) \cap \Gcal(f^+) = P(f^+)  - \Ecal(\bar f)$. In particular, the set of arcs contained in $P/f(s_f)$ consists of the arc sets of $P(f^-)  - \Ecal(f)$ and $P(f^+)  - \Ecal(\bar f)$.
\end{lem}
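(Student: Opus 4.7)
The plan is to first unpack what the compatibility condition \ref{itm:compatible-intersection} forces on $P(f^-)$ and $P(f^+)$, and then to verify the three claims of the lemma by direct inspection of the concatenation defining $P/f(s_f)$. Applying compatibility to $s = f^-$ and $e = f$ yields $P(f^-) \cap \Gcal(f) = Q(f) \cap \Gcal(f^-)$; since $E(\Gcal(f^-))$ contains $\Ecal(f)$ but not $\Ecal(\bar f)$, the right-hand side consists of $v_1, \dots, v_k$ in this order together with exactly the arcs $e_j$ with $a_j = f$. I would conclude that $P(f^-)$ meets $\Vcal(f)$ precisely at $v_1, \dots, v_k$ in this order, and that its sub-walk $R_j^- := v_j P(f^-) v_{j+1}$ equals the single-arc walk $(v_j, e_j, v_{j+1})$ when $a_j = f$, and is a detour with internal vertices in $\Vcal(f^-) \setminus \Vcal(f)$ and arcs in $E(\Gcal(f^-)) \setminus \Ecal(f)$ when $a_j = \bar f$. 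The initial sub-walk $R_0^-$ of $P(f^-)$ ending at $v_1$ and the final sub-walk $R_k^-$ starting at $v_k$ are either trivial or detours of the same kind; conditions \ref{itm:compatible-in} and \ref{itm:compatible-out} force them to be trivial whenever $X(f) = f$, respectively $Y(f) = f$. A symmetric analysis of $P(f^+)$ produces sub-walks $R_j^+$ that are direct arcs when $a_j = \bar f$ and detours inside $\Vcal(f^+) \setminus \Vcal(f)$ when $a_j = f$.

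Next, the $j$-th segment $v_{j-1} P(a_j^+) v_j$ in the definition of $P/f(s_f)$ equals $R_j^+$ if $a_j = f$ and $R_j^-$ if $a_j = \bar f$, and is therefore always a detour-type sub-walk whose arcs lie in $E(\Gcal(s_f)) = \bigl(E(\Gcal(f^-)) \cup E(\Gcal(f^+))\bigr) \setminus \bigl(\Ecal(f) \cup \Ecal(\bar f)\bigr)$. Since consecutive segments share an endpoint $v_j$, their concatenation is a well-formed walk on $\Gcal(s_f)$. To establish self-avoidance, I would combine four observations: each segment is a SAW as a sub-walk of a SAW; internal vertices of a segment drawn from $P(f^\pm)$ lie in $\Vcal(f^\pm) \setminus \Vcal(f)$, and $\Vcal(f^-) \cap \Vcal(f^+) = \Vcal(f)$ by \ref{itm:td-nocrossedge}; distinct segments from the same side are non-overlapping sub-walks of a common SAW and hence have disjoint internal vertex sets; and each junction vertex $v_j$ appears exactly once in the concatenation.

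Finally, for the intersection identities I would use that $E(\Gcal(f^-)) \cap E(\Gcal(f^+)) = \emptyset$ (the virtual arc-sets $\Ecal(f), \Ecal(\bar f)$ are disjoint by definition, and the non-virtual parts $\Ecal(s)$ are indexed by the partition $\theta^{-1}$ of $E(G)$). Intersecting $P/f(s_f)$ with $\Gcal(f^-)$ retains every $f^-$-side segment in full and collapses every $f^+$-side segment to its two endpoints $v_j, v_{j+1}$, producing a break in the resulting multi-walk at each such position. On the other hand, $P(f^-) - \Ecal(f)$ introduces a break between $v_j$ and $v_{j+1}$ precisely when the direct arc $e_j$ is present in $P(f^-)$, i.e.\ precisely when $a_j = f$, and retains the detours $R_j^-$ unchanged otherwise. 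Since both multi-walks have the same vertices, arcs, and breaks in the same order along $P(f^-)$, they coincide; the identity $P/f(s_f) \cap \Gcal(f^+) = P(f^+) - \Ecal(\bar f)$ follows by the symmetric argument. The main obstacle is bookkeeping: I must carefully track the four boundary cases $X(f), Y(f) \in \{f, \bar f\}$ and confirm that trivial initial or final sub-walks introduce the correct breaks in the intersections, but no conceptual difficulty arises once the structural description of $P(f^\pm)$ is in hand.
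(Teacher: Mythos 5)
Your proof is correct and takes essentially the same approach as the paper: both use the compatibility conditions to characterize the sub-walks of $P(f^\pm)$ between consecutive $v_j$'s as either single virtual arcs or detours with internal vertices outside $\Vcal(f)$, and read off the claims from this structural description. The only minor difference is that you prove self-avoidance directly from vertex-set disjointness, whereas the paper first establishes the intersection identity and then derives self-avoidance from it --- a cosmetic reordering of the same ideas.
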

\begin{proof}
Let  $P/f(s_f) = P(a_0^+)v_1P(a_1^+)v_2\dots v_kP(a_k^+)$ as defined above. If one of $P(f^-)$ and $P(f^+)$ is the empty walk, then all claimed properties are trivially satisfied, so we may assume that both walks are non-empty. By \ref{itm:compatible-in}, $P(X(f)^-)$ must start in $v_1$ and by \ref{itm:compatible-out}, $P(Y(f)^-)$ must end in $v_k$, so both $P(a_0^-)v_1$ and $v_kP(a_k^-)$ are trivial. By definition $Q(f)$ is a walk consisting of arcs $e_1, \dots e_{k-1}$ and by \ref{itm:compatible-intersection}, $v_jP(a_j^-)v_{j+1}$ consists only of the virtual arc $e_j$. Combining these observations with the fact that $P(f^-)$ can be decomposed as $P(f^-) = P(f^-)v_1P(f^-)v_2\dots v_l P(f^-)$
we conclude that 
\[P/f(s_f) \cap \Gcal(f^-) = P(f^-)  - \Ecal(f),\] and similarly for $f^+$. This implies that $P/f(s_f)$ uses no vertex more than once: for vertices in $\Vcal(f)$, this holds by definition, for vertices outside of $\Vcal(f)$, this follows from the fact that $P(f^-)$ and $P(f^+)$ are self-avoiding. Hence $P/f(s_f)$ is self-avoiding.
\end{proof}

\begin{lem}
The pair $A/f=(P/f,C/f)$ defined in Construction~\ref{cons:contraction} is an arrangement on $\sta(s_f)$.
\end{lem}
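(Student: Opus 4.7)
The plan is to verify the three defining properties \ref{itm:config-compatible}--\ref{itm:config-exit} of an arrangement for the pair $A/f = (P/f, C/f)$ on the open star $\sta(s_f) \subseteq T/f$. That $P/f(s_f)$ is a valid shape on $s_f$, i.e.\ a SAW on $\Gcal(s_f) = \Gcal(\sta(f))$, is already delivered by Lemma~\ref{lem:contraction-walk}. Each $C/f(e) = C(e)$ remains a valid configuration on $e$ with respect to $\Tcal/f$ because the adhesion graph $\Gcal(e)$ is unaffected by contracting $f$. So only \ref{itm:config-compatible}--\ref{itm:config-exit} need to be checked.

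For \ref{itm:config-compatible}, fix $e \in \Eout(s_f)$ and, without loss of generality, assume $e \in \Eout(f^-) \setminus \{f\}$. The heart of the argument is \ref{itm:compatible-intersection}: $P/f(s_f) \cap \Gcal(e) = Q(e) \cap \Gcal(s_f)$. Lemma~\ref{lem:contraction-walk} gives $P/f(s_f) \cap \Gcal(f^-) = P(f^-) - \Ecal(f)$; since $\Ecal(e)$ is disjoint from $\Ecal(f)$ and $P(f^+)$ uses no arc of $\Ecal(e) \cup \Ecal(\bar e)$, the arcs of $P/f(s_f)$ inside $\Gcal(e)$ all come from $P(f^-)$. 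On the vertex side, \ref{itm:td-nocrossedge} applied along the $T$-path $f^+, f^-, e^+$ yields $\Vcal(f^+) \cap \Vcal(e) \subseteq \Vcal(f)$, so any vertex of $P(f^+)$ in $\Vcal(e)$ is already visited by $P(f^-)$ and appears exactly once in $P/f(s_f)$ by construction. Hence $P/f(s_f) \cap \Gcal(e) = P(f^-) \cap \Gcal(e)$, and by the original \ref{itm:compatible-intersection} at $f^-$ this equals $Q(e) \cap \Gcal(f^-) = Q(e) \cap \Gcal(s_f)$ (the last equality because $\Gcal(\sta(f))$ contains $\Ecal(e)$ but not $\Ecal(\bar e)$, matching $\Gcal(f^-)$). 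Conditions \ref{itm:compatible-in} and \ref{itm:compatible-out} then transfer directly: the initial and terminal segments of the concatenation $P/f(s_f) = P(a_0^+)v_1 P(a_1^+) \dots v_k P(a_k^+)$ coincide with $P(a_0^+)$ and $P(a_k^+)$, so whenever $X(e) = e$ or $Y(e) = e$, the original compatibility on the corresponding side of $f$ delivers the required property.

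For \ref{itm:config-entry}, suppose some $e_1 \in \Eout(s_f)$ satisfies $X(e_1) = e_1$; without loss of generality $e_1 \in \Eout(f^-)$. The original \ref{itm:config-entry} at $f^-$ forces $e_1$ to be the unique such arc in $\Eout(f^-)$, hence in particular $X(f) \neq f$ viewed from $f^-$, i.e.\ $X(f) = \bar f$. Read from $f^+$, this says $X(\bar f) = \bar f$ on $\bar f \in \Eout(f^+)$, so the original \ref{itm:config-entry} at $f^+$ excludes every other candidate in $\Eout(f^+) \setminus \{\bar f\}$, making $e_1$ unique in $\Eout(s_f)$. If on the other hand no arc of $\Eout(s_f)$ satisfies $X(e) = e$, then the original \ref{itm:config-entry} at both endpoints of $f$, together with the fact that any entry through $f$ has been contracted away, forces the initial segment of $P/f(s_f)$ to begin with a non-virtual arc. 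The verification of \ref{itm:config-exit} is symmetric.

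The most delicate step is the bookkeeping for \ref{itm:compatible-intersection}: one must confirm that vertices of $\Vcal(f) \cap \Vcal(e)$ appear exactly once in $P/f(s_f)$ (rather than the two occurrences they enjoy across $P(f^-)$ and $P(f^+)$ separately), and that the order of $\Vcal(e)$-vertices and $\Ecal(e)$-arcs in the concatenation matches their order in $Q(e)$. The explicit concatenation formula together with the common vertex sequence $v_1, \dots, v_k$ visited in the same order by $P(f^-)$ and $P(f^+)$ (a consequence of \ref{itm:compatible-intersection} on $f$ in the original arrangement) makes this manageable but requires careful case analysis.
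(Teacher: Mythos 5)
Your proof follows essentially the same route as the paper: reduce to verifying (D1)–(D3), invoke Lemma~\ref{lem:contraction-walk} for the shape, and trace compatibility/entry/exit conditions back to the original arrangement at $f^-$ and $f^+$. Two small remarks. First, your treatment of \ref{itm:compatible-intersection} is actually a bit more careful than the paper's one-line ``by construction'': you correctly observe via \ref{itm:td-nocrossedge} that vertices of $\Vcal(f^+)\cap\Vcal(e)$ lie in $\Vcal(f)$, which is the detail needed to see that the intersection has the right vertex set and not just the right arc set. Second, your argument for \ref{itm:compatible-in}/\ref{itm:compatible-out} and the ``no such arc'' case of \ref{itm:config-entry} elides a step that the paper makes explicit: when $X(e)=e$ for some $e\in\Eout(f^-)\setminus\{f\}$, \ref{itm:config-entry} at $f^-$ forces $X(f)=\bar f$, so $a_0^+=X(f)^+=f^-$ and the initial vertex of $P/f(s_f)$ is that of $P(f^-)$ — this is what lets the original compatibility ``transfer''. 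Likewise, when no such arc exists, the precise claim is that all of $\Eout(X(f)^+)$ has entry direction pointing inward, hence $P(X(f)^+)$ starts with a non-virtual arc and therefore so does $P/f(s_f)$; your phrase ``any entry through $f$ has been contracted away'' gestures at this but doesn't pin it down. These are easily filled gaps, and the overall structure of the proof matches the paper's.
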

\begin{proof}
We have already seen that $P/f(s_f)$ is a shape and by definition $C/f(e)=C(e)$ is a configuration, so we only need to verify \ref{itm:config-compatible} -- \ref{itm:config-exit}. Let $e \in \sta(s_f)$ and assume without loss of generality that $e^-=f^-$ in $T$. Then by construction
\[
P/f(s_f) \cap \Gcal(e) = P(f^-) \cap \Gcal(e) = Q(e) \cap \Gcal(f^-)=Q(e) \cap \Gcal(s_f),
\]
so \ref{itm:compatible-intersection} holds. Furthermore $X(e)=e$ if and only if $P(f^-)$ starts in $\Vcal(e)$ and additionally $X(f)=\bar f$ by \ref{itm:config-entry} and thus $P(f^+)$ starts in $\Vcal(f)$. Then by construction the starting vertices of $P/f(s_f)$ and $P(f^-)$ coincide, so $P/f(s_f)$ starts in $\Vcal(e)$ and \ref{itm:compatible-in} is satisfied. Finally either $X(f)=f$ or $X(f)=\bar{f}$, so again by \ref{itm:config-entry} at most one arc $e \in \sta(s_f)$ can satisfy $X(e) = e$. If there is no such arc, then each $e \in \Eout(X(f)^-)$ satisfies $X(e)=\bar e$, so $P(X(f)^-)$ starts with a non-virtual arc, so $P/f(s_f)$ starts with a non-virtual arc and \ref{itm:config-entry} follows. Finally \ref{itm:compatible-out} and \ref{itm:config-exit} follow analogously by considering exit directions.
\end{proof}

We have managed to contract an arrangement on the open tree $\sta(f)$ of $T$ to obtain an arrangement on the open star $\sta(s_f)$ of $T/f$. In the next step, we want to do the converse, projecting a given arrangement $A=(P,C)$ on $\sta(s_f)$ down to obtain an arrangement $\pi A=(\pi P, \pi C)$ on $\sta(f)$, see again Figure \ref{fig:contraction} for a sketch. For technical reasons we assume that $P(s_f)$ meets $\Vcal(f)$.

\begin{cons}\label{cons:projection}
As one might expect, we choose $\pi C(e)=C(e)$ for $e \in \partial \Eout(f)$.
By definition $P(s_f) \cap \Gcal(f)$ is a multi-walk on the adhesion graph $\Gcal(f)$. Each of its walk components consists only of a single vertex $v_i$. Therefore the walk $P(s_f)$ can be written as a concatenation $q_0 \dots q_k$ of sub-walks $q_i$, where $q_0=P(s_f) v_1$, $q_k=v_k P(s_f)$ and $q_j=v_j P(s_f) v_{j+1}$ for $j \in [k-1]$. Note that while $q_0$ and $q_k$ may be trivial, all other $q_j$ must have length at least 1. By the basic properties of tree-decompositions, the arcs of each (non-trivial) $q_j$ are contained in exactly one of $\Gcal(f^-), \Gcal(f^+)$. Let $a_j \in \{f,\bar f\}$ be such that $q_j \in \Gcal(a_j^+)$; in other words, $a_j$ marks on which side of $f$ the part containing $q_j$ lies. For $j \in [k-1]$ let $e_j$ be the (virtual) arc in $\Ecal(a_j)$ connecting $v_j$ and $v_{j+1}$. We are able to construct shapes $\pi P(s)$ for $s \in \{f^-, f^+\}$ and a configuration $\pi C(f)=(\pi Q(f), \pi X(f),\pi Y(f))=\pi C(\bar f)$. The shape $\pi P(s)$ is given as the concatenation $p_0 p_1 \dots p_k$, where 
\[
 p_j=\begin{cases} 
 q_j \quad &\text{if }  a_j^+=s \\
 (v_j, e_j, v_{j+1}) \quad &\text{if } j \in [k-1] \text{ and } a_j^+\neq s \\
 (v_j) \quad &\text{if } j \in \{0,k\} \text{ and } a_j^+\neq s.
 \end{cases}
\]
The walk $\pi Q(f)$ is given as $\pi Q(f)=(v_1, e_1, v_2, \dots, e_{k-1}, v_k)$. Finally, by \ref{itm:config-entry} there is at most one $e \in \Eout(s_f)$ such that $X(e) = e$. If such an arc exists, we choose $\pi X(f)\in \{f, \bar f\}$ such that $\pi X(f)^+=e^-$. Otherwise $P(s_f)$ starts with a non-virtual arc $e_0 \in \Ecal(s_f)$ and we choose $\pi X(f)\in \{f, \bar f\}$ such that $e_0 \in \Ecal(\pi X(f)^+)$. Similarly if there is an arc $e \in \Eout(s_f)$ such that $Y(e) = e$ we choose $\pi Y(f)$ such that $\pi Y(f)^+=e^-$, otherwise we choose $Y(f)$ such that the last arc of $P(s_f)$ is contained in $\Ecal(\pi Y(f)^+)$.
\end{cons}

We no prove that applying Construction \ref{cons:projection} to an arrangement gives an arrangement.

\begin{lem} \label{lem:projection-walk}
The walks $\pi P(f^-)$ and $\pi P(f^+)$ are self-avoiding walks on $\Gcal(f^-)$ and $\Gcal(f^+)$ satisfying $P(s_f) \cap \Gcal(f^-) = \pi P(f^-) - \Ecal(f)$ and $P(s_f) \cap \Gcal(f^+) = \pi P(f^+) - \Ecal(\bar f)$.
\end{lem}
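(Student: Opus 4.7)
By the symmetry of Construction~\ref{cons:projection} under swapping $f$ and $\bar f$, it suffices to prove all four assertions for $\pi P(f^-)$; the corresponding statements for $\pi P(f^+)$ follow identically after exchanging the roles of $f$ and $\bar f$. The task then splits into three checks: (a) $\pi P(f^-) = p_0 p_1 \cdots p_k$ is a well-formed walk on $\Gcal(f^-)$; (b) it is self-avoiding; (c) the intersection identity $P(s_f) \cap \Gcal(f^-) = \pi P(f^-) - \Ecal(f)$ holds.

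For (a), I would verify that each piece $p_j$ is a walk in $\Gcal(f^-)$ and that consecutive pieces meet at the vertex $v_{j+1}$. If $a_j^+ = f^-$, then $p_j = q_j$ is a sub-walk of $P(s_f)$ whose interior vertices avoid $\Vcal(f)$ by the very choice of the $v_i$'s as the vertices at which $P(s_f)$ meets $\Vcal(f)$. Property~\ref{itm:td-nocrossedge} then forces these interior vertices into $\Vcal(f^-) \setminus \Vcal(f)$ and the arcs of $q_j$ into $E(\Gcal(f^-))$, since no arc of $\Gcal(\sta(f))$ can connect $\Vcal(f^-) \setminus \Vcal(f)$ with $\Vcal(f^+) \setminus \Vcal(f)$ (the only arcs ever bridging parts are virtual, and $\Ecal(f) \cup \Ecal(\bar f)$ is precisely what is excluded from $\Gcal(\sta(f))$). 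If $a_j^+ \neq f^-$, then either $p_j = (v_j, e_j, v_{j+1})$ with $e_j \in \Ecal(a_j) = \Ecal(f) \subseteq E(\Gcal(f^-))$ (because $f$ is outgoing from $f^-$), or $p_j$ is trivial for $j \in \{0,k\}$. Consistency at the junctions is immediate since each $p_j$ starts at $v_j$ and ends at $v_{j+1}$.

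For (b), since $P(s_f)$ is already self-avoiding, vertices of $\Vcal(f^-) \setminus \Vcal(f)$ occur only inside the $q_j$'s with $a_j^+ = f^-$ and hence at most once in all of $\pi P(f^-)$. The vertices of $\Vcal(f)$ occurring in $\pi P(f^-)$ are exactly the $v_j$'s, each arising exactly once as the shared endpoint of $p_{j-1}$ and $p_j$. For (c), the key observation is that contracting $f$ removes precisely the virtual arcs $\Ecal(f) \cup \Ecal(\bar f)$, so $E(\Gcal(f^-)) \cap E(\Gcal(\sta(f))) = E(\Gcal(f^-)) \setminus \Ecal(f)$. Consequently the arcs of $P(s_f) \cap \Gcal(f^-)$ are exactly the arcs of those $q_j$ with $a_j^+ = f^-$, which coincide with the arcs of $\pi P(f^-) - \Ecal(f)$; the vertex sequences match because both equal the subsequence of $P(s_f)$ lying in $\Vcal(f^-)$.

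The main bookkeeping obstacle is aligning the walk-component decompositions of the two multi-walks, in particular handling the boundary terms $p_0$ and $p_k$ (which may be trivial) consistently with how $P(s_f)$ first enters and last leaves $\Gcal(f^-)$. This is essentially the inverse of the identity established in Lemma~\ref{lem:contraction-walk}, and rests on the same structural ingredient: the arcs of $P(s_f)$ partition cleanly between $\Gcal(f^-)$ and $\Gcal(f^+)$, with transitions occurring only at the vertices $v_1, \ldots, v_k$ of the adhesion set $\Vcal(f)$.
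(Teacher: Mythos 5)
Your proof is correct and follows essentially the same route as the paper's: $\pi P(f^-)$ keeps the walk-components of $P(s_f)$ lying in $\Gcal(f^-)$ and replaces the rest by virtual shortcut arcs in $\Ecal(f)$, which immediately gives the intersection identity; self-avoidance follows because the vertex sequence of $\pi P(f^-)$ is a subsequence of the vertex sequence of the SAW $P(s_f)$. You have merely unpacked this into explicit checks (a)–(c), which is a fine elaboration but not a different argument.
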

\begin{proof}
By construction $\pi P(f^-)$ consists of all walk-components $q_j$ of $P(s_f)$ contained in $\Gcal(f^-)$, while those $q_j$ not contained in $\Gcal(f^-)$ are replaced by virtual arcs $e_j \in \Ecal(f)$ (shortcuts) connecting the same endpoints. In particular $\pi P(f^-)$ is a walk and $P(s_f) \cap \Gcal(f^-) = \pi P(f^-) - \Ecal(f)$. Furthermore it is self-avoiding because its sequence of vertices also occurs in the SAW $P(s_f)$. The statements for $\pi P(f^+)$ follow analogously.
\end{proof}

\begin{lem} \label{lem:projection-config}
The map $\pi A=(\pi P,\pi C)$ defined in Construction~\ref{cons:projection} is an arrangement on $\sta(f)$.
\end{lem}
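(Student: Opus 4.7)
My plan is to verify each of the arrangement axioms \ref{itm:config-compatible}--\ref{itm:config-exit} for the pair $\pi A = (\pi P, \pi C)$ on $\sta(f)$. Lemma \ref{lem:projection-walk} already supplies that $\pi P(f^-)$ and $\pi P(f^+)$ are self-avoiding walks on their respective part graphs, and Construction \ref{cons:projection} produces valid tuples $\pi C(e)$, so the substance of the proof is compatibility between shapes and configurations together with the single-entry/exit clauses at the two new vertices $f^-$ and $f^+$.

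The compatibility check splits into two cases. For a non-central arc $e \in E(\sta(f)) \setminus \{f, \bar f\}$ we have $\pi C(e) = C(e)$, and the key observation is that $\Ecal(e) \cup \Ecal(\bar e)$ is disjoint from $\Ecal(f) \cup \Ecal(\bar f)$. Combined with Lemma \ref{lem:projection-walk}, this gives $\pi P(s) \cap \Gcal(e) = P(s_f) \cap \Gcal(e)$ as multi-walks (the shortcuts live in $\Ecal(f)\cup\Ecal(\bar f)$ and contribute no vertices outside $\Vcal(f)$), and then \ref{itm:compatible-intersection} follows from the compatibility of $A$ at $s_f$ together with $\Vcal(e) \subseteq \Vcal(s)$. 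Condition \ref{itm:compatible-in} reduces to the observation that if $X(e) = e$ then $P(s_f)$ starts in $\Vcal(e) \subseteq \Vcal(s)$, forcing the opening piece $q_0$ onto the $s$-side ($a_0^+ = s$); hence $p_0 = q_0$ and $\pi P(s)$ inherits the starting vertex of $P(s_f)$. Condition \ref{itm:compatible-out} is symmetric.

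For the central arc $f$ itself, both $\pi P(f^-) \cap \Gcal(f)$ and $\pi Q(f) \cap \Gcal(f^-)$ consist of the vertices $v_1, \ldots, v_k$ together with precisely those virtual arcs $e_j \in \Ecal(f)$ for which $a_j = f$: the retained sub-walks $q_j$ with $a_j = \bar f$ contribute no arcs of $\Ecal(f) \cup \Ecal(\bar f)$ (they are sub-walks of $P(s_f)$ on $\Gcal(s_f)$), while the inserted shortcut triples $(v_j, e_j, v_{j+1})$ with $a_j = f$ contribute exactly $e_j \in \Ecal(f)$; the identity at $f^+$ is symmetric with $\Ecal(\bar f)$ in place of $\Ecal(f)$. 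This yields \ref{itm:compatible-intersection} for the central configuration, and \ref{itm:compatible-in}, \ref{itm:compatible-out} are built into the definitions of $\pi X(f)$ and $\pi Y(f)$: $\pi X(f) = f$ is triggered in Construction \ref{cons:projection} precisely when either the unique original entry arc $e$ at $s_f$ has $e^- = f^+$, or the first non-virtual arc of $P(s_f)$ lies in $\Ecal(f^+)$, and in both situations $a_0 = f$, so $p_0 = (v_1)$ and $\pi P(f^-)$ starts at $v_1 \in \Vcal(f)$.

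Finally, I will verify \ref{itm:config-entry} and \ref{itm:config-exit} by a short case distinction at $f^-$ (the case $f^+$ being symmetric, noting that $\pi X(\bar f) = \pi X(f)$ so the two sides are linked by the value of $\pi X(f)$). By \ref{itm:config-entry} for $A$ there is at most one arc $e \in \Eout(s_f)$ with $X(e) = e$; if such $e$ exists then the construction routes it to the side $e^- \in \{f^-, f^+\}$ on which it originally sat, so that if $e^- = f^-$ then $e$ is inherited as the unique entry at $f^-$ and $\pi X(f) = \bar f$, while if $e^- = f^+$ then no old arc at $f^-$ is an entry and $\pi X(f) = f$ is the unique one. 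The fallback case (no such $e$ exists, so $P(s_f)$ begins with a non-virtual arc $e_0$) is handled identically by which of $\Ecal(f^-), \Ecal(f^+)$ contains $e_0$: when $e_0 \in \Ecal(f^-)$ we have $a_0 = \bar f$ and $p_0 = q_0$ begins with $e_0$, giving the non-virtual start of $\pi P(f^-)$; when $e_0 \in \Ecal(f^+)$ we have $\pi X(f) = f$ supplying the unique entry. The main obstacle throughout is purely bookkeeping---tracking how $\pi X(f)$ and $\pi Y(f)$ toggle sides---and no new ideas beyond Lemma \ref{lem:projection-walk} and the case definitions of Construction \ref{cons:projection} are required.
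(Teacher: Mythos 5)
Your proof is correct and follows essentially the same approach as the paper: verify compatibility and the entry/exit uniqueness conditions separately at $f^-$ and $f^+$, treating the boundary arcs (where everything is inherited from $A$) and the central arc $f$ (where the newly defined $\pi C(f)$ and the side assignments must be checked) as distinct cases, with Lemma~\ref{lem:projection-walk} supplying the walk identities. Your bookkeeping of how $\pi X(f)$ and $\pi Y(f)$ toggle sides, and the observation that $\pi X(\bar f) = \pi X(f)$ links the two vertices, matches the paper's argument in substance, if in slightly more explicit form.
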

\begin{proof}
We show that \ref{itm:config-compatible} -- \ref{itm:config-exit} holds for $s=f^-$, the other vertex $f^+$ is treated analogously. For $e \in \Eout(f^-) \setminus\{f\}$ compatibility follows from the fact that $\pi P(f^-) \cap \Gcal(e)=P(s_f) \cap \Gcal(e)$ and
\[
\pi X(e)= e \iff X(e)= e \iff P(s_f) \text{ starts in } \Vcal(e) \iff \pi P(f^-) \text{ starts in } \Vcal(e).
\]
and the analogue statement for $\pi Y(e)$. We still need to consider the arc $f$. By construction $\pi Q(f) \cap \Gcal(f^-)$ is a non-empty multi-walk consisting of all virtual arcs $e_j$ in $\Ecal(f)$ such that $q_j \in \Gcal(f^+)$, which are exactly the virtual arcs in $\pi P(f^-) \cap \Gcal(f)$. Furthermore $\pi X(f)=f$ holds either if $X(e)=e$ holds for some arc $e \in \Ecal(f^+) \setminus \{\bar f\}$ or if $q_0$ is non-trivial and contained in $\Gcal(f^+)$. In both cases $\pi P(f^-)$ starts at $v_0 \in \Vcal(f)$. Similarly, if $\pi Y(f) = f$, then $\pi P(f^-)$ ends at $v_k \in \Vcal(f)$, so \ref{itm:config-compatible} is satisfied. Again by construction $\pi X(f)= f$ can only hold if $\pi X(e)=\bar e$ for all $e \in \Ecal(f^-) \setminus \{\bar f\}$, so at most one  $e \in \Ecal(f^-)$ satisfies $\pi X(e) \neq e$. If there is no such arc, then $P(s_f)$ starts with a non-virtual arc which is by construction contained in $\Gcal(f^-)$. In particular $\pi P(f^-)$ starts with a non-virtual arc, yielding \ref{itm:config-entry}. Similar arguments for the exit direction $\pi Y$ yield \ref{itm:config-exit}. We conclude that $\pi A$ is indeed an arrangement on $\sta(f)$. 
\end{proof} 

Next we show that contraction and projection are inverses of one another. The technical condition that the shape meets $\Vcal(f)$ ensures that all shapes are non-empty. This is crucial for the bijection between self-avoiding walks and arrangements. Without this technical condition, every self-avoiding walk would correspond to not just one, but infinitely many different arrangements, obtained by attaching arbitrarily many boring configurations and suitable (often empty) shapes.

\begin{pro} \label{pro:bij-edge-contraction}
Contraction of arrangements $A \mapsto A/f$ defines a bijection between arrangements on the open subtree $\sta(f)$ of $T$ with respect to $\Tcal$ and arrangements on the open star $\sta(s_f)$ centered in the contracted vertex $s_f$ of $T/f$ with respect to $\Tcal/f$ whose shape on $s_f$ meets $\Vcal(f)$. Its inverse map is the projection $A \mapsto \pi A$ of arrangements. Moreover $\norm{A}= \norm{A/f}$ holds.
\end{pro}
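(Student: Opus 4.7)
The plan is to verify separately that the two maps $A \mapsto A/f$ and $A \mapsto \pi A$ are well-defined with the stated domains/codomains, that they are mutual inverses, and that contraction preserves weight. The well-definedness has essentially been accomplished already: Lemmas \ref{lem:contraction-walk}--\ref{lem:projection-config} show that contraction produces an arrangement on $\sta(s_f)$ and projection an arrangement on $\sta(f)$. The one additional check I would make at the outset is that $P/f(s_f)$ always meets $\Vcal(f)$, which is needed to lie in the claimed codomain of contraction. This follows because $f$ is an interior arc of $\sta(f)$, so by Remark \ref{rmk:int-edges-non-empty} the configuration $C(f)$ is non-empty, hence $Q(f)$ contains at least one vertex $v_1 \in \Vcal(f)$, and by Construction \ref{cons:contraction} the walk $P/f(s_f)$ passes through $v_1$.

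For $\pi(A/f) = A$, the configurations on boundary arcs $e \neq f$ are fixed by definition of both constructions, so only three things need checking: that $\pi P/f(f^-) = P(f^-)$, that $\pi P/f(f^+) = P(f^+)$, and that $\pi C/f(f) = C(f)$. For the shapes, Lemma \ref{lem:contraction-walk} gives $P/f(s_f) \cap \Gcal(f^\pm) = P(f^\pm) - \Ecal(f)$ (resp.\ $-\Ecal(\bar f)$), while Lemma \ref{lem:projection-walk} applied to $A/f$ says $\pi P/f(f^\pm)$ extends $P/f(s_f) \cap \Gcal(f^\pm)$ by some sequence of virtual arcs in $\Ecal(f)$ (resp.\ $\Ecal(\bar f)$). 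By the uniqueness assertion in Lemma \ref{lem:arrangement-from-shape}\ref{itm:config-from-shape} these virtual arcs are forced, so the two shapes coincide with $P(f^\pm)$. For the configuration on $f$, the vertex sequence of $\pi Q/f(f)$ equals that of $Q(f)$ (both extract the same $v_1,\dots,v_k$ from the intersection of the shapes with $\Gcal(f)$); the virtual arcs $e_j$ match because in Construction \ref{cons:contraction} the walk component of $P/f(s_f)$ between $v_j$ and $v_{j+1}$ sits on side $a_j^+$ precisely when $e_j \in \Ecal(a_j)$ in the original $Q(f)$, and this same side-assignment is what Construction \ref{cons:projection} reads off; uniqueness of arcs in the complete adhesion graph gives equality. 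The entry and exit directions match by the same trichotomy (``interior arc with $X(e)=e$'' vs.\ ``non-virtual starting arc'' vs.\ nothing) that governs both constructions.

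For the converse direction $(\pi A)/f = A$, I would run the symmetric argument: the non-central configurations are fixed, and $(\pi P)/f(s_f)$ agrees with $P(s_f)$ because both are obtained by stitching the non-virtual portions of $\pi P(f^-)$ and $\pi P(f^+)$ together at the vertices of $\pi Q(f) = C/f(f)$'s walk component, in the order dictated by the side-assignments $a_j$; the hypothesis that $P(s_f)$ meets $\Vcal(f)$ guarantees $k \geq 1$ so that all walk components under discussion are non-trivial and the splitting into $q_0,\dots,q_k$ in Construction \ref{cons:projection} is well-defined. Finally, the weight equality follows directly from Lemma \ref{lem:contraction-walk}: the arcs of $P/f(s_f)$ are obtained from those of $P(f^-)$ and $P(f^+)$ by removing only virtual arcs (those in $\Ecal(f)\cup\Ecal(\bar f)$), so the non-virtual counts satisfy $\|P/f(s_f)\| = \|P(f^-)\| + \|P(f^+)\| = \|A\|$.

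The main obstacle is purely the bookkeeping in the middle paragraph: tracking the orientation conventions so that the virtual arcs $e_j \in \Ecal(a_j)$ produced by the two constructions are literally the same arc (not just ``a virtual arc between $v_j$ and $v_{j+1}$''), and verifying that $\pi X/f(f)$, $\pi Y/f(f)$ reconstruct $X(f)$, $Y(f)$ in all cases distinguished by \ref{itm:config-entry} and \ref{itm:config-exit}. No new ideas are required beyond those already deployed in Lemmas \ref{lem:arrangement-from-shape}--\ref{lem:projection-config}; it is a matter of tracing definitions carefully.
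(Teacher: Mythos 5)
Your proposal is correct and follows essentially the same route as the paper: use Remark~\ref{rmk:int-edges-non-empty} to verify the codomain of contraction, deduce the weight equality from Lemma~\ref{lem:contraction-walk}, prove $\pi(A/f)=A$ by comparing shapes via Lemmas~\ref{lem:contraction-walk} and~\ref{lem:projection-walk} and matching configurations/directions, and observe that $(\pi A)/f = A$ is symmetric. One small remark: where you invoke Lemma~\ref{lem:arrangement-from-shape}\ref{itm:config-from-shape} to force the virtual arcs, the paper argues directly that the virtual arcs must be the unique arcs in $\Ecal(f)$ joining consecutive walk-components; the cited item is about the uniqueness of $Q(e)$ rather than of a shape given its non-virtual part, so the citation is slightly off target, but the underlying uniqueness argument you have in mind is exactly the right one.
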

\begin{proof}
We have already seen that $A \mapsto A/f$ maps arrangements on $\sta(f)$ to arrangements on $\sta(s_f)$ and that $A \mapsto \pi A$ maps arrangements on $\sta(s_f)$ to arrangements on $\sta(f)$. By Remark~\ref{rmk:int-edges-non-empty} the walk $Q(f)$ of the configuration $C(f)$ is non-empty, so by construction $P/f(s_f)$ intersects $\Vcal(f)$. Furthermore by Lemma~\ref{lem:contraction-walk} the set of non-virtual arcs of $P/f(s_f)$ is the disjoint union of the sets of non-virtual arcs of $P(f^-)$ and $P(f^+)$, so $\norm{A}= \norm{A/f}$ holds. 

Let $A$ be an arrangement on $\sta(f)$. We claim that $\pi(A/f)=A$. 
By construction $\pi(A/f)$ is an arrangement on $\sta(f)$ such that $\pi (C/f)(e) = C/f(e)=C(e)$ for every $e \in \Eout(f)$. Additionally, by Lemmas \ref{lem:contraction-walk} and \ref{lem:projection-walk} we have that $\pi(P/f)(f^-)$ satisfies 
\[
\pi (P/f)(f^-)-\Ecal(f)=P/f(s_f)\cap \Gcal(f^-)=P(f^-)-\Ecal(f).
\]
Any virtual arcs of $\pi P/f(f^-)$ in $\Ecal(f)$ must connect the walk-components of $\pi P/f(f^-)-\Ecal(f)$, thus we conclude $\pi (P/f)(f^-)=P(f^-)$. In a similar way we obtain $\pi (P/f)(f^+)=P(f^+)$. 

It remains to show that $\pi(C/f)(f)=C(f)$. Equality of the walks $\pi (Q/f)(f)$ and $Q(f)$ follows directly from \ref{itm:compatible-intersection} of compatibility: Both walks $\pi (Q/f)(f)$ and $Q(f)$ consist of all virtual arcs of the shapes $\pi (P/f)(f^-)=P(f^-)$ and $\pi (P/f)(f^+)=P(f^+)$ contained in $\Gcal(f)$ and also the order of these arcs coincide. Finally, if there is some arc $e \in \partial \Eout(f)$ such that $X(e)= e$, then also $\pi (X/f)(e) = e$ and thus \ref{itm:config-entry} yields $X(f)= \pi (X/f)(f)$. Otherwise $X(e)=\bar e$ holds for all $e \in \Eout(X(f)^+)$, so by \ref{itm:config-entry} the walk $P(X(f)^+)$ starts with a non-virtual arc $e_0$ and by \ref{itm:compatible-in} the walk $P(X(f)^-)$ starts in $\Vcal(f)$. In particular, the first arc of $P/f(s_f)$ is $e_0$ and by construction $\pi (X/f)(f)=X(f)$. Analogous arguments yield $\pi (Y/f)(f)=Y(f)$.

The proof that $(\pi A)/f=A$ holds for every arrangement $A=(P,C)$ on $\sta(s_f)$ such that $P(s_f)$ intersects $\Vcal(e)$ works similarly and is left to the reader.
\end{proof}

Let $S$ be a finite open sub-tree of $T$. Starting with a given tree decomposition $\Tcal=(T,\Vcal)$ of $G$ the process of edge contraction can be iteratively applied to contract each interior edge of $S$. In this way $S$ is contracted into an open star centered at a single vertex $s_S$. We denote the obtained tree decomposition by $\Tcal/S=(T/S,\Vcal/S)$. As before, an arrangement $A$ on $S$ will be contracted to an arrangement $A/S$ on the open star $\sta(s_S)$ of $T/S$. 

\begin{rmk} \label{rmk:treedecomp}
It is not hard to see that neither the contracted tree decomposition $\Tcal/S=(T/S, \Vcal/S)$ nor the contracted arrangement $A/S=(P/S,C/S)$ on $\sta(s_S)$ depends on the order of edge contractions: Clearly the tree obtained by consecutive edge contractions does not depend on the order. Additionally $\Vcal/S(s_S)$ is the union of all vertices in $\Vcal(s)$ for $s \in V(S)$ and $\Vcal/S(t)=\Vcal(t)$ for $t \in V(T/S)\setminus \{s_S\}$. In particular $\Tcal/S$ does not depend on the order of contractions of internal edge in $S$.  

The configurations on boundary arcs remain the same after each step of contraction of $A$, so in particular $C/S(e)=C(e)$ holds for any arc $e \in \Eout(s_S)=\partial E(S)$. Additionally, by Lemma~\ref{lem:contraction-walk} the walk $P(s_S)$ consists of all non-virtual arcs of walks $P(s)$ for $s \in V(S)$ and its direction is uniquely defined. Again, this representation of $A/S$ on $S$ does not depend on the order of contractions of edges in $S$.
\end{rmk}

We immediately obtain the following corollary of Proposition~\ref{pro:bij-edge-contraction}.

\begin{cor}\label{cor:bij-tree-contraction}
Let $S$ be an open subtree of $T$. The contraction of arrangements $A \mapsto A/S$ defines a bijection between arrangements on $S$ with respect to $\Tcal$ and arrangements on the open star $\sta(s_S)$ centered in the contracted vertex $s_S$ of $T/S$ with respect to $\Tcal/S$ whose shape on $s_S$ meets $\Vcal(s)$ for every $s \in V(S)$.
Moreover $\norm{A}= \norm{A/S}$ holds.
\end{cor}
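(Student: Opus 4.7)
The plan is to induct on $|V(S)|$, using Proposition~\ref{pro:bij-edge-contraction} as the atomic step and Remark~\ref{rmk:treedecomp} to glue the iterated contractions into a single well-defined operation. In the base case $|V(S)|=1$, we have $S=\sta(s_S)$, so $\Tcal/S=\Tcal$ and the map $A\mapsto A/S$ is literally the identity; the meeting condition reduces to requiring that the shape on $s_S$ be non-empty, which is the natural selection in this degenerate case.

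For the inductive step, fix any interior edge $f\in E(S)\setminus\partial E(S)$. Given an arrangement $A=(P,C)$ on $S$, its data restricted to $\sta(f)$ is itself an arrangement, so Construction~\ref{cons:contraction} produces the merged shape $P/f(s_f)$ on the contracted vertex $s_f$. Keeping $P(s)$ unchanged for $s\in V(S)\setminus\{f^-,f^+\}$ and $C(e)$ unchanged for $e\in E(S)\setminus\{f,\bar f\}$ then yields a pair $A/f$ on the open subtree $S/f$ of $T/f$; compatibility at vertices other than $s_f$ is inherited for free since only the local data around $f$ has changed, while compatibility at $s_f$ follows from the proof of Proposition~\ref{pro:bij-edge-contraction}. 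That proposition therefore tells us that $A\mapsto A/f$ is a norm-preserving bijection between arrangements on $S$ and arrangements on $S/f$ whose shape on $s_f$ meets $\Vcal(f)$. Since $|V(S/f)|=|V(S)|-1$, the induction hypothesis supplies a further norm-preserving bijection $B\mapsto B/(S/f)$ onto arrangements on $\sta(s_S)$ (using $(T/f)/(S/f)=T/S$ and $s_{S/f}=s_S$, per Remark~\ref{rmk:treedecomp}) whose shape meets $\Vcal/f(s')$ for every $s'\in V(S/f)$. Composing gives the required map $A\mapsto A/S$, and Remark~\ref{rmk:treedecomp} guarantees that it does not depend on the initial choice of $f$.

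The step I expect to require the most care is matching up the composite meeting conditions with the single condition "meets $\Vcal(s)$ for every $s\in V(S)$" stated in the corollary. For $s\in V(S)\setminus\{f^-,f^+\}$ there is nothing to check since $\Vcal/f(s)=\Vcal(s)$. For the pair $s\in\{f^-,f^+\}$, the induction only supplies meeting $\Vcal/f(s_f)=\Vcal(f^-)\cup\Vcal(f^+)$; this must be supplemented by the fact that the stronger condition "meets $\Vcal(f)$" imposed after the first contraction survives all subsequent contractions. This persistence follows from Lemma~\ref{lem:contraction-walk}, which ensures that the non-virtual arcs of $P/f(s_f)$—and hence the vertices of $P/f(s_f)$ lying in $\Vcal(f)\subseteq\Vcal(f^-)\cap\Vcal(f^+)$—are preserved under each further contraction. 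For the reverse direction one uses property~\ref{itm:td-nocrossedge}: any vertex that appears in parts on both sides of $f$ must also lie in $\Vcal(f)$, so a shape in $\Gcal(s_S)$ meeting both $\Vcal(f^-)$ and $\Vcal(f^+)$ automatically meets $\Vcal(f)$, and the two formulations of the meeting condition coincide. Norm preservation $\norm{A}=\norm{A/S}$ telescopes immediately from the identity $\norm{A}=\norm{A/f}$ at each individual contraction step.
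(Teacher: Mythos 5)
Your proposal is correct and follows exactly the route the paper intends: the paper states the corollary is "immediately obtained" from Proposition~\ref{pro:bij-edge-contraction} together with Remark~\ref{rmk:treedecomp}, and you supply the induction that makes this precise, including the (genuinely non-automatic) bookkeeping that the composite meeting conditions coincide with ``meets $\Vcal(s)$ for every $s\in V(S)$.'' One small informality: the claim that a shape meeting both $\Vcal(f^-)$ and $\Vcal(f^+)$ automatically meets $\Vcal(f)$ needs slightly more than \ref{itm:td-nocrossedge} — it uses that the shape is a connected walk in $\Gcal(s_S)$ and that $\Vcal(f)$ separates $\Vcal(f^-)\setminus\Vcal(f)$ from $\Vcal(f^+)\setminus\Vcal(f)$ there (which follows since $\Ecal(f)$ and $\Ecal(\bar f)$ are removed by contraction) — but this is a minor gap in phrasing, not in substance.
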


Corollary \ref{cor:bij-tree-contraction} can be used to define a map $\varphi$ translating any arrangement $A$ on an open subtree $S$ of $T$ to the self-avoiding walk $\varphi(A)=P/S(s_S)$ on the graph $\Gcal(S)$. Observe that by definition the length of $\varphi(A)$ is $\norm{A}$. We call $\varphi(A)$ the walk \emph{represented} by $A$ and $A$ a \emph{representation} of $\varphi(A)$. Theorem \ref{thm:shape-to-arrangement} below shows that every SAW has some representation. 

Let $S$ be a not necessarily finite subtree of $T$ and $w$ be a SAW on $\Gcal(S)$. The \emph{support} of $w$ in $T$ is the smallest open subtree $S'$ of $S$ such that all arcs of $w$ are contained in $\Gcal(S)$. 

\begin{thm}\label{thm:shape-to-arrangement}
Let $T'$ be an open subtree of $T$ and let $w$ be a SAW on $\Gcal(T')$. Then there is an arrangement $A=(P,C)$ on the support of $w$ in $T$ representing $w$. Moreover, if $w$ starts in $\Vcal(f)$ for some $f \in \partial E(T')$ and/or ends in $\Vcal(f')$ for some $f' \in \partial E(T')$, we may choose $A$ such that $X(f)=f$ and/or $Y(f')=f'$ holds, respectively.
\end{thm}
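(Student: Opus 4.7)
The plan is to reduce the statement to the single-part case established in Lemma~\ref{lem:arrangement-from-shape}, using the contraction--projection bijection of Corollary~\ref{cor:bij-tree-contraction}. Let $S'$ denote the support of $w$ in $T$; by construction this is a finite open subtree with $V(S') \subseteq V(T')$. In the ``moreover'' setting one first verifies---or else ensures by a minor enlargement of $S'$ along the unique path in $T'$ from $S'$ to $f^-$---that $f \in \partial E(S')$, and proceeds analogously for $f'$. Passing to the contracted tree decomposition $\Tcal/S'$, the subtree $S'$ collapses to a single vertex $s_{S'}$, and by construction $\Gcal(\sta(s_{S'})) = \Gcal(S')$, so $w$ is a shape on $s_{S'}$ with respect to $\Tcal/S'$.

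Applying Lemma~\ref{lem:arrangement-from-shape}\ref{itm:arrangement-existence} to the shape $w$ on $s_{S'}$ produces an arrangement $\tilde{A} = (\tilde{P}, \tilde{C})$ on $\sta(s_{S'})$ with $\tilde{P}(s_{S'}) = w$. The arcs of $\sta(s_{S'})$ in $T/S'$ correspond bijectively to $\partial E(S')$, so part~\ref{itm:specific-entry-exit} of the same lemma allows us to impose $\tilde{X}(f) = f$ and $\tilde{Y}(f') = f'$ whenever these are prescribed.

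To invert and land back on $S'$, we must invoke the inverse direction of Corollary~\ref{cor:bij-tree-contraction}, whose hypothesis requires that the shape $\tilde{P}(s_{S'}) = w$ meets $\Vcal(s)$ for every $s \in V(S')$. This is the main technical step, and I would verify it by minimality of the support. If $s$ is a leaf of $S'$, then the subtree $S'' := S' - s$ is strictly smaller, and the arcs in $\Gcal(S') \setminus \Gcal(S'')$ are precisely those in $\Ecal(s) \cup \bigcup_{e \in \Eout(s) \cap \partial E(S')} \Ecal(e)$; by minimality some arc of $w$ lies in this set, and any such arc has both endpoints in $\Vcal(s)$. If $s$ is an interior vertex, an analogous minimality argument shows that either some arc of $w$ lies in the same set $\Ecal(s) \cup \bigcup_{e \in \Eout(s) \cap \partial E(S')} \Ecal(e)$, or arcs of $w$ lie in at least two distinct components $K_1, K_2$ of $S' - s$. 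In the latter case, axiom~\ref{itm:td-nocrossedge} forces $\bigl(\bigcup_{t \in V(K_1)} \Vcal(t)\bigr) \cap \bigl(\bigcup_{t \in V(K_2)} \Vcal(t)\bigr) \subseteq \Vcal(s)$, and since every arc of $w$ has both endpoints in one such union (or in $\Vcal(s)$ itself), connectedness of $w$ forces it to pass through $\Vcal(s)$.

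Finally, setting $A := \pi(\tilde{A})$ via the projection of Construction~\ref{cons:projection}, Corollary~\ref{cor:bij-tree-contraction} guarantees that $A$ is an arrangement on $S'$ with $\varphi(A) = \tilde{P}(s_{S'}) = w$, so $A$ represents $w$. The moreover claims follow because Construction~\ref{cons:projection} preserves configurations on the arcs in $\partial E(S')$, hence $X(f) = \tilde{X}(f) = f$ and $Y(f') = \tilde{Y}(f') = f'$. The principal obstacle is the case analysis verifying that $w$ meets every part of the support; everything else is a direct application of Lemma~\ref{lem:arrangement-from-shape} and Corollary~\ref{cor:bij-tree-contraction}.
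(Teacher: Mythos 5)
Your proof follows the paper's route exactly: contract the support $S$ of $w$ to a single vertex $s_S$, apply Lemma~\ref{lem:arrangement-from-shape} to the shape $w$ on $s_S$, and pull the resulting arrangement back to $S$ via the projection of Corollary~\ref{cor:bij-tree-contraction}. The case analysis you supply to verify that $w$ meets $\Vcal(s)$ for every $s\in V(S)$ is a correct and useful elaboration of a step the paper dispatches with the single phrase ``by minimality of $S$.''

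One caveat concerns the fallback you suggest for the ``moreover'' clause. Enlarging $S$ along the path toward $f^-$ when $f\notin\partial E(S)$ would produce an arrangement on a strictly larger open subtree than the support, contradicting the theorem's first conclusion, so that route is not actually available. This tension is really an imprecision in the theorem's statement rather than a defect in your argument: for the moreover clause to make sense one needs $f\in\partial E(S)$, a condition the paper's own proof also uses silently when it invokes Lemma~\ref{lem:arrangement-from-shape}~\ref{itm:specific-entry-exit} on the contracted star $\sta(s_S)$, whose arcs correspond bijectively to $\partial E(S)$. In the paper's actual applications of this theorem (e.g.\ in Lemma~\ref{lem:u-finite} and Proposition~\ref{pro:P-analytic}) the relevant arc is always a boundary arc of the support, so the fallback is never needed.
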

\begin{proof}
Let $S$ be the support of $w$ in $T$. By definition $w$ is a shape on the contracted vertex $s_{S}$ of $T/S$, thus Lemma~\ref{lem:arrangement-from-shape}~\ref{itm:arrangement-existence} provides us with an arrangement $A'=(P',C')$ on $\sta_{T/S}(s_{S})$ such that $P'(s_{S})=w$. By minimality of $S$ the shape $w$ meets $\Vcal(s)$ for every $s \in V(S)$. Thus Corollary~\ref{cor:bij-tree-contraction} provides us with an arrangement $A$ on $S$ such that $A/S=A'$ and this arrangement satisfies $\varphi(A)=w$. The moreover part follows from Lemma~\ref{lem:arrangement-from-shape}~\ref{itm:specific-entry-exit}.
\end{proof}

We have just seen that every arrangement $A$ on an open subtree $S$ of $T$ represents a self-avoiding walk $\varphi(A)$ on $\Gcal(S)$ and that every such walk is represented by some arrangement. However, in general there can be several different arrangements representing a single walk. To enumerate SAWs on $G$, we need each walk to be represented by exactly one arrangement. This can be done by restricting ourselves to complete arrangements, as the upcoming Theorem~\ref{thm:saw-to-arrangement} shows. For its proof, we need the following auxiliary result.

\begin{lem}\label{lem:non-virtual-edges-in-leaves}
Let $A=(P,C)$ be a complete arrangement on an open subtree $S$ of $T$ and let $t$ be a leaf of $S$. Then $P(t)$ contains a non-virtual arc.
\end{lem}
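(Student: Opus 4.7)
My plan is to argue by contradiction: suppose $P(t)$ contains no non-virtual arcs. Since $t$ is a leaf of $S$, at most one arc in $\Eout(t)$ is interior, i.e.\ belongs to $E(S)\setminus\partial E(S)$; call it $e$ when it exists. All other arcs $f\in\Eout(t)$ are boundary arcs, and completeness gives $C(f)$ boring with $X(f)=Y(f)=\bar f$, so every arc of $Q(f)$ lies in $\Ecal(\bar f)$. Since $\Ecal(\bar f)\not\subseteq E(\Gcal(t))$, the intersection $Q(f)\cap\Gcal(t)$ consists only of vertices, and compatibility \ref{itm:compatible-intersection} then forces $P(t)\cap\Gcal(f)$ to have no arcs at all. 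Combined with the assumption that $P(t)$ has no non-virtual arcs, this leaves only one place for arcs of $P(t)$ to live: they must all lie in $\Ecal(e)$, and in particular $e$ must exist.

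I would then case-split on $X(e),Y(e)\in\{e,\bar e\}$. If $X(e)=\bar e$ (or if $e$ does not exist), then no $f\in\Eout(t)$ satisfies $X(f)=f$, and \ref{itm:config-entry} forces $P(t)$ to start with a non-virtual arc --- contradiction. The case $Y(e)=\bar e$ is symmetric via \ref{itm:config-exit}. Thus we may assume $X(e)=Y(e)=e$. Then \ref{itm:compatible-in} ensures that $P(t)$ starts in $\Vcal(e)$, hence is non-empty; since all its arcs lie in $\Ecal(e)$, all its vertices lie in $\Vcal(e)$ as well, so $P(t)\cap\Gcal(e)=P(t)$ as a single walk. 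Compatibility now reads $P(t)=Q(e)\cap\Gcal(t)=Q(e)-\Ecal(\bar e)$. But removing any arc from a self-avoiding walk splits it into a multi-walk with at least two walk components, so the single walk $P(t)$ can equal $Q(e)-\Ecal(\bar e)$ only if $Q(e)$ has no arcs in $\Ecal(\bar e)$; combined with $X(e)=Y(e)=e$ this says exactly that $C(e)$ is boring, contradicting the reducedness of $A$ on the interior arc $e$.

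The main technical subtlety is the bookkeeping around the multi-walk equality in \ref{itm:compatible-intersection}: the crux is the observation that deleting any single arc from a self-avoiding walk produces at least two walk components, which cannot coincide with a single walk. A small additional observation handles the degenerate possibility $P(t)=(v)$: by Remark~\ref{rmk:int-edges-non-empty} combined with non-boringness, $Q(e)$ has at least one arc, and in the case $X(e)=Y(e)=e$ that arc must lie in $\Ecal(\bar e)$ (otherwise $C(e)$ would already be boring), so $Q(e)-\Ecal(\bar e)$ has at least two walk components and cannot equal the trivial walk $(v)$ either.
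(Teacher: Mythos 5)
Your proof is correct and uses the same ingredients as the paper's: conditions \ref{itm:config-entry}/\ref{itm:config-exit} to dispose of the cases $X(e)=\bar e$ or $Y(e)=\bar e$, and non-boringness of the interior configuration together with compatibility \ref{itm:compatible-intersection} to get the contradiction. The paper phrases the final step directly (extracting a sub-walk $p$ of $P(t)$ between the endpoints of a virtual arc in $\Ecal(\bar e)$ and observing that $p$ can have no virtual arcs), while you argue by contradiction via a walk-component count on the two sides of \ref{itm:compatible-intersection}; these are two ways of expressing the same observation.
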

\begin{proof}
Let $t$ be a leaf of $S$. If $S=\sta(t)$, completeness implies that $X(e)=\bar e$ for every $e \in E(S)$. Thus by \ref{itm:config-entry} the shape $P(t)$ starts with a non-virtual arc. 

Suppose now that $S$ is not an open star and let $f$ be the unique arc in $\Eout(t)\setminus \partial E(S)$. By completeness of $A$ we have $X(e)=Y(e)=\bar e$ for every $e \in \partial E(S)$. If $X(f)=\bar f$ or $Y(f)=\bar f$, then again  $P(t)$ has to start or end with a non-virtual arc. Thus we may assume that $X(f)=Y(f)=f$. By reducedness $C(f)$ is not boring, so $Q(f)$ has to contain a virtual arc in $\Ecal(\bar f)$. Then by compatibility \ref{itm:compatible-intersection} the shape $P(t)$ contains a sub-walk $p$ connecting the two endpoints of this walk. Now $C(e)$ is boring for every $e \in \partial E(S)$ and thus $p$ cannot contain any virtual arcs. We conclude that $P(t)$ contains a non-virtual arc.
\end{proof}

\begin{thm}\label{thm:saw-to-arrangement}
Let $w$ be a self-avoiding walk on $G$. Then there is a unique complete arrangement representing $w$.
\end{thm}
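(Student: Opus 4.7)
The plan is to establish existence and uniqueness separately. For existence, let $S$ be the support of $w$ in $T$; I would invoke Theorem~\ref{thm:shape-to-arrangement} to produce an arrangement $A = (P,C)$ on $S$ representing $w$. Since $w$ is a SAW on $G$ of length at least one, it starts and ends with non-virtual arcs, and the flexibility in the underlying Lemma~\ref{lem:arrangement-from-shape} allows me to choose $X(e) = Y(e) = \bar e$ on every boundary arc $e \in \partial E(S)$. Because $w$ contains no virtual arcs, its intersection with each adhesion graph $\Gcal(e)$ for $e \in \partial E(S)$ consists only of vertices of $\Vcal(e)$, so the walk $Q(e)$ supplied by Lemma~\ref{lem:arrangement-from-shape}~\ref{itm:config-from-shape} is built purely from virtual arcs in $\Ecal(\bar e) = \Ecal(X(e))$, making $C(e)$ boring. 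It remains to verify reducedness on internal arcs $e_0 \in E(S) \setminus \partial E(S)$. I would argue by contrapositive: if $C(e_0)$ were boring with $X(e_0) = Y(e_0) = e_0$, say, then tracing Construction~\ref{cons:projection} through all iterated un-contractions shows that every shape at vertices of $S$ lying on the side of $e_0$ opposite to $X(e_0)$ contains no non-virtual arc; consequently no arc $e$ of $w$ satisfies $\theta(e)$ on that side, and deleting those vertices from $V(S)$ gives a strictly smaller open subtree whose part graph still contains $w$, contradicting minimality of $S$. The case $X(e_0) = Y(e_0) = \bar e_0$ is symmetric.

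For uniqueness, let $A_i = (P_i, C_i)$ on $S_i$ ($i = 1,2$) be two complete arrangements representing $w$. I first want to show $S_1 = S_2 = S$. Since $\varphi(A_i) = w$ lives on $\Gcal(S_i)$, each $S_i$ contains $\theta(e)$ for every arc $e$ of $w$, so $V(S_i) \supseteq V(S)$. Conversely, any leaf $\ell$ of $S_i$ lying outside $V(S)$ would have no arc $e$ of $w$ with $\theta(e) = \ell$, forcing $P_i(\ell)$ to contain no non-virtual arc and contradicting Lemma~\ref{lem:non-virtual-edges-in-leaves}. Once $S_1 = S_2 = S$, I would pass via Corollary~\ref{cor:bij-tree-contraction} to the contracted arrangements $A_i/S$ on $\sta_{T/S}(s_S)$, both of which have shape $w$ at the central vertex. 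Lemma~\ref{lem:arrangement-from-shape}~\ref{itm:config-from-shape} pins down each walk $Q(e)$ uniquely from $w$; completeness forces $X(e) = \bar e$ on every boundary arc, and then the boring condition forces $Y(e) = \bar e$. Hence $A_1/S = A_2/S$, and the bijection of Corollary~\ref{cor:bij-tree-contraction} yields $A_1 = A_2$.

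The main obstacle is the reducedness step in existence: translating the abstract combinatorial condition that $C(e_0)$ is boring into the concrete geometric statement that $w$ has no non-virtual arcs in the parts corresponding to one side of $e_0$ requires tracing Construction~\ref{cons:projection} carefully across the sequence of un-contractions needed to pass from $\sta_{T/S}(s_S)$ back down to $S$. Once this link is established, the contradiction with the minimality of the support is immediate.
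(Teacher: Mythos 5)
Your proposal is correct and follows essentially the same approach as the paper's proof: you construct the arrangement from $w$ via Lemma~\ref{lem:arrangement-from-shape} with $X(e)=Y(e)=\bar e$ on the boundary arcs, argue that an internal boring configuration would force trivial shapes on one side of that arc and thereby contradict minimality of the support, and establish uniqueness via Lemma~\ref{lem:non-virtual-edges-in-leaves}, Corollary~\ref{cor:bij-tree-contraction}, and Lemma~\ref{lem:arrangement-from-shape}\,(\ref{itm:config-from-shape}). The only cosmetic difference is that the paper packages the propagation of the boring condition across the cone into Lemma~\ref{lem:boringextension} rather than unwinding Construction~\ref{cons:projection} directly; in its terms, when $X(e_0)=Y(e_0)=e_0$ the parts whose shapes carry no non-virtual arcs are those in $K_{X(e_0)}=K_{e_0}$, the cone containing $e_0^-$, which you may want to keep in mind when phrasing which ``side'' of $e_0$ is pruned.
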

\begin{proof}
We start by constructing a representation of $w$ as in the proof of Theorem~\ref{thm:saw-to-arrangement}. Let $S$ be the support of $w$ in $T$ and let $S'=\sta_{T/S}(s_{S})$. Then Lemma~\ref{lem:arrangement-from-shape}~\ref{itm:arrangement-existence} provides us with an arrangement $A'=(P',C')$ on $S'$ such that $P'(s_{S})=w$. Note that we can choose $A'$ to be complete: The walk $w$ starts and ends with a non-virtual arc and $X'(e)=Y'(e)=\bar e$ is a valid choice for every $e \in E(S')$. Additionally, all arcs of $w$ are non-virtual arcs of $\Gcal(S)=\Gcal(s_S)$, thus $Q'(e)$ contains only arcs in $\Ecal(\bar e)$ and $C'(e)$ is boring. Corollary~\ref{cor:bij-tree-contraction} provides us with an arrangement $A$ on $S$ such that $A/S=A'$ and thus $\varphi(A)=w$. By Lemma \ref{lem:boringextension} whenever $C(e)$ is boring for some arc $e$ of $S$, $P(s)$ cannot contain any non-virtual arcs for every $s \in V(K_{X(e)}) \cap V(S)$. By minimality of $S$ we conclude $e \in \partial E(S)$. Thus $A$ is complete. 

For the uniqueness part, observe that whenever $A$ is a complete arrangement on a subtree $S$ of $T$ representing $w$, by Lemma \ref{lem:non-virtual-edges-in-leaves} this subtree must be the support of $w$. By Corollary~\ref{cor:bij-tree-contraction} each $A$ corresponds to a unique arrangement $A/S$ on $S'$ and clearly $A/S$ must be complete. By completeness $X/S(e)=Y/S(e)=\bar e$ holds for every $e \in E(S')$ and by Lemma~\ref{lem:arrangement-from-shape}~\ref{itm:config-from-shape} the configuration $C/S(e)$ is uniquely defined by $w=P/S(s_S)$, so $A/S$ is uniquely defined by $w$.
\end{proof}

\begin{rmk}\label{rmk:source-of-arrangement}
For any complete arrangement $A$ on a finite open subtree $S$ of $T$ there is exactly one vertex $s_0$ of $S$ such that $X(e)=\bar e$ for every $e \in \Eout(s_0)$. This can be easily seen by double counting the set $\{(s,e) \mid e^-=s \text{ and } X(e) = e\}$ and using \ref{itm:config-entry} and that configurations on boundary arcs $e$ satisfy $X(e)=\bar e$. We call $s_0$ the \emph{source} of the complete arrangement $A$.
Similarly there is a unique vertex $t_0$ of $S$ such that $Y(e)=\bar e$ for every $e \in \Eout(t_0)$ and this vertex is called the \emph{target} of $A$.
\end{rmk}

\begin{lem}\label{lem:source-contains-first-edge}
Let $A$ be a complete arrangement representing a SAW $w$ on $G$. The part graph $\Gcal(s_0)$ of the source $s_0$ of $A$ contains the first arc of $w$. Similarly, the graph $\Gcal(t_0)$ of the target $t_0$ contains the final arc of $w$.
\end{lem}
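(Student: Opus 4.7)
The plan is to prove the stronger claim that the first arc of $w = \varphi(A)$ coincides with the first arc of the shape $P(s_0)$. By Remark~\ref{rmk:source-of-arrangement} and condition \ref{itm:config-entry}, the shape $P(s_0)$ must begin with a non-virtual arc $\alpha_0 \in \Ecal(s_0) \subseteq E(\Gcal(s_0))$, so the stronger claim immediately yields the lemma. The statement about the target follows by the dual argument, using exit directions $Y(\cdot)$ and condition \ref{itm:config-exit}.

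I would prove the stronger claim by induction on $|V(S)|$, where $S$ is the open subtree underlying $A$. The base case $|V(S)| = 1$ is immediate since no contractions are needed and $w = P(s_0)$. For the inductive step, pick any interior arc $f$ of $S$ and contract it via Construction~\ref{cons:contraction} to obtain $A/f = (P/f, C/f)$ on $S/f$. This arrangement is still complete (configurations on surviving interior arcs remain non-boring, and boundary configurations are unchanged), and $\varphi(A/f) = \varphi(A) = w$ by Corollary~\ref{cor:bij-tree-contraction}. Let $s_0'$ be the source of $A/f$. If $s_0 \notin \{f^-, f^+\}$, the conditions defining the source at $s_0$ are preserved by contraction, so $s_0' = s_0$ and $(P/f)(s_0) = P(s_0)$; the inductive hypothesis immediately yields the claim.

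Otherwise we may assume $s_0 = f^-$, so the source condition forces $X(f) = \bar f$. Since $C(f) = C(\bar f)$ we also have $X(\bar f) = \bar f$, so that $\bar f \in \Eout(f^+)$ is an arc satisfying $X(e) = e$ with $e = \bar f$; by \ref{itm:config-entry} it is the unique such arc at $f^+$, and every other $e \in \Eout(f^+) \setminus \{\bar f\}$ satisfies $X(e) = \bar e$. Combined with the source condition at $f^-$, this verifies that the merged vertex $s_f$ is the source $s_0'$ of $A/f$. Now, using Construction~\ref{cons:contraction} with $a_0 = X(f) = \bar f$, the walk $(P/f)(s_f)$ begins with the sub-walk $P(f^-) v_1 = P(s_0) v_1$. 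If the starting vertex $u_0$ of $P(s_0)$ differs from $v_1$, this sub-walk is non-trivial and its first arc is $\alpha_0$. Otherwise $u_0 = v_1$; observing that $e_1 \in \Ecal(f)$ would force $e_1$ to be the first arc of $P(s_0)$ (contradicting the non-virtuality of $\alpha_0$), we must have $e_1 \in \Ecal(\bar f)$, so $a_1 = \bar f$ and $P(a_1^+) = P(s_0)$, whence the next sub-walk $v_1 P(s_0) v_2$ again begins with $\alpha_0$.

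The main obstacle is carefully tracking the source under contraction; the key observation is that the convention $C(e) = C(\bar e)$ identifies $X(f)$ with $X(\bar f)$, so that combined with \ref{itm:config-entry} at $f^+$, the source condition at $f^-$ in $A$ automatically propagates to the merged vertex $s_f$ in $A/f$ without creating a second source elsewhere.
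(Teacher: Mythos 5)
Your proposal is correct, and it takes a genuinely different route from the paper. The paper argues directly: it observes that for every $t \in V(T)$ the shape $P(t)$ visits exactly the vertices of $w \cap \Vcal(t)$ in the order prescribed by $w$, and then traces the location of $w^-$ from an arbitrary part containing it towards the source — using the fact that at every non-source vertex $t$, the unique arc $e' \in \Eout(t)$ with $X(e') = e'$ points towards $s_0$, so \ref{itm:compatible-in} forces $w^-$ to lie in the corresponding adhesion set. Repeating this along the path yields $w^- \in \Vcal(s_0)$, and then \ref{itm:config-entry} at the source finishes the argument. Your approach instead strengthens the claim (first arc of $w$ equals first arc of $P(s_0)$) and proves it by induction on $\abs{V(S)}$, peeling off one edge contraction at a time and analysing Construction~\ref{cons:contraction} directly. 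This is a perfectly legitimate alternative, and it has the merit of reusing the contraction machinery rather than appealing to the less precisely stated "order-preserving restriction" observation; the paper's route, in exchange, yields the fact $w^- \in \Vcal(s_0)$ as a clean intermediate statement with very little case analysis.

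One small point worth tightening: in the subcase $s_0 = f^-$ and $u_0 = v_1$, your argument implicitly assumes that $Q(f)$ has at least two vertices, so that $e_1$ exists. If $Q(f) = (v_1)$ were possible, the clause "so $a_1 = \bar f$" would instead read $a_1 = Y(f)$ and the dichotomy on $e_1$ would be vacuous. You should note that this case cannot occur: with $X(f) = \bar f$ forced by the source condition, and $Q(f)$ carrying no arcs, non-boringness forces $Y(f) = f$, and then \ref{itm:compatible-out} would make $P(s_0)$ start and end at $v_1$, contradicting the fact that $P(s_0)$ begins with a non-virtual arc and is self-avoiding. This is a fillable gap, not a flaw in the overall strategy.
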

\begin{proof}
Observe that for any $t \in V(T)$ by Construction~\ref{cons:projection} the walk $P(t)$ visits exactly the vertices in $w \cap \Vcal(t)$ in the same order as $w$. Let $t$ be a vertex of $T$ such that $w^- \in \Vcal(t)$ and let $s$ be the neighbour of $t$ on the unique $s_0$--$t$-path and let $e=st$. Then $X(e)=e$, so by \ref{itm:compatible-in} the initial vertex $w^-$ of $P(t)$ must be in $\Ecal(e)$ and thus $w^- \in \Vcal(s)$. Induction yields $w^- \in \Vcal(s_0)$. 
By \ref{itm:config-entry} the first arc of $P(s_0)$ is non-virtual and thus coincides with the first arc of $w$.
The statement about the target follows analogously.
\end{proof}

\section{A generating system of equations for configurations}\label{sec: system}

Throughout this section let $G$ be a locally finite connected graph and let $\Gamma$ be a group acting quasi-transitively on $G$. We always assume that $\Gamma$ does not fix an end of $G$. Corollary \ref{cor:reducedtd} provides us with a $\Gamma$-invariant tree decomposition $(\Tcal,\Vcal)$ of $G$ such that there are exactly two $\Gamma$-orbits on the arcs of $T$. More precisely, we can pick an arc $e_0$ such that every arc $e$ of $T$ either lies in the orbit $\Gamma e_0$ or in the orbit $\Gamma \bar e_0$. We denote by $\rho(e) \in \{e_0, \bar e_0\}$ the respective representative of the arc $e$. 

Two non-boring configurations $c=(q,x,y)$ and $c'=(q',x',y')$ on the arcs $e$ and $e'$ of $T$ are \emph{$\Gamma$-equivalent} if $(q',x',y')=(\gamma(q), \gamma(x), \gamma(y))$ holds for some $\gamma \in \Gamma$, where $\gamma(q)$ refers to the natural extension of $\gamma$ to the virtual edges. Clearly this defines an equivalence relation on the set of configurations on $E(T)$. Let $\Ccal$ be a set of representatives chosen in a way such that each $c \in \Ccal$ is a non-boring configuration on the arc $e_0$. Observe that the set $\Ccal$ has finite cardinality because the adhesion graph $\Gcal(e_0)$ is finite and thus carries only finitely many different configurations. For any non-boring configuration $c$ we denote its representative in $\Ccal$ by $\rho(c)$.

Let $c=(q,x,y)$ be a non-boring configuration. A \emph{$c$-completion} is an arrangement $A=(P,C)$ on a finite open sub-tree $S$ of $K_{x}$ containing $x$ such that $C(x)=c$ and $C(e)$ is boring on $e \in E(S)$ if and only if $e \in \partial E(S) \setminus \{x\}$. We define the generating function of $c$-completions as 
\[
F_{c}(z)=\sum_{\text{$A$ $c$-completion}} z^{\norm{A}},
\]
Let $r_c$ denote the radius of convergence of the generating function $F_c(z)$ and let $R=\min_{c \in \Ccal} r_c$. 

The main goal in this section is to find a system of equations for the generating functions of $c$-completions and study this system to obtain properties of these generating functions.

For an arrangement $A$ on an open subtree $S$ of $T$ and a vertex $s$ of $S$, we denote by $\nb(A,s)$ the set of arcs $e \in \Eout(s)$ such that $A$ is non-boring on $e$. Observe that by definition $\nb(A,s)$ is a finite set.

Let $c=(q,x,y)$ be a configuration and let $A=(P,C)$ be a $c$-completion on the open cone $K_x$. Then we can decompose $A$ into its restriction $A_S$ on the open star $S=\sta(x^-)$ and $C(e)$-completions $A_e$ on the cones $K_{\bar{e}}$ for all outgoing arcs $e$ of $S$ different from $x$. Observe that Lemma \ref{lem:boringextension} implies that whenever $C(e)$ is boring, the arrangement $A_e$ is uniquely given by $C(e)$ and does not contain any non-virtual arcs. Thus it is possible to recursively build all $c$-completions by completing arrangements $A$ on $\sta(x^-)$ satisfying $C(x)=c$ with $C(e)$-completions for all $e \in \Eout(x^-)\setminus \{x\}$ carrying non-boring configurations $C(e)$. Thus we obtain
\begin{equation}\label{eq:defPc}
F_{c}(z) = \sum_{\substack{A \text{ arr. on } \sta(x^-): \\ C(x)=c}} z^{\norm{A}} \prod_{e \in \nb(A,x^-) \setminus \{x\}} F_{C(e)}(z).
\end{equation}

It follows from the definitions that $F_c(z)=F_{c'}(z)$ whenever $c$ and $c'$ are $\Gamma$-equivalent configurations. Write $\Fbf(z)=(F_{c}(z))_{c \in \Ccal}$. Then the equations above can be rewritten by replacing every $F_c(z)$ by $F_{\rho(c)}(z)$ to obtain 
\begin{equation}
F_{c}(z)=P_{c}(z,\Fbf(z)), \quad c \in \Ccal. \label{eq:soe}
\end{equation}
Here
\begin{equation*}
    P_c(z,\mathbf y)= \sum_{\mathbf{n} \in \N_0^{\abs{\Ccal}}} a_{c,\mathbf{n}}(z) \mathbf{y}^\mathbf{n}, \quad c \in \Ccal,
\end{equation*}
is a formal power series in the variables $\mathbf y=(y_c)_{c \in \Ccal}$ whose coefficients are formal power series $a_{c,\mathbf n}(z)$ in the variable $z$ given by the sums in equation \eqref{eq:defPc}, respectively, where 
\[
\mathbf{y}^\mathbf{n}=\prod_{c \in \Ccal} y_c^{n_c}
\]
for an exponent vector $\mathbf{n}=(n_c)_{c \in \Ccal}$.

Let $c \in \Ccal$ be a non-boring configuration. By Lemma~\ref{lem:non-virtual-edges-in-leaves} each $c$-completion contains at least one non-virtual arc, so that $F_c(0)=0$. Furthermore we may assume without loss of generality that $F_c(z)$ is not identically zero, that is, that there exists at least one $c$-completion. If this is not the case for some configuration $c \in \Ccal$, we remove $c$ from $\Ccal$ and replace every occurrence of $y_c$ in $P_c(z,\ybf)$ by $0$. Observe that after this process equations \eqref{eq:soe} remain valid. %Finally, it is not hard to see that the coefficient of the monomial $y_c$ in $P_c(z,\ybf)$ is $0$. Indeed, the coefficient can only be non-zero if there is an arrangement on $\sta(x^-)$ such that ... by Proposition~\ref{pro:finitesubtree} every vertex of $G$ is contained in finitely many adhesion sets.
%CL: Remove this comment or move it somewhere else? (e.g. we could state this in terms of the Jacobian if necessary...)

Consider the Jacobian matrix 
\[
\Jac(z,\ybf)=\left(\frac{ \partial P_{c}}{\partial y_{c'}}(z,\ybf)\right)_{c,c' \in \Ccal}.
\]

Let $c=(q,x,y)$ and $c'$ be two non-boring configurations. A \emph{$c$--$c'$-completion} is an arrangement $A=(P,C)$ on a finite open subtree $S$ of $K_x$ containing $x$ such that $C(x)=c$ and there is an arc $f \in \partial E(S)$ such that $\rho(C(f))=c'$ and $C(e)$ is boring on $e \in E(S)$ if and only if $e \in \partial E(S) \setminus \{x,f\}$. The arc $x$ is the \emph{source arc} of the $c$--$c'$-completion $A$ and $f$ is the \emph{target arc}. The length of $A$ is the {\lidi} between the source arc $x$ and the target arc $f$. We denote by $\Acal(c,c',n)$ the set of $c$--$c'$-completions of length $n$.

Let us now take a closer look at the entries of $\Jac(z,\Fbf(z))$. To simplify notation, for any expression $\phi(z,\ybf)$ depending on the variable $z$ and the vector $\ybf$, we denote its evaluation at $\ybf=\Fbf(z)$ by
\[
\phi(z)=\phi(z,\Fbf(z)).
\]

\begin{lem}\label{lem:jacobian-completions}
Let $c,c'$ be non-boring configurations. Then
\[
(\Jac(z))_{c,c'}= \sum_{ A \in \Acal(c,c',1)} z^{\norm{A}}.
\]
\end{lem}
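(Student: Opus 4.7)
The plan is to evaluate $\partial P_c / \partial y_{c'}$ directly from the series expression for $P_c$, and then match each resulting term to an element of $\Acal(c, c', 1)$.

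First I would unpack $P_c(z, \ybf)$: by construction it is obtained from \eqref{eq:defPc} by replacing every factor $F_{C(e)}(z)$ on the right-hand side by the formal variable $y_{\rho(C(e))}$. Differentiation with respect to $y_{c'}$ acts on each resulting monomial by picking out one factor equal to $y_{c'}$ and removing it; combinatorially, this amounts to marking an arc $f \in \nb(A_0, x^-) \setminus \{x\}$ with $\rho(C_0(f)) = c'$, where $A_0$ is the underlying arrangement on $\sta(x^-)$ satisfying $C_0(x) = c$. After evaluating at $\ybf = \Fbf(z)$, each remaining variable $y_{\rho(C_0(e))}$ expands via the definition of $F$ into a sum over $C_0(e)$-completions on the sub-cone $K_{\bar e}$ (using $F_{\rho(C_0(e))} = F_{C_0(e)}$ by $\Gamma$-equivalence). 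Gluing $A_0$ with these attached completions, together with the uniquely determined boring continuations on the other boundary arcs granted by Lemma~\ref{lem:boringextension}, produces an arrangement on a finite subtree $S \subseteq K_x$ that matches the definition of a $c$--$c'$-completion with source arc $x$ and target arc $f$; the weight $\norm{\cdot}$ adds up correctly by construction.

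It remains to verify that the resulting completions all have length $1$ and that this correspondence is bijective. Length $1$ means $d_T(x, f) = 1$, i.e.\ $x$ and $f$ share a vertex. The only mildly nontrivial point in the whole proof is that this shared vertex must be $x^-$: since $x, f \in \partial E(S)$, each has its initial endpoint in $V(S)$ and terminal endpoint outside $V(S)$, and the fact that $T[V(S)]$ is a connected subtree rules out every case except $f^- = x^-$ (a shared endpoint outside $V(S)$ would force the unique $x^-$--$f^-$ path in $T$ to leave $V(S)$, contradicting connectedness of $T[V(S)]$). In our construction $f$ was already produced in $\Eout(x^-) \setminus \{x\}$, so this holds automatically; conversely, given any $c$--$c'$-completion of length $1$, the same tree argument places its target arc in $\Eout(x^-)$, and restricting its arrangement to $\sta(x^-)$ and to each relevant sub-cone $K_{\bar e}$ recovers the triple $(A_0, f, (A_e)_e)$. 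Everything else is a direct matching of formal sums.
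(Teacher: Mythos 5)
Your proof is correct and takes essentially the same route as the paper's: both expand $P_c(z,\ybf)$ over arrangements on $\sta(x^-)$, apply the Leibniz rule to differentiate, identify the marked factor $y_{c'}$ with a target arc $f$, and then evaluate at $\ybf=\Fbf(z)$ so that the remaining factors $F_{C(e)}$ expand as sums over $C(e)$-completions, giving the bijection with $\Acal(c,c',1)$. Your added discussion of why a length-$1$ completion forces $f \in \Eout(x^-)$ (ruling out $x^+ = f^+$ via connectedness of $T[V(S)]$) is a correct elaboration of a step the paper treats as immediate "by definition"; the appeal to Lemma~\ref{lem:boringextension} is slightly misplaced since boring configurations on $\partial E(S)\setminus\{x,f\}$ are simply retained as boundary data rather than extended, but this does not affect the argument.
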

\begin{proof}
Let $c=(q,x,y)$. From equations \eqref{eq:defPc} and \eqref{eq:soe} we obtain
\[
    P_c(z,\ybf)= \sum_{\substack{A \text{ arr. on } \sta(x^-): \\ C(x)=c}} z^{\norm{A}} \prod_{e \in \nb(A,x^-) \setminus \{x\}} y_{\rho(C(e))}.
\]
An application of the Leibniz rule provides
\[
    \frac{ \partial P_{c}}{\partial y_{c'}}(z,\ybf) =\sum_{\substack{A \text{ arr. on } \sta(x^-): \\ C(x)=c}} z^{\norm{A}} \sum_{\substack{f \in \nb(A,x^-) \setminus \{x\}: \\ \rho(C(f))=c'}} \quad \prod_{e \in \nb(A,x^-) \setminus \{x,f\}} y_{\rho(C(e))}
\]
By definition every $c$--$c'$-completion $A$ of length $1$ consists of an arrangement $A_s=(P_s,C_s)$ on $\sta(x^-)$ such that there is an arc $f \in \Eout(x^-) \setminus \{x\}$ with $\rho(C_s(f))=c'$ and $C_s(e)$-completions $A_e$ for all $e \in \nb(A,x^-) \setminus \{x,f\}$. Conversely, every such $A_s$ together with some $C_s(e)$-completions $A_e$ forms a $c$--$c'$-completion. Finally note that in this case 
\[
\norm{A}=\norm{A_s} + \sum_{e \in \nb(A,x^-) \setminus \{x,f\}} \norm{A_e}.
\]  
We conclude
\[
    (\Jac(z))_{c,c'}= \sum_{ A \in \Acal(c,c',1)} z^{\norm{A_s}} \prod_{e \in \nb(A_s,x^-) \setminus \{x,f\}} F_{\rho(C_s(e))} = \sum_{ A \in \Acal(c,c',1)} z^{\norm{A}}.
\]
    
\end{proof}

Observe that a $c$--$c''$-completion $A_1$ and a $c''$--$c'$-completion $A_2$ can be concatenated to obtain a $c$--$c'$-completion whose length is the sum of the lengths of $A_1$ and $A_2$. In particular, the matrix counting $c$--$c'$-completions of length $n$ can be obtained as the $n$-th power of the matrix of $c$--$c'$-completions of length $1$.

\begin{cor}\label{cor:jacobian-completions}
Let $c,c'$ be non-boring configurations and denote $\Acal(c,c',n)$ the set of $c$--$c'$-completions of length $n$. Then
\[
(\Jac(z)^n)_{c,c'}= \sum_{ A \in \Acal(c,c',n)} z^{\norm{A}}.
\]
\end{cor}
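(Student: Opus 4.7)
The plan is to induct on $n$, with the base case $n=1$ provided by Lemma~\ref{lem:jacobian-completions}. For the inductive step, matrix multiplication $\Jac(z)^n = \Jac(z)^{n-1}\Jac(z)$ combined with the inductive hypothesis and the base case yields
\[
(\Jac(z)^n)_{c,c'} = \sum_{c'' \in \Ccal}\sum_{\substack{A_1 \in \Acal(c,c'',n-1)\\A_2 \in \Acal(c'',c',1)}} z^{\norm{A_1}+\norm{A_2}},
\]
so the task reduces to exhibiting a weight-preserving bijection between these triples $(c'',A_1,A_2)$ and the set $\Acal(c,c',n)$. This is precisely the concatenation operation announced in the paragraph preceding the corollary.

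The forward direction of the bijection would start from $A \in \Acal(c,c',n)$ with source arc $x$ and target arc $g$, identify the arc $f_1$ at distance $1$ from $g$ on the unique $x$--$g$ path in $T$ (so $d_T(x,f_1)=n-1$), and cut along $f_1$. Since $f_1$ is interior to the support of $A$, the configuration $C(f_1)$ is non-boring, so $c'' := \rho(C(f_1))$ lies in $\Ccal$. The restriction of $A$ to the $x$-side of $f_1$ is then automatically a member of $\Acal(c,c'',n-1)$, while the restriction to the $g$-side, after transplantation via a canonically chosen $\gamma \in \Gamma$ depending on $(f_1, C(f_1))$ and satisfying $\gamma C(f_1)=c''$ and $\gamma\overline{f_1}=x''$ (the entry direction of $c''$), yields $A_2 \in \Acal(c'',c',1)$.

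The reverse direction glues $A_1$ with $\gamma^{-1}A_2$ along the target arc $f_1$ of $A_1$, using the same canonical $\gamma$ rule. The gluing is well defined because the two configurations at $f_1$ coincide by construction, and direct verification shows $A \in \Acal(c,c',n)$: the source $x$ carries $C(x)=c$ inherited from $A_1$, the target $\gamma^{-1}g_2$ (with $g_2$ the target arc of $A_2$) has $\rho$-image $c'$, all interior arcs carry non-boring configurations (in particular $f_1$ itself, since $c''$ is non-empty), and the length equals $d_T(x,f_1)+d_T(\overline{f_1},\gamma^{-1}g_2) = (n-1)+1 = n$ by additivity of distances along the $T$-path. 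Weight preservation is immediate since $\gamma$ acts as a graph automorphism and hence preserves the count of non-virtual arcs.

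The main technical point is the canonical choice of $\gamma$: two elements of $\Gamma$ satisfying the required conditions differ by an element of $\mathrm{Stab}_\Gamma(c'')$ and may produce distinct elements of $\Acal(c'',c',1)$ from the same $A$, so the forward and reverse maps must use a consistent rule. The bijection becomes well defined once one fixes, for each pair $(f_1,C(f_1))$ with $\rho(C(f_1))=c''$, a specific $\gamma$ satisfying the required conditions—equivalently, a section of the quotient map $\Gamma \to \Gamma/\mathrm{Stab}_\Gamma(c'')$—and uses it consistently in both the splitting and gluing steps. With this choice fixed, the two constructions are visibly mutually inverse, and summing $z^{\norm{A_1}+\norm{A_2}} = z^{\norm{A}}$ over all triples recovers $\sum_{A \in \Acal(c,c',n)} z^{\norm{A}}$, completing the inductive step.
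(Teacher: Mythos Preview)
Your proof is correct and follows precisely the approach the paper sketches in the paragraph preceding the corollary: induction on $n$ via the concatenation of a $c$--$c''$-completion with a $c''$--$c'$-completion. The paper leaves the details implicit, whereas you have carefully spelled them out, including the key technical point that a canonical choice of $\gamma$ is needed to make the splitting/gluing maps mutually inverse; the fact that such a $\gamma$ with both $\gamma C(f_1)=c''$ and $\gamma\bar f_1 = x''$ exists follows because property \ref{itm:config-entry} propagated along the $x$--$g$ path forces $X(f_1)=\bar f_1$.
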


The \emph{dependency digraph} $D$ of the system of equations \eqref{eq:soe} is given as follows. Its vertex set is the set $\Ccal$ of representatives of configurations and there is an arc from $c=(q,x,y)$ to $c'$ if $\frac{\partial P_{c}}{\partial y_{c'}}(z)$ is non-zero. In this case we also write $c \rightarrow c'$. Recall that we removed all $c$ such that $F_c(z)$ is identically zero from $\Ccal$. Therefore we have that $c \rightarrow c'$ if and only if $y_{c'}$ occurs in the power series $P_{c}$, that is, if $a_{c',\mathbf{n}}(z) > 0$ for some $\mathbf{n}=(n_c)_{c \in \Ccal}$ with $n_{c'}>0$. By definition this is the case if and only if there is a $c$-completion $A=(P,C)$ such that $\rho(C(f))=c'$ holds for some $f \in \Eout(x^-) \setminus \{x\}$. 

The following lemma follows readily from the definitions. 

\begin{lem} \label{lem:strongcompradius}
If $c \rightarrow c'$ for two configuration $c,c' \in \Ccal$, then $r_{c} \leq r_{c'}$.
\end{lem}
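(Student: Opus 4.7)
The plan is to argue by contradiction. Assume that $r_c > r_{c'}$ and pick some $z_0 \in (r_{c'}, r_c)$. Because $F_{c'}(z)$ is a power series with non-negative coefficients and radius of convergence $r_{c'}$, the monotonically increasing partial sums diverge at $z_0$, so $F_{c'}(z_0) = +\infty$. The goal is to turn the combinatorial recursion for $F_c$ into a lower bound that already forces $F_c(z_0) = +\infty$, contradicting $z_0 < r_c$.

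The first step is to use the recursive identity obtained by decomposing a $c$-completion into its restriction on the open star $\sta(x^-)$ and the sub-completions attached along the remaining outgoing arcs, namely
\[
F_c(z) = \sum_{\mathbf n \in \N_0^{\abs{\Ccal}}} a_{c, \mathbf n}(z) \prod_{c'' \in \Ccal} F_{c''}(z)^{n_{c''}}.
\]
Every coefficient $a_{c, \mathbf n}(z)$ and every $F_{c''}(z)$ is a power series with non-negative coefficients, so this formal identity holds as an equality in $[0, +\infty]$ for every $z \geq 0$; this is the only step requiring any care, and it reduces to Tonelli's theorem for non-negative series.

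Next, since $c \rightarrow c'$, the characterisation of the dependency digraph (given just before Lemma~\ref{lem:strongcompradius}) yields some exponent vector $\mathbf n^{\ast}$ with $n^{\ast}_{c'} \geq 1$ and $a_{c, \mathbf n^{\ast}}(z) \not\equiv 0$. A non-zero power series with non-negative coefficients is strictly positive on $(0, +\infty)$ within its disc of convergence, so $a_{c, \mathbf n^{\ast}}(z_0) > 0$. Similarly, every $F_{c''}$ still present in $\Ccal$ is non-zero and satisfies $F_{c''}(0) = 0$, whence $F_{c''}(z_0) > 0$ for all $c''$ appearing in the monomial.

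Dropping all other summands from the recursion gives
\[
F_c(z_0) \;\geq\; a_{c, \mathbf n^{\ast}}(z_0) \, F_{c'}(z_0)^{n^{\ast}_{c'}} \prod_{c'' \neq c'} F_{c''}(z_0)^{n^{\ast}_{c''}},
\]
and since $F_{c'}(z_0)^{n^{\ast}_{c'}} = +\infty$ while the remaining factors are strictly positive, the right-hand side equals $+\infty$. This contradicts $F_c(z_0) < +\infty$ and concludes the proof, so the only genuinely non-trivial obstacle is the soft analytic justification of the expansion of $F_c(z_0)$ in the second paragraph.
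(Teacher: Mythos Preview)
Your proof is correct and follows the same underlying idea the paper has in mind when it says the lemma ``follows readily from the definitions'': the recursion $F_c(z)=P_c(z,\Fbf(z))$ with non-negative coefficients forces $F_c$ to dominate a positive multiple of $F_{c'}$, so divergence of $F_{c'}$ at $z_0$ propagates to $F_c$.

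One small phrasing issue: when you write that $a_{c,\nbf^\ast}(z_0)>0$ ``within its disc of convergence'', you do not actually know (or need) that $z_0$ lies inside the disc of convergence of $a_{c,\nbf^\ast}$. The correct statement is simply that a non-zero power series with non-negative coefficients takes a value in $(0,+\infty]$ at every $z_0>0$; either way the factor is strictly positive, which is all you use. The same remark applies to the factors $F_{c''}(z_0)$.

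If you want the argument to look even closer to ``readily from the definitions'', you can bypass the full expansion: the existence of a single $c$-completion $A$ with $\rho(C(f))=c'$ lets you replace the $C(f)$-completion sitting inside $A$ by an arbitrary $c'$-completion, giving an injection from $c'$-completions of weight $m$ into $c$-completions of weight $m+k$ for a fixed $k$. This yields $F_c(z)\geq z^k F_{c'}(z)$ coefficientwise and hence $r_c\leq r_{c'}$ in one line. Your version is a perfectly valid unpacking of the same mechanism.
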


We further classify configurations as follows. A configuration $c=(q,x,y)$ is called an \emph{I-configuration} if $x \neq y$ and it is called a \emph{U-configuration} if $x=y$. We denote the respective subsets of $\Ccal$ by $\Ical$ and $\Ucal$, so that $\Ccal=\Ical \cup \Ucal$ is a disjoint union. We call $c \in \Ical$ \emph{simple}, if $q$ consists only of a single vertex and denote by $\Ical_s$ the set of simple configurations. Furthermore, we call $c \in \Ical$ \emph{persistent}, if there are two simple configurations $c_1$ and $c_2$ such that $c$ lies on a walk from $c_1$ to $c_2$ in $D$. Note that this is equivalent to the existence of a $c_1$--$c_2$-completion such that there is some arc $e$ of the shortest walk from the source arc to the target arc for which $C(e)$ is equivalent to $c$. Non-persistent I-configurations are called \emph{transient} and we denote by $\Ical_p$ and $\Ical_t$ the sets of persistent and transient configurations, respectively. Note in particular that all simple configurations are persistent.

\begin{lem} \label{lem:simplecomponent}
Let $e, f \in E(T)$. Then there is an arc $f' \in \Gamma f$ such that for every pair of simple configurations $c=(q,x,y) , c'=(q',x',y')$ with entry directions $x=e$ and $x'=\bar f$ there is a $c$--$c'$-completion with target arc $f'$.
\end{lem}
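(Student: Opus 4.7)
The plan is to realize the required completion as the arrangement associated, via Theorem~\ref{thm:shape-to-arrangement}, to a self-avoiding walk supplied by Proposition~\ref{pro:adhesion-paths}. The three main ingredients are Proposition~\ref{pro:edges-dense} (to position $f'$), Proposition~\ref{pro:adhesion-paths} (to construct the walk), and Theorem~\ref{thm:shape-to-arrangement} (to prescribe the entry and exit directions on the boundary arcs $e$ and $f'$).

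First, use Proposition~\ref{pro:edges-dense} to choose $f' \in \Gamma f$ linkable with $e$ and satisfying $d_T(e,f') \geq N$, where $N$ is large enough that both Proposition~\ref{pro:adhesion-paths} and Corollary~\ref{cor:finitesubtree} apply; in particular $f' \in K_e$ and $\Vcal(e) \cap \Vcal(f') = \emptyset$. Given simple configurations $c = ((v), e, \bar e)$ and $c' = ((v_0'), \bar f, f)$ with $v \in \Vcal(e)$ and $v_0' \in \Vcal(f)$, fix $\gamma \in \Gamma$ with $\gamma(f) = f'$ and set $v' = \gamma(v_0') \in \Vcal(f')$. The target configuration to realize on $f'$ is then $\gamma \cdot c' = ((v'), \bar{f'}, f')$. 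Apply the \emph{moreover} part of Proposition~\ref{pro:adhesion-paths} to obtain a SAW $w$ in $G$ from $v$ to $v'$ meeting $\Vcal(e) \cup \Vcal(f')$ only at its endpoints, and let $S$ be its support in $T$.

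I claim that $e, f' \in \partial E(S)$. Because $\Vcal(e)$ separates $G$ into two sides and $w$ visits $\Vcal(e)$ only at its starting vertex $v$, the vertices of $w$ after $v$ lie in a single component of $G - \Vcal(e)$; since $v' \in \Vcal(f') \setminus \Vcal(e)$ and $f' \in K_e$, this component is contained in the $e^-$-side of the separation induced by $e$. Consequently every $\theta$-image of an edge of $w$ lies in $V(K_e)$, forcing $e^+ \notin V(S)$. The vertex $e^-$ then appears in $V(S)$ because the tree span of the $\theta$-images necessarily passes through $e^-$ on its way from the $\theta$-image of the first edge (located in the finite subtree $T_v$ of parts containing $v$, which includes $e^-$) to the $\theta$-image of the last edge (located near $f'^-$ and separated from $T_v$ in $K_e$ by $e^-$ once $d_T(e,f')$ is large enough). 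The symmetric argument yields $f' \in \partial E(S)$. Apply Theorem~\ref{thm:shape-to-arrangement} with $T' = S$ to obtain an arrangement $A = (P,C)$ on $S$ representing $w$ and satisfying $X(e) = e$ and $Y(f') = f'$. Lemma~\ref{lem:arrangement-from-shape} then forces $Q(e) = w \cap \Gcal(e) = (v)$, so $C(e) = c$, and similarly $C(f') = \gamma \cdot c'$, i.e.\ $\rho(C(f')) = c'$.

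To finish, verify the remaining conditions for a $c$--$c'$-completion. For any other boundary arc $g \in \partial E(S) \setminus \{e, f'\}$, the default $X(g) = Y(g) = \bar g$ together with $Q(g)$'s virtual arcs lying entirely in $\Ecal(\bar g)$ makes $C(g)$ boring; for any interior arc $g$, minimality of the support forces both components of $S - \{g, \bar g\}$ to carry edges of $w$, so $w$ must cross $\Vcal(g)$ at least once and $Q(g)$ contains arcs in both $\Ecal(g)$ and $\Ecal(\bar g)$, making $C(g)$ non-boring. Thus $A$ is the desired completion. The main obstacle throughout is the verification of the claim that $e$ and $f'$ are boundary arcs of the support---while intuitively clear from the walk's endpoints lying in $\Vcal(e)$ and $\Vcal(f')$, a rigorous justification requires carefully tracking how $\theta$ assigns edges of $w$ to tree vertices, and arguing that the resulting convex hull in $T$ reaches exactly to $e^-$ and $f'^-$ but no further.
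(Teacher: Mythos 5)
Your proof follows exactly the same approach as the paper's: choose $f' = \gamma(f)$ via Proposition~\ref{pro:edges-dense} so that $e$ and $f'$ are linkable at distance at least $N$, produce a SAW $w$ between the singleton vertices of the two configurations via the ``moreover'' part of Proposition~\ref{pro:adhesion-paths}, and apply Theorem~\ref{thm:shape-to-arrangement} to the walk to obtain the desired arrangement with $X(e) = e$ and $Y(f') = f'$. The only difference is that the paper ends with ``it is easy to check that this representation is a $c$--$c'$-completion'', whereas you attempt to supply those checks.

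Two of your verification details deserve a small caveat. First, for an interior arc $g$ of the support $S$ you assert that $Q(g)$ ``contains arcs in both $\Ecal(g)$ and $\Ecal(\bar g)$''; this need not hold --- for instance, if $w$ crosses $\Vcal(g)$ exactly once, $Q(g)$ has no arcs at all, and if $w$ starts and ends on the same side of $\Vcal(g)$ the arcs of $Q(g)$ may lie only in $\Ecal(g)$. The conclusion that $C(g)$ is non-boring is nevertheless correct, but the cleaner argument is a dichotomy: either $X(g)\neq Y(g)$ (then non-boring by definition), or $X(g) = Y(g)$ and, since by minimality of $S$ the walk $w$ has non-virtual arcs on the $g^+$-side, $Q(g)$ contains an arc in $\Ecal(g)\neq\Ecal(X(g))$. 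Second, your justification that $e^-, f'^- \in V(S)$ --- that the span of the $\theta$-images ``necessarily passes through $e^-$'' --- is stated more confidently than it is argued; in principle $\theta$ of the first arc of $w$ is merely some vertex of the finite subtree $T_v\cap K_e$, and showing the convex hull actually reaches $e^-$ requires a little more care (e.g.\ using that $T_v$ and $T_{v'}$ are disjoint for $d_T(e,f')\geq N$ and tracking where the path between the two exits $T_v$). Both of these are details the paper silently passes over, so on balance your write-up is at least as careful as the original; the overall strategy and all the essential ingredients coincide.
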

\begin{proof}
Let $N$ be the constant defined in Proposition~\ref{pro:adhesion-paths}. Proposition~\ref{pro:edges-dense} provides an automorphism $\gamma \in \Gamma$ such that $e$ and $f'=\gamma(f)$ are linkable and have {\lidi} at least $N$. Let $u$ be the single vertex of $q$ and $v$ be the single vertex of $\gamma(q')$. Then Proposition~\ref{pro:adhesion-paths} provides us with a SAW $w$ connecting $u$ and $v$ and meeting $\Vcal(e)$ and $\Vcal(f')$ only in these vertices. 

Theorem \ref{thm:shape-to-arrangement} provides us with a representation $A=(P,C)$ of the SAW $w$ on its support $S$ such that $X(e)=e$ and $Y(f')=f'$. It is easy to check that this representation is a $c$--$c'$-completion with target arc $f'$.
\end{proof}

\begin{rmk}
\label{rmk:rleq1}
The previous lemma immediately implies that $r_c \leq 1$ holds for the radius of convergence $r_c$ of $F_c$ for every simple configuration $c \in \Ical_s$. Indeed, the walk represented by a $c$-$c'$-completion is also represented by a $c$-completion, thus $F_c(z)$ contains infinitely many non-zero coefficients.
\end{rmk}

\begin{lem} \label{lem:D-components}
The dependency-digraph $D$ satisfies the following conditions.
\begin{enumerate}[label=(\roman*)]
    \item There are no arcs from U-configurations to I-configurations. \label{itm:no-U-I}
    \item Each strong component is contained in one of the sets $\Ical_p$, $\Ical_t$ and $\Ucal$. We call components persistent, transient or U-components depending on the type of configurations they contain. \label{itm:components-Ip-It-U}
    \item The set $\Ical_p$ is a strong component of $D$. \label{itm:Ip-strong}
\end{enumerate}
\end{lem}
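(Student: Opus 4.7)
The plan is to derive \ref{itm:no-U-I} first, then obtain \ref{itm:components-Ip-It-U} as a direct consequence, and finally prove \ref{itm:Ip-strong} using Lemma~\ref{lem:simplecomponent}.

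For \ref{itm:no-U-I}, I will show that for a U-configuration $c=(q,\alpha,\alpha)$ on the arc $x$, every star-arrangement on $\sta(x^-)$ satisfying $C(x)=c$ has only U-configurations on the other outgoing arcs $f\in\Eout(x^-)\setminus\{x\}$. The key observation is that, after fixing the entry direction $\alpha=x$ on the side of $x^-$ corresponding to the cone $K_x$ on which $c$-completions live, the arc $x$ becomes the unique candidate for the $X$-special and $Y$-special arc at $x^-$ by \ref{itm:config-entry} and \ref{itm:config-exit}. This forces $X(f)=Y(f)=\bar f$ for every other $f\in\Eout(x^-)\setminus\{x\}$, so $C(f)=(Q(f),\bar f,\bar f)$ is itself a U-configuration. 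Summing over all such arrangements, every monomial of $P_c$ is a product of $y_{c''}$-variables with $c''\in\Ucal$, hence $\partial P_c/\partial y_{c'}(z)\equiv 0$ for every $c'\in\Ical$, and no arc from $\Ucal$ to $\Ical$ exists in $D$.

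For \ref{itm:components-Ip-It-U}, the U-versus-I dichotomy is immediate from \ref{itm:no-U-I}: a strong component meeting both $\Ucal$ and $\Ical$ would supply a directed walk from the U-vertex to the I-vertex, contradicting \ref{itm:no-U-I}. For I-components, suppose a strong component contains some $c\in\Ical_p$ sitting on a walk $c_1\to\cdots\to c\to\cdots\to c_2$ between simples $c_1,c_2\in\Ical_s$, and let $c'$ be any other vertex of the same component. By strong connectivity there exist walks $c\to\cdots\to c'$ and $c'\to\cdots\to c$, which I splice into the original walk to obtain $c_1\to\cdots\to c\to\cdots\to c'\to\cdots\to c\to\cdots\to c_2$, exhibiting $c'\in\Ical_p$. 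Consequently every I-component lies entirely in $\Ical_p$ or entirely in $\Ical_t$.

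For \ref{itm:Ip-strong}, Lemma~\ref{lem:simplecomponent} delivers, for any two simple configurations $c_1,c_2\in\Ical_s$ (with entry directions arranged by an appropriate choice of $\gamma\in\Gamma$), a $c_1$-$c_2$-completion whose source arc carries $c_1$ and whose target arc carries $c_2$. Reading off the configurations along the unique $T$-path between source and target in this completion yields, after passing to $\Ccal$-representatives, a walk in $D$ from $c_1$ to $c_2$. Hence $\Ical_s$ lies in a single strong component. Any $c\in\Ical_p$ is by definition on a walk between two simples, so it belongs to the strong component of $\Ical_s$. Conversely, any $c$ in the strong component of $\Ical_s$ admits walks both from and to some simple configurations, so $c$ lies on a walk between simples and is persistent. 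Thus $\Ical_p$ is exactly the strong component of $\Ical_s$.

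The main obstacle is \ref{itm:no-U-I}: the subtlety lies in the initial reduction to the orientation $\alpha=x$, which must be justified using the correspondence between $c$-completions and their containing cones $K_x$. Once this is achieved, the conditions \ref{itm:config-entry} and \ref{itm:config-exit} do the rest, and parts \ref{itm:components-Ip-It-U} and \ref{itm:Ip-strong} become clean combinatorial consequences of \ref{itm:no-U-I} together with Lemma~\ref{lem:simplecomponent}.
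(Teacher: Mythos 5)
Your proof is correct and follows essentially the same route as the paper's: part (i) via conditions (D2)/(D3) forcing $X(f)=Y(f)=\bar f$ for $f\in\Eout(x^-)\setminus\{x\}$, part (iii) via Lemma~\ref{lem:simplecomponent} and the observation that persistence is equivalent to lying in the strong component of the simples, and part (ii) as a consequence. The only cosmetic difference is that the paper avoids the ``reduction to $\alpha=x$'' you flag by always naming $x$ to be the entry direction of $c$, so the star $\sta(x^-)$ automatically has $x\in\Eout(x^-)$ and no renaming step is needed.
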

\begin{proof}
Let $c=(q,x,y) \in \Ccal$ be a U-configuration and let $A=(P,C)$ be an arrangement on $\sta(x^-)$ such that $C(x)=c$. As $c$ is a U-configuration, $x=y \in \Eout(x^-)$, so by \ref{itm:config-entry} and \ref{itm:config-exit} in the definition of arrangements $X(f)=Y(f) = \bar f$ holds for every $f \in \Eout(x^-) \setminus \{x\}$, thus $C(f)$ is a U-configuration. In particular $D$ cannot contain arcs from U-configurations to I-configurations.

Lemma \ref{lem:simplecomponent} shows that all simple configurations are contained in the same strong component of $D$. Furthermore, persistent configurations are defined to lie on paths between simple configurations, so all of them are contained in this component, showing that $\Ical_p$ is contained in a single strong component.

Now let $\Kcal$ be an arbitrary strong component in $D$. If $\Kcal$ contains a U-configuration, then \ref{itm:no-U-I} implies that $\Kcal$ cannot contain any I-configuration, so $\Kcal \subseteq \Ucal$. 

Next assume that $\Kcal$ contains a transient configuration. Recall that by definition a transient configuration $c$ cannot lie on a $c_1$--$c_2$-path in $D$ for two simple configurations $c_1,c_2$. Hence, if $\Kcal$ contains a transient configuration, it cannot contain a simple configuration, so in this case $\Kcal \subseteq \Ical_t$.

Finally, if $\Kcal$ contains neither U-configurations nor transient configurations, then it is contained in $\Ical_p$. Since all of $\Ical_p$ is contained in a single strong component we conclude that in this case $\Kcal = \Ical_p$, in particular $\Ical_p$ is a strong component.
\end{proof}

The submatrix obtained from $\Jac(z)$ by restricting the index set to some subset $\Kcal \subset \Ccal$ is denoted by $\Jac_\Kcal(z)$.
It follows directly from Lemma~\ref{lem:strongcompradius} that if $\Kcal$ is a strong component of $D$, the radii of convergence $r_c$ coincide for all $c \in \Kcal$. We denote the common value by $r_\Kcal$. Let $c \in \Kcal$ and let $c' \in \Ccal$ such that $c \rightarrow c'$. Then Lemma~\ref{lem:strongcompradius} implies that $F_{c'}(z)$ is well-defined and finite for $0\leq z < r_\Kcal$. Hence $\Jac_\Kcal(z)$ is well defined for $0\leq z < r_\Kcal$.
Denote the spectral radius of the matrix $\Jac_\Kcal(z)$ by $\lambda_\Kcal(z)$.

Let us now state some properties of $\Jac_\Kcal(z)$ and $\lambda_\Kcal(z)$ which will be important later on.

\begin{lem}
\label{lem:jac_properties}
For each strong component $\Kcal$ of $D$ the following statements hold.
\begin{enumerate}[label=(\roman*)]
    \item All entries of $\Jac_\Kcal(z)$ are continuous, non-negative, and non-decreasing in $[0, r_\Kcal)$. 
    \item The spectral radius $\lambda_\Kcal(z)$ is continuous and non-decreasing in $[0, r_\Kcal)$. Additionally, $\lambda_{\Ical_p}(z)$ is strictly increasing in $[0, r_{\Ical_p})$
    \item $\Jac_\Kcal(0)$ is nilpotent, in particular $\lambda_\Kcal(0)=0$.
\end{enumerate}
\end{lem}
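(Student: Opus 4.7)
The plan is to prove the three parts in the order (i), (iii), (ii), since the strict-monotonicity half of (ii) will use the nilpotence established in (iii).

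For (i), I will read each entry of $\Jac_\Kcal(z)$ off Lemma~\ref{lem:jacobian-completions}: the entry $(\Jac(z))_{c,c'}$ equals $\sum_{A \in \Acal(c,c',1)} z^{\norm{A}}$, a formal power series in $z$ with non-negative coefficients. It remains to verify convergence on $[0, r_\Kcal)$. The formula derived in Lemma~\ref{lem:jacobian-completions} uses generating functions $F_d(z)$ for configurations $d$ with $c \to d$ in $D$; Lemma~\ref{lem:strongcompradius} guarantees $r_d \geq r_c = r_\Kcal$, so each such $F_d$ converges on $[0, r_\Kcal)$. Non-negativity, monotonicity, and continuity then follow from the coefficient structure.

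For (iii), Corollary~\ref{cor:jacobian-completions} identifies $(\Jac_\Kcal(0)^n)_{c,c'}$ with the number of $c$--$c'$-completions $A$ of length $n$ with $\norm{A}=0$. Any such $A$ represents a SAW $\varphi(A)$ of length $\norm{A}=0$, hence a single vertex $v$. Because $c$ is non-boring, the walk $Q(x)$ is nonempty; together with condition \ref{itm:compatible-intersection} and the absence of non-virtual arcs in all shapes, this will force $Q(x) = (v)$ and therefore $v \in \Vcal(x)$. The analogous argument at the target arc $f$ yields $v \in \Vcal(f)$. Corollary~\ref{cor:finitesubtree} then forces $d_T(x,f) < N$ for the universal constant $N$, so the entry vanishes whenever $n \geq N$, giving $\Jac_\Kcal(0)^N = 0$ and hence $\lambda_\Kcal(0) = 0$.

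For (ii), continuity of $\lambda_\Kcal(z)$ follows from the continuity of the roots of the characteristic polynomial in its coefficients combined with (i). Monotonicity uses the standard fact that the spectral radius is monotone on the cone of non-negative matrices: $\Jac_\Kcal(z_1) \leq \Jac_\Kcal(z_2)$ entrywise (by (i)) implies $\lambda_\Kcal(z_1) \leq \lambda_\Kcal(z_2)$. For strict monotonicity of $\lambda_{\Ical_p}$, I will use that $\Ical_p$ is a strong component of $D$ by Lemma~\ref{lem:D-components}~\ref{itm:Ip-strong}, which makes $\Jac_{\Ical_p}(z)$ irreducible for every $z > 0$. Lemma~\ref{lem:simplecomponent} furnishes a completion of strictly positive weight between two simple (hence persistent) configurations, and combined with the nilpotence of $\Jac_{\Ical_p}(0)$ from (iii) this forces at least one entry of $\Jac_{\Ical_p}(z)$ to be a non-constant power series. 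So for $0 \leq z_1 < z_2 < r_{\Ical_p}$ the difference $\Jac_{\Ical_p}(z_2) - \Jac_{\Ical_p}(z_1)$ is non-negative and nonzero, and the strict Perron--Frobenius monotonicity (if $0 \leq A \leq B$ with $A \neq B$ and $B$ irreducible, then $\rho(A) < \rho(B)$) will yield $\lambda_{\Ical_p}(z_1) < \lambda_{\Ical_p}(z_2)$. The main obstacle I anticipate is the compatibility analysis in (iii), which must pin down weight-zero completions as being built from simple configurations on a common vertex in order for Corollary~\ref{cor:finitesubtree} to deliver the spine-length bound.
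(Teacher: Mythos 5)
Your proposal is correct and follows essentially the same route as the paper: (i) reads the entries off Lemma~\ref{lem:jacobian-completions} as power series with non-negative coefficients; (iii) uses Corollary~\ref{cor:jacobian-completions} to identify $(\Jac_\Kcal(0)^n)_{c,c'}$ with weight-zero completions of length $n$ and then invokes Corollary~\ref{cor:finitesubtree} to rule them out for large $n$; (ii) combines irreducibility of $\Jac_{\Ical_p}$ on the strong component with a non-constant entry and Perron--Frobenius monotonicity. The paper's version of (iii) is marginally more compact (it simply observes the walk $\varphi(A)$ connects $\Vcal(e)$ and $\Vcal(f)$ and hence is nontrivial, rather than pinning down the single vertex), and for (ii) the paper cites Proposition~\ref{pro:adhesion-paths} where you cite Lemma~\ref{lem:simplecomponent} plus nilpotence; both routes produce the required non-constant entry. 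One small caveat in your (iii): ``$c$ non-boring implies $Q(x)$ nonempty'' is not literally true for an abstract non-boring I-configuration with empty walk; what you actually need (and what holds here) is that after the pruning of $\Ccal$ performed just after \eqref{eq:soe}, every $c\in\Ccal$ admits a $c$-completion, and compatibility condition \ref{itm:compatible-in}/\ref{itm:compatible-intersection} then forces the walk part of $c$ to be nonempty.
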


\begin{proof}
By Lemma \ref{lem:jacobian-completions}, the entries of the matrix $\Jac_{\Kcal}(z)$ are generating functions counting $c$--$c'$-completions of length 1 for $c,c' \in \Kcal$. In particular they are continuous, non-negative and non-decreasing within their radius of convergence and the same holds for the spectral radius $\lambda_\Kcal(z)$. 

By Proposition~\ref{pro:adhesion-paths}, there are two persistent configurations $c, c'$ such that  $(\Jac_{\Ical_p}(z))_{c,c'}$ is non-constant and thus strictly increasing in $z$. Irreducibility of $\Jac_{\Ical_p}(z)$ implies that also $\lambda_{\Ical_p}(z)$ is strictly increasing in $z$.

To see that (iii) holds, note that by Corollary~\ref{cor:finitesubtree} for $n$ large enough, any arcs $e,f \in E(T)$ of {\lidi} at least $n$ satisfy $\Vcal(e) \cap \Vcal(f) = \emptyset$. By Corollary \ref{cor:jacobian-completions} the entries of $\Jac_{\Kcal}(z)^n$ are generating functions counting $c$--$c'$-completions $A$ of length $n$. But the walk represented by such an arrangement $A$ connects $\Vcal(e)$ and $\Vcal(f)$ and thus must be non-trivial. In particular the respective entry of $\Jac_{\Kcal}(z)^n$ has no constant term, so $\Jac_{\Kcal}(0)^n=0$.
\end{proof}

Our aim now is to show that $\Jac_\Kcal(z)$ has finite entries at $z=r_{\mathcal{K}}$ for every strong component $\Kcal$.

\begin{lem}\label{lem:spectral}
For each strong component $\Kcal$ of $D$, the matrix $\Jac_\Kcal(r_{\mathcal{K}})$ is well-defined and has finite entries. Moreover, $\lambda_{\mathcal{K}}(z)< 1$ for every $0\leq z<r_{\mathcal{K}}$ and $\lambda_{\mathcal{K}}(r_{\mathcal{K}})\leq 1$.
\end{lem}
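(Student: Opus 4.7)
The plan is to use a concatenation argument to bound the Neumann series of $\Jac_\Kcal(z)$ by $F_c(z)$, from which $\lambda_\Kcal(z) < 1$ on $[0, r_\Kcal)$ follows, and then to pass to the limit $z \uparrow r_\Kcal$. Well-definedness of $\Jac_\Kcal(z)$ on $[0, r_\Kcal)$ is immediate from Lemma~\ref{lem:strongcompradius}: for $c \in \Kcal$ and $c' \in \Ccal$ with $c \to c'$ in $D$ we have $r_{c'} \geq r_c = r_\Kcal$, so $F_{c'}(z) < \infty$ and hence every entry of $\Jac_\Kcal(z)$ is finite.

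The heart of the argument is the following combinatorial bound. Fix $c, c' \in \Kcal$ and a single $\rho(\bar c')$-completion $B_0$; such a $B_0$ exists because reversing a non-trivial completion of $c'$ yields a non-trivial completion of $\bar c'$, and we discarded all $c \in \Ccal$ with $F_c \equiv 0$. To any $c$-$c'$-completion $A$ whose intermediate configurations along the $T$-path from source to target all lie in $\Kcal$, with target arc $f$, we attach an appropriate $\Gamma$-translate of $B_0$ at the cone $K_{\bar f}$: since $C(\bar f)$ is $\Gamma$-equivalent to $\rho(\bar c')$, this produces a valid $c$-completion of weight $z^{\norm A + \norm{B_0}}$. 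Distinct $A$'s give distinct $c$-completions, so by Corollary~\ref{cor:jacobian-completions} summation yields
\[F_c(z) \geq z^{\norm{B_0}} \sum_{n \geq 0} (\Jac_\Kcal(z)^n)_{c,c'}.\]
For $z \in [0, r_\Kcal)$ the left-hand side is finite, so the Neumann series converges entrywise. Non-negativity and irreducibility of $\Jac_\Kcal(z)$ (the latter from strong connectedness of $\Kcal$ in $D$) combined with Perron-Frobenius then force $\lambda_\Kcal(z) < 1$.

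To pass to the limit $z \uparrow r_\Kcal$, note that each entry of $\Jac_\Kcal(z)$ is non-decreasing in $z$ (Lemma~\ref{lem:jac_properties}(i)), so the entries converge in $[0, \infty]$ to the entries of $\Jac_\Kcal(r_\Kcal)$. If some limit $(\Jac_\Kcal(r_\Kcal))_{c_0,c_0'}$ were infinite, irreducibility of $D$ on $\Kcal$ would provide a directed cycle $c_0 \to c_0' \to \cdots \to c_0$ of some length $k$; monotonicity and strict positivity of the other factors along the cycle for $z > 0$ would force the cycle's contribution to $(\Jac_\Kcal(z)^k)_{c_0,c_0}$ to diverge as $z \uparrow r_\Kcal$, hence $\lambda_\Kcal(z) \to \infty$, contradicting the previous step. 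Therefore $\Jac_\Kcal(r_\Kcal)$ has finite entries, and continuity of the spectral radius on finite non-negative matrices yields $\lambda_\Kcal(r_\Kcal) = \lim_{z \uparrow r_\Kcal} \lambda_\Kcal(z) \leq 1$.

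The principal obstacle I anticipate is setting up the concatenation rigorously: defining the $\Gamma$-translate of $B_0$ so that it glues to $A$ at the target arc $f$ and satisfies all compatibility conditions from Section~\ref{sec: conf arrang}, and verifying injectivity of the concatenation map so that the lower bound on $F_c(z)$ genuinely follows from summing over the restricted class of $c$-$c'$-completions counted by $\sum_n (\Jac_\Kcal(z)^n)_{c,c'}$.
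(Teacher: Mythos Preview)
Your overall strategy---bound the Neumann series of $\Jac_\Kcal(z)$ combinatorially, then pass to the limit---is natural and differs from the paper's approach, which proves $\lambda_\Kcal(z)<1$ on $[0,r_\Kcal)$ by a Perron--Frobenius/Taylor-expansion contradiction (assuming $\lambda_\Kcal(s)=1$ at some $s<r_\Kcal$, pairing the system against a positive left eigenvector, and deriving $\beta(s-z)=O((s-z)^2)$). Your argument for finiteness of $\Jac_\Kcal(r_\Kcal)$ via a cycle is essentially the same idea as the paper's Lemma~\ref{bounded entries}, just phrased more directly.

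However, the concatenation step has a genuine gap. First a minor point: the configuration you must attach is a $C(f)$-completion, and $\rho(C(f))=c'$, not $\bar c'$; the configuration on an edge does not change when you view it from the other side, and its entry direction is already $\bar f$, so a translated $c'$-completion lands in $K_{\bar f}$ as required. More seriously, the map $A\mapsto A'$ is \emph{not} injective. Given a $c$-completion $A'$, any interior arc $g$ with $\rho(C(g))=c'$ such that $A'|_{K_{\bar g}\cap S'}$ happens to be a translate of $B_0$ yields a valid preimage. Nothing prevents a $c$--$c'$-completion $A$ from carrying, on a side branch off its spine, a $C(g)$-completion that is itself a $\Gamma$-translate of $B_0$; after attaching $B_0$ at the target arc $f_A$, the resulting $A'$ then admits at least two decompositions, one at $f_A$ and one at $g$. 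So ``distinct $A$'s give distinct $c$-completions'' fails, and the inequality $F_c(z)\geq z^{\|B_0\|}\sum_n(\Jac_\Kcal(z)^n)_{c,c'}$ is unjustified.

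The gap is repairable without abandoning your approach. The key observation is that distinct valid cut-arcs $g$ in $A'$ lie on different branches from $x$, so the corresponding subtrees $K_{\bar g}\cap S'$ are pairwise disjoint, and each contributes at least $\|B_0\|\geq 1$ non-virtual arcs to $\|A'\|$. Hence the multiplicity of $A'$ is at most $\|A'\|$, and one obtains
\[
z^{\|B_0\|}\sum_{n\geq 0}(\Jac_\Kcal(z)^n)_{c,c'}\ \leq\ \sum_{A'}\|A'\|\,z^{\|A'\|}\ =\ z\,F_c'(z),
\]
which is finite for $z<r_\Kcal$ and still forces $\lambda_\Kcal(z)<1$. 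With this fix your route is a valid, more combinatorial alternative to the paper's analytic argument.
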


In order to prove this lemma, we first need the following result.

\begin{lem}\label{bounded entries}
Let $A=(a_{ij})\in \mathbb{R}^{n\times n}$ be an irreducible matrix such that $a_{ij}\geq 0$ for every $i,j\in\{1,2,\ldots,n\}$ and $\lambda(A)\leq 1$. Let also $M=\max\{a_{ij} \}$ and $m=\min\{a_{ij} \mid a_{ij}>0\}$. Then $M\leq m^{-n}$.
\end{lem}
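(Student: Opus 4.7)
The plan is to apply the Perron--Frobenius theorem to extract a positive eigenvector and then compare entries along a path guaranteed by irreducibility.

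First, since $A$ is irreducible with non-negative entries, Perron--Frobenius gives a strictly positive eigenvector $v=(v_1,\dots,v_n)$ with $Av=\lambda(A) v$. Let $v_{\max}=\max_i v_i$ and $v_{\min}=\min_i v_i$, attained at indices $i^*$ and $j^*$ respectively. From $\sum_j a_{ij}v_j=\lambda(A)v_i\leq v_i$, I would extract the pointwise bound
\[
a_{ij}\,v_j\leq v_i\qquad\text{for all }i,j,
\]
which yields $a_{ij}\leq v_i/v_j\leq v_{\max}/v_{\min}$. Hence
\[
M\leq v_{\max}/v_{\min},
\]
and the remaining task is to bound this ratio by $m^{-n}$.

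For the ratio bound, I would use irreducibility. Whenever $a_{ij}>0$ one has $m\leq a_{ij}\leq v_i/v_j$, i.e.\ $v_j\leq v_i/m$. By irreducibility, the directed graph on $\{1,\dots,n\}$ with an arc $i\to j$ whenever $a_{ij}>0$ is strongly connected, so there exists a directed path $j^*=p_0\to p_1\to\cdots\to p_k=i^*$ with $k\leq n-1$ (shortest paths visit each vertex at most once). Along this path, each transition gives $v_{p_{l+1}}\leq v_{p_l}/m$, and iterating yields
\[
v_{\max}=v_{i^*}\leq v_{j^*}\,m^{-k}=v_{\min}\,m^{-k}.
\]
Therefore $v_{\max}/v_{\min}\leq m^{-k}\leq m^{-(n-1)}$.

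The one technical point is to justify passing from $m^{-(n-1)}$ to the claimed $m^{-n}$: I would first observe that $m\leq 1$. Indeed, pick any directed cycle $i_0\to i_1\to\cdots\to i_\ell=i_0$ (which exists by irreducibility), and the same inequality $v_{p_{l+1}}\leq v_{p_l}/m$ chained around the cycle gives $v_{i_0}\leq v_{i_0}\,m^{-\ell}$, forcing $m^\ell\leq 1$ and hence $m\leq 1$. Once $m\leq 1$, the bound $m^{-(n-1)}\leq m^{-n}$ is immediate, giving $M\leq m^{-n}$ as required. The main (minor) obstacle is being careful about the direction of inequalities when translating $a_{ij}\leq v_i/v_j$ into information about $v_j/v_i$; choosing a path from $j^*$ to $i^*$ (rather than the reverse) is what makes the argument produce an upper, rather than lower, bound on $v_{\max}/v_{\min}$.
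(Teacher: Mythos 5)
Your proof is correct, but it takes a genuinely different route from the paper's. You invoke the Perron--Frobenius theorem to obtain a strictly positive eigenvector $v$, extract the pointwise inequality $a_{ij}\leq v_i/v_j$ from $\lambda(A)\leq 1$, and then propagate this along a path from the argmin to the argmax of $v$ to control the ratio $v_{\max}/v_{\min}$; the observation that $m\leq 1$ (via a directed cycle) is handled cleanly. The paper instead avoids the eigenvector entirely: it picks $i_0,j_0$ with $a_{i_0j_0}=M$, closes a walk of length $l\leq n$ from $i_0$ back to $i_0$ through $j_0$ using irreducibility, lower-bounds the diagonal entry $(A^l)_{i_0i_0}\geq Mm^{l-1}$, and then uses monotonicity of the spectral radius of non-negative matrices to get $Mm^{l-1}\leq\lambda(A^l)=\lambda(A)^l\leq 1$. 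Your argument leans on the existence half of Perron--Frobenius (a positive eigenvector), while the paper's only uses the comparison half (a diagonal entry of a non-negative matrix bounds its spectral radius from below) together with $\lambda(A^l)=\lambda(A)^l$; the paper's version is in that sense marginally more elementary. Both prove the sharper bound $M\leq m^{-(n-1)}$ and both relax it to $m^{-n}$ after observing $m\leq 1$, so the end result and the role of irreducibility are the same.
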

\begin{proof}
Consider some $i_0,j_0$ such that $a_{i_0j_0}=M$. By the irreducibility of $A$, there are $l\leq n$ and indices $z_1=j_0,z_2,\ldots,z_l=i_0$ with $a_{z_iz_{i+1}}>0$ for every $i=1,2,\ldots,l-1$. The entry $a^{(l)}_{i_0i_0}$ of the matrix $A^l$ satisfies
\[a^{(l)}_{i_0i_0}\geq a_{i_0j_0}\prod_{i=1}^{l-1} a_{z_i z_{i+1}}\geq Mm^{l-1}.\] 
Since $A^l$ has non-negative entries and $\lambda(A^l)=\lambda(A)^l\leq 1$, we have by the monotonicity of the spectral radius that
\[1 \geq \lambda(A^l)\geq \lambda(B)=Mm^{l-1},\]
where $B$ is the matrix with $b_{i_0i_0}=Mm^{l-1}$, and all other entries 0. 
In particular we see that $m \leq 1$, so $M \leq m^{-l+1} \leq m^{-n}$.
\end{proof}

For each strong component $\mathcal{K}$ of $D$, let $F_{\mathcal{K}}(z)=(F_c(z))_{c\in\mathcal{K}}$. Additionally for $\ybf=(y_c)_{c \in \Kcal}$ let $P_{c',\Kcal}(z,\ybf)$ be obtained from $P_{c'}(z,(y_c)_{c \in \Ccal})$ by substituting $y_c=F_c(z)$ for each $c \in \Ccal \setminus \Kcal$. 
%Given some exponent vector $\mathbf{n}=(n_c)_{c\in \mathcal{K}}$ and $c'\in \mathcal{K}$ we define \florian{we don't seem to ever use $a_{c',\mathcal{K},\mathbf{n}}(z)$ again. Delete?}
%$$a_{c',\mathcal{K},\mathbf{n}}(z)=\sum_{\mathbf{m}|_{\mathcal{K}}=\mathbf{n}} a_{c',\mathbf{m}}(z) \prod_{c\in \mathcal{C}\setminus\mathcal{K}}F_c(z)^{m_c},$$
%where the sum ranges over all $\mathbf{m}=(m_c)_{c\in\mathcal{C}}$ such that $m_c=n_c$ for every $c\in \mathcal{K}$, i.e.\ $a_{c',\mathcal{K},\mathbf{n}}(z)$ is the coefficient of $\ybf^\nbf$ in $P_{c',\Kcal}(z,\ybf)$.

%$\prod_{c\in \mathcal{K}}F_c(z)^{n_c}$ in $F_{c'}(z)$.

We are now ready to prove Lemma~\ref{lem:spectral}.

\begin{proof}[Proof of Lemma~\ref{lem:spectral}]
Let us start by showing that 
\begin{equation}\label{leq1}
    \lambda_{\mathcal{K}}(z)< 1 \text{ for every } 0\leq z< r_{\mathcal{K}}.
\end{equation} 
We will argue as in \cite[Proposition 4]{ceccherini-woess}.

%Lemma reference
%CP changed z to z_0 for clarity and z_0 to z_1 in the last paragraph
Assume for a contradiction that $\lambda_{\mathcal{K}}(z_0)\geq 1$ for some $0\leq z_0< r_{\mathcal{K}}$. Since $\lambda_{\mathcal{K}}(0)=0$ and $\lambda_{\mathcal{K}}(z)$ is continuous, there is some $0<s\leq z_0<r_{\mathcal{K}}$ such that $\lambda_{\mathcal{K}}(s)=1$. By the Perron-Frobenius Theorem for non-negative irreducible matrices,  $\lambda_{\mathcal{K}}(s)=1$ is a positive simple eigenvalue of $\Jac_{\mathcal{K}}(s)$ for which we can find a a left eigenvector $\mathbf{x}=(x_c)_{c\in \mathcal{K}}$ with only positive entries.
Define the function
$$\mathcal{F}(z,\mathbf{y})=\sum_{c\in \mathcal{K}}x_c\left(y_c-P_{c,\Kcal}(z,\mathbf{y})\right),$$
where $\mathbf{y}=(y_c)_{c\in\mathcal{K}}$.
Note that for $\zeta=\left(s,F_{\mathcal{K}}(s)\right)$ there is some $\beta \geq 0$ such that
\begin{equation*}
\begin{gathered}
\mathcal{F}(\zeta)=0, 
\quad 
\frac{\partial\mathcal{F}}{\partial z}(\zeta)=-\beta,
\quad
\frac{\partial\mathcal{F}}{\partial y_{c}}(\zeta)=0.
\end{gathered}
\end{equation*}
We claim that $\beta$ is strictly positive, that is, that there is some $c \in \Kcal$ such that $P_{c,\Kcal}(z,\mathbf{y})$ depends non-trivially on $z$. Recall that by Lemma~\ref{lem:non-virtual-edges-in-leaves} every $c'$-completion contains a non-virtual arc, and thus $F_{c'}(z)$ depends on $z$ for every $c'$. If the strong component $\Kcal$ of $D$ has an outgoing arc $c c'$, then $P_{c,\Kcal}$ includes $F_{c'}(z)$, which depends on $z$. Otherwise for $c \in \Kcal$, the function  $P_{c,\Kcal}(z,\mathbf{y})=P_{c}(z,\mathbf{y})$ does not depend on any $y_{c'}$ for $c' \in \Ccal \setminus \Kcal$ and thus must depend on $z$, because $F_c(z)=P_{c}(z,\mathbf{F}(z))$ does.

Taylor expanding around $\zeta$, replacing $\ybf$ by $F_{\mathcal{K}}(z)$, and using that $F_c(z)=P_{c,\Kcal}(z,F_{\mathcal{K}}(z))$ we obtain
\begin{equation*}
\begin{gathered}
    \beta(s-z)=O\left((s-z)^2\right),
\end{gathered}    
\end{equation*}
which is a contradiction. This proves \eqref{leq1}.

We will now show that $\Jac_{\Kcal}(r_{\Kcal})$ has finite entries. Let us write $m(z)$ for the smallest positive entry of $\Jac_{\Kcal}(z)$, and consider some $0<z_1<r_{\mathcal{K}}$.
Then $\Jac_{c,c'}(z) \leq m(z)^{-|\mathcal{K}|}\leq m(z_1)^{-|\mathcal{K}|}$ for every $c,c'\in \mathcal{K}$ and every $z_1\leq z<r_{\mathcal{K}}$ by Lemma~\ref{bounded entries} and the monotonicity of $m(z)$. It follows from the monotone convergence theorem that $\Jac_{c,c'}(r_{\mathcal{K}})$ is well-defined and $\Jac_{c,c'}(r_{\mathcal{K}})\leq m(z_1)^{-|\mathcal{K}|}$ for every $c,c'\in \mathcal{K}$, which proves the first assertion of the lemma.

For the second assertion, it remains to show that $\lambda_{\mathcal{K}}(r_{\mathcal{K}})\leq 1$. This follows from \eqref{leq1} by taking the limit as $z$ goes to $r_{\mathcal{K}}$.
\end{proof}

\section{Analyticity at the critical point}\label{sec: analytic}

Keeping all definitions and assumptions from the previous section, the goal in this section is to prove the following analyticity results; recall that $R = \min_{c \in \Ccal} r_c$ is the minimal radius of convergence of any $F_c(z)$.

\begin{pro}\label{pro:P-analytic}
For every $c\in \Ucal$, $P_c(z,\ybf)$ is analytic at $(z,\ybf)=(R,\Fbf(R))$.
\end{pro}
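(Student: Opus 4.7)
By Lemma~\ref{lem:D-components}\,(i), for $c \in \Ucal$ the series $P_c(z,\ybf)$ involves only the variables $y_{c'}$ with $c' \in \Ucal$. Consequently, analyticity at $(R, \Fbf(R))$ reduces to exhibiting a $\delta > 0$ with
\[
P_c\bigl((1+\delta)R,(1+\delta)\Fbf(R)\bigr) < \infty,
\]
the point being that $P_c$ has non-negative coefficients, so convergence at any point strictly beyond $(R,\Fbf(R))$ gives a convergent power-series expansion in a polydisc around it. Unfolding the definition, the quantity to control is
\[
\sum_{A_S} (1+\delta)^{\norm{A_S}+n_{A_S}} \, R^{\norm{A_S}} \prod_{e \in \nb(A_S,x^-) \setminus \{x\}} F_{\rho(C(e))}(R),
\]
where $A_S = (P,C)$ ranges over arrangements on $\sta(x^-)$ with $C(x)=c$, and $n_{A_S}$ counts non-boring outgoing arcs of $x^-$ other than $x$.

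The first substantive step is to reparametrise the sum by the underlying shape $p := P(x^-)$, a self-avoiding walk on the part graph $\Gcal(x^-)$ (every $A_S$ is determined by $p$ up to a bounded choice of entry/exit data from \ref{itm:config-entry}--\ref{itm:config-exit}). For each outgoing arc $e \neq x$ contributing to $n_{A_S}$, the definition of ``non-boring'' together with \ref{itm:config-entry}--\ref{itm:config-exit} force $p$ either to use at least one virtual arc in $\Ecal(e)$ or to visit $\Vcal(e)$ in several walk-components; since $|\Vcal(e)| \leq k$ by strong reducedness (Corollary~\ref{cor:reducedtd}), each $\Ecal(e)$ hosts at most $\lfloor k/2 \rfloor$ virtual arcs of $p$, and self-avoidance bounds the number of walk-components per $\Vcal(e)$ by $k$. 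Writing $v(p)$ for the total number of virtual arcs in $p$, this yields a two-sided comparison $n_{A_S} \leq v(p) + O(1)$ and $v(p) \leq (k/2)\,n_{A_S}$, so the total degree $\norm{A_S} + n_{A_S}$ of the monomial contributed by $A_S$ is bounded by a universal multiple of the total length $\norm{p}_{\mathrm{nv}} + v(p)$ of the shape.

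The remaining ingredient is a sub-criticality statement for the resulting weighted generating function on $\Gcal(x^-)$: the series counting SAWs on $\Gcal(x^-)$ that are compatible with the U-boundary $c$, weighted by $R$ on each non-virtual arc and by $F_{\rho(c')}(R)$ on each virtual arc in $\Ecal(e)$, has radius of convergence strictly greater than $1$ when multiplied by the extra scaling $(1+\delta)^{\norm{p}_{\mathrm{nv}}+v(p)}$ for $\delta > 0$ small. Intuitively, because $c$ is a U-configuration, the shapes being counted correspond to SAWs in $G$ that are confined to the cone $K_x$ with a U-type interaction at $\Vcal(x)$; such walks are exponentially rare compared with unrestricted SAWs on $G$, as they cannot exploit the ``persistent'' I-crossings that drive the critical growth rate $\mu_w = 1/R$. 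Quantifying this gap via Propositions~\ref{pro:edges-dense} and~\ref{pro:adhesion-paths}---which guarantee that long enough SAWs in $G$ have abundant opportunities to pass through many distinct adhesion sets---shows that the weighted connective constant of $\Gcal(x^-)$ seen above is strictly less than $\mu_w$. Combined with the boundedness of total degree from the previous paragraph, this yields absolute convergence for all sufficiently small $\delta > 0$, completing the proof.

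The main obstacle is making the sub-criticality claim rigorous without invoking the spectral-radius bounds of Section~\ref{sec: spectral}, which are proven only later. A natural route is a direct one-part estimate, in the spirit of Lemma~\ref{lem:onepart} mentioned in the introduction, using the abundance of adhesion crossings supplied by the geometric content of Section~\ref{sec: tree dec}; the technical heart of the argument lies in showing that shapes on $\Gcal(x^-)$ with a U-boundary at $x$ admit a global exponential decay rate strictly below $R^{-1}$, uniformly in the choice of weights $F_{c'}(R)$ assigned to the outgoing arcs.
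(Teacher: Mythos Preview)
Your overall reduction is correct and matches the paper: since $P_c$ has non-negative coefficients, it suffices to find $\delta>0$ with $P_c((1+\delta)R,(1+\delta)\Fbf(R))<\infty$, and you are right that for $c\in\Ucal$ only the variables $y_{c'}$ with $c'\in\Ucal$ appear. Your reparametrisation by the shape $p=P(x^-)$ and the comparison between $n_{A_S}$, $v(p)$ and $\norm{A_S}$ also agree with what the paper does (compare inequality~\eqref{eq:nb-n}).

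However, the proposal is not a proof: the entire technical content is packed into the ``sub-criticality claim,'' which you assert but do not establish. You yourself flag this as ``the main obstacle,'' and indeed it is where all the work lies. The paper's argument for precisely this step is substantial and does not follow from the geometric Propositions~\ref{pro:edges-dense} and~\ref{pro:adhesion-paths} alone. Two preliminary results are needed first: Lemma~\ref{lem:u-finite}, which shows $F_{c'}(R)<\infty$ for every $c'\in\Ucal$ (without this you cannot even evaluate the weights you are putting on virtual arcs), and Lemma~\ref{lem:onepart}, which shows that SAWs confined to a single part have growth rate strictly below $1/R$. Both rely on the reflection-extension device (Construction~\ref{cons:refl-ext}), which is the key geometric idea you are missing: one takes a walk that fails to cross an adhesion set, picks many well-separated candidate arcs, and for each chosen arc reflects the tail of the walk through it to manufacture an I-crossing. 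Injectivity of this surgery plus finiteness of $\Jac_{\Ical_p}(R)$ (Lemma~\ref{lem:spectral}) give the exponential gain.

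Even granting those lemmas, your one-line appeal to a ``weighted connective constant strictly less than $\mu_w$'' does not work directly, because shapes with many non-boring outgoing arcs are \emph{not} confined to one part in the sense of Lemma~\ref{lem:onepart}: each virtual arc carries a weight $F_{c'}(R)$ that already encodes an excursion into a cone. The paper handles this by a genuine trichotomy on $\Phi_n(R)$: shapes with \emph{few} virtual arcs are controlled by Lemma~\ref{lem:onepart}; shapes with \emph{many} virtual arcs whose attached $c'$-completions are mostly \emph{long} are controlled by the tail decay of each $F_{c'}$ at $R$; and shapes with many virtual arcs whose completions are mostly \emph{short} are again amenable to a reflection-extension argument, this time applied one level deeper in the tree. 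None of this three-way split, nor the reflection-extension machinery that drives two of the three cases, appears in your outline. Until those pieces are supplied, the argument remains a plausible strategy rather than a proof.
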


\begin{pro}\label{pro:J-analytic}
We have that $\Jac_{\Ical}(z,\ybf)$ is analytic at $(z,\ybf)=(R,\Fbf(R))$.
\end{pro}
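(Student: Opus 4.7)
The plan is to observe that $P_c(z,\ybf)$ is affine in the I-variables for each I-configuration $c$; this reduces the statement to an analyticity question in the U-variables alone, which can then be handled along the lines of Proposition \ref{pro:P-analytic}.

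\textbf{Affinity in the I-variables.} Fix $c=(q,x,y)\in\Ical$ and consider an arrangement $A=(P,C)$ on $\sta(x^-)$ with $C(x)=c$. Since $X(x)=x$, property \ref{itm:config-entry} forces $X(e)=\bar e$ for every $e\in\Eout(x^-)\setminus\{x\}$; since $Y(x)=\bar x$, property \ref{itm:config-exit} permits at most one $f\in\Eout(x^-)\setminus\{x\}$ with $Y(f)=f$. Any other non-boring $C(e)$ then satisfies $X(e)=Y(e)=\bar e$ and hence is a U-configuration, while in the unique-$f$ case $C(f)$ is forced to be an I-configuration with entry direction $\bar f$ and exit direction $f$ (it cannot be boring since a boring configuration requires $X(e)=Y(e)$). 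Thus at most one outgoing arc of $x^-$ other than $x$ carries a non-boring I-configuration, and we obtain the decomposition
\begin{equation*}
  P_c(z,\ybf) \;=\; P_c^{(0)}(z,\ybf_{\Ucal}) \;+\; \sum_{c'\in\Ical} y_{c'}\,P_c^{(c')}(z,\ybf_{\Ucal}),
\end{equation*}
where $P_c^{(0)}$ collects arrangements with no non-boring outgoing I-configuration and each $P_c^{(c')}$ collects those with exactly one, equivalent to $c'$. Consequently $\bigl(\Jac_{\Ical}(z,\ybf)\bigr)_{c,c'}=P_c^{(c')}(z,\ybf_{\Ucal})$ depends only on $(z,\ybf_{\Ucal})$, so analyticity of $\Jac_{\Ical}$ at $(R,\Fbf(R))$ is equivalent to analyticity of every $P_c^{(c')}$ at $(R,\Fbf_{\Ucal}(R))$.

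\textbf{Transferring the U-case argument.} Since the spectral bounds of Section \ref{sec: spectral} will give $r_c>R$ for every $c\in\Ucal$, the vector $\Fbf_{\Ucal}(R)$ has finite coordinates, so the evaluation point is genuinely interior. Each $P_c^{(c')}(z,\ybf_{\Ucal})$ is the generating function of arrangements on $\sta(x^-)$ whose skeleton shape on $x^-$ is a SAW in $\Gcal(x^-)$ entering through $\Vcal(x)$, exiting through $\Vcal(f)$ for some $f$ with $\rho(C(f))=c'$, and decorated by U-completions on the remaining non-boring outgoing arcs. This is structurally the same object handled by Proposition \ref{pro:P-analytic}, the only differences being that the two anchor vertices of the skeleton SAW lie in distinct adhesion sets rather than the same one, and that we must sum over the possibly infinite family of choices of $f\in\Eout(x^-)$. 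The main obstacle is to verify that the combinatorial estimates behind Proposition \ref{pro:P-analytic}—bounding both the number of skeleton SAWs of a given length in $\Gcal(x^-)$ and the weights of the attached U-completions—remain valid under these two modifications; this is where the genuine analytic content lies, but since neither change affects the essential growth rates (the two anchors still lie in quasi-transitively related adhesion sets, and the sum over $f$ can be organized by $\Gamma$-orbits), the argument carries through and yields the required analyticity.
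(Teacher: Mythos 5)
Your observation that $P_c$ is affine in the I-variables --- so that $\Jac_{\Ical}(z,\ybf)$ is a function of $(z,\ybf_\Ucal)$ alone --- is correct and a clean way of packaging what \ref{itm:config-entry} and \ref{itm:config-exit} say for an I-configuration $c$. It is consistent with (though not explicitly spelled out in) the paper's proof. However, the step that carries all the analytic content is exactly the one you wave through: ``the argument [of Proposition~\ref{pro:P-analytic}] carries through.'' That claim is where the paper does genuine work that your proposal omits, and omitting it leaves a real gap.

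The paper distinguishes two cases. For $c,c'\in\Ical_p$ it adapts the $\Sigma_n+\Lambda^1_n+\Lambda^2_n$ split of Proposition~\ref{pro:P-analytic}, and the restriction to the persistent component is used specifically in the $\Lambda^2_n$ estimate: after the successive reflection-extensions, the modified walk must be representable by a completion that contributes to an entry of $\Jac_{\Ical_p}(R)^N$, which one needs to be a finite matrix with controlled powers. For general $c,c'\in\Ical$ the paper does \emph{not} claim the direct adaptation works; instead it uses the multiplicative bound $\Jac_{c,c'}(z,\ybf)\le \Jac_{c_1,c_1'}(z,\ybf)\,P_{c_2}(z,\ybf)\,P_{c_2'}(z,\ybf)$, splitting $c$ and $c'$ into the simple I-configurations $c_1,c_1'$ they contain and the U-configurations $c_2,c_2'$ obtained by flipping an exit/entry direction. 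This is the analogue, for the Jacobian, of the reduction inside Proposition~\ref{pro:P-analytic} of a general $c\in\Ucal$ to a configuration with only two vertices and one virtual arc, a reduction you have also dropped. Without it, the walk represented by an arrangement with $C(x)=c$ and $\rho(C(f))=c'$ meets $\Vcal(x)$ and $\Vcal(f)$ in several vertices, the reflection-extension does not yield a representation whose end-configurations are simple, and the resulting object need not contribute to a power of $\Jac_{\Ical_p}(R)$ --- so the crucial bound on $\Lambda^2_n$ does not come for free.

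In short: the affine reduction is a nice observation, but to close the argument you still need (i) the reduction to simple I-configurations via a product bound and (ii) the explicit use of persistence in the $\Lambda^2_n$ estimate. Neither ``carries through'' automatically from the U-case, and both are supplied by the paper's two-step proof. (As a minor point, the finiteness of $\Fbf_\Ucal(R)$ you invoke is Lemma~\ref{lem:u-finite}, not a consequence of Section~\ref{sec: spectral}; phrasing it the latter way risks circularity, since Corollary~\ref{cor:u analytic} relies on the analyticity results of Section~\ref{sec: analytic}.)
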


We will prove Propositions~\ref{pro:P-analytic} and \ref{pro:J-analytic} in a series of lemmas. We start by showing that $F_c(R)$ is finite for every $c\in \Ucal$.

\begin{lem}\label{lem:u-finite}
If $T$ has more than two ends and $\Gamma$ does not fix an end of $T$, then $F_c(R)$ is well-defined and finite for every $c\in \Ucal$.
\end{lem}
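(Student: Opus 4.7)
Plan:

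The strategy is to show that $r_c > R$ strictly for every $c \in \Ucal$, which by analyticity in the open disk of convergence yields $F_c(R) < \infty$ immediately.

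\textbf{Step 1: Closure of the U-subsystem.} By Lemma \ref{lem:D-components}(i), the dependency digraph has no arcs from U-configurations to I-configurations, so $\{F_c\}_{c \in \Ucal}$ satisfies a self-contained system of equations. Let $R_\Ucal := \min_{c \in \Ucal} r_c$. The target is $R_\Ucal > R$, where by Lemma \ref{lem:strongcompradius} and the definition of $R$ we already know $R_\Ucal \geq R$.

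\textbf{Step 2: Geometric extension.} Fix $c = (q, x_0, x_0) \in \Ucal$ and a $c$-completion $A$ on the cone $K_{x_0}$ of weight $n$, representing (via the bijection from Theorem \ref{thm:saw-to-arrangement}) a SAW $w$ in $G$ that starts in $\Vcal(x_0)$ and ends in $\Vcal(x_0)$. The hypothesis that $T$ has more than two ends and $\Gamma$ does not fix an end of $T$ enters through Proposition \ref{pro:edges-dense}: there exists $\gamma \in \Gamma$ with $\gamma(x_0)$ linkable to $x_0$ at distance $\geq N$, where $N$ is the constant from Proposition \ref{pro:adhesion-paths}. That latter proposition then furnishes a SAW $w'$ in $G$ of bounded length $M$ connecting the endpoint of $w$ to a vertex of $\Vcal(\gamma(x_0))$, meeting $\Vcal(x_0) \cup \Vcal(\gamma(x_0))$ only at its endpoints. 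The concatenation $w \cdot w'$ is then a SAW in $G$ of length $n + |w'| \leq n + M$ that enters $\Vcal(x_0)$, executes the U-turn encoded by $A$ inside $K_{x_0}$, then exits and terminates in $\Vcal(\gamma(x_0))$ on a different arc.

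\textbf{Step 3: Injection to I-completions.} By Theorem \ref{thm:saw-to-arrangement}, the SAW $w \cdot w'$ has a unique complete arrangement $\widehat{A}$ on its support in $T$. Since $w \cdot w'$ starts in $\Vcal(x_0)$ and ends in $\Vcal(\gamma(x_0))$ on a distinct arc of $T$, the source configuration of $\widehat{A}$ is an I-configuration $\hat c \in \Ical$. The construction has bounded multiplicity in $w'$ (at most a constant $K$ choices, since $G$ is locally finite and $|w'| \leq M$), so from a $c$-completion $A$ of weight $n$ we obtain a $\hat c$-completion of weight $\leq n + M$, and the resulting map has multiplicity at most $K$. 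Summing yields
\[
F_c(z) \leq K \cdot z^{-M} \cdot F_{\hat c}(z) \quad \text{for } 0 \leq z < r_{\hat c},
\]
and in particular $r_c \geq r_{\hat c}$.

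\textbf{Step 4: Upgrading to strict inequality.} To conclude $r_c > R$, I would iterate Step 2. For each $c$-completion $A$, one can instead attach extensions at many independent ``extension sites'' simultaneously: pick $\gamma_1, \dots, \gamma_k \in \Gamma$ using Proposition \ref{pro:edges-dense} so that the images $\gamma_j(x_0)$ are pairwise linkable with disjoint supports far from $x_0$, and perform the extension from Step 2 independently at each. This is where the hypothesis that $T$ has more than two ends and $\Gamma$ does not fix an end is essential: it is exactly the condition that guarantees enough ``transverse directions'' in $T$ for an unbounded supply of disjoint extension sites. Each independent extension multiplies the number of produced I-completions per $A$ by a factor $\geq 2$, so iterating $k = \Omega(n)$ times produces $\geq 2^{\Omega(n)}$ distinct I-completions of weight $\leq n + O(n)$ per $c$-completion of weight $n$. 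This forces $r_c > r_{\hat c} \geq R$ strictly.

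\textbf{Main obstacle.} The delicate point is step 4: one must show that the iterated attachment construction produces genuinely distinct arrangements (not just distinct SAWs) with controlled weight increase, and in sufficient multiplicity to provide the exponential gain. The hypothesis on ends is exactly what allows the construction to scale: if $\Gamma$ fixed an end of $T$, the available extension sites would be constrained to a ``half'' of the tree, cutting off the supply and potentially allowing $r_c = R$. Conversely, the closure of $\Ucal$ from Step 1 is what makes the absence of such extensions inside the U-subsystem significant, since any I-extension necessarily escapes $\Ucal$.
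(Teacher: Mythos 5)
Your proposal has the right geometric intuition — attach a short path to escape the cone and convert the U-structure into an I-structure — but the target of the resulting comparison is wrong, and the fix you attempt in Step~4 does not close the gap.

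The bound in Step~3, $F_c(z) \leq K z^{-M} F_{\hat c}(z)$, is vacuous at $z=R$: for $\hat c \in \Ical_p$ one has $r_{\hat c} = R$ and $F_{\hat c}(R)=\infty$ (this is precisely the content of Proposition~\ref{pro:Ip R}, and it is expected, since the support of a $\hat c$-completion can reach arbitrarily deep into $T$). So embedding $c$-completions into $\hat c$-completions cannot give a finite bound; the sum one is comparing against already diverges. The paper's proof of Lemma~\ref{lem:u-finite} avoids this by constructing a \emph{single} auxiliary ``wrapper'' arrangement $A$ on a subtree of $K_{\bar x}$ whose boundary carries the configuration $c$ on $f=\bar x$ and two \emph{simple I-configurations} on two further boundary arcs $e_1,e_2$ with pairwise disjoint adhesion sets (here the hypothesis of more than two ends is used to find $e_1,e_2$). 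Gluing any $c$-completion $A'$ onto $A$ along $f$ yields a $C(e_1)$--$C(e_2)$-completion whose \emph{length} (tree distance between source and target arcs) is the fixed number $n=d_T(e_1,e_2)$, independent of $A'$. Such objects are counted in the single matrix entry $(\Jac_p(R)^n)_{C(e_1),C(e_2)}$, which is finite by Lemma~\ref{lem:spectral}. The crucial distinction you are missing is between $F_{\hat c}(R)$ (sums over completions of arbitrary tree-depth, divergent) and $(\Jac_p(R)^n)_{c_1,c_2}$ (sums over completions of tree-depth exactly $n$, finite); your extension is attached at the endpoint of $w$ and leaves the depth of the U-turn inside $K_{x_0}$ entirely uncontrolled.

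Step~4 does not rescue this. As written, one cannot attach several ``independent'' extensions at the single endpoint of $w$; one would have to find $\Omega(n)$ distinct extension sites along $w$ where $w$ passes near adhesion sets, and then argue combinatorially as in Kesten's pattern lemma. This is exactly the reflection-extension machinery of Construction~\ref{cons:refl-ext}, Lemma~\ref{lem:onepart} and Proposition~\ref{pro:P-analytic}, which the paper uses to prove the \emph{stronger} statement $R_\Ucal > R$ (Corollary~\ref{cor:u analytic}). But that machinery explicitly uses the finiteness $\max_{c \in \Ucal} F_c(R) < \infty$ as an input (it enters as the constant $t_1$ in the proof of Proposition~\ref{pro:P-analytic}), so proving Lemma~\ref{lem:u-finite} this way would be circular. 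The present lemma is deliberately proved by a cheap, one-shot argument so that it can serve as the base case for the heavy machinery; its proof needs only Proposition~\ref{pro:adhesion-paths}, Theorem~\ref{thm:shape-to-arrangement}, and Lemma~\ref{lem:spectral}. A smaller issue: a $\hat c$-completion must by definition live inside a single cone $K_{\hat x}$, but the support of $w\cdot w'$ meets both sides of the arc $x_0$, so the object you produce in Step~3 is not literally a $\hat c$-completion under the paper's definitions.
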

%this proof only works if T has infinitely many ends
\begin{proof}
First note that every $c \in \Ucal$ is by definition non-boring and thus contains a virtual arc in $\Ecal(\bar x)$. If $c$ contains $k \geq 2$ virtual arcs $e_1, \dots e_k \in \Ecal(\bar x)$, then $F_c(z) \leq \prod_{i=1}^k F_{c_i}(z)$, where $c_i=(q_i,x,y)$ is the configuration such that $q_i$ is the walk of length 1 with the single arc $e_i$. 
It thus suffices to prove the assertion for every $c=(q,x,y)\in \Ucal$ that contains exactly two vertices and a virtual arc in $\Ecal(\bar x)$ connecting those vertices. Let $c$ be such a configuration.

    We claim that there are an open subtree $S$ of $T$, an arrangement $A=(P,C)$ on $S$ and three arcs $f,e_1,e_2 \in \partial E(S)$ satisfying the following conditions:
    \begin{enumerate}[label=(\roman*)]
    \itemsep0em 
        \item $C(f)= c$ and $X(f)=\bar f$. \label{itm:u-finite-1}
        \item $C(e_1)$ and $C(e_2)$ are simple I-configurations such that $X(e_1)=e_1$ and $Y(e_2)=e_2$.
        \item $C(e')$ is boring if and only if $e' \in \partial E(S) \setminus \{f,e_1,e_2\}$. \label{itm:u-finite-3}
    \end{enumerate}
    If such an arrangement $A$ exists, then $A$ together with a $c$-completion $A'$ forms a $C(e_1)$--$C(e_2)$-completion, whose length $n$ is the {\lidi} of $e_1$ and $e_2$. In particular
    \[
    (\Jac_p(R)^n)_{C(e_1),C(e_2)} \geq R^{\norm{A}} F_c(R),
    \]
    so Lemma~\ref{lem:spectral} implies that $F_c(R)$ is finite.
    %CP: Can we find two persistent configurations in the same part as c, so that we can take n=1?
    %CL: Yes, but is it necessary?
    
    For the proof of the claim we start by picking $f=\bar x$. Furthermore let $e_1, e_2$ be two different arcs having the same {\lidi} to $f$. We pick $e_1$ and $e_2$ such that the sets $\Vcal(f)$, $\Vcal(e_1)$ and $\Vcal(e_2)$ are pairwise disjoint.
    
    Let $q=(u,e,v) \in \Vcal(f)$. As in the proof of Proposition~\ref{pro:adhesion-paths} we can find two disjoint $\Vcal(f)$--$\Vcal(e_2)$-paths $\pi_{u}$ and $\pi_{v}$ in $G$, starting at $u$ and $v$, respectively. Moreover, we find a $\Vcal(f)$--$\Vcal(e_1)$-path $\pi$ in $G$ starting at $u$. Let $u'$ be the endpoint of $\pi$ in $\Vcal(e_1)$. We construct a walk $w$ on $\Gcal(K_f)$ as follows. Start at $u'$ and follow $\pi$ until its first intersection with one of the paths $\pi_{u}$ and $\pi_{v}$. If we reached $\pi_u$, we follow it until $u$, add the arc $e$ to reach $v$ and follow $\pi_v$ from $v$ to $\Vcal(e_2)$. Otherwise we follow $\pi_v$ until $v$, add $\bar e$ to reach $u$ and follow $\pi_u$ to reach $\Vcal(e_2)$. In this case we reverse the obtained walk and exchange $e_1$ and $e_2$. It is now easy to check that in both cases we end up with a SAW $w$ on $\Gcal(K_f)$ connecting $\Vcal(e_1)$ and $\Vcal(e_2)$ and containing the arc $e$. Let $S$ be the support of $w$. By construction the arcs $e_1,e_2$ and $f$ are in $\partial E(S)$. Theorem~\ref{thm:shape-to-arrangement} provides us with a representation $A=(P,C)$ of $w$ such that $X(e_1)=e_1$ and $Y(e_2)=e_2$. It is easy to check that $A$ satisfies conditions \ref{itm:u-finite-1}--\ref{itm:u-finite-3}.
\end{proof}

The following construction is essential in the upcoming proofs; it is sketched in Figure~\ref{fig:refl-ext}.

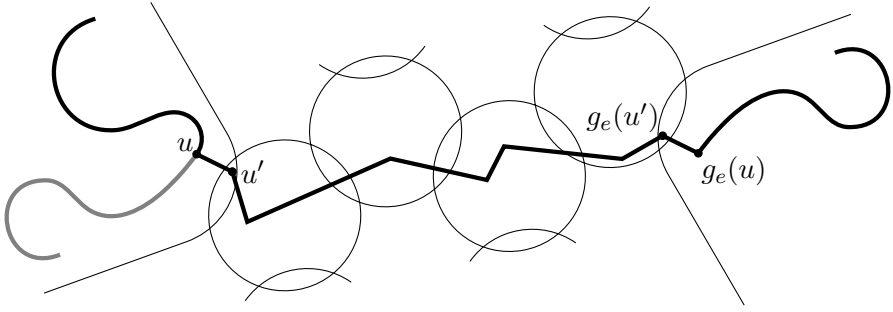
\begin{figure}
    \centering
    \begin{tikzpicture}[use Hobby shortcut,vertex/.style={inner sep=1pt,circle,draw,fill},rotate=10]
        \draw (0,0) 
        circle(1cm)
        ++(0:1cm)
        ++(60:1cm)
        circle(1cm)
        ++(0:1cm)
        ++(-60:1cm)
        circle(1cm)
        ++(0:1cm)
        ++(60:1cm)
        circle(1cm);

        %top arcs
        \draw (0,0)
        ++(0:1cm)
        ++(60:1cm)
        ++(60:1cm)
        ++(120:1cm)
        ++(-45:1) arc (-45:-135:1) ++(-135:-1)
        ++(0:1cm)
        ++(60:1cm)
        ++(0:1cm)
        ++(-60:1cm)
        ++(-45:1) arc (-45:-135:1) ++(-135:-1);

        %bottom arcs
        \draw (0,0)
        ++(-60:1cm)
        ++(-120:1cm)
        ++(45:1) arc (45:135:1) ++(135:-1)
        ++(0:1cm)
        ++(60:1cm)
        ++(0:1cm)
        ++(-60:1cm)
        ++(45:1) arc (45:135:1) ++(135:-1);

        \draw (0,0)
        ++(180:1cm)
        ++(120:1cm)
        ++(-80:1)
        ++(190:2)
        -- ++(10:2)
        arc (-80:20:1)
        -- ++(110:2);

        \draw (0,0)
        ++(0:3cm)
        ++(60:2cm)
        ++(-60:1cm)
        
        ++(180:-1cm)
        ++(120:-1cm)
        ++(-80:-1)
        ++(190:-2)
        -- ++(10:-2)
        arc (-80:20:-1)
        -- ++(110:-2);

        \draw[ultra thick]
        (-2,3)..(-2,1.5)..(-1,1.5)..(-1,1);
        \draw[gray, ultra thick]
        (-1,1)..(-1.5,0.6)..(-2.5,0.5)..(-3,1)..(-3,0);

        \draw[ultra thick]
        (-1,1)
        --(130:.9) 
        --(-.5,0)
        --(1.5,.5)
        --(2.7,0)
        --(3,.4)
        --++($(0,-.4) + (0:1) + (60:1) + (90:-.9)$)
        --++($(90:.9) + (-50:.9)$)
        --++($(130:.9) -(-1,1)$);
        \begin{scope}[shift = ($(0:1)+(60:1)$), xshift=3cm,rotate=180]
        \draw[ultra thick]
        (-1,1)..(-1.5,0.6)..(-2.5,0.5)..(-3,1)..(-3,0);
        \node[vertex] at (-1,1){};
        \node[inner sep = 1pt, anchor=north west] at (-1,1){$g_e(u)$};
        \node[inner sep = 1pt, anchor=south east] at (130:.9){$g_e(u')$};
        \node[vertex] at (130:.9) {};
        \end{scope}

        \node[vertex] at (-1,1){};
        \node[inner sep = 1pt, anchor=south east] at (-1,1){$u$};
        \node[vertex] at (130:.9) {};
        \node[inner sep =3pt,anchor=west] at (130:.9){$u'$};
        \node[vertex] at (130:.9) {};
    \end{tikzpicture}
    \caption{Reflection-extension  with splitting point $u$. The last part of the original walk is drawn in gray, the modified walk is drawn in black. Note that the distance between $u'$ and $g_e(u')$ is bounded by an absolute constant, so the increase in length will become negligible when the length of the original walk is large.}
    \label{fig:refl-ext}
\end{figure}

\begin{cons}\label{cons:refl-ext}
Let $a \in \{e_0, \bar e_0\}$ be a representative arc of the action of $\Gamma$ on $E(T)$ and fix an odd integer $N$ which is larger than or equal to the constant of Proposition~\ref{pro:adhesion-paths}. Then Proposition~\ref{pro:edges-dense} provides us with an automorphism $\gamma_{a}$ such that $a$ and $\gamma_{a}(a)$ are linkable and have {\lidi} $N$. By Proposition~\ref{pro:adhesion-paths} for any $v \in \Vcal(a)$ there is a $v$--$\gamma_{a} v$-path $\pi_{a,v}$ in $G$ meeting $\Vcal(a) \cup \Vcal( \gamma_{a} a)$ only in its endpoints.

Moreover, for every $e \in E(T)$ fix an automorphism $h_e$ mapping $e$ to its representative $\rho(e) \in \{e_0,\bar e_0\}$. 

Let $e \in E(T)$ and $w$ be a self-avoiding walk on $\Gcal(K_e)$ not intersecting $\Vcal(e)$. A \emph{reflection-extension} of $w$ through $e$ is constructed as follows. Let $u$ be a vertex of $w$ such that $d_G(u,\Vcal(e))=d_G(w,\Vcal(e))$. We apply $g_e:=h^{-1}_{e}\circ \gamma_{h_e(\bar e)}\circ h_{e}$ to the sub-walk of $w$ starting at $u$. Next we connect $u$ to $\Vcal(e)$ with a geodesic in $G$, which by definition intersects $\Vcal(e)$ only in a single vertex $u'$. Furthermore we connect $g_e(u)$ to $\Vcal(g_e(e))$ with the image of the latter geodesic under $g_e$. Finally, we connect $u'$ to $g(u')$ with $h_e^{-1}(\pi_{h_e(\bar e),h_e(u')})$. Note that by our choice of the vertex $u$, the object we thus obtain is a SAW; we denote by $\refl_e(w)$ the set of all possible reflection-extensions of $w$ through $e$. The vertex $u$ is called \emph{splitting point} of the reflection-extension.
\end{cons}

\begin{lem} \label{lem:injective-refl}
Let $e \in E(T)$ and let $w_1$ and $w_2$ be different self-avoiding walks on $\Gcal(K_e)$. Then $\refl_e(w_1) \cap \refl_e(w_2) = \emptyset$.
\end{lem}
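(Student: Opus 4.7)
My plan is to show that $v \in \refl_e(w)$ determines $w$ uniquely; this yields the claim immediately, since any $v \in \refl_e(w_1) \cap \refl_e(w_2)$ would force $w_1$ and $w_2$ to coincide with the reconstructed walk. Fix $v \in \refl_e(w)$, write $w = \alpha \cdot \beta$ with splitting point $u$, let $P$ be the geodesic from $u$ to $u' \in \Vcal(e)$ used in the construction, and set $Q = h_e^{-1}(\pi_{h_e(\bar e),\,h_e(u')})$, so that
\[
v = \alpha \cdot P \cdot Q \cdot g_e(P)^{-1} \cdot g_e(\beta).
\]

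First I would recover $u'$ as the first vertex of $v$ lying in $\Vcal(e)$. The prefix of $v$ up to $u'$ equals $\alpha \cdot P$, where $\alpha$ avoids $\Vcal(e)$ by the hypothesis on $w$, and $P$ meets $\Vcal(e)$ only at its terminal vertex; so this characterisation is valid and intrinsic to $v$. Given $u'$, the path $Q$ is determined by the fixed data of the construction, and hence so is $g_e(u')$. By construction the $|Q|$ edges of $v$ immediately after $u'$ coincide with $Q$, after which $v$ continues with $g_e(P)^{-1} \cdot g_e(\beta)$. Applying $g_e^{-1}$ to this tail I obtain $\beta' := P^{-1} \cdot \beta$, a walk from $u'$ that is intrinsic to $v$; similarly $\alpha' := \alpha \cdot P$, the prefix of $v$ up to $u'$, is intrinsic to $v$.

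Next I would recover the splitting point $u$ by comparing $(\alpha')^{-1}$ and $\beta'$, both starting at $u'$. For their first $|P|$ edges they both traverse $P^{-1}$ and hence agree, reaching $u$ simultaneously; they must then diverge, because the vertex preceding $u$ in $\alpha$ and the vertex following $u$ in $\beta$ are the two distinct neighbours of $u$ on the self-avoiding walk $w$. Hence $u$ is the last vertex of the longest common initial segment of $(\alpha')^{-1}$ and $\beta'$, and is therefore intrinsic to $v$. With $u$ identified, $\alpha$ is the prefix of $\alpha'$ ending at $u$, $\beta$ is the suffix of $\beta'$ starting at $u$, and $w = \alpha \cdot \beta$ is uniquely determined by $v$.

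The main subtlety lies in the first step: one must know that no vertex of $\alpha$ and no interior vertex of $P$ lies in $\Vcal(e)$, so that $u'$ is unambiguously the first vertex of $v$ in $\Vcal(e)$. This uses precisely the assumption that $w$ does not meet $\Vcal(e)$ together with the defining property of the geodesic $P$. Every other step is essentially mechanical, relying only on the construction of $\refl_e(w)$ and the self-avoidance of $w$.
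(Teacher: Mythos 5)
Your proof is correct, and it reaches the conclusion by a somewhat different route than the paper. The paper identifies the splitting point in one stroke: $u$ is the first vertex $v$ of $w'$ for which $g_e(v)$ also lies on $w'$, and then $w$ is recovered as the prefix up to $u$ concatenated with $g_e^{-1}$ applied to the suffix starting at $g_e(u)$. Justifying that one-line identification actually requires checking that for every vertex $v$ of $\alpha$ preceding $u$, the image $g_e(v)$ avoids all five pieces of $w'$ (which follows from the disjointness of the cones $K_e$ and $K_{g_e(e)}$ and the fact that the connecting path $Q$ stays out of $K_{g_e(e)}$), but the paper leaves this verification implicit. Your argument sidesteps that disjointness check entirely by exploiting that the middle piece $Q$ is deterministically pinned down by $e$ and $u'$: you first read off $u'$ as the first vertex of $v$ in $\Vcal(e)$, peel off the known path $Q$, pull the tail back through $g_e^{-1}$ to obtain $\beta' = P^{-1}\beta$, and then locate $u$ as the branching point between $(\alpha')^{-1}$ and $\beta'$, using self-avoidance of $w$ to guarantee that they diverge immediately after traversing $P^{-1}$. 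The trade-off is more bookkeeping (tracking $u'$, $Q$, $\alpha'$, $\beta'$) in exchange for making each reconstruction step elementary and local, whereas the paper's proof is more compact but rests on an unspelled geometric separation argument. Both approaches correctly reduce to the same key input — the construction data $h_e$, $\gamma_a$, $\pi_{a,v}$ are fixed in advance, so knowing $e$ determines $g_e$ and $Q$ — and both correctly handle the degenerate cases where $\alpha$ or $\beta$ is trivial.
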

\begin{proof}
We show that the walk $w$ can be uniquely reconstructed from any $w' \in \refl_e(w)$ provided that $e$ is known. Note that by construction $w'$ consists of 5 parts: the sub-walk of $w$ up to the vertex $u$, followed by a geodesic $\pi_u$ from $u$ to $u' \in \Vcal(e)$, a walk from $u'$ to $g_e(u')$, the image of $\pi_u$ under $g_e$, and the image of the sub-walk of $w$ starting at $u$ under $g_e$. In particular, given $w'$ it is easy to identify $u$ as the first vertex of $w'$ such that $g_e(u)$ is also contained in $w'$. Then $w$ is obtained as the concatenation of the sub-walk of $w'$ up to $u$ and the image of the sub-walk of $w'$ starting at $g_e(u)$ under the map $g_e^{-1}$.
\end{proof}

Our next goal is to show that there are exponentially fewer SAWs that stay withing a part than SAWs that use vertices in multiple parts.  Consider some part $\Vcal(t)$ and some vertex $o\in \Vcal(t)$. We define $c_n(o,t)$ to be the number of SAWs $w$ on $G$ of length $n$ starting at $o$ visiting only vertices in $\Vcal(t)$. Let $\mu_{o,t}=\limsup c_n(o,t)^{1/n}$.

\begin{lem}\label{lem:onepart}
If $\Gamma$ does not fix an end of $T$, then $\mu_{o,t}<1/R$ for every $t\in V(T)$ and $o\in \Vcal(t)$.
\end{lem}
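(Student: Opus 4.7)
The plan is to use the reflection-extension construction (Construction~\ref{cons:refl-ext}) to inject SAWs confined to $\Vcal(t)$ into SAWs on $G$ of a highly constrained type, and then to iterate this injection and combine it with the spectral bounds on $\Jac(z)$ from Lemma~\ref{lem:spectral} together with the U-finiteness from Lemma~\ref{lem:u-finite} to upgrade the resulting bound to the strict inequality $\mu_{o,t} < 1/R$.

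First I would choose an arc $e$ of $T$ satisfying (i) $t \in V(K_e)$, (ii) $\Vcal(e) \cap \Vcal(t) = \emptyset$, and (iii) $d_G(\Vcal(t), \Vcal(e)) \leq L_0$ for some constant $L_0 = L_0(t)$. Condition (ii) holds once $e$ is at sufficient tree-distance from every arc incident to $t$ (by Proposition~\ref{pro:finitesubtree} and Corollary~\ref{cor:finitesubtree}), and among such arcs one can be chosen at bounded tree-distance from $t$; quasi-transitivity of $\Gamma$ then guarantees that $d_G(\Vcal(t), \Vcal(e))$ is bounded by a constant $L_0$. Given a SAW $w$ of length $n$ in $\Vcal(t)$ starting at $o$, condition (ii) forces $w \cap \Vcal(e) = \emptyset$, so $\refl_e(w)$ is defined. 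Every $w' \in \refl_e(w)$ is a SAW on $G$ of length at most $n + L$, where $L$ is bounded by $2L_0 + L_0'$ with $L_0'$ a uniform bound on the fixed paths $\pi_{a,v}$ from Construction~\ref{cons:refl-ext} (whose existence rests on Propositions~\ref{pro:edges-dense} and~\ref{pro:adhesion-paths}). By Lemma~\ref{lem:injective-refl} the map $w \mapsto w'$ is injective.

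The image walks $w'$ are highly restricted: each crosses $\Vcal(e)$ and $\Vcal(g_e(e))$ at a single vertex, with terminal segment lying in $g_e(\Vcal(t))$. Translating to arrangements via Theorem~\ref{thm:saw-to-arrangement}, every such $w'$ corresponds to a complete arrangement whose configurations at $e$ and $g_e(e)$ are simple I-configurations. To promote this injection to a strict upper bound on $\mu_{o,t}$, I would iterate the reflection-extension through a sequence of carefully chosen arcs (using Proposition~\ref{pro:edges-dense} to control tree-distances and orbits): after $k$ iterations a walk of length $n$ in $\Vcal(t)$ yields a SAW of length $n + O(k)$ spanning $k+1$ parts and forced to pass through $k$ prescribed crossing configurations. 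The count of the iterated walks factorizes via products of blocks of $\Jac(z)$ and individual partition functions $F_c(z)$, and by Lemma~\ref{lem:spectral} each crossing contributes a factor bounded at $z = R$; for crossings whose configurations fall in components with spectral radius strictly less than~$1$ at $z = R$ (in particular those linking to U-components via Lemma~\ref{lem:u-finite}), the factor is strictly less than $1$. Choosing $k$ to grow linearly with $n$ then yields $c_n(o,t) \leq C \beta^n R^{-n}$ for some $\beta < 1$, giving the required strict inequality $\mu_{o,t} \leq \beta/R < 1/R$.

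The main obstacle will be extracting the genuine gain factor $\beta < 1$. The reflection-extension natively produces I-configurations, whereas Lemma~\ref{lem:u-finite} directly controls only U-configurations; so one must either (a)~apply two successive reflection-extensions through arcs $e_1, e_2$ arranged so that $g_{e_2}\circ g_{e_1}$ returns the walk close to $\Vcal(t)$, producing a genuine U-excursion whose count is bounded by $F_c(R) < \infty$, or (b)~argue by a Kesten-style pattern lemma in the spirit of \cite{KestenI}, identifying a local pattern that cannot occur in walks confined to $\Vcal(t)$ but appears with positive density in typical SAWs, and concluding that pattern-free walks are exponentially rare. Either route requires careful bookkeeping of the quasi-transitive action of $\Gamma_{\Vcal(t)}$ on $\Gcal(t)$ (Proposition~\ref{pro:partsqtrans}) to ensure the gain is uniform in the base point $o$.
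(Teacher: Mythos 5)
You have identified the right main tool — the reflection-extension of Construction~\ref{cons:refl-ext} together with the injectivity from Lemma~\ref{lem:injective-refl}, deployed in the spirit of Kesten's pattern lemma — but your proposal does not pin down the mechanism that actually produces the exponential gain, and the two candidate mechanisms you offer would both fail.

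The gain in the paper's proof is purely combinatorial (entropic), and comes from the number of \emph{candidate sites} for reflection. Starting from a SAW $w$ of length $n$ confined to $\Vcal(t)$, one chooses a maximal set $\Bcal(w)$ of arcs $e \in \Eout(t)$ with $\Vcal(e)$ close to $w$ but the sets $\Vcal(e)$ pairwise far apart; Proposition~\ref{pro:distance-to-adhesion} gives $|\Bcal(w)| \geq m := n/\Delta^{4M+2D}$, linear in $n$. One then applies reflection-extensions at an \emph{arbitrary subset} $H \subseteq \Bcal(w)$ of size $k = \lfloor\varepsilon m\rfloor$; the crucial point is that the map $(w,H) \mapsto w_k(H)$ is injective, so $c_n(o,t)$ walks yield at least $\binom{m}{k}c_n(o,t)$ distinct SAWs of length at most $n + k\ell$. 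The Stirling estimate for $\binom{m}{k}$ is exactly what overtakes the competing factors $\norm{\Jac_p(R)}_1^{k(N+1)}$ and $R^{-k\ell}$ once $\varepsilon$ is chosen suitably small, producing a net factor $(1-\varepsilon)^{(1-\varepsilon)m+o(m)}$ exponentially small in $n$. Nothing about a \emph{single} crossing is strictly less than $1$; the gain is the entropy of choosing $H$.

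Your proposed sources of a factor $\beta<1$ do not hold up. Option (a): bounding U-excursions by $F_c(R) < \infty$ (Lemma~\ref{lem:u-finite}) only gives a bounded constant per reflection, which is $\geq 1$ in general, not $<1$. More importantly, the reflection-extension manufactures \emph{simple I-configurations}, which are persistent, so the crossing pieces $w_k^2(H)$ are counted by entries of $\Jac_{\Ical_p}(R)$; the associated spectral radius $\lambda_{\Ical_p}(R)$ equals $1$ (this is proved in Proposition~\ref{pro:Ip R}), so your claim that "the factor is strictly less than $1$" is false for precisely the crossings that appear. Consequently a \emph{fixed} sequence of $k$ reflections, even with $k$ linear in $n$, gives no gain whatsoever: it is merely an injection into a family whose count grows at rate $\geq 1/R^n$. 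The missing idea is that the reflection sites must be allowed to vary among $\Theta(n)$ candidates determined by the walk itself, and that the $\binom{m}{k}$ multiplicity is what is then divided out. Finally, note that the paper's computation first produces only $\mu_{o,t}<\max\{\mu_t,1/R\}$, where $\mu_t = \max_o \mu_{o,t}$, and the desired strict inequality $\mu_t<1/R$ is extracted by the small but necessary bootstrap: since the bound holds for every $o$, one gets $\mu_t < \max\{\mu_t,1/R\}$, which forces $\mu_t < 1/R$.
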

\begin{proof}
The statement is trivially satisfied if $\Vcal(t)$ is finite, so we henceforth assume that $\Vcal(t)$ is infinite.
Consider a SAW $w$ of length $n$ starting at $o$ and visiting only vertices in $\Vcal(t)$. We will construct several SAWs starting at $o$ that do not stay in $\Vcal(t)$ by applying Construction~\ref{cons:refl-ext} and then compare the corresponding generating functions. 

We first need to define a suitable set of adhesion sets. Let $M>0$ be the constant of Proposition~\ref{pro:distance-to-adhesion}. Let also $\Bcal(w)$ be a set of arcs of $\Eout(t)$ such that $1 \leq d_G(\Vcal(e),w) \leq M$ for every $e \in \Bcal(w)$, $d_G(\Vcal(e),\Vcal(f))>2M$ for every distinct $e, f\in \Bcal(w)$, and $\Bcal(w)$ is maximal with respect to the latter property.
%CL: Not quite happy to use \Bcal for arcs, however I believe \Ecal was more confusing due to the similarity with \Ecal(t) and \Ecal(e).

We now associate to each $e\in \Bcal(w)$ a vertex $u_e=u_e(w)$ of $w$ such that $d_G(u_e,\Vcal(e))=d_G(w,\Vcal(e))\leq M$, and we denote the set thus obtained by $U$. Note that the vertices $u_e$ are distinct because $d_G(\Vcal(e),\Vcal(f))>2M$ for every distinct $e, f\in \Bcal(w)$. Using the ordering of the vertices $u_e$ coming from the ordering of the vertices of $w$, we order the elements of $\Bcal(w)$. 

We claim that $|\Bcal(w)|\geq m:=n/\Delta^{4M+2D}$, where $\Delta$ is the maximal vertex-degree in $G$ and $D$ is maximal distance (in $G$) between two vertices in the sam adhesion set. Indeed, let $v$ be a vertex of $w$, and $f \in \Eout(t)$ such that $d_G(v,\Vcal(f))\leq M$. Then 
\[
d_G\left(\Vcal(f),\bigcup_{e \in \Bcal(w)} \Vcal(e)\right)\leq 2M
\]
by the maximality of $\Acal(w)$. Hence $d_G(v,U)\leq 4M+2D$. This implies that the balls of radius $4M+2D$ around the vertices in $U$ cover $w$. Since each ball of radius $4M+2D$ has size at most $\Delta^{4M+2D}$, the claim follows.

Let $\varepsilon\in(0,1)$ be a small constant to be defined later, and consider a set $H\subseteq \Bcal(w)$ of cardinality $k=\lfloor \varepsilon m \rfloor$. We write $e_i=e_i(H)$ for the $i$th element of $H$. We will successively apply Construction~\ref{cons:refl-ext} to $w$.
More precisely, let $w_1(H) \in \refl_{e_1}(w)$ be a reflection-extension of $w$ through $e_1$ with splitting point $u_{e_1}$ and let $g=g_{e_1}$ be the automorphism applied to the second part of $w$ in the reflection-extension process.

Consider the second element $e_2\in H$, and note that $g(u_{e_2})$ minimizes $d_G(v, \Vcal(g(e_2)))$ among the vertices $v$ of $w_1(H)$, since 
\[
d_G\left(\Vcal(g(e_1)),\Vcal(g(e_2))\right)>2M.
\] 
Thus there is a reflection-extension $w_2(H) \in \refl_{g(e_2)}(w_1(H))$ of $w_1(H)$ through $g(e_2)$ with splitting point $g(u_{e_2})$. Continuing in this way, we obtain a sequence of SAWs 
$w_1(H),\ldots,w_k(H)$. Note that $w_k(H)$ has length between $n$ and 
$n+k\ell$ for some constant $\ell>0$ independent of $w$.

Varying $H$ over all possible subsets of $\Bcal(w)$ of cardinality $k$, we obtain a map $(w,H) \mapsto w_k(H)$. We claim that this map is injective. Indeed, note first that the set $H$ can be reconstructed uniquely from $w_k(H)$. The arc $e_1(H)$ is the unique arc in $\Eout(t)$ such that the configuration defined by $w_k(H)$ on $e_1(H)$ is an I-configuration. This also defines uniquely the map $g=g_{e_1}$ used in the reflection-extension process. Similarly $g(e_2(H))$ is the unique arc in $\Eout(g(t)) \setminus \{e_1(H)\}$ such that the configuration defined by $w_k(H)$ on $g(e_2(H))$ is an I-configuration. Proceeding in this way, we see that $H$ is uniquely given by $w_k(H)$. But then the claim is a consequence of Lemma~\ref{lem:injective-refl}.

We have thus constructed at least
\[
\binom{m}{k}c_n(o,t)
\] 
distinct SAWs that start at $o$ and have length between $n$ and $n+k\ell$.
Since $R \leq 1$ by Remark \ref{rmk:rleq1}, this implies that 
\[
R^{n+k\ell}\binom{m}{k} c_n(o,t) \leq \sum_{(w,H)}R^{\norm{w_k(H)}},
\]
where the sum runs over all SAWs $w$ of length $n$ starting at $o$ and all subsets $H$ of $\Bcal(w)$ of cardinality $k$. We will now estimate this sum in terms of $\Jac_p(R)$.

Each walk $w_k(H)$ is a self-avoiding walk starting at $\Vcal(t)$, and ending at $\Vcal(t')$, where $t'=(g_k \circ \dots \circ g_1)(t)$. Additionally, by construction $w_k(H)$ meets each of $e_1(H)$ and $e_k'(H):=(g_k \circ \dots \circ g_1)(e_k(H))$ only in a single vertex. Decomposing $w_k(H)$ at these two vertices, we see that it consists of a SAW $w_k^1(H)$ containing only vertices of $\Vcal(t)$, a SAW $w_k^2(H)$ connecting $\Vcal(e_1(H))$ and $\Vcal(e_k'(H))$ and a SAW $w_k^3(H)$ containing only vertices of $\Vcal(t')$. By Theorem~\ref{thm:shape-to-arrangement} there is a representation $A_H=(P_H,C_H)$ of $w_k^2(H)$ which is a $C_H(e_1(H))$--$C_H(e_k'(H))$-completion for the simple configurations $C_H(e_1(H))$ and $C_H(e_k'(H))$. By Construction~\ref{cons:refl-ext} the arcs $e_1(H)$ and $e_k'(H)$ have {\lidi} $kN+k-1$. Thus the arrangement $A_H$ is counted in some entry of $\Jac_p(R)^{k N+ k-1}$. 

Letting $c_n^t = \max_{o \in \Vcal(t)} c_n(o,t)$, we thus obtain
\[
\sum_{(w,H)}R^{\norm{w_k(H)}} \leq \norm{\Jac_p(R)}^{k N+ k-1}_1 \sum_{i=0}^n \sum_{j=0}^{n-i} c_i^t c_j^t R^{i+j}.
\]
To estimate the latter sum, let $\mu_{t}=\max_{v\in \Vcal(t)}\mu_{v,t}$.
By taking cases according to whether $\mu_t R \leq 1$ or $\mu_t R>1$ we obtain
\[
\sum_{i=0}^n \sum_{j=0}^{n-i} c_i^t c_j^t R^{i+j}=\max\{\mu_t^n R^n,1\}e^{o(n)}.
\]
This implies that 
\[
c_n(o,t) \leq \frac{\max\{\mu_t^n,1/R^n\}}{R^{k\ell}\binom{m}{k}}\norm{\Jac_p(R)}^{k(N+1)+o(n)}_1.
\]
Recall that $k = \lfloor \varepsilon m \rfloor$. Hence by Stirling's approximation we obtain
\begin{equation}\label{eq:stirling}
\binom{m}{k}=\sqrt{\frac{m}{2\pi k(m-k)}}\frac{m^m}{k^k(m-k)^{m-k}}(1+o(1))=\left(\frac{1}{\varepsilon^\varepsilon(1-\varepsilon)^{1-\varepsilon}}\right)^{m+o(m)}.    
\end{equation}

For $\varepsilon= R^{\ell}\norm{\Jac_p(R)}^{-(N+1)}_1$ we have 
\[
c_n(o,t)\leq \max\{\mu_t^n, 1/R^n\}(1-\varepsilon)^{(1-\varepsilon)m+o(m)}.
\]
Taking $n$-th roots and sending $n$ to infinity we obtain that 
$$\mu_{o,t}<\max\{\mu_t,1/R\}.$$
Since this holds for all $o$, we obtain that $\mu_t<\max\{\mu_t,1/R\}$, hence $\mu_t<1/R$, as desired.
\end{proof}

We are now ready to prove Proposition~\ref{pro:P-analytic}.

\begin{proof}[Proof of Proposition~\ref{pro:P-analytic}]
By assumption all coefficients of $P_c(z,\ybf)$ are non-negative. Thus it is enough to show that there is some constant $\delta > 0$ such that 
\[
F_{c,\delta}(R) := P_c\left((1+\delta)R,(1+\delta)\Fbf(R)\right) < \infty.
\]

As in the proof of Lemma~\ref{lem:u-finite}, it suffices to consider configurations $c\in \Ucal$ that contain only two vertices and a virtual arc connecting those vertices.

Let $c=(q,x,y)$ be such a configuration, and let $t=x^-$. Let us start by defining $\Acal(c,n)$ to be the set of arrangements $A=(P,C)$ on $S=\sta(t)$ such that $C(x)=c$ and $\abs{P(t)}=n$, where here $n$ counts the total number of arcs in $P(t)$ and not just the non-virtual ones. In particular every $A \in \Acal(c,n)$ satisfies
\begin{equation}\label{eq:nb-n}
\norm{A} + \abs{\nb(A,t) \setminus \{x\}} \leq n \leq \norm{A} + K \abs{\nb(A,t) \setminus \{x\}},
\end{equation}
where $K$ is the size of the adhesion sets of our tree decomposition.
%Christoforos: 
With this in mind, we can write
\[
F_{c,\delta}(R) \leq  \sum_{n=0}^\infty (1+\delta)^n \Phi_{n}(R),
\]
where
\[
\Phi_{n}(R) = \sum_{A \in \Acal(c,n)} R^{\norm{A}} \prod_{e \in \nb(A,t) \setminus \{x\}} F_{C(e)}(R).
\]

It is enough find an upper bound for $\Phi_{n}(R)$ which decays exponentially in $n$ because then for some $\delta > 0$ small enough, $F_{c,\delta}(R)$ is upper bounded by a geometric series.
We split the inner sum into three parts and treat those parts individually. In order to split up the sum, we first need some definitions.

For $\varepsilon>0$ we partition $\Acal(c,n)$ into two subsets according to whether they result in a `small' or `large' number of non-boring configurations on the edges of $E(S)$: 
\begin{enumerate}[label=(\roman*)]
    \item $\mathcal{S}(c,\varepsilon,n)$ is the set of  $A\in \Acal(c,n)$ such that the number of arcs $e\in E(S)\setminus\{x\}$ for which $C(e)$ is non-boring, is smaller than $\varepsilon n$,
    \item $\mathcal{L}(c,\varepsilon,n)=\Acal(c,n)\setminus \mathcal{S}(c,\varepsilon,n)$.
\end{enumerate}
Clearly,
\begin{equation}
\label{eq:innersum}
  \Phi_{n}(R) = \sum_{A \in \mathcal{S}(c,\varepsilon,n)} R^{\norm{A}} \prod_{e \in \nb(A,t) \setminus \{x\}} F_{C(e)}(R)
  + \sum_{A \in \mathcal{L}(c,\varepsilon,n)} R^{\norm{A}} \prod_{e \in \nb(A,t) \setminus \{x\}} F_{C(e)}(R).  
\end{equation}

We further split the second sum into a part summing over completions where `many' of the $C(e)$-completions are longer than some constant $L$, and a part summing over completions where `few' of the $C(e)$-completions are longer than $L$. 
More precisely, for $L>0$ and $A\in \mathcal{L}(c,\varepsilon,n)$, let $\mathcal{M}(A,\varepsilon,n,L)$ be the set of sequences $(A_e)_{e\in E(S)\setminus \{x\}}$, where each $A_e=(P_e,C_e)$ is an arrangement on $K_{\bar e}$, such that $C_e(e)=C(e)$ for every $e\in E(S)\setminus \{x\}$, and for at least $\varepsilon n/2$ arcs $e\in E(S)\setminus \{x\}$, we have $\norm{A_e}\geq L$. Let $\mathcal F(A, \varepsilon,n,L)$ be the set of sequences $(A_e)_{e\in E(S)\setminus \{x\}}$ not contained in $\mathcal M(A,\varepsilon,n,L)$.
Write $\norm{(A_e)}=\sum_{e\in E(S)\setminus \{x\}}\norm{A_e}$.

Recall that
\[
    F_{C(e)}(R) = \sum_{A_e \; C(e)\text{-completion}} R^{\norm{A_e}}.
\]
Using this identity on the factors of the product inside the second sum in the expression \eqref{eq:innersum}, we arrive at
$
    \Phi_{n}(R) = \Sigma_{n}(R) + \Lambda_{n}^1(R)+ \Lambda_{n}^2(R),
$
where
\begin{align*}
\Sigma_{n}(R) & =  \sum_{A \in \mathcal{S}(c,\varepsilon,n)} R^{\norm{A}} \prod_{e \in \nb(A,t) \setminus \{x\}} F_{C(e)}(R) \\
\Lambda_{n}^1(R) & =  \sum_{A\in \mathcal{L}(c,\varepsilon,n)} R^{\norm{A}} \sum_{(A_e)\in \mathcal{M}(A,\varepsilon,n,L)} R^{\norm{(A_e)}}  \\
\Lambda_{n}^2(R) & =  \sum_{A\in \mathcal{L}(c,\varepsilon,n)} R^{\norm{A}} \sum_{(A_{e})\in \mathcal{F}(A,\varepsilon,n,L)} R^{\norm{(A_{e})}}.
\end{align*}

Let us first consider $\Sigma_n(R)$. Using the upper bound in \eqref{eq:nb-n} together with the fact that $R \leq 1$ by Remark~\ref{rmk:rleq1}, we immediately obtain
\[
\Sigma_n(R) \leq  t_1^{\varepsilon n} R^{(1-K \varepsilon) n} \abs{ \mathcal{S}(c,\varepsilon,n)} ,
\]
where $1<t_1:=1+\max\{F_c(R)\mid c\in \Ucal\}$ is a finite constant by Lemma~\ref{lem:u-finite}.
We will show that the right-hand side of the inequality decays exponentially, provided that $\varepsilon$ is small enough.

Let us estimate the cardinality of $\mathcal{S}(c,\varepsilon,n)$. Each $A=(P,C) \in \mathcal{S}(c,\varepsilon,n)$ consists of a walk $P(t)$ and a collection of configurations $(C(e))_{e \in E(S)}$ with all but at most $\varepsilon n$ of them being boring.  
Note that $P(t)$ already uniquely determines $A$ because $c$ is a U-configuration. Indeed, the configurations $C(e)=(Q(e),X(e),Y(e))$ must be U-configurations and by Lemma~\ref{lem:arrangement-from-shape} each $Q(e)$ is determined by $P(t)$.
We derive an upper bound for the number of possible walks $P(t)$. We start by counting the possible walks with exactly $k$ virtual arcs, and whose walk components after removing the virtual arcs have lengths $(n_0,\dots, n_k)$. It is possible that some $n_i=0$. Note that $\sum_{i=0}^k n_i = n-k$. Denote the set of such walks by $\Wcal(n_0,\dots, n_k)$. We can construct such walks inductively. Start by choosing a walk $W_0$ of length $n_0$ starting at the initial vertex of the walk $q$ of the configuration $c$. In the $i$-th step we attach a virtual arc $e_i$ and a walk $W_i$ of length $n_i$ to the walk constructed so far; if no suitable virtual arcs exists, we do not proceed with the construction.
There are at most $c_{n_i}(t)$ ways to choose the $i$-th walk, and there is a constant upper bound $t_2$ for the number of virtual arcs incident to a vertex. Thus 
\[|\Wcal (n_0,\dots n_k)| \leq t_2^{k}\prod_{i=0}^k c_{n_i}(t),\]
where $c_{n_i}(t)=1$ if $n_i=0$.
To bound this product, consider some $r\in (\mu_t,1/R)$, where $\mu_t=\max_{o\in \Vcal(t)} \mu_{o,t}$ Note that such an $r$ exists by Lemma~\ref{lem:onepart} and that there exists a constant $t_3\geq 1$ such that $c_m(t)\leq t_3 r^m$ for every $m\geq 1$ and every $o\in \Vcal(t)$. Moreover, every virtual arc lies in an adhesion set such that $A$ is non-boring on the corresponding arc. This implies that $k\leq K \varepsilon n$. We conclude that 

\[
|\Wcal (n_0,\dots n_k)| \leq (t_2 t_3)^{K\varepsilon n } r^{n-k}.
\]

Every $A \in \Scal(c,\varepsilon,n)$ is uniquely determined by some $W \in \Wcal(n_0,\dots,n_k)$ for some $k \leq K \varepsilon n$ where $n_0, \dots, n_k$ are given by the positions of the $k$ virtual arcs in $P(t)$. We conclude that
\[
\abs{ \mathcal{S}(c,\varepsilon,n)} \leq \sum_{k=0}^{\lfloor K\varepsilon n \rfloor} \binom{n}{k} (t_2 t_3)^{K\varepsilon n } r^{n-k}.
\]
Using Stirling's approximation as in \eqref{eq:stirling} and the fact that the binomial coefficients $\binom{n}{k}$ are monotonic in $k$, we see that 
\[
\binom{n}{k}\leq \left(\frac{1}{(K\varepsilon)^{K\varepsilon}(1-K\varepsilon)^{1-K\varepsilon}}\right)^{n+o(n)}
\]
for every $0\leq k \leq K \varepsilon n$ and every $0<\varepsilon\leq \frac{1}{2K}$. Therefore, for some positive constant $t_4$ depending on $t_1$, $t_2$, $t_3$ and $K$,
\[
\Sigma_n(R) \leq \sum_{k=0}^{\lfloor K \varepsilon n \rfloor}R^{(1-K \varepsilon) n} t_4^{\varepsilon n} r^{n-k} \left(\frac{1}{(K\varepsilon)^{K\varepsilon}(1-K\varepsilon)^{1-K\varepsilon}}\right)^{n+o(n)}.
\]

It is possible to choose $0<\varepsilon\leq \frac{1}{2K}$ to be small enough so that 
\[
\frac{2^{\varepsilon} t_4^{\varepsilon} r^{1-k/n}}{R^{K\varepsilon}(K\varepsilon)^{K\varepsilon}(1-K\varepsilon)^{1-K\varepsilon}} \leq  \frac{1}{R}
\]
holds for $n$ large enough because the left side goes to $r$ as $\varepsilon$ tends to $0$. Hence, we obtain
\[
\Sigma_n(R) \leq (K\varepsilon n +1) R^{(1-K \varepsilon) n} 2^{-\varepsilon n} R^{K\varepsilon n} R^{-n+o(n)} \leq 2^{-\varepsilon n+o(n)},
\]
as desired.

Let us next bound $\Lambda_n^1(R)$.
Our aim is to estimate $\sum_{(A_e)\in \mathcal{M}(A,\varepsilon,n,L_0)} R^{\norm{(A_e)}}$ for a certain length $L_0$. Let $p=2^{-{4/\varepsilon}}$ and note that there exist a constant $L_0>0$ such that 
\[
\sum_{\substack{A \; c'\text{-completion}\\\norm{A}\geq L_0}} R^{\norm{A}}\leq p F_{c'}(R).
\]
for every $c'\in \Ucal$.

For a fixed $H \subseteq \nb(A,t)\setminus \{x\}$ with $|H| \geq \varepsilon n/2$, we have
\[
\sum_{\substack{(A_e)\in \mathcal{M}(A,\varepsilon,n,L_0)\\ \norm{A_e}\geq L_0 \text{ for } e \in H}} R^{\norm{(A_e)}} \leq p^{\varepsilon n/2} \prod_{e \in \nb(A,t) \setminus\{x\}}F_{C(e)}(R).
\]
Each $(A_e)\in \mathcal{M}(A,\varepsilon,n,L_0)$ by definition satisfies $\norm{A_e}\geq L_0$ for at least $\varepsilon n/2$ arcs $e\in E(S)\setminus \{x\}$. Summing over all possible $H$ yields

\[
\sum_{(A_e)\in \mathcal{M}(A,\varepsilon,n,L_0)} R^{\norm{(A_e)}}\leq 
2^n p^{\varepsilon n/2} \prod_{e \in \nb(A,t) \setminus\{x\}}F_{C(e)}(R).
\]
It follows that
\begin{equation*}
\begin{split}
\Lambda_n^1(R) \leq  2^n p^{\varepsilon n/2} \sum_{A\in \mathcal{L}(c,\varepsilon,n)} R^{\norm{A}} \prod_{e \in \nb(A,t) \setminus\{x\}}F_{C(e)}(R) \leq 2^n p^{\varepsilon n/2}  F_c(R) \leq 2^{-n}F_c(R).
\end{split}
\end{equation*}

Finally we find an upper bound for $\Lambda_n^2(R)$. To this end, we employ a similar strategy as in Lemma~\ref{lem:onepart}.

For $e_0 \in E(T)$, Proposition~\ref{pro:edges-dense} yields an arc $f_0 \in E(K_{e_0})$ in the orbit of $\bar e_0$ such that $e_0$ and $f_0$ are linkable. Let $M=d_G(\Vcal(e_0),\Vcal(f_0))$ and let $M'= d_T(e_0,f_0)$. We choose $f_0$ such that that $M > L_0$. 

For a $c$-completion $A = (P,C)$, we denote by $A_S$ and $A_e$ the restrictions of $A$ to $S$ and $K_{\bar e}$, respectively. Assume that we are given a $c$-completion $A$ such that $A_S \in \mathcal{L}(c,\varepsilon,n)$ and $(A_e)_{e\in E(S)\setminus \{x\}}\in \mathcal{F}(A,\varepsilon,n,L_0)$ and let $w=\varphi(A)$ be the walk represented by $A$. Consider the set of arcs $e\in \nb(A,t) \setminus \{x\}$ such that $\norm{A_e}< L_0$. As in the proof of Lemma~\ref{lem:onepart} we can find a subset $\Bcal(w)$ of these arcs of size at least $m := \lfloor\varepsilon' n \rfloor$ for some $0<\varepsilon'<\varepsilon/2$ such that $d_G(\Vcal(e),\Vcal(f)) > 2 M+ 2 D$ for distinct $e,f \in \Bcal(w)$, where $D$ is the diameter of the adhesion sets. Indeed, at least $\varepsilon n/2$ vertices of the walk $w$ lie in adhesion sets $\Vcal(e)$ for $e\in \nb(A,t) \setminus \{x\}$ such that $\norm{A_e}< L_0$, and the $(2M+2D)$-ball around each such vertex covers only a constant number of other vertices.

Let $\varepsilon''\in (0,1)$ be a constant to be defined. 
Consider a subset $H$ of $\Bcal(w)$ of cardinality $k:=\lfloor \varepsilon'' m \rfloor$. Each $e\in H$ either lies in the orbit of $e_0$ or in the orbit of $f_0$. If there is $\gamma \in \Gamma$ such that $\gamma e_0 = e$, then we choose $f(e,w) = \gamma f_0$. Otherwise there is $\gamma \in \Gamma$ such that $\gamma f_0 = e$ and we choose $f(e,w) = \gamma e_0$.
Note that $M > L_0$ and the choice of $e$ such that $\norm{A_e} < L_0$ implies that $0 < d_G(w,\Vcal(f(e,w))) < M+D$. Choose a vertex $u_e=u_e(w)$ of $w$ such that $d_G(u_e,\Vcal(f(e,w)))=d_G(w,\Vcal(f(e,w)))$. The walk $w$ induces an order on the vertices $\{u_e\mid e \in H\}$, denote them by $u_1, \dots, u_k$. This order induces an order on $H$, denoted by $e_1,\dots,e_k$, and consequently also on the arcs $\{f(e,w)\mid e \in H\}$ which we denote by $f_1, \dots, f_k$. As in Lemma~\ref{lem:onepart}, we successively apply Construction~\ref{cons:refl-ext} to $w$. Let $w_1(H) \in \refl_{f_1}(w)$ be a reflection-extension of $w$ through $f_1$ with splitting point $u_1$ and let $g_1$ be the automorphism applied to the second part of $w$ in the reflection-extension process.

Consider the second element $f_2\in H$, and note that $g_1(u_2)$ minimizes $d_G(v, \Vcal(g_1(f_2)))$ among the vertices $v$ of $w_1(H)$, since 
\[
d_G\left(\Vcal(g_1(f_1)),\Vcal(g_1(f_2))\right)>2M+2D.
\] 
Thus there is a reflection-extension $w_2(H) \in \refl_{g_1(f_2)}(w_1(H))$ of $w_1(H)$ through $g_1(f_2)$ with splitting point $g_1(u_2)$. Continuing in this way, we obtain a sequence of SAWs $w_1(H),\ldots,w_k(H)$.

Letting $H$ vary over all possible subsets of $\Bcal(w)$ of cardinality $k$, we obtain a map $(w,H) \mapsto w_k(H)$. We claim that this map is injective. Indeed, note first that the set $H$ can be reconstructed uniquely from $w_k(H)$. The arc $e_1$ is the unique arc in $\Eout(t)$ such that the configuration defined by $w_k(H)$ on $e_1$ is an I-configuration. Then, up to orientation, $f_1$ is the unique arc of $K_{\bar e_1}$ such that $d_G(\Vcal(e_1),\Vcal(f_1))=M$ and $w_k(H)$ visits $\Vcal(f_1)$. This also defines uniquely the map $g_{1}$ used in the reflection-extension process. Similarly $g_1(e_2)$ is the unique arc in $\Eout(g_1(t)) \setminus \{e_1\}$ such that the configuration defined by $w_k(H)$ on $g_1(e_2)$ is an I-configuration. Proceeding in this way, we see that $H$ is uniquely given by $w_k(H)$. But then the claim is a consequence of Lemma~\ref{lem:injective-refl}.

Note that each time we apply the local modification, the length of the SAW we obtain increases by at most $\ell$, where $\ell>0$ is a uniform constant, hence $\abs{w_k(H)}\leq \abs{w}+k\ell$. Therefore,
\[
    \Lambda_n^2(R) = \sum_{A\in \mathcal{L}(c,\varepsilon,n)} \sum_{(A_{e})\in \mathcal{F}(A,\varepsilon,n,L)} R^{\abs{w}} \leq \frac{1}{\binom{m}{k}R^{k\ell} } \sum_{(w,H)} R^{\abs{w_k(H)}}.
\]%use again that R \leq 1.
where the sum index $(w,H)$ ranges over the set of all walks $w$ represented by a $c$-completion $A$ such that $A_S \in \mathcal{L}(c,\varepsilon,n)$ and $(A_e)_{e\in E(S)\setminus \{x\}}\in \mathcal{F}(A,\varepsilon,n,L_0)$, and all possible choices of subsets $H$ of $\Bcal(w)$ of size $k$.

Each walk $w_k(H)$ is a self-avoiding walk starting at $\Vcal(x)$, and ending at $\Vcal(x')$, where $x'=(g_k \circ \dots \circ g_1)(x)$. Note that $g_1, \dots, g_k$, and thus also $x'$, depend on $H$. By Theorem~\ref{thm:shape-to-arrangement} there is a representation $A_H=(P_H,C_H)$ of $w_k(H)$ such that $X_H(x)=x$ and $Y_H(x')=x'$. Note also that by construction $C_H(x)$ and $C_H(x')$ are simple configurations because $w$ meets $\Vcal(x)$ only in its endpoints. Observe that the distance of $x$ and $x'$ is $2 k M' + k N'+k+1$ because for every $i=1, \dots, k$ the distance of $e_i$ and $f_i$ is $M'$ and the distance of $f_i$ and $g_i (f_i)$ is $N'$. Thus the arrangement $A_H$ is counted in some entry of $\Jac_p(R)^{k (2M' + N'+ 1) + 1}$. We obtain
\[
\sum_{(w,H)} R^{\abs{w_k(H)}}\leq \norm{\Jac_p(R)^{k (2M' + N'+ 1) + 1}}_1 \leq e^{t_5 k}
\]
for some constant $t_5$.

Using the bound \eqref{eq:stirling} for the binomial coefficient, we see that we can choose $\varepsilon''$ small enough so that
\[
\Lambda_n^2(R) \leq \frac{e^{t_5k}}{\binom{m}{k}R^{k\ell} } \leq e^{-\varepsilon'' m+o(m)},
\]
Since $m = \lfloor \varepsilon' n \rfloor$, the desired assertion follows readily.
\end{proof}

Proposition~\ref{pro:J-analytic} can be now proved in a similar way.

\begin{proof}[Proof of Proposition~\ref{pro:J-analytic}]
For $c,c'\in \Ical_p$, the analyticity of $\Jac_{c,c'}(z,\ybf)$ can be proved by arguing as in Proposition~\ref{pro:P-analytic}. Indeed, we can split $\Phi_n(R)$ into three sums $\Sigma_n(R)$, $\Lambda^1_n(R)$ and $\Lambda^2_n(R)$, where each of them can be handled in the same way. We emphasise that this is indeed the case for $\Lambda^2_n(R)$ because of our assumption that $c,c'\in \Ical_p$, which allows us to obtain an arrangement that contributes to $\Jac_p(R)^N$, for some $N>0$, after applying a successive application of our construction.

In general, for $c=(q,x,y),c'=(q',x',y')\in \Ical$, we can see that $c$ and $c'$ contain simple configurations $c_1=(q_1,x,y)$ and $c_1'=(q_1',x',y')$ and U-configurations $c_2=(q_2,x,x),c_2'=(q_2', \bar{x}',\bar{x}')$.
%Make the orientation correct
Then for $(z,\ybf)=((1+\delta)R,(1+\delta)\Fbf(R))$
\[\Jac_{c,c'}(z,\ybf)\leq \Jac_{c_1,c'_1}(z,\ybf) P_{c_2}(z,\ybf) P_{c_2'}(z,\ybf),\] and each term in the product is finite for some $\delta>0$ sufficiently small.
\end{proof}

\section{Spectral radius and radius of convergence}\label{sec: spectral}

Our goal in this section is to show that $R < r_\Kcal$ for every U-component $\Kcal$. Our approach loosely follows Alm and Janson \cite{AlJa90} of decomposing U-configurations into smaller U-configurations and pairs of I-configurations. In order to formalise this approach we first need a few definitions.

A \emph{pair configuration} is a pair $(c_1,c_2)$ of persistent I-configurations $c_1=(q_1,x_1,y_1), c_2=(q_2,x_2,y_2) \in \Ical_p$ such that $x_1 = x_2$; we write  $\Ical_\pair=\{(c_1,c_2) \in \Ical_p \times \Ical_p: {x_1}={x_2}\}$ for the set of all pair configurations.

%Let $c_1=(q_1,x_1,y_1)$, $c_2=(q_2,x_2,y_2)$, $c_1'$ and $c_2'$ be non-boring configurations such that $x_1=x_2$. 
Let $(c_1,c_2)$ and $(c_1',c_2')$ be pair configurations.
A $(c_1,c_2)$--$(c_1',c_2')$-completion is a pair $(A_1,A_2)$ consisting of a $c_1$--$c_1'$-completion $A_1$ and a $c_2$--$c_2'$-completion $A_2$ having the same target arc $f$. The length of $(A_1,A_2)$ is the length of $A_1$ (which equals the length of $A_2$) and we call it \emph{disjoint} if the walks $p_1$ and $p_2$ represented by $A_1$ and $A_2$ are disjoint. Denote by $\Acal(c_1,c_2,c_1',c_2',n)$ the set of all $(c_1,c_2)$--$(c_1',c_2')$-completions of length $n$ and by $\Acal_{\disj}(c_1,c_2,c_1',c_2',n)$ the subset of those which are disjoint. We define a matrix $\Jac_{\Ical_\pair}(z)$ with index set $\Ical_\pair \times \Ical_\pair$ entry-wise by
\[
(\Jac_{\Ical_\pair}(z))_{(c_1,c_2),(c_1',c_2')}=\sum_{(A_1,A_2) \in \Acal(c_1,c_2,c_1',c_2',1)} z^{\norm{A_1}+\norm{A_2}}.
\]

Like in the case of single configurations, we define a dependency digraph $D_\pair$ for pair configurations. Its vertex set is $\Ical_\pair$ and we have an arc from $(c_1,c_2)$ to $(c_1',c_2')$ if the entry at position $(c_1,c_2),(c_1',c_2')$ in the matrix $\Jac_{\Ical_\pair}(z)$ is non-zero. 

A pair configuration $(c_1,c_2)$ is called simple if both $c_1$ and $c_2$ are simple. It is called persistent, if it lies on a walk connecting two simple pair configurations in $D_\pair$. The set of persistent pair configurations is denoted $\Ical_{\pair,p}$. Note that while every persistent pair configuration consists of two persistent I-configurations, the converse is not necessarily true. This is due to the fact that a $c_1$--$c_1'$-completion $A_1$ and a $c_2$--$c_2'$-completion $A_2$ might have the same source arc but different target arcs.

Finally we define the matrix $\Jac_{\pair,\disj}(z)$ with index set $\Ical_{\pair,p} \times \Ical_{\pair,p}$ entry-wise by
\[
    (\Jac_{\pair,\disj}(z))_{(c_1,c_2),(c_1',c_2')}=\sum_{(A_1,A_2) \in \Acal_\disj(c_1,c_2,c_1',c_2',1)} z^{\norm{A_1}+\norm{A_2}}.
\]
Denote by $\lambda_{\pair,\disj}(z)$ the spectral radius of $\Jac_{\pair,\disj}(z)$.

\begin{lem}\label{lem:pair-p}
For every $z\in (0,R]$, $\Jac_{\pair,\disj}(z)$ has finite entries and furthermore, we have $\lambda_{\pair,\disj}(z)< \lambda_{\Ical_p}(z)^2$.
\end{lem}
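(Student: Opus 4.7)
The plan is to compare $\Jac_{\pair,\disj}(z)$ with the restriction $\Jac_{\Ical_\pair,p}(z)$ of $\Jac_{\Ical_\pair}(z)$ to the index set $\Ical_{\pair,p}\times\Ical_{\pair,p}$, and then invoke a standard Perron--Frobenius principle. The key observation is that an entry of $\Jac_{\Ical_\pair}(z)$ decomposes as a sum over the common target arc $f$, and the elementary bound $\sum_f a_f b_f \leq \bigl(\sum_f a_f\bigr)\bigl(\sum_f b_f\bigr)$ yields
\[
    (\Jac_{\Ical_\pair}(z))_{(c_1,c_2),(c_1',c_2')} \leq (\Jac_{\Ical_p}(z))_{c_1,c_1'}\cdot (\Jac_{\Ical_p}(z))_{c_2,c_2'},
\]
which is finite at $z=R \leq r_{\Ical_p}$ by Lemma~\ref{lem:spectral}. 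Since $\Jac_{\pair,\disj}(z) \leq \Jac_{\Ical_\pair}(z)$ entry-wise, the entries of $\Jac_{\pair,\disj}(z)$ are finite as well. The same bound applied to $n$-fold products (a length-$n$ pair completion being, by definition, a pair of length-$n$ single completions with a common target arc) gives
\[
    (\Jac_{\Ical_\pair,p}(z)^n)_{(c_1,c_2),(c_1',c_2')} \leq (\Jac_{\Ical_p}(z)^n)_{c_1,c_1'}\cdot (\Jac_{\Ical_p}(z)^n)_{c_2,c_2'},
\]
so that $\rho(\Jac_{\Ical_\pair,p}(z)) \leq \lambda_{\Ical_p}(z)^2$ after taking $n$-th roots and sending $n \to \infty$.

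To finish I apply the classical fact that for an irreducible non-negative matrix $A$ and any non-negative $B \leq A$ with $B \neq A$, one has $\rho(B) < \rho(A)$. Two things must be checked. First, I show that $\Jac_{\Ical_\pair,p}(z)$ is irreducible by showing that $\Ical_{\pair,p}$ is a single strong component of $D_\pair$. Given any two simple pair configurations $(c_1^0, c_2^0)$ and $(c_1^1, c_2^1)$, the plan is to build a length-$N$ pair completion connecting them: Proposition~\ref{pro:edges-dense} supplies a target arc $f$ of prescribed orbit at large tree distance $N$ from $x$, and Proposition~\ref{pro:adhesion-paths} then supplies individual paths from each prescribed source vertex to each prescribed target vertex realising the correct entry and exit directions. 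By definition of persistence, the persistent pair configurations are then exactly the configurations of this strong component. Second, I exhibit at least one entry where $\Jac_{\pair,\disj}(z)$ is strictly smaller than $\Jac_{\Ical_\pair,p}(z)$. Taking a simple pair configuration $(c_1, c_2)$ with distinct source vertices $u_1 \neq u_2$ in $\Vcal(x)$ and any persistent pair target, I can route two walks from $u_1, u_2$ to a common adhesion set through a chosen shared intermediate vertex; the resulting pair completion is non-disjoint and hence contributes to $\Jac_{\Ical_\pair,p}(z)$ but not to $\Jac_{\pair,\disj}(z)$.

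Combining these gives $\lambda_{\pair,\disj}(z) < \rho(\Jac_{\Ical_\pair,p}(z)) \leq \lambda_{\Ical_p}(z)^2$, as required. The main technical obstacle is the irreducibility step: both walks in a pair completion must terminate at the \emph{same} target arc, and the bookkeeping involving representative arcs, exit directions, and $\Gamma$-orbits must be coordinated across the two components of the pair. Once irreducibility is established, the remaining argument is essentially formal.
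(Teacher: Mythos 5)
Your proof takes essentially the same route as the paper: the entrywise bound you derive is exactly domination by the Kronecker product $\Jac_{\Ical_p}(z)\otimes\Jac_{\Ical_p}(z)$, whose spectral radius is $\lambda_{\Ical_p}(z)^2$ (the paper invokes this Kronecker eigenvalue fact directly, avoiding the $n$-th-root/Gelfand step), and the strict inequality then follows from the Perron--Frobenius comparison once irreducibility of the restriction to $\Ical_{\pair,p}$ is in hand. The only noteworthy departure is that the irreducibility you plan to re-derive from Propositions~\ref{pro:edges-dense} and \ref{pro:adhesion-paths} is already supplied by Lemma~\ref{lem:simplecomponent} --- the target arc $f'$ produced there is independent of the choice of simple configuration, so it applies simultaneously to both components of a pair --- and for the strictly smaller entry the paper gets away more cheaply with a diagonal entry, since a twin pair $(A,A)$ is never disjoint.
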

\begin{proof}
    We start by noting that for $z \geq 0$ all matrices considered in this proof are non-negative, so whenever we restrict the index set or reduce some entries of a matrix, the spectral radius of the resulting matrix must be smaller than or equal to the spectral radius of the original matrix.
    
    Consider the Kronecker product $\Jac_{\Ical_p \times \Ical_p}(z)=\Jac_{\Ical_p}(z) \otimes \Jac_{\Ical_p}(z)$. The eigenvalues of the Kronecker product of two matrices $M_1$ and $M_2$ are precisely the products $\lambda_1\lambda_2$ of eigenvalues $\lambda_1$ of $M_1$ and $\lambda_2$ of $M_2$. Therefore the spectral radius of $\Jac_{\Ical_p \times \Ical_p}(z)$ is $\lambda_{\Ical_p}(z)^2$. By Lemma~\ref{lem:spectral}, the matrix $\Jac_{\Ical_p}(z)$ has finite entries, thus the same holds for $\Jac_{\Ical_p \times \Ical_p}(z)$.

    By definition every $(c_1,c_2)$--$(c_1',c_2')$-completion $(A_1,A_2)$ consists of a $c_1$--$c_1'$-completion $A_1$ and a $c_2$--$c_2'$-completion $A_2$, so for $z \in (0,R]$ all entries of the matrix $\Jac_{\Ical_\pair}(z)$ are smaller than or equal to the respective entries of $\Jac_{\Ical_p \times \Ical_p}(z)$. We conclude that its spectral radius $\lambda_{\Ical_\pair}(z)$ satisfies $\lambda_{\Ical_\pair}(z) \leq \lambda_{\Ical_p}(z)^2$. By Lemma \ref{lem:simplecomponent} all simple pair configurations are contained in the same strong component of $D_\pair$, so the set $\Ical_{\pair,p}$ of persistent pair configurations is a strong component of $D_\pair$. In particular for $z > 0$ the submatrix $\Jac_{\Ical_{\pair,p}}(z)$ of $\Jac_{\Ical_\pair}(z)$ obtained by restricting the index set to $\Ical_{\pair,p}$ is irreducible. Finally, the entries of the matrix $\Jac_{\pair,\disj}(z)$ are smaller than or equal to the respective entries of $\Jac_{\Ical_{\pair,p}}(z)$ with strict inequality for some entries, for instance the diagonal entries. Thus $\lambda_{\pair,\disj}(z) < \lambda_{\Ical_\pair}(z) \leq \lambda_{\Ical_p}(z)^2$ holds. %Should we cite something here?
\end{proof}

To simplify notation let $\lambda_{\Ucal}(z):=\max\{\lambda_{\Kcal}(z) \mid \Kcal \subseteq \Ucal \text{ strong component of $D$}\}$ and let $\lambda_{\Ical_t}(z):=\max\{\lambda_{\Kcal}(z) \mid \Kcal \subseteq \Ical_t \text{ strong component of $D$}\}$. 
%Define J_K for K subset of C?

\begin{lem}\label{lem:U-pair}
For every $z\in (0,R]$ we have $\lambda_\Ucal(z) \leq \lambda_{\pair,\disj}(z)$.
\end{lem}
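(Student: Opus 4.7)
The plan is to exploit a geometric decomposition of U--U completions into disjoint pair completions plus a residual excursion. The intuition is that a U-walk enters and exits the same adhesion set on one side, producing a ``turnaround''; cutting such a walk at a second U-adhesion along it leaves two outer legs that form a vertex-disjoint pair of I-walks, while the central piece will be controlled by a U-completion generating function, which is finite at the critical point by Lemma~\ref{lem:u-finite}.

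I would begin by formalising the decomposition. Fix a U-component $\Kcal \subseteq \Ucal$ of $D$ and configurations $c, c' \in \Kcal$. Let $A$ be a $c$--$c'$-completion of length $n$ with source arc $x$ and target arc $f$, and set $w = \varphi(A)$. Since $c$ is a U-configuration, $w$ starts at some $v_0 \in \Vcal(x)$ and ends at some $v_n \in \Vcal(x)$. Because $c'$ is non-boring and a U-configuration, its associated walk on the adhesion graph contains an arc of $\Ecal(\bar f)$, which forces $w$ to visit $\Vcal(f)$ at two distinct vertices; let $a$ and $b$ be the first and the last such vertices along $w$. Splitting $w = w_1 \cdot w_{\mathrm{mid}} \cdot w_2$ at $a$ and $b$ produces two walks, $w_1$ from $v_0$ to $a$ and $\overline{w_2}$ from $v_n$ to $b$, each meeting $\Vcal(x) \cup \Vcal(f)$ only at its endpoints, and vertex-disjoint because $w$ is self-avoiding.

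Next I would apply Theorem~\ref{thm:saw-to-arrangement} to obtain unique complete arrangements $A_1, A_2$ representing $w_1$ and $\overline{w_2}$. These are I-completions of length $n$ (the same {\lidi} from $x$ to $f$ as $A$), both with simple source and target configurations, since each $w_i$ touches $\Vcal(x)$ and $\Vcal(f)$ only in one vertex. Simple configurations are persistent by definition, so $(A_1, A_2) \in \Acal_{\disj}(c_1, c_2, c_1', c_2', n)$ with $(c_1, c_2), (c_1', c_2') \in \Ical_{\pair, p}$. The middle piece $w_{\mathrm{mid}}$ is a self-avoiding walk from $a$ to $b$ which avoids $\Vcal(x)$, and corresponds to a $\tilde c$-completion $A_3$ for some U-configuration $\tilde c$ at an arc equivalent to $\bar f$, encoding the excursion of $w$ across $\Vcal(f)$. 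The map $A \mapsto (A_1, A_2, A_3)$, enriched with the records $(c_1, c_2, c_1', c_2', \tilde c)$ of the arising $\Gamma$-equivalence classes, is injective: one reconstructs $A$ by concatenating $\varphi(A_1)$, $\varphi(A_3)$, $\overline{\varphi(A_2)}$ at $a$ and $b$.

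Since $\norm A = \norm{A_1} + \norm{A_2} + \norm{A_3}$, the decomposition yields the entrywise inequality
\[
(\Jac_{\Ucal}(z)^n)_{c, c'} \leq \sum_{(c_1, c_2), (c_1', c_2'), \tilde c} (\Jac_{\pair, \disj}(z)^n)_{(c_1, c_2), (c_1', c_2')} \, F_{\tilde c}(z),
\]
where the sum ranges over a finite index set. For $z \in (0, R]$ each $F_{\tilde c}(z)$ is finite by Lemma~\ref{lem:u-finite}, so a finite constant $C(z)$ satisfies $(\Jac_{\Ucal}(z)^n)_{c, c'} \leq C(z) \max_{(c_1, c_2), (c_1', c_2')} (\Jac_{\pair, \disj}(z)^n)_{(c_1, c_2), (c_1', c_2')}$. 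Taking $n$-th roots and invoking Gelfand's formula yields $\lambda_{\Kcal}(z) \leq \lambda_{\pair, \disj}(z)$ for each U-component $\Kcal$, and therefore $\lambda_{\Ucal}(z) \leq \lambda_{\pair, \disj}(z)$. The main obstacle will be the technical bookkeeping needed to realise $A_3$ as a genuine completion with the correct orientation of $\tilde c$ and correct treatment of the virtual arcs in $\Ecal(f)$ versus $\Ecal(\bar f)$, and to verify injectivity of the reconstruction; once that is in place, the generating-function bound and the spectral inequality are routine.
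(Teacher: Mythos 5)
Your overall strategy — bounding $(\Jac_\Kcal(z)^n)_{c,c'}$ for a U-component $\Kcal$ by decomposing the walk represented by a $c$--$c'$-completion into a disjoint pair of I-walks plus a U-residual, then applying Gelfand's formula — is the same as the paper's, but your decomposition is too coarse, and the gap is genuine.

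You cut the walk $w=\varphi(A)$ only at the first visit $a$ and last visit $b$ to $\Vcal(f)$, and you claim that the outer legs $w_1$ and $\overline{w_2}$ meet $\Vcal(x)\cup\Vcal(f)$ only at their endpoints, so that they yield \emph{simple} I-configurations. This is false in general: the source configuration $c$ is a non-boring U-configuration on $x$, whose walk $q$ on the adhesion graph can visit $\Vcal(x)$ at up to $K=|\Vcal(x)|$ vertices, joined by arcs in $\Ecal(\bar x)$ (detours inside the cone). By compatibility, $w=\varphi(A)$ then revisits $\Vcal(x)$ at those same vertices, and nothing forces all of these intermediate visits to occur between $a$ and $b$ — e.g.\ the walk may first loop back to $\Vcal(x)$ inside the cone (without touching $\Vcal(f)$) before ever reaching $\Vcal(f)$. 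In that case $w_1$ has internal visits to $\Vcal(x)$, the induced configuration at $x$ is not simple, and your asserted membership in $\Ical_{\pair,p}$ is not justified. The same issue afflicts $w_{\mathrm{mid}}$: it can visit $\Vcal(x)$ internally (and, since $w$ may visit $\Vcal(f)$ at up to $K$ vertices, also $\Vcal(f)$ at several intermediate vertices), so $w_{\mathrm{mid}}$ does not correspond to a single $\tilde c$-completion for a U-configuration $\tilde c$ on an arc equivalent to $\bar f$. These are not bookkeeping issues to be ironed out; they break the factorization into $\Jac_{\pair,\disj}$ and a finite $F_{\tilde c}$ factor.

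The paper avoids this by cutting $w$ at \emph{every} visit to $\Vcal(\iota)\cup\Vcal(\tau)$, producing at most $2K$ non-virtual sub-walks, each touching the adhesion sets only at its endpoints. These are classified into four types: U-turns at $\Vcal(\iota)$, U-turns at $\Vcal(\tau)$, crossings $\iota\to\tau$, and crossings $\tau\to\iota$; the latter two come in equal numbers and are paired into disjoint simple pair-completions, while the U-turns give U-completion factors bounded by Lemma~\ref{lem:u-finite}. The number of factors is bounded ($k_1+k_2\le 2K$, $1\le k_3\le K$), so the resulting bound $M'n^{m'}\lambda_{\pair,\disj}^n(z)$ follows by the Jordan-form estimate, and Gelfand's formula finishes. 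If you want to keep your three-piece picture, you must first subdivide each of $w_1$, $w_{\mathrm{mid}}$, $w_2$ at the remaining adhesion-set crossings, at which point you have reconstructed the paper's decomposition.
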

\begin{proof}
Let $\Kcal \subseteq \Ucal$ be a strong component of $D$. Let $c,c'$ be two configurations in $\Kcal$ and recall that $\Acal(c,c',n)$ denotes the set of $c$--$c'$-completions of length $n$. Then
\begin{equation}\label{eq:decompu}
    (\Jac_\Kcal(z)^n)_{c,c'} = \sum_{A \in \Acal(c,c',n)} z^{\norm{A}}.
\end{equation}

By Corollary~\ref{cor:finitesubtree} there is a constant $N'$ such that $\Vcal(e) \cap \Vcal(f)=\emptyset$ whenever $d_T(e,f) \geq N'$ holds for two edges $e,f \in E(T)$.
Let $A \in \Acal(c,c',n)$ for some $n \geq N'$, let $S$ be the support of $A$ and let $\iota$ and $\tau$ be source and target arc of $A$, respectively. The arrangement $A$ represents a unique SAW $\varphi(A)$ on $\Gcal(S)$ starting and ending in $\Vcal(\iota)$. Let $v_0,v_1, \dots, v_k$ be the sequence of vertices of $\varphi(A)$ contained in at least one of $\Vcal(\iota)$ and $\Vcal(\tau)$, ordered by their appearance in $\varphi(A)$. We decompose $\varphi(A)$ as
\[
\varphi(A) = v_0w_1v_1w_2v_3\dots w_k v_k,
\]
in other words, $w_i$ is the sub-walk of $\varphi(A)$ starting at $v_{i-1}$ and ending at $v_i$. Denote by $K$ the size of an adhesion set of the tree decomposition. Then the number of sub-walks $k$ is bounded from above by $2K$. We call a walk $w_i$ virtual if it contains only a single arc and this arc is contained in $\Ecal(\iota)$ or $\Ecal(\tau)$, and non-virtual otherwise. Then each non-virtual $w_i$ belongs to one of four possible classes. 
\begin{enumerate}[label=(\roman*)]
\itemsep0em 
    \item $w_i$ starts and ends in $\Vcal(\iota)$ \label{itm:ll-walk} 
    \item $w_i$ starts and ends in $\Vcal(\tau)$ \label{itm:rr-walk}
    \item $w_i$ starts in $\Vcal(\iota)$ and ends in $\Vcal(\tau)$ \label{itm:lr-walk}
    \item $w_i$ starts in $\Vcal(\tau)$ and ends in $\Vcal(\iota)$ \label{itm:rl-walk}
\end{enumerate}
Observe that the number of walks of class \ref{itm:lr-walk} coincides with the number of walks of class \ref{itm:rl-walk} because $\varphi(A)$ starts and ends in $\Vcal(\iota)$. Hence we can group them into pairs, each consisting of a walk of class \ref{itm:lr-walk} and a walk of class \ref{itm:rl-walk}. Denote by $\Wcal_1$, $\Wcal_2$ and $\Wcal_3$ the sets of walks $w_i$ of class \ref{itm:ll-walk}, class \ref{itm:rr-walk} and the set of pairs $(w_i,w_j)$ of classes \ref{itm:lr-walk} and \ref{itm:rl-walk}, respectively. For technical reasons, pairs $(w_i,w_j)$ such that the final vertex of $w_i$ coincides with the initial vertex of $w_j$ (that is, $j=i+1$) are not added to $\Wcal_3$; instead their concatenation is included in $\Wcal_1$. Note that $\Wcal_3$ contains at least one pair of walks, because $c'$ is non-boring. 

By Theorem \ref{thm:shape-to-arrangement} each $w \in \Wcal_1$ is represented by a $c_w$-completion $A_w$, where $c_w$ is a U-configuration on $\iota$. Similarly, each $w \in \Wcal_2$ is represented by a $c_w$-completion $A_w$, where $c_w$ is a U-configuration on $\tau$. Finally, each pair $(w,w') \in \Wcal_3$ is represented by a disjoint $(c_w,c_{w'})$--$(c_{w}',c_{w'}')$-completion $(A_w,A_{w'})$ of length $n$, where $c_w$ and $c_{w'}$ are simple configurations on $\iota$ and $c_w'$ and $c_{w'}'$ are simple configurations on $\tau$. 

Note that by construction 
\begin{equation*}
    z^{\norm{A}}=\prod_{w\in\Wcal_1} z^{\norm{A_w}} \prod_{w\in\Wcal_2} z^{\norm{A_w}} \prod_{(w,w')\in \Wcal_3} z^{\norm{A_w}+\norm{A_{w'}}}.
\end{equation*}
Moreover, the arrangement $A$ can be uniquely reconstructed from the pair $c,c'$, the $c_w$-completions $A_w$, and the $(c_w,c_{w'})$--$(c_{w}',c_{w'}')$-completions $(A_w,A_{w'})$. Let $L$ be the number of possible configurations on any fixed arc $e$. Then obviously for each $c \in \Ccal$ there are at most $L$ configurations on $e$ which are $\Gamma$-equivalent to $c$, so at most $L$ of the configurations $c_w$ and $c_w'$ are equivalent to $c$. 
We conclude that for fixed $k_1,k_2 \geq 0$ and $k_3 \geq 1$, the subset of all $A \in \Acal(c,c',n)$ such that $\abs{\Wcal_i}=k_i$ for $i=1,2,3$ gives a contribution to the sum \eqref{eq:decompu} of at most
\begin{equation}\label{eq:decompu2}
\left(L\sum_{c_1 \in \Ucal} F_{c_1}(z) \right)^{k_1+k_2} \left( L^2 \sum_{( c_3, c_4), (c_3', c_4') \in  \Ical_{\pair,p}} (\Jac_{\pair,\disj}(z)^n)_{( c_3, c_4), (c_3', c_4')} \right)^{k_3}
\end{equation}

Lemma~\ref{lem:u-finite} tells us that $F_{c_1}(R)$ is finite for every $c_1 \in \Ucal$. In particular, by monotonicity of the entries of $F_{c_1}(z)$, there is some constant $L'$ which does not depend on $n$ such that
\[L \sum_{c_1 \in \Ucal} F_{c_1}(z) \leq L'\] 
for all $z \in [0,R]$. 

Recall that for any matrix norm $\norm{\cdot}$ and any matrix $A$ with spectral radius $\lambda$ there are finite constants $M,m$ such that $\norm{A^n} \leq M n^m \lambda^n$ holds for every $n \in \N$. An easy way to verify this is by computing powers of Jordan blocks in the Jordan normal form of $A$.
Applying this to $\Jac_{\pair,\disj}(z)$, we obtain
\begin{equation}\label{eq:decompu3}
L^2 \sum_{( c_3, c_4), (c_3', c_4') \in  \Ical_{\pair,p}} (\Jac_{\pair,\disj}(z)^n)_{( c_3, c_4), (c_3', c_4')} \leq M n^m \lambda_{\pair,\disj}^n(z)
\end{equation}

Utilizing these estimates and that $0 \leq k_1+k_2 \leq 2K$ and $1 \leq k_3 \leq K$, from \eqref{eq:decompu} we obtain  
\[
 (\Jac_\Kcal(z)^n)_{c,c'} \leq  \sum_{k_1=0}^{2K} \sum_{k_3=1}^K  (L')^{k_1} \left(M n^m \lambda_{\pair,\disj}^n(z)\right)^{k_3}
\]

Note that $\lambda_{\pair,\disj}(z) \leq 1$ by Lemma~\ref{lem:pair-p} combined with Lemma~\ref{lem:spectral}. Thus 
\[
 (\Jac_\Kcal(z)^n)_{c,c'} \leq M' n^{m'} \lambda_{\pair,\disj}^n(z) 
\]
holds for some new constants $M'$ and $m'$ independent of $n$.

Taking $n$-th roots and sending $n$ to infinity, an application of Gelfand's formula completes the proof:
\[
\lambda_\Kcal(z)= \lim_{n \to \infty} \norm{\Jac_\Kcal(z)^n}_{\infty}^{1/n} \leq \lambda_{\pair,\disj}(z)
\]
\end{proof}

\begin{lem}\label{lem:Itrans-pair}
For every $z\in (0,R]$ we have $\lambda_{\Ical_t}(z) \leq \lambda_{\pair,\disj}(z)$.
\end{lem}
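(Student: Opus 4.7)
The plan is to adapt the proof of Lemma~\ref{lem:U-pair} from U-components to transient I-components. For a strong component $\Kcal \subseteq \Ical_t$ and $c, c' \in \Kcal$, I would consider a $c$--$c'$-completion $A$ of length $n$ with source arc $\iota$ and target arc $\tau$. Because $c$ and $c'$ are I-configurations, the walk $\varphi(A)$ on $\Gcal(S)$ starts in $\Vcal(\iota)$ and ends in $\Vcal(\tau)$, in contrast to the U-case where both endpoints lie in $\Vcal(\iota)$. As in Lemma~\ref{lem:U-pair}, I would decompose $\varphi(A)$ at its visits to $\Vcal(\iota)\cup\Vcal(\tau)$ and classify the resulting non-virtual sub-walks as LL, LR, RL, RR. The asymmetric endpoint constraint now gives $\#\mathrm{LR} = \#\mathrm{RL}+1$ rather than the equality that held in the U-case.

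Granted $\#\mathrm{RL}\geq 1$, the remainder would mirror Lemma~\ref{lem:U-pair} very closely: the $\#\mathrm{RL}$ matched pairs of LR/RL sub-walks would contribute a factor bounded by a sum of entries of $\Jac_{\pair,\disj}(z)^n$; the single excess LR sub-walk would contribute at most $\lambda_{\Ical_p}(z)^n \leq 1$ on $(0,R]$, using Lemma~\ref{lem:spectral} together with $R \leq r_{\Ical_p}$ (which follows from Lemma~\ref{lem:strongcompradius} since every simple I-configuration lies in $\Ical_p$, and Remark~\ref{rmk:rleq1} gives $r_c \leq 1$ for simple $c$); and the LL and RR sub-walks correspond to U-type completions on $\iota$ and $\tau$, each contributing a factor that is finite at $z=R$ by Lemma~\ref{lem:u-finite}. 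Running the same bookkeeping as in Lemma~\ref{lem:U-pair} would then yield an estimate of the form
\[
(\Jac_\Kcal(z)^n)_{c,c'} \leq M' n^{m'} \lambda_{\pair,\disj}(z)^n,
\]
and an application of Gelfand's formula would give $\lambda_\Kcal(z) \leq \lambda_{\pair,\disj}(z)$. Maximising over strong components $\Kcal \subseteq \Ical_t$ then completes the proof.

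The main obstacle, and where the hypothesis of transience genuinely enters, is the lower bound $\#\mathrm{RL}\geq 1$. Transient I-configurations are necessarily non-simple (simple I-configurations are automatically persistent), so $|q|, |q'|\geq 2$ and the walk visits each of $\Vcal(\iota)$ and $\Vcal(\tau)$ at least twice; moreover, to contribute to $\Jac_\Kcal(z)^n$ every intermediate configuration along the $\iota$-$\tau$ path in $T$ also lies in $\Kcal$ and is non-simple, so each intermediate adhesion set is visited at least twice as well. The hard part is to combine these visit-multiplicity constraints with the compatibility conditions of Section~\ref{sec: conf arrang} to rule out the ``monotone'' scenario in which all $\Vcal(\iota)$-visits precede all $\Vcal(\tau)$-visits and $\#\mathrm{RL}=0$. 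In such a scenario the single inter-side crossing would realise a simple-to-simple I-walk, and I expect the extra visits forced by transience at the intermediate adhesion sets to be inconsistent with this monotone structure unless an RL sub-walk is present. Establishing this combinatorial claim rigorously is where the bulk of the work will lie.
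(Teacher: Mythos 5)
Your proposal correctly identifies the decomposition into LL/LR/RL/RR sub-walks and the asymmetry $\#\mathrm{LR}=\#\mathrm{RL}+1$, and your treatment of the case $\#\mathrm{RL}\geq 1$ (which handles the excess LR walk via $\lambda_{\Ical_p}(z)^n\leq 1$) is essentially what the paper does for its set $\Acal_{>0}(c,c',n)$. However, there is a genuine gap: the claim $\#\mathrm{RL}\geq 1$ that you hope to extract from transience is \emph{false}. The ``monotone scenario'' you want to rule out is perfectly possible in a transient component. For instance, the walk can first make a long LL detour that passes through all the intermediate adhesion sets $\Vcal(e)$ (thereby rendering each $C(e)$ non-simple, hence potentially transient), then cross once via a single LR sub-walk, and finally make an RR detour; this has $\#\mathrm{RL}=0$ while all intermediate configurations are non-simple. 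What transience actually forces is weaker: for each interior arc $e$ of the source-to-target path $W$, the single LR sub-walk $w_0$ cannot be the only one visiting $\Vcal(e)$, since $w_0$ alone would induce a simple (hence persistent) configuration on $e$. In other words, the LL/RR sub-walks must \emph{collectively cover} the tree-path $W$.

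The paper therefore splits $\Acal(c,c',n)$ into $\Acal_{>0}$ (where $\#\mathrm{RL}\geq 1$, handled as you outline) and $\Acal_{=0}$, and for the latter uses the covering observation to show that the lengths $j_w$ of the tree-path segments covered by the U-type completions $A_w$ ($w\in \Wcal_1\cup\Wcal_2$) satisfy $\sum_w j_w\geq n-N$ for a fixed constant $N$. Each such $A_w$ contains a $c_w$--$c_w'$-completion of length $j_w-1$ between U-configurations, so its contribution is controlled by $\Jac_\Ucal(z)^{j_w-1}$, and the product over $w$ is bounded by a polynomial times $\lambda_\Ucal(z)^n$. Combined with Lemma~\ref{lem:U-pair} (which gives $\lambda_\Ucal(z)\leq\lambda_{\pair,\disj}(z)$), this yields the bound $M n^m\lambda_{\pair,\disj}(z)^n$ on the $\Acal_{=0}$ contribution. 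You would need to add this second case to make the argument complete; attempting to prove $\#\mathrm{RL}\geq 1$ is a dead end.
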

\begin{proof}
    We follow a similar strategy as in the proof of Lemma~\ref{lem:U-pair}. Let $\Kcal \subseteq \Ucal$ be a strong component of $D$ and let $c,c'$ be two configurations in $\Kcal$. Then
\begin{equation}\label{eq:decomp-itrans}
    (\Jac_\Kcal(z)^n)_{c,c'} = \sum_{A \in \Acal(c,c',n)} z^{\norm{A}}.
\end{equation}
As in the previous proof we translate every $A \in \Acal(c,c',n)$ to the SAW $\varphi(A)$ represented by $A$. Then we decompose $\varphi(A)$ at every vertex contained in the adhesion sets $\Vcal(\iota) \cup \Vcal(\tau)$, where $\iota$ and $\tau$ are source and target arc of $A$. We end up with sub-walks $w_1, \dots, w_k$ of $\varphi(A)$ and categorize them into classes \ref{itm:ll-walk} to \ref{itm:rl-walk}. However, because $A$ is a $c$--$c'$-completion for two I-configurations $c$ and $c'$, the walk $\varphi(A)$ starts at $\Vcal(\iota)$ and ends at $\Vcal(\tau)$. This means that class~\ref{itm:lr-walk} contains one more walk than class~\ref{itm:rl-walk}. We denote by $w_0$ the walk $w_i$ with maximal index $i$ in class \ref{itm:lr-walk}. Excluding $w_0$, as before we can build pairs, each consisting of a walk of class \ref{itm:lr-walk} and a walk of class \ref{itm:rl-walk}. Denote by $\Wcal_1$, $\Wcal_2$ and $\Wcal_3$ the sets of walks $w_i$ of class \ref{itm:ll-walk}, class \ref{itm:rr-walk} and the set of pairs $(w_i,w_j)$ of classes \ref{itm:lr-walk} and \ref{itm:rl-walk}, respectively.

As before, for every $w \in \Wcal_1 \cup \Wcal_2$ there is a $c_w$-completion $A_w$ representing $w$ provided by Theorem \ref{thm:shape-to-arrangement} such that $c_w$ is a U-configuration. Furthermore, for every $(w,w') \in \Wcal_3$ there is a $(c_w,c_{w'})$--$(c_{w}',c_{w'}')$-completion $(A_w,A_{w'})$ representing $(w,w')$ such that $c_w,c_{w'},c_{w}'$ and $c_{w'}'$ are simple configurations. Finally, there is a $c_0$--$c_0'$-completion $A_{w_0}$ representing $w_0$ such that $c_0$ and $c_0'$ are simple configurations.

The main difference to the previous proof is that the size $k_3$ of the set $\Wcal_3$ need not necessarily be larger than 0. We denote by $\Acal_{=0}(c,c',n)$ the subset of all $A \in \Acal(c,c',n)$ such that $k_3=0$ and by $\Acal_{>0}(c,c',n)$ its complement $\Acal(c,c',n) \setminus \Acal_{=0}(c,c',n)$. Let us first deal with arrangements in $\Acal_{>0}(c,c',n)$, which works similarly to the previous proof. The only difference is an additional factor
\[
    L \sum_{c_0,c_0' \in  \Ical_p} (\Jac_{\Ical_p}(z)^n)_{c_0,c_0'}
\]
in the analogue of equation \eqref{eq:decompu2} coming from the walk $w_0$. As in \eqref{eq:decompu3}, we can bound this factor from above by $M_0 n^{m_0} \lambda_{I_p}^n(z)$, for some constants $M_0$ and $m_0$ independent from $n$.
By Lemma \ref{lem:spectral} we have that $\lambda_{I_p}(z) \leq 1$, thus we again end up with 
\begin{equation}\label{eq:decomtr-1}
\sum_{A \in \Acal_{>0}(c,c',n)} z^{\norm{A}} \leq M' n^{m'} \lambda_{\pair,\disj}^n(z) 
\end{equation}
for some constants $M', m'$ independent of $n$.

We are left to deal with arrangements in $\Acal_{=0}(c,c',n)$. Here we need to treat the $c_w$-completions  $A_w$ for $w \in \Wcal_1 \cup \Wcal_2$ more carefully. Let $S_w$ be the support of $A_w$ and denote by $W$ the open path in $T$ connecting $\iota$ and $\tau$. Then $W \cap S_w$ is an open path of $W$ containing $\iota$ if $w \in \Wcal_1$ and $\tau$ if $w \in \Wcal_2$. Denote by $j_w \in [n]$ the length of this open path. Then $A_w$ consists of a $c_w$--$c'_w$-completion of length $j_w-1$ for some non-boring U-configuration $c'_w$ and a $c'_w$-completion. We claim that there is a constant $N$ independent of $n$ such that 
\begin{equation}\label{eq:j-sum}
\sum_{w \in \Wcal_1 \cup \Wcal_2} j_w \geq n-N
\end{equation}
holds for every $A \in \Acal_{=0}(c,c',n)$. Indeed, by the choice of $A$ for every interior arc $e$ of $W$ the configuration $\rho(C(e))$ lies in the component $\Kcal$ and thus must be transient by Lemma~\ref{lem:D-components}. In particular $\Vcal(e)$ must be visited by some $w \in \Wcal_1 \cap \Wcal_2$, otherwise $C(e)=C_{w_0}(e)$ holds and thus $\rho(C(e))$ lies in the same component as the simple configurations 
$c_0$ and $c_0'$ and thus is persistent. By Corollary~\ref{cor:finitesubtree} there is a constant $N'$ such that $\Vcal(e) \cap \Vcal(f)=\emptyset$ whenever $d_T(e,f) \geq N'$. In particular, $w \in \Wcal_1$ cannot visit $\Vcal(e)$ for any arc $e$ such that $d_T(\iota, e)>j_w+N'$. This implies in particular that \eqref{eq:j-sum} holds for $N=2N'$.

For fixed $k \geq 0$ and integers $j_1, \dots j_k \in [n]$, the subset of all $A \in \Acal_{=0}(c,c',n)$ such that $\Wcal_i$ contains $k$ elements $w_1, \dots, w_k$ with $j_{w_i}=j_i$ gives a contribution to the sum \eqref{eq:decomp-itrans} of at most 
\[
\left( L \sum_{c_0,c_0' \in  \Ical_p} (\Jac_{\Ical_p}(z)^n)_{c_0,c_0'}\right) \prod_{i=1}^k \left( L \sum_{c_1,c_1' \in \Ucal} (\Jac_{\Ucal}(z)^{j_i-1})_{c_1,c_1'} F_{c_1'}(z) \right) 
\]
The first factor is bounded by $M_0 n^{m_0}$ as in the previous case. For the other factors, we use again that by Lemma \ref{lem:u-finite} there is a constant $L'$ such that $F_{c}(z) \leq L'$ holds for every $c \in \Ucal$ and every $z \in [0,R]$. Therefore, we bound each factor as in \eqref{eq:decompu3} to obtain the upper bound
\[
\prod_{i=1}^k \left( L \sum_{c_1,c_1' \in \Ucal} (\Jac_{\Ucal}(z)^{j_i-1})_{c_1,c_1'} F_{c_1'}(z) \right) \leq (M_1)^k \prod_{i=1}^k (j_i-1)^{m_1} \lambda_{\Ucal}(z)^{j_i-1} \leq M_2^k n^{m_2} \lambda_{\Ucal}(z)^n,
\]
where $M_1, M_2, m_1$ and $m_2$ are constants independent of $n$ and for the last inequality we use \eqref{eq:j-sum} and that $\lambda_U(z) \leq 1$ by Lemma~\ref{lem:U-pair}.

With these estimates we obtain
\begin{equation}\label{eq:decomtr-2}
\sum_{A \in \Acal_{=0}(c,c',n)} z^{\norm{A}} \leq M_0 n^{m_0} \sum_{k=0}^{2K} \; \sum_{j_1, \dots,j_k=1}^n M_2^k n^{m_2} \lambda_{\Ucal}^n(z) \leq M_3 n ^{m_3} \lambda_{\Ucal}^n(z)
\end{equation}
for some new constants $M_3$ and $m_3$.

Combining equations \eqref{eq:decomtr-1} and \eqref{eq:decomtr-2} and using Lemma \ref{lem:U-pair}, we end up with
\[
(\Jac_\Kcal(z)^n)_{c,c'} \leq M'' n^{m''} \lambda_{\pair,\disj}^n(z) 
\]
for some new constants $M''$ and $m''$. Taking the $n$-th root and sending $n$ to infinity completes the proof.
\end{proof}

\begin{cor}\label{cor:u analytic}
If $\Gamma$ does not fix an end of $T$, then $R_{\Ucal}>R$.
\end{cor}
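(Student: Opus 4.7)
The plan is to apply the analytic implicit function theorem to the system of equations restricted to U-configurations at the point $(z,\ybf)=(R,\Fbf(R))$. The key observation is that by Lemma~\ref{lem:D-components}\,(i), no arc of the dependency digraph $D$ goes from a U-configuration to an I-configuration. Consequently, after substitution, each $P_c(z,\ybf)$ for $c\in\Ucal$ depends only on $z$ and on the variables $y_{c'}$ with $c'\in\Ucal$. The equations $F_c(z)=P_c(z,\Fbf(z))$ for $c\in\Ucal$ therefore form a self-contained subsystem
\[
\ybf_\Ucal = P_\Ucal(z,\ybf_\Ucal),
\]
whose Jacobian with respect to $\ybf_\Ucal$ at the solution is exactly $\Jac_\Ucal(z)$.

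Next I would verify the hypotheses of the implicit function theorem at $(R,F_\Ucal(R))$. Lemma~\ref{lem:u-finite} ensures that $F_c(R)<\infty$ for every $c\in\Ucal$, so this is a valid evaluation point, and Proposition~\ref{pro:P-analytic} gives that each $P_c(z,\ybf)$ is analytic at $(R,\Fbf(R))$ for $c\in\Ucal$. The crucial quantitative input is the chain of inequalities
\[
\lambda_\Ucal(R)\ \leq\ \lambda_{\pair,\disj}(R)\ <\ \lambda_{\Ical_p}(R)^2\ \leq\ 1,
\]
obtained by combining Lemma~\ref{lem:U-pair}, Lemma~\ref{lem:pair-p}, and Lemma~\ref{lem:spectral} (the last inequality uses $R\leq r_{\Ical_p}$ together with $\lambda_{\Ical_p}(r_{\Ical_p})\leq 1$). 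Since every eigenvalue of $\Jac_\Ucal(R)$ has modulus strictly less than $1$, the matrix $I-\Jac_\Ucal(R)$ is invertible.

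The analytic implicit function theorem then yields a unique analytic function $\widetilde{F}_\Ucal(z)$ on a complex neighbourhood of $R$ with $\widetilde{F}_\Ucal(R)=F_\Ucal(R)$ and $\widetilde{F}_\Ucal(z)=P_\Ucal\!\left(z,\widetilde{F}_\Ucal(z)\right)$. To identify $\widetilde{F}_\Ucal$ with the extension of $F_\Ucal$, I would use that each $F_c$ has non-negative coefficients, so by monotone convergence $F_c(z)\nearrow F_c(R)$ as $z\uparrow R$. Thus for real $z<R$ sufficiently close to $R$, the values $F_\Ucal(z)$ lie in the neighbourhood of $F_\Ucal(R)$ on which the implicit function theorem guarantees uniqueness of the fixed point, forcing $F_\Ucal(z)=\widetilde{F}_\Ucal(z)$ there. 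Hence $F_c$ is analytic in a neighbourhood of $R$ for every $c\in\Ucal$, so $r_c>R$, and taking the minimum gives $R_\Ucal>R$.

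The main obstacle is really bundled into the earlier machinery; once Proposition~\ref{pro:P-analytic} supplies joint analyticity and Lemmas~\ref{lem:pair-p} and \ref{lem:U-pair} supply the strict spectral bound at the critical point, the implicit function theorem argument is straightforward. The only subtlety is the self-containedness of the U-subsystem, which is exactly what the asymmetry between U- and I-configurations in $D$ (Lemma~\ref{lem:D-components}\,(i)) provides.
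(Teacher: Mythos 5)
Your proof is correct and takes essentially the same route as the paper: establish joint analyticity of the $P_c$ for $c\in\Ucal$ at $(R,\Fbf(R))$ via Proposition~\ref{pro:P-analytic}, show $I-\Jac_\Ucal(R)$ is invertible via $\lambda_\Ucal(R)\leq\lambda_{\pair,\disj}(R)<\lambda_{\Ical_p}(R)^2\leq 1$ (Lemmas~\ref{lem:U-pair}, \ref{lem:pair-p}, \ref{lem:spectral}), and apply the analytic implicit function theorem to the self-contained U-subsystem. You spell out two points the paper leaves implicit — the self-containedness of the U-subsystem via Lemma~\ref{lem:D-components}\,(i), and the identification of the IFT solution with $F_\Ucal$ for real $z<R$ — which is a welcome clarification rather than a deviation.
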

\begin{proof}
Since $P_c(z,\ybf)$ is analytic at $(z,\ybf)=(R,(F_{c'}(R))_{c'\in \mathcal{C}})$ for all $c\in \Ucal$ by Proposition~\ref{pro:P-analytic}, the matrix $\Jac_{\Ucal}(R)$ is well-defined and has finite entries (for the strong components of $\Ucal$ this is already implied by Lemma~\ref{lem:spectral}).
It follows from Lemmas~\ref{lem:spectral}, \ref{lem:pair-p}, and \ref{lem:U-pair} that $\lambda_{\Ucal}(R)<1$, hence $I-\Jac_{\Ucal}(R)$ is invertible. Since $P_c(z,\ybf)$ is analytic at $(z,\ybf)=(R,(F_{c'}(R))_{c'\in \mathcal{C}})$ for all $c\in \Ucal$, it follows from the analytic Implicit Function Theorem that the functions $F_c(z)$ for $c\in \Ucal$ are analytic in a neighbourhood of $R$.
\end{proof}

\section{Proof of main results}\label{sec: proofs}

In this section we will prove the main results of this paper building on the results developed in the previous sections.

Our overall strategy is to exploit the close connection between $c$-completions and SAWs to obtain an expression for the SAW-generating function $F_{\SAW}(z)$ in terms of the functions $F_c(z)$. To do this, recall that Theorem~\ref{thm:saw-to-arrangement} allows us to bijectively translate every SAW $w$ on $G$ into a complete arrangement $A=(P,C)$ on the support of $w$ representing $w$ such that $\norm{A}$ coincides with the length of $w$. By Lemma~\ref{lem:source-contains-first-edge} the shape $P(s_0)$ of the source $s_0$ of $A$ starts with the first arc of $p$. In particular, even when fixing the starting vertex $o$ of our SAWs, $s_0$ may still vary depending on the specific choice of $w$, the only restriction is that $\Vcal(s_0)$ has to contain $o$. To get rid of this small inconvenience, let us first manipulate our tree decomposition. 

Fix $o \in V(G)$ and let $S$ be the subtree of $T$ induced by all vertices $t \in V(T)$ for which $\Vcal(t)$ contains a vertex at distance at most $2$ from $o$. Note that $S$ is finite by Proposition \ref{pro:finitesubtree} and that $\Gcal(S)$ contains all edges of $G$ incident to $o$ or one of its neighbours.
Let $\Tcal'=(T',\Vcal') = \Tcal/S$ be the tree decomposition obtained from $\Tcal$ by contracting all interior edges of $S$ and let $s_0$ be the vertex of $T'$ representing $V(S)$. Clearly the tree decomposition $\Tcal'$ is not $\Gamma$-invariant, as the new part $\Vcal'(s_0)$ plays a special role and cannot be mapped to any other parts. However, every arc $e \in E(T')$ pointing towards $s_0$ was also present in $T$ and the open cone $K_e$ in $T'$ coincides with the respective cone in $T$, so $c$-completions and their generating functions remain the same. By Theorem~\ref{thm:saw-to-arrangement} any SAW $w$ starting at $o$ is represented by a unique complete arrangement $A$ on the support of $w$ in $T'$. Clearly $P(s_0)$ starts at $o$. Moreover, all edges incident to $o$ are contained in the part $\Vcal'(s_0)$ by construction of $\Tcal'$. Thus by Lemma~\ref{lem:source-contains-first-edge} the source of $A$ has to be $s_0$. We obtain

\[
F_{\SAW}(z)= \sum_{w \text{ SAW starting at } o} z^{\abs{w}}= \sum_{\substack{\text{$A$ compl. arr.} \\ \text{with source $s_0$ s.t.} \\ \text{$P(s_0)$ starts at $o$}}} z^{\norm{A}}.
\]

By decomposing the tree $T'$ into an open star centered at $s_0$ and possibly infinitely many cones $K_{\bar e}$ for $e \in \Eout(s_0)$ and the respective arrangement on the open subtree $S$ into an arrangement $A=(P,C)$ on $\sta(s_0)$ and $C(e)$-completions for all non-boring configurations on arcs $e$ of $\sta(s_0)$, we conclude
\[
F_{\SAW}(z)= \sum_{\substack{\text{$A$ arr. on $\sta(s_0)$:}\\ \text{$P(s_0)$ starts at $o$}}} z^{\norm{A}} \prod_{e \in \nb(A,s_0)} F_{C(e)}(z). 
\]
We point out that all arrangements $A$ included in the sum satisfy $X(e) = \bar e$ for every $e \in \Eout(s_0)$ because by construction $o$ is not included in any adhesion set.

To analyse $F_{\SAW}(z)$ we need to first isolate the terms that determine its radius of convergence. To this end we write
\begin{equation} \label{eq:expPSAW}
F_{\SAW}(z)= F^1_{\SAW}(z)+F^2_{\SAW}(z),
\end{equation}
where   
\[
F^1_{\SAW}(z)= \sum_{\substack{\text{$A$ arr. on $\sta(s_0)$:}\\ \text{$P(s_0)$ starts at $o$ and} \\ \forall e \in \Eout(s_0)\colon Y(e)=\bar{e}}} z^{\norm{A}} \prod_{e \in \nb(A,s_0)} F_{C(e)}(z)
\]
and
\[
F^2_{\SAW}(z)=\sum_{\substack{\text{$A$ arr. on $\sta(s_0)$:}\\ \text{$P(s_0)$ starts at $o$} \\  \exists \, e_0\in E(s_0)\colon Y(e_0)=e_0}} z^{\norm{A}} \prod_{e \in \nb(A,s_0)} F_{C(e)}(z).
\]
Proposition \ref{pro:Fz-analytic} below implies that $F^1_{\SAW}(z)$ is analytic at $z=R$. Hence only $F^2_{\SAW}(z)$ is relevant. In order to study the latter we further write
\begin{equation} \label{eq:F2ps}
F^2_{\SAW}(z)=\Fbf^{\init}(z)\sum_{k=0}^\infty \Jac_{\Ical}(z)^k \Fbf^{\term}(z)
\end{equation}
where $\Fbf^{\init}(z)=(F^{\init}_c(z))_{c\in \Ical}$ is the vector with entries
\[
F^{\init}_c(z)= \sum_{\substack{\text{$A$ arr. on $\sta(s_0)$:}\\ \text{$P(s_0)$ starts at $o$} \\ \exists \, e_0\in E(s_0) \colon C(e_0)=c}} z^{\norm{A}} \prod_{e \in \nb(A,s_0)\setminus\{e_0\}} F_{C(e)}(z)   
\]
and $\Fbf^{\term}(z)=(F^{\term}_c(z))_{c\in \Ical}$ is the vector with entries  
\[
F^{\term}_c(z)=\sum_{\substack{\text{$A$ arr. on $\sta(x^-)$:}\\ \text{$C(x)=c$ and} \\ \forall e \in \Eout(x^-)\colon Y(e)=\bar{e}}} z^{\norm{A}} \prod_{e \in \nb(A,x^-)\setminus \{x\}} F_{C(e)}(z).
\]
By Lemma \ref{lem:spectral}, we know that
\begin{equation} \label{eq:F2}
F^2_{\SAW}(z)=\Fbf^{\init}(z)(I-\Jac_{\Ical}(z))^{-1} \Fbf^{\term}(z)
\end{equation}
for $z<R$.

In order to prove Theorem~\ref{thm:c_n asym}, by \cite[Theorem 4]{P19} it suffices to show that $\mu_p<\mu_w$, where $\mu_p$ and $\mu_w$ are the connective constants for SAPs and SAWs, respectively.
To stay within the framework of our definitions, it will be convenient to work with self-avoiding returns (SARs) instead of SAPs. By construction of $\Tcal'$ all edges incident to neighbours of $o$ are contained in the part $\Vcal'(s_0)$. Thus by Lemma~\ref{lem:source-contains-first-edge} every SAR $w$ starting at $o$ is represented by a unique complete arrangement $A$ on the support of $w$ in $T'$ with source and target $s_0$ such that $P(s_0)$ starts at $o$ and ends at a neighbour of $o$. Let 
\[F_{\SAR}(z)= \sum_{w \text{ SAR starting at } o} z^{\abs{w}}.\]
As above, we obtain 
\begin{equation} \label{eq:defPSAP}
F_{\SAR}(z)= \sum_{\substack{\text{$A$ arr. on $\sta(s_0)$:}\\ \text{$P(s_0)$ starts at $o$ and ends in $N(o)$}}} z^{\norm{A}} \prod_{e \in \nb(A,s_0)} F_{C(e)}(z). 
\end{equation}
Observe that the condition $X(e)=Y(e)=\bar{e}$ for every $e \in \Eout(s_0)$ implies that each configuration $C(e)$ is a U-configuration.

We will now prove the following result. 

\begin{pro}\label{pro:Fz-analytic}
Each of the functions $F^1_{\SAW}(z)$, $F_{\SAR}(z)$, $F^{\init}(z)$ and $F^{\term}(z)$ is analytic at $z=R$. Moreover, the function $F_{\SAW}(z)$ is analytic in the interval $(0,R)$.
\end{pro}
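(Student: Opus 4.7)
The plan is to reduce the analyticity at $R$ of the four ``local'' functions $F^1_{\SAW}$, $F_{\SAR}$, $F^{\init}_c$ and $F^{\term}_c$ to essentially the same argument used in the proof of Proposition~\ref{pro:P-analytic}, and then to derive the analyticity of $F_{\SAW}(z)$ on $(0,R)$ from the Neumann series representation of $(I-\Jac_\Ical(z))^{-1}$ appearing in \eqref{eq:F2ps}--\eqref{eq:F2}.

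The key observation is that each of $F^1_{\SAW}(z)$, $F_{\SAR}(z)$, $F^{\init}_c(z)$ and $F^{\term}_c(z)$ is a sum over arrangements on a single open star $\sta(s_0)$ (respectively $\sta(x^-)$) in which the configurations attached to the outer arcs are \emph{all} U-configurations, with the only possible exception of one fixed I-configuration $c$ whose contribution has already been factored out of the product. Indeed, in the definitions of $F^1_{\SAW}$ and $F_{\SAR}$ the conditions force $X(e)=Y(e)=\bar e$ for every $e\in\Eout(s_0)$, so $C(e)$ is a U-configuration; for $F^{\init}_c$ and $F^{\term}_c$, conditions \ref{itm:config-entry} and \ref{itm:config-exit} guarantee that $X(e)=Y(e)=\bar e$ for every $e$ apart from the distinguished arc $e_0$ carrying $c$, and this distinguished arc is excluded from the product. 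Up to this factoring, each of the four sums has exactly the same structure as the one controlling $P_c(z,\Fbf(z))$ for a U-configuration $c$ treated in the proof of Proposition~\ref{pro:P-analytic}. The three-way split $\Phi_n(R)=\Sigma_n(R)+\Lambda^1_n(R)+\Lambda^2_n(R)$ of that proof, stratified by the number of virtual arcs in the shape at the star centre and the lengths of the attached U-completions, then carries over: its inputs---finiteness of $F_c(R)$ for $c\in\Ucal$ from Lemma~\ref{lem:u-finite}, analyticity at $R$ of $F_c(z)$ for $c\in\Ucal$ from Corollary~\ref{cor:u analytic}, the bound $\mu_{o,t}<1/R$ from Lemma~\ref{lem:onepart}, and the reflection-extension construction (Construction~\ref{cons:refl-ext})---are intrinsic to the U-configurations attached to the star and do not depend on either the rooting configuration or the choice of initial vertex $o$. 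This yields analyticity at $R$ of $F^1_{\SAW}$, $F^{\init}_c$, and $F^{\term}_c$. The analyticity of $F_{\SAR}$ at $R$ then follows by coefficient-wise domination by $F^1_{\SAW}$, since the endpoint of the shape at $s_0$ being a neighbour of $o$ forces $Y(e)=\bar e$ for every $e$ (neighbours of $o$ lie outside every adhesion set of $\Tcal'$ by construction).

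For the analyticity of $F_{\SAW}$ on $(0,R)$, equations \eqref{eq:expPSAW} and \eqref{eq:F2} give
\[
F_{\SAW}(z)=F^1_{\SAW}(z)+\Fbf^{\init}(z)\,(I-\Jac_\Ical(z))^{-1}\,\Fbf^{\term}(z).
\]
The vectors $\Fbf^{\init}(z)$ and $\Fbf^{\term}(z)$ are analytic on $(0,R)$ by the preceding paragraph, and every entry of $\Jac_\Ical(z)$ is a power series in $z$ with non-negative coefficients which converges at $z=R$ by Proposition~\ref{pro:J-analytic}, hence is analytic on $\{|z|<R\}$. Moreover, for every strong component $\Kcal\subseteq\Ical$ of $D$ and every $z\in(0,R)$, we have $z<r_\Kcal$ and thus $\lambda_\Kcal(z)<1$ by Lemma~\ref{lem:spectral}; since, after topologically ordering the strong components, $\Jac_\Ical(z)$ is block upper-triangular with the matrices $\Jac_\Kcal(z)$ as its diagonal blocks, its spectral radius is strictly less than $1$ on $(0,R)$, so $(I-\Jac_\Ical(z))^{-1}$ is well-defined and analytic there.

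The main obstacle lies in verifying that the intricate three-way split from Proposition~\ref{pro:P-analytic} really does adapt to our four functions without essential modification; the case distinctions, the combinatorial bookkeeping via the reflection-extension, and the Stirling-type estimates carry over once one recognises that the role of the rooting configuration $c\in\Ucal$ in that proof is inessential, what matters being only that the configurations attached to the remaining arcs of the star lie in $\Ucal$.
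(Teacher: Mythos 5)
Your proposal takes a genuinely different route from the paper's proof, and it has a gap that I do not think closes without essentially reproducing the paper's argument.

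The paper's proof does \emph{not} re-run the $\Sigma_n/\Lambda^1_n/\Lambda^2_n$ machinery. Instead, for each of the four local functions it views the corresponding walk $w$ (which starts at a vertex $v_0$ in some part $\Vcal(s)$ and ends in $\Vcal(s)$), and splits $w$ at the first arc $e_1$ and the last arc $e_2$ of $w$ not in $\Ecal(s)$. This gives an initial piece $wv_1$ and a final piece $v_2w$, both SAWs on $\Gcal(s)$ starting/ending at fixed vertices, whose generating functions are controlled by Lemma~\ref{lem:onepart}, and a middle piece $v_1wv_2$ running from $\Vcal(f_1)$ to $\Vcal(f_2)$, which decomposes into a product of a $c$-completion and a $c'$-completion for $c,c'\in\Ucal$ (when $f_1=f_2$) or into two $\Ucal$-completions and a $c_1'$--$c_2'$-completion contributing to $\Jac_\Ical$ (when $f_1\neq f_2$). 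Analyticity at $R$ then follows as a convolution of generating functions, each already known to be analytic there via Propositions~\ref{pro:P-analytic}, \ref{pro:J-analytic} and Lemma~\ref{lem:onepart}. This is shorter and more modular than redoing the $\Sigma/\Lambda^1/\Lambda^2$ split.

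The gap in your version is precisely in the claim that ``the role of the rooting configuration $c\in\Ucal$ in that proof is inessential.'' In the $\Lambda^2_n$ bound of Proposition~\ref{pro:P-analytic}, the reflection-extensions $w_k(H)$ of $w$ are controlled via $\sum_{(w,H)}R^{\abs{w_k(H)}}\leq \norm{\Jac_p(R)^N}_1$, and this bound holds \emph{because} $w$ is the walk represented by a $c$-completion for a $2$-vertex U-configuration $c$: this forces $w$ to start and end in $\Vcal(x)$ and to meet $\Vcal(x)$ only in its endpoints, so that each $w_k(H)$ is a SAW from $\Vcal(x)$ to $\Vcal(x')$ represented by a $C_H(x)$--$C_H(x')$-completion with simple (hence persistent) source and target configurations. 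For your four functions the walk $w$ starts at $o$, which by construction of $\Tcal'$ lies outside every adhesion set, so the reflection-extended walks $w_k(H)$ start at $o$ as well and are \emph{not} representable as $\Jac_p$-type completions. The needed bound on $\sum_{(w,H)}R^{\abs{w_k(H)}}$ then does not follow, and one cannot simply replace it by $\sum_m c_m(o)R^m$, which is precisely the quantity $F_{\SAW}(R)$ whose finiteness we are trying to establish. The only repair I see is to first split $w_k(H)$ at its first and last visits to adhesion sets, bound the outer pieces by Lemma~\ref{lem:onepart}, and apply the $\Jac_p$ bound to the middle---but that \emph{is} the paper's decomposition, and if you are going to carry it out you may as well apply it to $w$ directly and skip the $\Sigma/\Lambda^1/\Lambda^2$ machinery entirely.

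The remaining parts of your proof are fine: the observation that all arcs of the star carry U-configurations (except the one factored-out I-configuration in $F^{\init}_c$ and $F^{\term}_c$) is correct and is the right starting point; the coefficient-wise domination $F_{\SAR}\preceq F^1_{\SAW}$ is correct, as is the block upper-triangular argument giving $\lambda_{\Ical}(z)<1$ on $(0,R)$ and hence the analyticity of $F_{\SAW}$ there via \eqref{eq:expPSAW} and \eqref{eq:F2}.
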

% \begin{pro}\label{pro:Fy-analytic}
% The functions $F_{\SAW}(z)$ and $F_{\SAP}(z)$ are analytic in the interval $(0,R)$. Moreover, each of the functions $P^1_{\SAW}(z,\ybf)$, $P_{\SAP}(z,\ybf)$, $P_c^{\init}(z,\ybf)$ and $P_c^{\term}(z,\ybf)$ (for $c \in \Ical$) is analytic at $(z,\ybf)=(R,\Fbf(R))$.
% \end{pro}
\begin{proof}
Let $F(z)$ be any of the functions $F^1_{\SAW}(z)$, $F_{\SAR}(z)$, $F_c^{\init}(z)$ and $F_c^{\term}(z)$. Then there is some $s \in V(T)$ such that each of the walks counted by $F(z)$ starts at some given vertex $v_0$ in $V(s)$ and ends in $V(s)$. Let $w$ be one of these walks. We decompose $w$ as follows.

Let $e_1$ and $e_2$ be the first and last arc of $w$ not in $\Ecal(s)$, respectively, and let $v_1=e_1^-$ and $v_2 = e_2^+$. If no such arcs exist, we set $v_1 = v_2 = w^+$. Then $w v_1$ and $v_2 w$ are SAWs on $\Gcal(s)$. If $v_1 \neq v_2$, let $f_1,f_2 \in \Eout(s)$ be such that $e_i$ is an edge of $\Gcal(K_{\bar f_i})$. Then $v_1 w v_2$ is a SAW on $G$ starting in $\Vcal(f_1)$ and ending in $\Vcal(f_2)$. If $f_1 = f_2$, then $v_1 w v_2$ can be represented by a pair consisting of a $c$-completion on the cone $K_{f_1}$ and a $c'$-completion on the cone  $K_{\bar f_1}$. Note that $c$ and $c'$ are U-configurations which only differ in their entry and exit direction.
Assume now that $f_1 \neq f_2$. Then $v_1 w v_2$ can be represented by a triple consisting of a $c_1$-completion on the cone $K_{\bar f_1}$, a $c_2$-completion on $K_{\bar f_2}$, and a $c_1'$--$c_2'$-completion of length 1 on $K_{f_1} \cap K_{f_2}$ which contributes to $\Jac_{\Ical}$. Here $c_1$ and $c_2$ are U-configurations, $c_1'$ and $c_2'$ are I-configurations, $c_1$ and $c_1'$ only differ in their entry direction while $c_2$ and $c_2'$ only differ in their exit direction.

With this decomposition in hand, the analyticity of $F(z)$ follows from Proposition~\ref{pro:P-analytic}, Proposition~\ref{pro:J-analytic} and Lemma~\ref{lem:onepart}.

Too see that $F_{\SAW}(z)$ is analytic in the interval $(0,R)$, recall that by \eqref{eq:expPSAW} and \eqref{eq:F2} we have that
\[
F_{\SAW}(z)=F^1_{\SAW}(z)+\Fbf^{\init}(z)(I-\Jac_{\Ical}(z))^{-1} \Fbf^{\term}(z).
\]
is finite for $z \in (0,R)$.
\end{proof}

With the above result in hands we can show that $R$ is determined by $\Ical_p$.

\begin{pro}\label{pro:Ip R}
If $\Gamma$ does not fix an end of $T$, then we have $R_{\Ical_p}=R$.    
\end{pro}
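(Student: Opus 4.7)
The plan is to argue by contradiction. Since $R \leq R_{\Ical_p}$ by definition, I will suppose $R < R_{\Ical_p}$ and derive that $r_c > R$ for every $c \in \Ccal$, contradicting $R = \min_{c \in \Ccal} r_c$.

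Under this hypothesis, Lemma~\ref{lem:spectral} gives $\lambda_{\Ical_p}(R) < 1$. Combining with Lemmas~\ref{lem:pair-p} and~\ref{lem:Itrans-pair}, for every strong component $\Kcal \subseteq \Ical_t$ we have
\[
\lambda_\Kcal(R) \leq \lambda_{\pair,\disj}(R) < \lambda_{\Ical_p}(R)^2 < 1.
\]
Ordering the strong components of $\Ical$ according to a topological sort of the DAG of components makes $\Jac_\Ical(R)$ block upper triangular, with diagonal blocks $\Jac_{\Ical_p}(R)$ and $\Jac_\Kcal(R)$ for each transient $\Kcal$. Hence its spectral radius equals $\max\{\lambda_{\Ical_p}(R), \max_\Kcal \lambda_\Kcal(R)\}$, which is strictly less than $1$, and $I - \Jac_\Ical(R)$ is invertible.

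By Corollary~\ref{cor:u analytic}, $\mathbf{F}_\Ucal$ extends analytically in a neighbourhood of $R$, and by Lemma~\ref{lem:D-components}(i), $\mathbf{P}_\Ucal$ does not depend on $\ybf_\Ical$. Substituting $\ybf_\Ucal = \mathbf{F}_\Ucal(z)$ then yields a closed analytic system for $\mathbf{F}_\Ical(z)$. I plan to apply the analytic implicit function theorem at $(R, \mathbf{F}_\Ical(R))$, exactly as in the proof of Corollary~\ref{cor:u analytic}, to conclude that $\mathbf{F}_\Ical$ extends analytically to a neighbourhood of $R$. Two inputs are needed: entrywise finiteness of $\mathbf{F}_\Ical(R)$, and analyticity of $\mathbf{P}_\Ical(z,\ybf)$ at $(R,\mathbf{F}(R))$. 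Both will be handled by analogues of Lemma~\ref{lem:u-finite} and Proposition~\ref{pro:P-analytic}: decomposing an I-configuration $c = c_1 \cdot c_2$ into a simple ``crossing'' $c_1 \in \Ical_s$ and a U-shaped ``detour'' $c_2 \in \Ucal$, as in the proof of Proposition~\ref{pro:J-analytic}, one bounds both $P_c$ and $F_c$ by products whose factors are already controlled---the first factor via the persistent Jacobian estimates, the second by Lemma~\ref{lem:u-finite} and Proposition~\ref{pro:P-analytic}. Once this is in place, $r_c > R$ for all $c \in \Ical$, and combined with $r_c > R$ for $c \in \Ucal$ we obtain $\min_c r_c > R$, a contradiction.

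The main obstacle will be verifying finiteness of $F_c(R)$ and analyticity of $P_c$ at $(R, \mathbf{F}(R))$ for $c \in \Ical_t$; neither follows mechanically from the earlier sections, and both require adapting the delicate three-part decomposition ($\Sigma_n$, $\Lambda_n^1$, $\Lambda_n^2$) of Proposition~\ref{pro:P-analytic}. In particular, the argument must combine the ``single-part'' growth bound from Lemma~\ref{lem:onepart} with the matrix-geometric expansion enabled by $\lambda_\Ical(R) < 1$ to absorb the transient layers while keeping control of the boundary contributions coming from $\mathbf{F}_\Ucal(R)$ and $\mathbf{F}_{\Ical_p}(R)$.
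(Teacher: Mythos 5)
Your overall contradiction structure and the first half of the argument track the paper closely: assuming $R<R_{\Ical_p}$, deducing $\lambda_{\Ical_p}(R)<1$ from Lemma~\ref{lem:spectral}, pushing this to $\lambda_{\Ical}(R)<1$ via Lemmas~\ref{lem:pair-p} and~\ref{lem:Itrans-pair} and the block-triangular structure, and using Corollary~\ref{cor:u analytic} to handle $\Ucal$ are all steps the paper takes (the paper phrases the contradiction hypothesis as $\lambda_{\Ical_p}(R)<1$ rather than $R<R_{\Ical_p}$, but these are interchangeable here).

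The divergence — and the gap — is in the final step. You propose to close the argument by the analytic implicit function theorem, for which you correctly observe that you would need (i) finiteness of $F_c(R)$ for $c\in\Ical_t$ and (ii) analyticity of $P_c(z,\ybf)$ at $(R,\Fbf(R))$ for $c\in\Ical$, and you plan to obtain both by adapting Lemma~\ref{lem:u-finite} and Proposition~\ref{pro:P-analytic}. This is where the proposal does not go through as stated. First, it is circular: the IFT must be applied at the point $(R,\Fbf_{\Ical}(R))$, and you cannot even form this point without already knowing the entrywise finiteness that you propose to derive from it. Second, the proposed adaptation of Lemma~\ref{lem:u-finite} to transient I-configurations is not a routine modification: that lemma's argument bounds $F_c(R)$ for $c\in\Ucal$ by embedding a $c$-completion into a $c_1$--$c_2$-completion for simple $c_1,c_2$ and invoking finiteness of $\Jac_{\Ical_p}(R)^n$, but a \emph{transient} I-configuration is by definition one that does \emph{not} lie on such a path between two simple configurations, so the same embedding does not exist. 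You flag this as ``the main obstacle,'' but no mechanism to overcome it is offered.

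What you are missing is that the paper never needs the implicit function theorem here, because the combinatorial decomposition of $c$-completions along the chain of I-configurations yields a \emph{linear} identity, namely equations~\eqref{eq:F2ps} and~\eqref{eq: i-ji}:
\[
(F_c(z))_{c\in\Ical}=\sum_{n=0}^\infty\Jac_{\Ical}(z)^n\,\Fbf^{\term}(z)=\bigl(I-\Jac_{\Ical}(z)\bigr)^{-1}\Fbf^{\term}(z).
\]
Both ingredients on the right are already known to be analytic at $z=R$ (Proposition~\ref{pro:J-analytic} for $\Jac_{\Ical}$, Proposition~\ref{pro:Fz-analytic} for $\Fbf^{\term}$), and under the contradiction hypothesis $\lambda_{\Ical}(R)<1$ the matrix geometric series converges in a neighbourhood of $R$. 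This gives, in one stroke, both finiteness of $F_c(R)$ for $c\in\Ical$ and analyticity of $\Fbf_{\Ical}$ near $R$, hence $R_{\Ical}>R$, and together with $R_{\Ucal}>R$ the contradiction $R>R$. No new analogue of Lemma~\ref{lem:u-finite} or Proposition~\ref{pro:P-analytic} for transient configurations is needed, precisely because the linear structure sidesteps the implicit function theorem entirely.
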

\begin{proof}
First, we claim that $\lambda_{\Ical_p}(R)=1$. Assume for a contradiction that $\lambda_{\Ical_p}(R)<1$. Then $\lambda_{\Ical}(R)<1$ by Lemmas \ref{lem:spectral}, \ref{lem:pair-p}, and \ref{lem:Itrans-pair}. Recall that $\Jac_{\Ical}(z)$ is analytic at $z=R$ by Proposition~\ref{pro:J-analytic}. Then $\lambda_{\Ical}(z)<1$ for every $z$ in a neighbourhood of $R$. Now, observe that 
\begin{equation}\label{eq: i-ji}
 (F_c(z))_{c\in\Ical}=\sum_{n=0}^\infty\Jac_{\Ical}(z)^n \Fbf^{\term}(z)
\end{equation}
provided that $\Jac_{\Ical}(z)$ and $\Fbf^{\term}(z)$ are analytic.
If $\lambda_{\Ical}(z)<1$, this implies that 
$(F_c(z))_{c\in\Ical}= (I-\Jac_{\Ical}(z))^{-1} \Fbf^{\term}(z)$.

Since this holds at $z=R$, it follows that $(F_c(z))_{c\in\Ical}$ is analytic at $z=R$, hence $R_{\Ical}>R$. The latter together with Corollary \ref{cor:u analytic} implies that $R>R$, which is absurd. This proves the claim.

Since $\lambda_{\Ical_p}(R)=1$, equation \eqref{eq: i-ji} together with the fact that $\Ical_p$ is a strong component implies that $F_c(R)$ is infinite for every $c\in \Ical_p$. The desired assertion follows.
%
% For every $z>R$ such that $\Jac_{\Ical}(z)$ is analytic, Lemma~\ref{lem:jac_properties} implies that $\lambda_{\Ical_p}(z) \geq 1$, hence $F_c(z)$ is infinite for every $c\in \Ical_p$ by \eqref{eq: i-ji} and the fact that $\Ical_p$ is a strong component. The desired assertion follows.
\end{proof}

We are now ready to prove the main results of this paper. We first show that SAW is ballistic. In order to prove Theorem~\ref{thm:ballistic}, we will show the strict inequality $\mu_p<\mu_w$. The fact that this implies the ballisticity of SAW was proved in \cite[Theorem 4]{P19}. This result was proved for transitive graphs but as explained in \cite[Remark 4.1]{P19}, it can be extended to graphs which are not necessarily transitive but satisfy \[\limsup_{n\to\infty}\left( \sup_{x\in V} p_n(x) \right)^{1/n}<\liminf_{n\to\infty} \left(\inf_{x\in V} c_n(x)\right)^{1/n}.\] In particular, the result of \cite[Theorem 4]{P19} holds for quasi-transitive graphs.
%CL: For convenience of the reader we might want to state Theorem 4 here.
% Also define (recall?) coefficients c_n and p_n 

\begin{proof}[Proof of Theorem~\ref{thm:ballistic}]
%We claim that $\mu_p<1/R$ and $\mu_w\geq 1/R$, from which it follows that $\mu_p<\mu_w$. The desired result then follows from \cite[Theorem 4]{} and the discussion above.
By Proposition~\ref{pro:Fz-analytic}, the generating function $F_{\SAR}(z)$ is analytic at $z=R$. Since there are fewer self-avoiding polygons of length $n$ containing $o$ than self-avoiding returns starting at $o$, it follows that $\mu_p < 1/R$.

Let $p$ be a shortest walk from $o$ to some adhesion set $\Vcal(e)$, where $e^+ = s_0$. 
Let $c$ be the simple configuration $(q,x,y)$ where $q$ is the walk consisting only of the terminal vertex of $p$, $x=e$, and $y=\bar e$. Then  $z^{\abs p} F_c(z) \leq F_{\SAW}(z)$ because $p$ together with the walk corresponding to any $c$-completion is a self-avoiding walk. This together with Proposition~\ref{pro:Ip R} implies that $\mu_w \geq 1/R_{\Ical_p} = 1/R$. The desired result follows.
\end{proof}

\begin{rmk}\label{rem: mu R}
As mentioned in the proof of Theorem~\ref{thm:ballistic}, we have $\mu_w\geq 1/R$. The reverse inequality $\mu_w\leq 1/R$ follows from the fact that $F_{\SAW}(z)$ is finite for every $z<R$ by Proposition~\ref{pro:Fz-analytic}.
This proves that $\mu_w=1/R$.
\end{rmk}

We will now prove that $c_n$ grows asymptotically like $\mu_w^n$, that is, the subexponential factor is $O(1)$.

\begin{proof}[Proof of Theorem~\ref{thm:c_n asym}]
We will show that all singularities of $F_{\SAW}(z)$ on the circle $|z|=R$ are simple poles and they are located at the complex $k$-th roots of $R^k$ for some $k$, from which the desired result will follow. To this end, let $r>R$ be such that each of $F^1_{\SAW}(z), F^{\init}(z), F^{\term}(z)$ and $\Jac_{\Ical}(z)$ is analytic for every $z\in \mathbb{C}$ such that $|z|<r$. Such an $r$ exists by Propositions~\ref{pro:J-analytic} and \ref{pro:Fz-analytic}. Then the points of singularity of $F_{\SAW}(z)$ for $|z|<r$ are the points at which $\det(I-\Jac_{\Ical}(z))=0$ by \eqref{eq:expPSAW} and \eqref{eq:F2}. 

Since $\det(I-\Jac_{\Ical}(z))$ is a non-constant analytic function, its zeros are isolated, hence we can assume without loss of generality that $\det(I-\Jac_{\Ical}(z))\neq 0$ for every $R<|z|<r$. For every $|z|<R$ we have $\lambda_{\Ical}(z)<1$ by Lemma~\ref{lem:spectral}, thus the only singularities of $F_{\SAW}(z)$ for $|z|<r$ are at the points of the circle $|z|=R$ at which $1$ is an eigenvalue of $\Jac_{\Ical}(z)$. Let us denote this set by $S$ and consider some $w\in S$. 

We claim that $1$ is a simple eigenvalue of $\Jac_{\Ical}(w)$. Indeed, 
note that $1$ is a simple eigenvalue of the irreducible matrix $\Jac_{\Ical_p}(R)$ by the Perron-Frobenius theorem, and that $\lambda_{\Ical_t}(R)<1$ by Lemmas~\ref{lem:Itrans-pair} and \ref{lem:pair-p}. Since $|\Jac_{c,c'}(w)|\leq \Jac_{c,c'}(R)$ for every $c,c'\in \Ical$ by the triangle inequality we have $\lambda_{\Ical_p}(w)=1$ and $\lambda_{\Ical_t}(w)<1$. It thus suffices to show that $1$ is a simple eigenvalue of $\Jac_{\Ical_p}(w)$. 

Note that $\Jac_{\Ical_p}(w)$ is not a real matrix in general, and we cannot apply Perron-Frobenius. Instead, we argue as follows. The matrix $\Jac_{\Ical_p}(R)$ is irreducible, $|\Jac_{c,c'}(w)|\leq \Jac_{c,c'}(R)$ for $c,c' \in \Ical_p$, and  $\Jac_{\Ical_p}(w)$ and $\Jac_{\Ical_p}(R)$ have the same spectral radius. Because $1$ is the eigenvalue of $\Jac_{\Ical_p}(w)$ of maximal modulus, it follows from \cite[Chapter I, Proposition 6.4]{Banach} that $\Jac_{\Ical_p}(w)$ and $\Jac_{\Ical_p}(R)$ are similar matrices and thus have the same spectrum. Hence $1$ is a simple eigenvalue of $\Jac_{\Ical_p}(w)$. Since $\lambda_{\Ical_t}(w)<1$ we deduce that $1$ is a simple eigenvalue of $\Jac_{\Ical}(w)$ thus proving our claim.

Our aim is to deduce that $(I-\Jac_{\Ical}(z))^{-1}$ has a simple pole at every point in $S$, which implies that the same holds for $F_{\SAW}(z)$. 

Applying \cite[Lemma 9]{AlJa90} we obtain for any $w\in S$ that 
\begin{equation}\label{eq:pole}
\lim_{z\to w} (z-w)(I-\Jac_{\Ical}(z))^{-1}=\left(\eta^T(w) \Jac_{\Ical}'(w) \xi(w) \right)^{-1}\xi(w) \eta^T(w),
\end{equation}
provided that $\eta^T(w) \Jac_{\Ical}'(w) \xi(w)\neq 0$, where $\eta(w)$ and $\xi(w)$ are left and right eigenvectors of $\Jac(w)$ corresponding to $1$ normalised so that $\eta^T(w)\xi(w)=1$. 

To verify the condition note that since $\lambda_{\Ical_t}(w)<1$, the only non-zero entries of $\eta(w)$ and $\xi(w)$ correspond to elements of $\Ical_p$, hence $\eta^T(w)\Jac_{\Ical}'(w) \xi(w)=\eta^T_p(w)\Jac_{\Ical_p}'(w) \xi_p(w)$, where $\eta_p(w)$ and $\xi_p(w)$ are the restrictions to $\Ical_p$. In the particular case $w=R$, we have $\eta^T_p(R)\Jac_{\Ical_p}'(R) \xi_p(R)\neq 0$ because by the Perron-Frobenius theorem the entries of $\eta_p(R)$ and $\xi_p(R)$ are strictly positive and at least one entry of $\Jac_{\Ical_p}(z)$ is strictly increasing in $z$. Thus it suffices to show that $\eta^T_p(w) \Jac_{\Ical_p}'(w) \xi_p(w)=\eta^T_p(R) \Jac_{\Ical_p}'(R) \xi_p(R)$. 

To this end, let $D=D(w)$ be an invertible matrix such that $D^{-1}\Jac_{\Ical_p}(w) D=\Jac_{\Ical_p}(R)$. Then $D^{-1}\xi_p(w)=\alpha \xi_p(R)$ and $\eta^T_p(w)D=\alpha^{-1}\eta^T_p(R)$ for some $\alpha \neq 0$ due to the fact that each of $D^{-1}\xi_p(w),\xi_p(R),\eta^T_p(w)D,\eta^T_p(R)$ is an eigenvector of an one-dimensional eigenspace of $\Jac_{\Ical_p}(R)$. The scalar factor $\alpha$ is due to the normalisation. Applying again \cite[Chapter I, Proposition 6.4]{Banach} we obtain that $|\Jac_{c,c'}(w)|=\Jac_{c,c'}(R)$ for all $c,c'\in \Ical_p$, in other words, we have equality in the triangle inequality. This implies that there exist integers $k,\ell$ such that $t_n$ is a non-zero Taylor coefficient of $\Jac_{c,c'}(z)$ only if $n\in k\mathbb{Z}+\ell$ and $w/R$ is a $k$th root of unity. 

Thus $D^{-1}\Jac_{\Ical_p}(\beta w) D=\Jac_{\Ical_p}(\beta R)$ for every $\beta \in [0,1]$, hence $D^{-1}\Jac_{\Ical_p}'(w) D=\Jac_{\Ical_p}'(R)$. This in turn implies that $\eta^T_p(w) \Jac_{\Ical_p}'(w) \xi_p(w)=\eta^T_p(R) \Jac_{\Ical_p}'(R) \xi_p(R)$, and we can conclude that \eqref{eq:pole} holds.

Therefore there exist $b_1,b_2,\ldots, b_k \in \mathbb{C}$ such that the function
\[
H(z)=F_{\SAW}(z)-\sum_{i=1}^k \frac{b_i}{w_i-z}
\]
is analytic in the open disk $|z|<r$, where $w_1,w_2,\ldots,w_k$ are the elements of $S$. This implies that the $n$th Taylor coefficient $h_n$ of $H(z)$ around $0$ satisfies $h_n=O((r-\varepsilon)^{-n})$ for $\varepsilon=(r-R)/2>0$. Taylor expanding we see that
\[
c_n-\sum_{i=1}^k \frac{b_i}{w^{n+1}_i}=h_n=O((r-\varepsilon)^{-n}).
\]
Since each $w_i/R$ is a $k$th root of unity, $\mu_w=1/R$ by Remark~\ref{rem: mu R}, and $c_n\geq 0$, it follows that \eqref{eq:c_n asym} holds for some $a_1,a_2,\ldots,a_k\geq 0$. Since on a quasi-transitive graph $c_n^{1/n}$ converges to $\mu_w$ \cite{MR0091568}, it follows that each $a_i$ is strictly positive. 

The second part of the theorem follows now immediately, recalling that on a transitive graph we have $c_n\geq \mu_w^n$ for every $n\geq 1$.
\end{proof}

%\begin{rmk}
%On transitive graphs one has $c_n\geq \mu_w^n$. For quasi-transitive graphs, one has that $\sigma_n:=\max_{o\in V} c_n(o)$ is sub-multiplicative, i.e.\ $\sigma_{n+m}\leq \sigma_n \sigma_m$, which implies that $\sigma_n\geq \mu_w^n$. Following the proof of equation (3.12) in \cite{grimmett2015bounds} and using the above theorem (in \cite{grimmett2015bounds} it is used that for every $\tau>\mu_w$ there exists $C>0$ such that $\sigma_n\leq C\tau^n$, and in our case we know that this holds also for $\tau=\mu_w$) one can check that for neighbouring vertices $u,v$ one has
%\[
%c_{2n}(u)\leq Ac_n(v)(\mu_w^{n+1}+2nA\mu_w^n+\mu_w^n).
%\]
%This readily implies that for some constant $c>0$ we have $c_n(o)\geq c\mu_w^n$ for every $o\in V$.
%\end{rmk}

%CL: We might mention periodicity. 2

\bibliographystyle{abbrv}
\bibliography{latex}

\end{document}